\pgfplotsset{compat=1.16}
\newcommand\inner[2]{{\langle #1, #2 \rangle}}
\newcommand{\normW}[1]{{\vert\kern-0.25ex\vert\kern-0.25ex\vert #1 
		\vert\kern-0.25ex\vert\kern-0.25ex\vert}}
\newcommand{\normWbig}[1]{{\Big\vert\kern-0.25ex\Big\vert\kern-0.25ex\Big\vert #1 
		\Big\vert\kern-0.25ex\Big\vert\kern-0.25ex\Big\vert}}
\newcommand{\B}{\mathcal{B}}
\newcommand{\D}{\mathcal{D}}
\newcommand{\C}{\mathbb{C}}
\newcommand{\N}{\mathbb{N}}
\newcommand{\R}{\mathbb{R}}
\newcommand{\Z}{\mathbb{Z}}
\newcommand{\boB}{\mathcal{B}}
\newcommand{\boC}{\mathcal{C}}
\newcommand{\boD}{\mathcal{D}}
\newcommand{\boE}{\mathcal{E}}
\newcommand{\boF}{\mathcal{F}}
\newcommand{\boG}{\mathcal{G}}
\newcommand{\boH}{\mathcal{H}}
\newcommand{\boI}{\mathcal{I}}
\newcommand{\boJ}{\mathcal{J}}
\newcommand{\boN}{\mathcal{N}}
\newcommand{\boV}{\mathcal{V}}
\newcommand{\boW}{\mathcal{W}}
\newcommand{\W}{\mathcal{W}}
\newcommand{\boX}{\mathcal{X}}
\newcommand{\ga}{\mathfrak{a}}
\newcommand{\gb}{\mathfrak{b}}
\newcommand{\gc}{\mathfrak{c}}
\newcommand{\gp}{\mathfrak{p}}
\newcommand{\gq}{\mathfrak{q}}
\newcommand{\q}{\mathfrak{q}}
\newcommand{\gs}{\mathfrak{s}}
\newcommand{\ka}{\kappa}
\newcommand{\ve}{\varepsilon}
\renewcommand{\d}{\textup{d}}
\newcommand{\p}{\textup{p}}
\newcommand{\wh}{\widehat}
\newcommand{\ptl}{{\partial}}
\newcommand{\Emin}{E_\textup{min}}
\newcommand{\loc}{\operatorname{loc}}
\newcommand{\card}{\operatorname{card}}
\providecommand{\abs}[1]{|#1 |}
\providecommand{\norm}[1]{\lVert#1 \rVert}
\renewcommand{\Re}{\operatorname{Re}}
\renewcommand{\Im}{\operatorname{Im}}
\newcommand{\sech}{\operatorname{sech}}
\newcommand{\atanh}{\operatorname{atanh}}
\newcommand{\atan}{\operatorname{atan}}
\theoremstyle{plain}
\newtheorem{theorem}{Theorem}[section]
\newtheorem{proposition}[theorem]{Proposition}
\newtheorem{lemma}[theorem]{Lemma}
\newtheorem{corollary}[theorem]{Corollary}
\theoremstyle{definition}
\newtheorem{step}{Step}
\newtheorem{remark}[theorem]{Remark}
\theoremstyle{remark}
\newtheorem*{merci}{Acknowledgments}
\numberwithin{equation}{section}
\begin{document}
\title{Exotic traveling waves for  a quasilinear Schr\"odinger equation with nonzero background }                               
\author{ Andr{\'e} de Laire\thanks{\noindent Univ.\ Lille, CNRS, Inria, UMR 8524 - Laboratoire Paul Painlev\'e, F-59000 Lille, France. \\ 	Emails: {\tt andre.de-laire@univ-lille.fr, erwan.lequiniou@univ-lille.fr} } \and
 {Erwan Le Quiniou\footnotemark[1]}}   
 
		\date{}
	\maketitle
\begin{abstract}
We study a defocusing quasilinear Schr\"odinger equation with nonzero conditions at infinity in dimension one. This quasilinear model corresponds to a weakly nonlocal approximation of the nonlocal Gross--Pitaevskii equation, and can also be derived by considering the effects of surface tension in superfluids. When the quasilinear term is neglected, the resulting equation is the classical Gross--Pitaevskii equation, which possesses a well-known stable branch of subsonic traveling waves solution, given by dark solitons. 

Our goal is to investigate how the quasilinear term and the intensity-dependent dispersion affect the traveling-waves solutions.
We provide a complete classification of finite energy traveling waves of the equation, 
in terms of two parameters: the speed and the strength of the quasilinear term.
This classification leads to the existence of dark and antidark solitons, as well as more exotic localized solutions like dark cuspons, compactons, and composite waves, even for supersonic speeds. Depending on the parameters, these types of solutions can coexist, showing that finite energy solutions are not unique.  Furthermore, we prove that some of these dark solitons can be obtained as minimizers of the energy, at fixed momentum, and that they are orbitally stable.
\end{abstract}   
	
	\maketitle
	
	\medskip
	\noindent{{\em Keywords:}
			Quasilinear Schr\"odinger equation, Gross--Pitaevskii equation, traveling waves, dark solitons, 		dark cuspons, nonzero conditions at infinity, orbital stability.
		
		\medskip
		\noindent{2020 {\em MSC}}:
				35Q55, 
                35J62,  
				35C07, 
                35C08, 
                34A05,      
                35J20, 
			    35B35,  
                35D35, 
		35Q60, 
  		82D50, 
		%
        34A12.   	
  \tableofcontents

\section{Introduction}
\label{intro}
\subsection{The quasilinear equation and related models}

We consider the following defocusing quasilinear Gross--Pitaevskii equation in dimension one
		\begin{equation}
			\label{QGP}
   \tag{QGP}
			i\ptl_t\Psi=\ptl_{xx}\Psi+\Psi(1-|\Psi|^2)+\ka\Psi\ptl_{xx}(1-|\Psi|^2),\quad\text{ in }\R\times\R,
		\end{equation} 
where $\ka$ is a real parameter, and $\Psi:\R\times\R\to\C$, satisfies the nonzero conditions at infinity
		\begin{equation}
			\label{nonzero0}
			\lim_{\abs{x}\to \infty}\abs{ \Psi(x,\cdot)}=1,
		\end{equation}
		representing a (normalized) nonzero background.

Let us notice that the equation is recast as in \eqref{QGP} for practical purposes. Indeed, we can write a more general Schr\"odinger equation as
 \begin{equation}\label{quasilin0}
			i\ptl_t\Phi=\ptl_{xx}\Phi+\gs \Phi\big( |\Phi|^2+\ka \ptl_{xx}|\Phi|^2)).
		\end{equation} 
 When $\kappa=0,$ it corresponds to the cubic NLS equation, a classical model for Bose--Einstein condensates, superfluidity, and nonlinear optical fibers, depending on the sign of $\gs$, and on the background conditions \cite{fibich,KevFrCa0}.
 For instance, in  Bose--Einstein condensates, the term in $\gs$  models attractive interatomic interactions if $\gs>0$,
 and repulsive interactions if $\gs<0$. In nonlinear optics, it corresponds to the Kerr effect in a focusing fiber if $\gs>0$, and to ionization effects in a defocusing one if $\gs<0$. Equation \eqref{quasilin0} enables the description of significant and experimentally relevant nonlinear phenomena such as {\em solitons}. Solitons are particular types of solutions that travel with constant speed and with a profile that remains unchanged. They provide important information for the analysis of dispersive equations.  Although the most common solitons are bright and dark solitons, there are more exotic solitons dispersive PDEs, that are not smooth, such as the cupsons and compactons \cite{Roseneau,OstroPeliShira,ARORAsolitons,OSTERfinitelaticeNonlocal,GermainCompactons,HarropMarzuola,ZhangZhou}, which we will discuss below. 

Bright solitons are characterized by having a localized amplitude peak, rapidly decaying to zero. The existence and properties of these solutions are a classical subject for the focusing NLS equation, that has received much attention \cite{SuleSul0,CuccaPeli,Weinstein} in the case $\kappa=0$. In the case $\kappa\geq 0$, 
Kr\'owlikowski and Bang  obtained in \cite{krolikowski2000} an explicit formula for the bright solitons to \eqref{quasilin0} with $\gs=1$, taking the form of a standing wave $\Phi(x,t)=v_{\omega,\kappa}(x)e^{-i\omega t},$ for every $\omega>0$,  
with $v_{\omega,\kappa}$ a real-valued profile given by  $v_{\omega,\kappa}(x)=F_{\omega,\kappa}^{-1}(|x|)$, for all $x\in\R$, where 
\begin{equation}\label{eq:implibright}
    F_{\omega,\kappa}(y)=\frac{1}{\sqrt{\omega}}\atanh\Big(\frac{1}{\sqrt{2\omega}}\sqrt{\frac{2\omega-y^2}{1+2\ka y^2}}\Big)+2\sqrt{\ka}\atan\Big( \sqrt{2\ka} \sqrt{\frac{2\omega-y^2}{1+2\ka y^2}}\Big).
\end{equation}
These bright solitons are unique for up to invariances (translation by a constant and multiplication by a phase shift).
Letting $\ka\to0$ in \eqref{eq:implibright}, we recover the profile of the cubic NLS bright soliton $$v_{\omega,0}(x)=\sqrt{2\omega}\sech(\sqrt{\omega}x).$$ 

On the one hand,  Colin, Jeanjean, and Squassina~\cite{Colin2}  showed the existence of bright solitons for \eqref{quasilin0}, with  $\gs=1$ and $\ka>0$, in any dimension. Moreover, using formula \eqref{eq:implibright} in the one-dimensional case, the results in \cite{Colin2} imply that these bright solitons are orbitally stable solutions to \eqref{quasilin0}. We also refer to Iliev and Kirchev \cite{ilievquasilinstab} who showed existence of bright solitons and a stability criterion for equations similar to \eqref{quasilin0} with $\gs=1$ in one dimension, with more general nonlinearities.

 On the other hand, in the defocusing case, there are no bright solitons, but the existence of dark solitons is expected \cite{kivshar}. Dark solitons have a localized amplitude dip or notch (of their absolute value) on a nonzero background density. Although these solutions are physically relevant, they have been less studied in the literature. They can be obtained explicitly for the cubic defocusing NLS equation (i.e.\ \eqref{quasilin0} with $\gs=-1$ and $\kappa=0$), and they are given, up to invariances, by
$$\Phi(x,t)=u_c(x-ct)e^{it},$$
 for $c\in [0,\sqrt 2),$ where 
 \begin{equation}
	\label{sol:1D}
	u_{c}(x)=\sqrt{\frac{2-c^2}{2}}\tanh\Big(\frac{\sqrt{2-c^2}}{2}x\Big)-i\frac{c}{\sqrt{2}}.
\end{equation} Notice that this branch of dark solitons satisfies the nonzero background condition $\abs{\Phi(x,\cdot)}\to 1$, as $\abs{x}\to\infty$. We refer to \cite{bethuel2008existence,bethuelasympt} for more details and stability results for these dark solitons, and to \cite{chironexistence1d} for some generalizations. 

Since dark solitons can only exist in the defocusing case, from now on, we only consider the case $\gs=-1$. To avoid the dependence on $t$ of the solitary waves, it is usual to perform the change of variables $$\Psi(x,t)=\Phi(x,t)e^{-it}, $$
which transforms equation \eqref{quasilin0} into \eqref{QGP}, and so that the nontrivial condition at infinity appears more explicitly in the equation.

The study of generalizations of Schr\"odinger equations with intensity-dependent dispersion, has recently gained significant attention, 
mostly in the context of solutions vanishing at infinity \cite{PelinovskyClassif1,kevrekidis2024stabilitysmoothsolitarywaves,PelinovskyVar}.
We refer to the work of Pelinovsky and Plum  \cite{PelinovskyGPBlackInIntensityDependantEq}  for the study of the stability of the black soliton for an intensity-dependent dispersion Schr\"odinger equation.

Given the increasing interest in intensity-dependent dispersion equations and quasilinear Gross-Pitaevskii equations, our aim is to provide a complete classification of finite energy traveling waves of the equation, and the stability of some of them.

We end this subsection by giving some physical motivations for the quasilinear model \eqref{QGP}, with the nonzero condition \eqref{nonzero0}. The evolution of a one-dimensional optical beam of intensity $|{\Psi}|^2$ in a defocusing nonlocal Kerr-like medium is given by the  nonlocal Gross--Pitaevskii equation
\begin{equation}\label{eq:nonloc}
i\partial_t\Psi=\partial_{xx}\Psi+\Psi(\boW*(1-|\Psi|^2)),
\end{equation}
 where $\W$ characterizes the nonlocal response of the medium \cite{nikolov2004,krolikowski2000,delaire,de2011nonexistence}. As explained in \cite{krolikowski2000,horikisWnonloc}, in a  weakly nonlocal medium we can replace $\boW$ by   $\boW_\ve(\cdot)=\boW(\cdot/\ve)/\ve$, for a  small positive $\ve$. Then,
performing a Taylor expansion of $\eta(x-y)=(1-|\Psi|^2)(x-y,t)$ near $x$, for fixed $t$, we have 
$$
\eta(x-y)=\eta(x)-y\eta'(x)+\frac{y^2}{2}\eta''(x)+{O}(y^3),
$$
hence, using that $\boW_\ve$ is even, the convolution product in \eqref{eq:nonloc} can be formally computed as
\begin{equation}\label{eq:wnonloc}
	(\boW_\ve*\eta)(x)=\int_\R \boW_\ve(y)\eta(x-y)dy=\eta(x)+\ka_\ve\eta''(x)+O(\ve^3),
 \text{ with } \ka_\ve=\frac{\ve^2}2\int_\R y^2\boW(y).
\end{equation}
Therefore,  equation \eqref{QGP} follows from \eqref{eq:nonloc} and \eqref{eq:wnonloc}, in the regime $\ve$ small, neglecting the term $O(\ve^3)$.
For this reason, \eqref{QGP} is known as the weakly nonlocal Gross--Pitaevskii equation in the nonlinear optics literature. 

Formally, we can also see a connection between \eqref{eq:nonloc} and \eqref{QGP}
by considering the potential given with its Fourier transform
\begin{equation}
\label{W:kappa}
    \wh \boW_\ka(\xi)=1-\kappa\xi^2,
\end{equation}
that can be seen as a limit case for the results of the existence of dark solitons for \eqref{eq:nonloc} (see Remark~\ref{rem:nonlocal}).

In addition, \eqref{QGP} was also obtained using the least action principle by 
Kurihara in \cite{kurihara}, in order to describe  a superfluid $^4\rm{He}$ film, where $\ka\leq 0$ and  
$|\ka|$ measures the surface tension of the superfluid. 
 
\subsection{Classification of finite energy traveling waves}
Equation  \eqref{QGP} has a Hamiltonian structure, and its energy,  given by
		\begin{equation}\label{eq:energie}
			E_\ka(\Psi(\cdot,t))= \frac{1}{2} \int_\R \abs{\ptl_x\Psi(x,t)}^2 dx +\frac{1}{4}\int_\R
			\Big(1-\abs{\Psi(x,t)}^2\Big)^2dx-\frac{\ka}{4}\int_\R\big(\ptl_x \abs{\Psi(x,t)} ^2 \big)^2  dx,
		\end{equation}
		is formally conserved, as well as the (renormalized) momentum 
		\begin{equation}\label{def:moment}
			p(\Psi(\cdot,t))=\frac{1}{2}\int_\R\inner{i\ptl_x\Psi(x,t)}{\Psi(x,t)}\Big(1-\frac{1}{\abs{\Psi(x,t)}^2}\Big)dx,
		\end{equation}
		  whenever $\inf_{x\in\R}\abs{\Psi(x,t)}>0$, where we used the inner product $\langle z_1,z_2 \rangle=\Re(z_1)\Re(z_2)+
		\Im(z_1)\Im(z_2)$, for $z_1$, $z_2\in \C$. 

	We are interested in solutions to \eqref{QGP} of the form
		$$\Psi_c(x,t)=u(x-ct),$$
		which represents a traveling wave with profile $u:\R\to\C$ propagating at speed $c\in\R$. Hence, the profile $u$ satisfies
		\begin{equation}
			\label{TWc}\tag{TW$(c,\ka)$}
			icu'+u''+u (1-\abs{u}^2)+\ka u\big(1-        \abs{u}^2\big)''=0.
		\end{equation}	
		Notice that taking the complex conjugate of $u$ in equation \eqref{TWc}, we are reduced to the case $c\geq 0$.		
To study physically relevant solutions to \eqref{TWc} in function of $\ka$, we define 
 energy space
		$$\boX(\mathbb{R})=
		\{v \in H^{1}_{\loc}(\mathbb{R}) : 1-|v|^{2}\in L^2(\mathbb{R}), \ v' \in L^{2}(\mathbb{R})\}.$$
Let us recall that 	 $\mathcal{X}(\mathbb{R})\subset L^\infty(\R)\cap \boC(\R)$, and that any function in $\boX(\R)$ satisfies the nontrivial condition at infinity  \eqref{nonzero0}
(see Lemma~\ref{lem:finiteenergyassum}).   
We will use extensively, as a new variable, the {\em intensity profile} of $u$, given by  $$\eta_{u}=1-\abs{u}^2,$$ so that the condition at infinity \eqref{nonzero0} becomes $\eta_{u}(x)\to0$, as $\abs{x}\to\infty$. Omitting the subscript of $\eta_u$ for notational simplicity, we can recast the energy functional as
\begin{equation}\label{eq:Eu}
E_\ka(u)=\frac{1}{2}\int_\R \abs{u'}^2+\frac{1}{4}\int_\R\eta^2-\frac{\ka}{4}(\eta')^2,\quad\text{ for all }u\in\boX(\R).
\end{equation}
Moreover, the momentum is well-defined in the nonvanishing energy space defined by
	\begin{equation}
	    \boN\boX(\mathbb{R})=
		\{v \in \boX(\mathbb{R})  :  \inf_{\R}\abs{v}>0\},
	\end{equation}
and writing the lifting $u=\sqrt{1-\eta}e^{i\theta}\in\boN\boX(\R)$ (see Lemma~\ref{lem:phaseregu}), we have
\begin{equation}
\label{def:mom2}
    p(u)=-\frac{1}{2}\int\langle iu',u\rangle\frac{\eta}{1-\eta}=\frac{1}{2}\int_\R\eta\theta'.
\end{equation}
Furthermore, the energy can be written in this case as 
\begin{equation}\label{eq:Eupol}
E_\ka(u)=\frac{1}{8}\int_\R\frac{(\eta')^2(1-2\ka+2\ka\eta)}{1-\eta}+\frac{1}{2}\int_\R(1-\eta)(\theta')^2+\frac{1}{4}\int_\R\eta^2.
\end{equation}
We can check that the energy space $\boX(\R)$ is equal to the domain of $E_\kappa$ if $\kappa\leq 0$, and is strictly included in the domain of $E_\kappa$ if $\kappa>0$. 
  
To be more precise, we say that    $\Psi\in L^1_\mathrm{loc}(\R;H^1_{\loc}(\R))$ is a \textit{weak solution} to  \eqref{QGP} if, for all $\varphi\in\boC_0^\infty(\R;H^1(\R))$,
	\begin{equation}\label{eq:weakqgp}
		\iint_{\R\times\R}
		\langle {i\Psi},{\ptl_t\varphi}\rangle dxdt=\iint_{\R\times\R}\big(
		\langle\partial_{x}\Psi, {\partial_x\varphi}\rangle
		-(1-\abs\Psi^2)
		\langle \Psi,\varphi \rangle+
		\ka  \partial_{x}(1-\abs\Psi^2)\partial_x  \langle\Psi,\varphi \rangle\big)
		dxdt.
	\end{equation}
Hence, we can check that $\Psi(x,t)=u(x-ct)$, with $u\in \boX(\R)$  is a weak solution to \eqref{QGP}
if and only if for all $\phi\in \boC_0^\infty(\R;\C)$,
		\begin{equation}
			\label{TW:weak}
			\int_\R  \langle  icu'+u (1-\abs{u}^2),\phi\rangle   
			- \langle u', \phi'\rangle 
			+2\ka  \langle u,u'\rangle
			\langle  u,\phi\rangle '=0,
		\end{equation}
where we used that 	$(1-\abs{u}^2)'=-2\langle u,u'\rangle$. Therefore, \eqref{TW:weak} corresponds to the weak formulation of \eqref{TWc}.

To obtain analytical solutions, we will show that if $u$ is a finite energy solution to \eqref{TW:weak}, then its intensity profile $\eta=1-\abs{u}^2$ satisfies the ODE
\begin{equation}
\label{EDO:intro}
    (1-2\ka+2\ka\eta)(\eta')^2=\eta^2(2-c^2 -2\eta),
\end{equation}
as long as $u$ is smooth enough. Thus, the rest of the analysis relies on the study of possible singularities, combined with the computation of the primitives of
		\begin{align*}
			f(y)=-y\sqrt{\frac{2-c^2-2y}{1-2\ka+2\ka y}},
		\end{align*} 
according to the parameters $c$ and $\kappa$. As explained in Section~\ref{sec:construction:smooth}, 
taking $x$ large enough in \eqref{EDO:intro}, we expect that 
\begin{equation}
\label{cond:c-ka}
    0<(2-c^2)(1-2\kappa),
\end{equation}
is a necessary condition for the existence of nontrivial solutions. From \eqref{cond:c-ka}, we see that $c=\sqrt 2$ and $\kappa=1/2$ are critical values, as shown below in our results.
In fact, the value $c=\sqrt 2$ corresponds to the speed of sound for \eqref{quasilin0} (see the explanations in \cite{de2011nonexistence,delaire-salva-guillaume} applied to \eqref{W:kappa}), while the value $\kappa=1/2$ corresponds to the critical case, for which the linear dispersion and the nonlinear disperse
on (i.e.\ the quasilinear terms of order 2) cancel as the intensity $|\Psi|^2$ approaches $1$ \cite{ARORAsolitons,OSTERfinitelaticeNonlocal,lenells2005traveling}.

To explain our results, we split the set of parameters according to the critical values, as represented in Figure~\ref{fig:domains}, by defining the following regions
	 \begin{align}\label{def:D1}
\D_1&=\{(c,\ka) : 0\leq c<\sqrt{2} \text{ and }  0<\ka<1/2  \},\ 
	 	\D_2=\{(c,\ka) : 0\leq c<\sqrt{2} \text{ and }  \ka\leq0  \},\\
	 	\D_3&=\{(c,\ka) : c>\sqrt{2} \text{ and }  \ka>1/2  \}, \ \text{ and, } \	\D=\D_1\cup \D_2\cup  \D_3;
   \label{def:D3}
		\end{align}
the boundary sets associated with the sonic speed
	 \begin{equation}
	 	\label{B-}
	 	\B_-=\{(c,\ka):c=\sqrt{2}\text{ and }0<\ka<1/2\},\quad 
\B_+=\{(c,\ka):c=\sqrt{2}\text{ and }\ka>1/2\},
	\end{equation}
 and the boundary set associated with the critical value for $\ka$
 \begin{equation}
     \label{def:C}
     \boC=\{(c,\ka):c\geq0\text{ and }\ka=1/2\}.
 \end{equation}
\begin{figure}[ht!]	
\centering
		\begin{overpic}
				[scale=0.75,trim=100 460 100 110,clip]{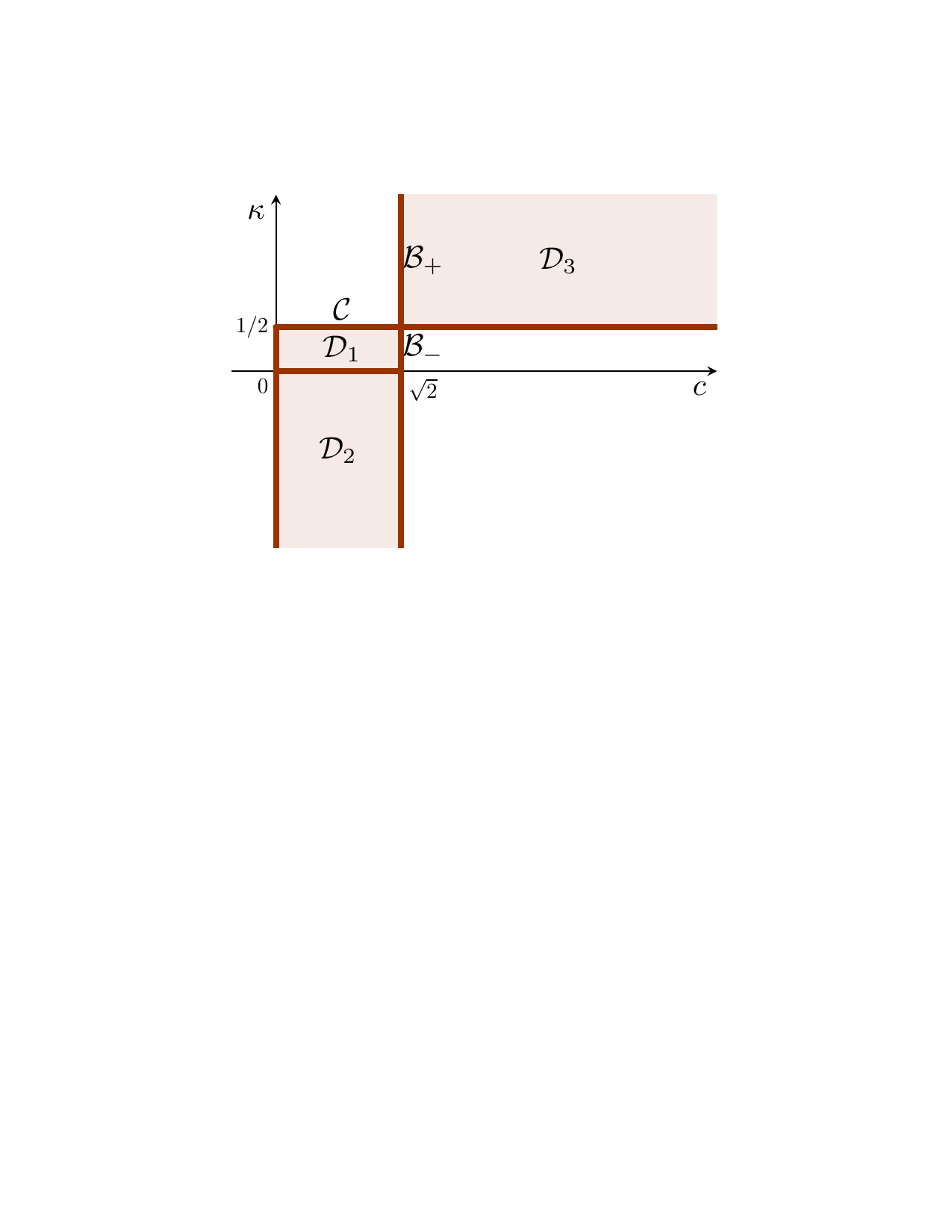}
			\end{overpic}
\caption{Sets of parameters to classify the solutions of \eqref{TWc}.}
\label{fig:domains}
\end{figure}
Our main classification result shows the existence of multiple branches of localized traveling waves indexed by $\ka$ and $c$, that can coexist at fixed parameters, showing the nonuniqueness of finite energy solutions. Before stating our results, we give some vague definitions used in the literature,  to understand the nature of these different types of solitons $u$ for \eqref{TWc}. The adjective {\em dark} refers to the nonzero constant background condition $\abs{u(x)}\to 1$, as $\abs{x}\to\infty$. Moreover, $u$ is a:
\begin{enumerate}[topsep=1pt,itemsep=-3pt,partopsep=1ex,parsep=1ex]
\label{def:solitontypes}
\item  \emph{dark soliton} if $u$ is smooth and $\abs{u}^2$ has a  localized dip.
\item \emph{antidark soliton} if $u$ is smooth and  $\abs{u}^2$ has a localized bump.
\item \emph{black soliton} if $u$ is smooth and $u$ vanishes at some point.
\item \emph{dark cuspon} if $u$  is continuous and has a localized cusped dip, $u'$ is unbounded at the cusp but is smooth elsewhere.
\item \emph{antidark cuspon} if $u$  is continuous and has a localized cusped bump,  $u'$ is unbounded at the bump but is smooth elsewhere.
\item \emph{compacton} if $u$  is continuous and $1-\abs{u}^2$  is compactly supported.
\item \emph{composite wave} if $u$ is continuous, 
built using pieces of the solitons mentioned above at different intervals, and the gluing is not $\boC^2$.
\end{enumerate}
With these formal definitions at hand, we can schematically summarize our results on the existence and uniqueness (up to invariances)
of nontrivial finite energy solution to \eqref{TWc}, in the following table. 
\begin{center}
 \begin{tabular}{||l|l||}
    \hline    
      In $\D_1$& There exist a unique \emph{dark soliton}, a unique \emph{antidark cuspon},\\
      &and \emph{composite waves}.\\
   \hline
   In $\D_2$& There exists a unique \emph{dark soliton}. \\
    \hline
      In  $\D_3$& There exist a unique \emph{antidark soliton}, a unique \emph{dark cuspon},\\
      &and \emph{composite waves}. \\
    \hline
       In $\B_-$& There exist a unique \emph{antidark cuspon}, and  \emph{composite waves}. \\    
        \hline
       In $\B_+$& There exist a unique \emph{dark cuspon},  and \emph{composite waves}. \\ 
           \hline
In $\boC$& There exist \emph{composite waves} built from  \emph{compactons}. \\
       \hline
       Elsewhere & There is no nontrivial finite energy traveling wave. \\
        \hline        
    \end{tabular}
\end{center}
To give explicit formulas for the dark and antidark soliton solutions to \eqref{TWc}, which we will denote by $u_{c,\ka}$, 
we introduce the following intervals
 \begin{align*}
 \boI_c=(0,1-c^2/2], \quad  \boI_c^\circ=(0,1-c^2/2) \quad \text{ if } c<\sqrt{2},\\
 \boJ_c=[1-c^2/2,0), \quad 	\boJ_c^\circ=(1-c^2/2,0)\quad\text{ if } c>\sqrt{2}.
 \end{align*}
Indeed, these intervals will correspond to the image set of $\eta_{c,\ka}:=1-\abs{u_{c,\ka}}^2$.
Then, we will check that the following functions are well-defined depending on the set of parameters \eqref{def:D1}--\eqref{def:D3} (see 	Lemmas~\ref{lem:etafunctions} and \ref{lem:etafunctions2}).
(i) For $(c,\ka)\in \D_1$, we set $F_{c,\ka}: \boI_c\to \R$ the function defined, for all $y\in \boI_c$, by
	 	\begin{align}\label{eq:FD1}
	 		F_{c,\ka}(y)=2\sqrt{\ka}\atan\Big(\sqrt{\ka}\sqrt{\frac{ 2-c^2-2y}{1-2 \ka +2\ka y}}\Big)+2\sqrt{\frac{1-2\ka}{2-c^2}}\atanh\Big(\sqrt{\frac{(1-2 \ka) (2-c^2-2y)}{(2-c^2)(1-2\ka+2\ka y)}}\Big).
	 	\end{align}
(ii)
	 	For $(c,\ka)\in \D_2$, we set $G_{c,\ka}: \boI_c\to \R$ the function defined, for all $y\in \boI_c$, by
	 	\begin{equation}\label{eq:FD2}
	 		G_{c,\ka}(y)=-2\sqrt{-\ka}\atanh\Big(\sqrt{-\ka}\sqrt{\frac{ 2-c^2-2y}{1-2 \ka +2\ka y}}\Big)
	 		+ 2\sqrt{\frac{1-2\ka}{2-c^2}}\atanh\Big(\sqrt{\frac{(1-2 \ka) (2-c^2-2y)}{(2-c^2)(1-2 \ka +2\ka y)}}\Big).
	 	\end{equation}
(iii)
	 	For $(c,\ka)\in \D_3$, we set $H_{c,\ka}: \boJ_c\to \R$ the function defined, for all $y\in \boJ_c$, by
	 	\begin{equation}\label{eq:FD3}
	 		H_{c,\ka}(y)=	F_{c,\ka}(y).
	 	\end{equation}
 We will prove that the functions \eqref{eq:FD1}--\eqref{eq:FD3} are injective and take their values in $(0,\infty)$.  Thus, we can define their inverse  as follows
	 \begin{equation}
	 	\label{def:inverses}
	 	\boF_{c,\ka}=F^{-1}_{c,\ka}, \text{ for }  (c,\ka)\in \D_1;  
	 	\
	 	\boG_{c,\ka}=G^{-1}_{c,\ka}, \text{ for }  (c,\ka)\in \D_2;\
	 	\boH_{c,\ka}=H^{-1}_{c,\ka}, \text{ for }  (c,\ka)\in \D_3. 
	 \end{equation}
 Also, we can extend $\boF_{c,\ka},$ $\boG_{c,\ka}$ and $\boH_{c,\ka}$ to even $\boC^\infty$-functions on $\R$, which we still denote by $\boF_{c,\ka},$ $\boG_{c,\ka}$ and $\boH_{c,\ka}$. 
 
 We can now state our first classification result for {\em smooth} solutions. 
 Using the sets defined in \eqref{def:D1}--\eqref{def:D3}, we first show that the only finite energy smooth solution to \eqref{TWc} in $\boD^c$ are the trivial ones, i.e.\ the constants of modulus one. Then, for $(c,\kappa)\in \boD$,
 the description of dark solitons is explicitly provided in terms of $\boF_{c,\ka}$ and $\boG_{c,\ka}$, while antidark solitons are described by $\boH_{c,\ka}$. Moreover, we establish their uniqueness among smooth solutions, up to invariances, i.e.\ up to a translation and a phase shift.
\begin{theorem}
\label{thm:classiftwregu}
\begin{enumerate}[leftmargin=4.5ex,topsep=1pt,itemsep=-3pt,partopsep=0ex,parsep=1ex]
\item\label{thm:item:regunoexi} If $(c,\ka)\notin\D$ and $u_{c,\ka}\in \boX(\R)\cap\boC^2(\R)$ is a solution to \eqref{TWc}, then  there exists $\phi\in\R$ such that $u_{c,\ka}(x)=e^{i\phi}$, for all $x\in\R$.
\item\label{thm:item:reguexi}  For $(c,\ka)\in\D$, we set
\begin{equation}
\label{def:eta:c}
\eta_{c,\ka}=\boF_{c,\ka},  \text{ if }  (c,\ka)\in \D_1;\  
\eta_{c,\ka}=\boG_{c,\ka}, \text{ if }  (c,\ka)\in \D_2;\ 
\eta_{c,\ka}=\boH_{c,\ka}, \text{ if }  (c,\ka)\in \D_3. 
\end{equation}	
If $c>0$, then $\eta_{c,\ka}<1$ on $\R$, and   $u_{c,\kappa}=\rho_{c,\kappa} e^{i\theta_{c,\kappa}}$ 
is the unique nontrivial  solution to \eqref{TWc} in $\boC^2(\R)\cap\boX(\R)$, up to invariances,  where 
\begin{equation}\label{formule:soli}
\rho_{c,\kappa}(x)=\sqrt{1-\eta_{c,\kappa}},\quad  \text{ and }\quad  \theta_{c,\kappa}(x)=\frac{c}2\int_0^x\frac{\eta_{c,\kappa}(y)}{1-\eta_{c,\kappa}(y)}dy.
\end{equation}
If $c=0$, then the real-valued odd function
	\begin{align}
 \label{black-soliton}
u_{0,\ka}(x)=\pm\sqrt{1-\eta_{0,\ka}(x)}, \quad \text{ for all } \pm x\geq0,
	\end{align}
 is the unique nontrivial  solution to \eqref{TWc} in $\boC^2(\R)\cap\boX(\R)$, up to invariances.
\end{enumerate}
In addition, for any $(c,\ka)\in \D$, $u_{c,\kappa}$ belongs to $\boC^\infty(\R)$,
and $\eta_{c,\ka}$ is even,  reaching a  global extremum at the origin with 
\begin{equation}
\label{maximum:value}
    \eta_{c,\kappa}(0)=1-c^2/2.
\end{equation}
 Moreover,  we have $\eta_{c,\kappa}>0$ and $\eta_{c,\kappa}'<0$ on $(0,\infty)$ if $(c,\ka)\in \D_1\cup \D_2$, while  $\eta_{c,\kappa}<0$ and $\eta_{c,\kappa}'>0$ on $(0,\infty)$ if $(c,\ka)\in \D_3$. Finally, there exist constants $A,C>0$ such that for all $k\in\N$, the following exponential decay holds
\begin{align}\label{eq:decaydark}
	\abs{D^k u_{c,\ka}'(x)}+\abs{D^k\eta_{c,\ka}(x)}\leq Ae^{-C\abs{x}},\quad\text{ for all } x\in\R.
\end{align}
\end{theorem}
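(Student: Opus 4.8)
The plan is to reduce \eqref{TWc} to the first-order autonomous ODE \eqref{EDO:intro} for the intensity profile $\eta=1-\abs{u}^2$ and to extract every assertion from it. Since a $\boC^2$ weak solution solves \eqref{TWc} classically, I would first produce two first integrals. Taking the imaginary part of the product of \eqref{TWc} with $\overline u$ turns the equation into the exact derivative $\frac{\d}{\d x}\big(\Im(\overline u u')-\frac c2\eta\big)=0$; fixing the constant by the decay of $u'$ and $\eta$ along a sequence at infinity (available because $u\in\boX(\R)$) gives the flux relation $\Im(\overline u u')=\frac c2\eta$. Pairing \eqref{TWc} with $\overline{u'}$ and taking the real part gives a second exact derivative, whence $\frac12\abs{u'}^2-\frac14\eta^2-\frac\ka4(\eta')^2\equiv0$. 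Using $\Re(\overline u u')=-\frac12\eta'$ together with the identity $\abs{u}^2\abs{u'}^2=\Re(\overline u u')^2+\Im(\overline u u')^2$, I get $\abs{u'}^2(1-\eta)=\frac14\big((\eta')^2+c^2\eta^2\big)$, and substituting this into the second first integral yields exactly \eqref{EDO:intro} wherever $u\neq0$. Evaluating \eqref{EDO:intro} at a hypothetical zero of $u$ (where $\eta=1$) gives $(\eta')^2=-c^2$, so for $c>0$ the solution never vanishes, $\eta<1$ on $\R$, the lifting of Lemma~\ref{lem:phaseregu} is available, and the flux relation integrates to the phase formula in \eqref{formule:soli}.

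For part~\ref{thm:item:regunoexi}, assume $(c,\ka)\notin\D$, i.e.\ $(2-c^2)(1-2\ka)\le0$ by the discussion around \eqref{cond:c-ka}, and suppose $\eta\not\equiv0$. Being continuous with $\eta\to0$ at $\pm\infty$, $\eta$ attains a nonzero extremum at a finite point $x_e$, where $\eta'(x_e)=0$ forces $\eta(x_e)=1-c^2/2$ through \eqref{EDO:intro}; this already excludes $c=\sqrt2$. For $(2-c^2)(1-2\ka)<0$, the right-hand side of \eqref{EDO:intro} divided by $\eta^2$ has the sign of $(2-c^2)/(1-2\ka)<0$ when $\eta$ is small, so $(\eta')^2<0$ at points where $\eta$ is small and nonzero, which is impossible since $\eta$ takes arbitrarily small nonzero values near infinity. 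On the remaining degenerate line $\ka=1/2$, \eqref{EDO:intro} reduces on $\{\eta\neq0\}$ to $(\eta')^2=\eta(2-c^2-2\eta)$, hence $\eta''=\frac{2-c^2}2-2\eta$; a nontrivial $\boC^2$ profile decaying to $0$ would then satisfy $\eta''\to\frac{2-c^2}2\neq0$ while $\eta'\to0$, a contradiction. Thus $\eta\equiv0$ in all cases, and the flux relation gives a constant phase, so $u\equiv e^{i\phi}$.

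For part~\ref{thm:item:reguexi} with $(c,\ka)\in\D$, equation \eqref{EDO:intro} is separable and its integration yields exactly the functions $F_{c,\ka},G_{c,\ka},H_{c,\ka}$ of \eqref{eq:FD1}--\eqref{eq:FD3} (Lemmas~\ref{lem:etafunctions}--\ref{lem:etafunctions2}); inverting them defines the even profile $\eta_{c,\ka}$, which attains its unique extremum $\eta_{c,\ka}(0)=1-c^2/2$ and decays to $0$, with the sign and monotonicity statements following from $1-c^2/2>0$ on $\D_1\cup\D_2$ and $<0$ on $\D_3$. The decisive point is regularity: differentiating \eqref{EDO:intro} gives the second-order equation $(1-2\ka+2\ka\eta)\eta''+\ka(\eta')^2=(2-c^2)\eta-3\eta^2$, whose leading coefficient $1-2\ka+2\ka\eta$ stays away from $0$ on the range $\boI_c$ (resp.\ $\boJ_c$) because its endpoint values $1-2\ka$ and $1-\ka c^2$ have a common sign on each of $\D_1,\D_2,\D_3$; one can therefore solve for $\eta''$ and bootstrap to $\eta_{c,\ka}\in\boC^\infty$, whence $\rho_{c,\ka}=\sqrt{1-\eta_{c,\ka}}$ and $\theta_{c,\ka}$ are smooth. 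The same second-order equation yields uniqueness: any nontrivial solution has, after a translation, the Cauchy datum $(\eta(0),\eta'(0))=(1-c^2/2,0)$, which is regular because $1-\ka c^2\neq0$, so by Cauchy--Lipschitz it coincides with $\eta_{c,\ka}$, the phase being then fixed up to the phase-shift invariance. Exponential decay comes from linearizing \eqref{EDO:intro} near $\eta=0$, giving $\eta'\sim-\sqrt{(2-c^2)/(1-2\ka)}\,\eta$ and the rate $C=\sqrt{(2-c^2)/(1-2\ka)}$, with the estimates on all derivatives following by induction from the smooth ODE. For $c=0$ the flux relation forces a constant phase and $\eta_{0,\ka}(0)=1$, so $u$ vanishes at the origin; keeping a fixed sign would create a corner, whereas the odd choice \eqref{black-soliton} makes $u$ smooth across its simple zero and is the unique such solution.

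The main obstacle is this smoothness-and-uniqueness step. One must check two nondegeneracy facts: that the leading coefficient $1-2\ka+2\ka\eta$ does not vanish along the orbit---exactly what fails on $\B_-$, $\B_+$ and $\boC$, where the corresponding cusps and compactons appear---and that the turning point $\eta=1-c^2/2$, a simple zero of the right-hand side of \eqref{EDO:intro}, is an order-two extremum, so that the square-root branch of the separated equation reconstructs a genuine $\boC^2$ solution and the Cauchy datum there is regular. These two facts, encoded in the strict sign of $1-2\ka+2\ka\eta$ on $\boI_c$, $\boJ_c$ and in $1-\ka c^2\neq0$, are precisely what distinguishes the smooth branches of this theorem from the exotic non-smooth waves analysed elsewhere.
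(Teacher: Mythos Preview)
Your strategy matches the paper's: derive the two first integrals for $\eta$ (Proposition~\ref{prop:eqeta}), rule out $(c,\ka)\notin\D$ by sign analysis (Corollary~\ref{coro:crittrivial} and Proposition~\ref{prop:k1/2}), integrate the separable ODE on $\D$ (Lemmas~\ref{lem:etafunctions}--\ref{lem:etafunctions2}, Proposition~\ref{prop:globaleta}), and reconstruct $u$ from $\eta$ (Proposition~\ref{prop:exiu}). The nondegeneracy checks you single out---that $1-2\ka+2\ka\eta$ keeps a fixed sign along the orbit and that $1-\ka c^2\ne0$---are exactly the right ones.

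Two places are under-argued. First, your $\ka=1/2$ argument (``$\eta''\to(2-c^2)/2\ne0$ while $\eta'\to0$'') tacitly assumes $\eta\ne0$ for all large $|x|$; a priori $\eta$ could vanish infinitely often. One must observe that at any zero $x_0$ of $\eta$ the first-order relation forces $\eta'(x_0)=0$, while the linear equation $\eta''=(2-c^2)/2-2\eta$ (valid by continuity of $\eta''$) gives $\eta''(x_0)=(2-c^2)/2\ne0$, so $\eta$ cannot remain zero on either side and the solution is globally the periodic profile $\frac{2-c^2}{2}\cos^2(x/\sqrt2)$, hence not in $L^2$. The paper makes this explicit in Proposition~\ref{prop:k1/2}.

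Second, and more seriously, your $c=0$ treatment is a sketch. The claim ``the odd choice makes $u$ smooth'' requires proof: you must show that $x\mapsto\operatorname{sign}(x)\sqrt{1-\eta_{0,\ka}(x)}$ is $\boC^2$ at the origin, which is not automatic from $\eta_{0,\ka}\in\boC^\infty$ with $\eta_{0,\ka}(0)=1$. The paper handles this by constructing, via Cauchy--Lipschitz, the local real solution $\tilde u$ of $(1-2\ka\tilde u^2)\tilde u''+\tilde u(1-\tilde u^2-2\ka(\tilde u')^2)=0$ with $\tilde u(0)=0$, $\tilde u'(0)=1/\sqrt2$ (the value dictated by \eqref{eq:quadratic}), checking that $1-\tilde u^2$ satisfies the same second-order ODE as $\eta_{0,\ka}$ with the same Cauchy data, and concluding $\tilde u=u_{0,\ka}$. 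Your uniqueness claim is likewise unjustified: ``the flux relation forces a constant phase'' only yields $\theta'=0$ on each connected component of $\{u\ne0\}$, so you have not excluded solutions that fail to be real up to a \emph{global} phase. The paper closes this gap by using \eqref{eq:quadratic} to fix $|u'(0)|^2=1/2$, writing $u'(0)=e^{i\phi}/\sqrt2$, and applying Cauchy--Lipschitz to the full real system \eqref{eq:systtw} (with $c=0$, where $\det A(u)=1-2\ka|u|^2$ stays away from zero since $|u|\le1$ and $\ka<1/2$) to identify $e^{-i\phi}u$ with the odd real solution. Both steps should be supplied.
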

Notice that \eqref{maximum:value} implies that the solutions $u_{c,\kappa}$ in Theorem~\ref{thm:classiftwregu} correspond to dark solitons that do not vanish if $c\neq 0$ (gray solitons) and to black solitons if $c=0$.
\begin{figure}[ht!]
		\begin{tabular}{ccc}
		\resizebox{0.3\textwidth}{!}{
			\begin{overpic}
				[scale=0.5,clip]{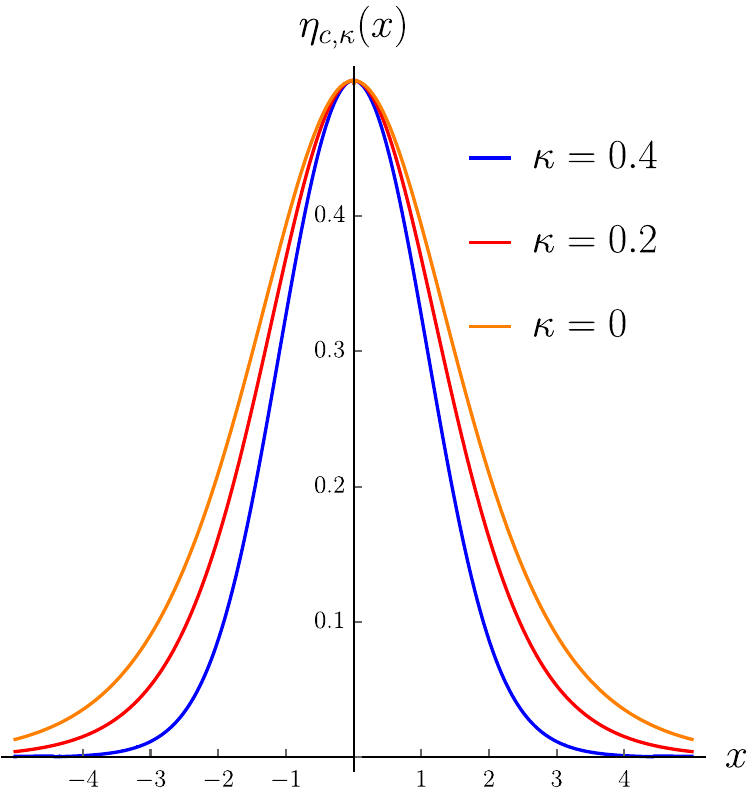}
			\end{overpic}
		}
		&\resizebox{0.3\textwidth}{!}{
			\begin{overpic}
				[scale=0.5,clip]{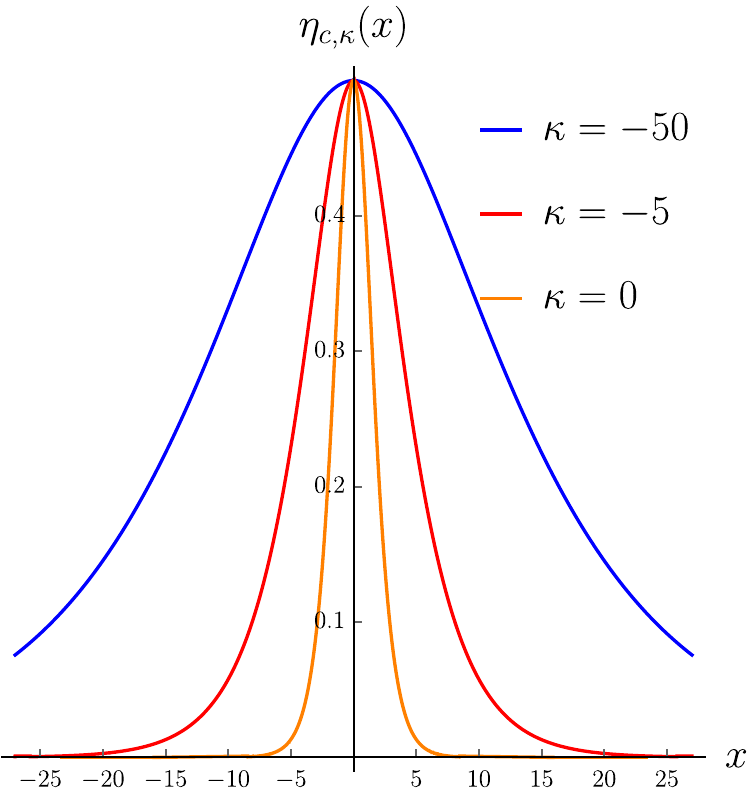}
			\end{overpic}	
		}
		& \resizebox{0.3\textwidth}{!}{
			\begin{overpic}
				[scale=0.5,clip]{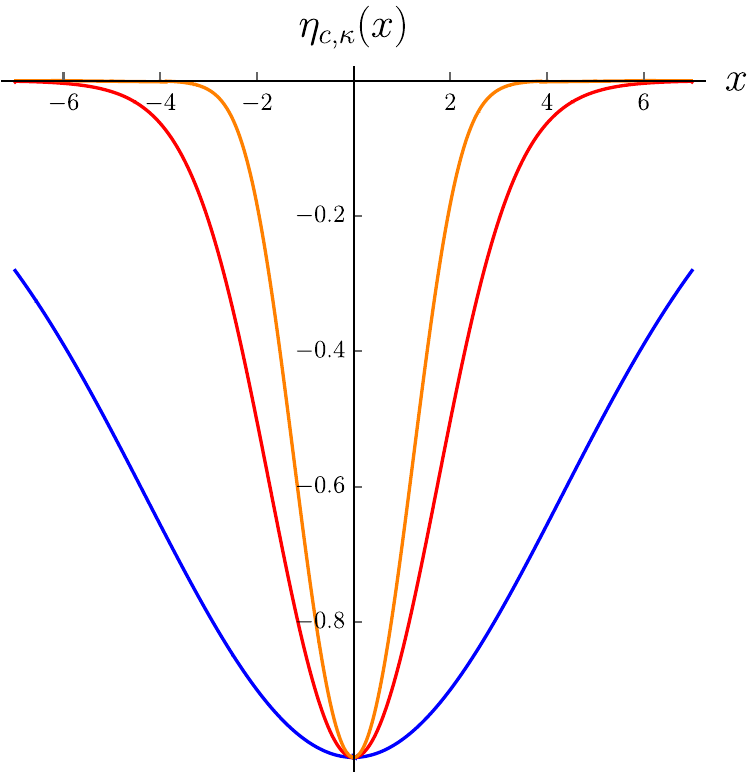}
				\put(10,80){\color{red}\vector(3,0){10}}
				\put(-5,78){\color{red}$\ka=1$}
				\put(72,80){\color{orange}\vector(-3,0){10}}
				\put(73,78){\color{orange}$\ka=0.6$}
				\put(20,9){\color{blue}\vector(2,1){10}}
				\put(3,5){\color{blue}$\ka=5$}
			
			\end{overpic}
		}
	\end{tabular}
\caption{Plot of intensity profiles $\eta_{c,k}$  in Theorem~\ref{thm:classiftwregu} for several values of $(c,\kappa)$. 
The left panel displays profiles for  $(1,0), (1,0.2), (1,0.4)\in\D_1$ associated with dark solitons. 
In the center, profiles for  $(1,-50), (1,-5), (1,0)\in\D_2$ associated also with dark solitons. 
On the right, profiles for  $(2,0.6), (2,1), (2,5)\in\D_2$, associated with antidark solitons. 
}
\label{fig:solitons}
\end{figure}

In Figure \ref{fig:solitons}, we depict the intensity profile $\eta_{c,k}$ given by 
 \eqref{def:eta:c} in Theorem~\ref{thm:classiftwregu}
for several values of $(c,\ka)\in\D$. In the left and center panels, we plotted $\eta_{c,\ka}$ for parameters 
in $\boD_1$ and $\boD_2$, respectively, associated with dark solitons; 
while plots for parameters in $\D_3$, associated with antidark solitons, are shown in the right panel.  The case $\kappa=0$ corresponds to the dark soliton \eqref{sol:1D}, so that the intensity profile is given by 
\[\eta_{c,0}(x)=1-\abs{u_{c,\kappa}}^2=\frac{(2-c^2)}{2}\sech^2\Big(\frac{\sqrt{2-c^2}}{2}x\Big),\quad \text{ for all }x\in\R.\]
From the left and center panel, we deduce that, at a fixed speed, the profile wavelength of the dark soliton narrows when $\kappa$ increases from $-\infty$ to $1/2$. We notice the same effect for the antidark solitons in the right panel when $\kappa$ decreases from  $ \infty$ to $1/2$.

In addition, we show  in Proposition~\ref{prop:reguparam1} smooth dependence of the dark soliton $u_{c,\ka}$,
 with respect to $(c,\ka)$ in $\D$. Consequently, the dark solitons $u_{c,\ka}$ in Theorem~\ref{thm:classiftwregu} converge to the dark soliton \eqref{sol:1D}, 
 as $\ka\to0$, in $H^{s}(\R)$, for all $s\in \N$.  

Notice that the uniqueness stated in Theorem~\ref{thm:classiftwregu} is in the set  $\boX(\R)\cap\boC^2(\R)$. This restriction is due to the existence of finite energy weak solutions that are not of class $\boC^2(\R)$ in the new set of parameters  
$$
	\tilde\D=\D_1\cup   \D_3\cup \B_-\cup\B_+,
$$
and also in $\boC$. 
We argue that these sets are well-suited for studying weak solutions. 
Indeed, the next result shows that there are no solutions outside $\tilde{\D}\,\cup \boC$.
\begin{theorem}\label{thm:non-singular-sol}
 Let $c\geq 0$ and $\kappa\in \R$. Assume that $u_{c,\ka}\in\boX(\R)$ is a solution to \eqref{TWc}.
If $(c,\ka)\notin (\tilde{\D}\,\cup \boC)$, then 
	either $(c,\ka)\in \D_2$ and $u_{c,\ka}\in\boC^2(\R)$ so that it is explicitly given by Theorem~\ref{thm:classiftwregu}, or 
$u_{c,\ka}$ is a constant function of modulus one.
\end{theorem}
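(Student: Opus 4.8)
The plan is to let the sign of $\ka$ dictate the argument, since $\ka$ controls the (non)degeneracy of the principal part of \eqref{TWc}. Throughout, $u\in\boX(\R)$ is a weak solution in the sense of \eqref{TW:weak}, $\eta=1-\abs u^2$, and the finite-energy assumption forces \eqref{nonzero0}, so $\eta(x)\to0$ as $\abs x\to\infty$. I split the complement of $\tilde\D\cup\boC$ according to the sign of $\ka$, noting that on its $\{\ka>0\}$ part one automatically has $(2-c^2)(1-2\ka)<0$: indeed $\ka=1/2$ is excluded by $\boC$, if $0<\ka<1/2$ then being outside $\D_1\cup\B_-$ forces $c>\sqrt2$, and if $\ka>1/2$ then being outside $\D_3\cup\B_+$ forces $c<\sqrt2$.

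For $\ka\le0$, I would prove that every weak solution is smooth. Identifying $\C\simeq\R^2$ and expanding $\eta''=-2\Re(\bar u u'')-2\abs{u'}^2$ in \eqref{TWc}, the equation reads $(I-2\ka\,u u^{T})u''=-icu'-u\eta+2\ka u\abs{u'}^2$, where the symmetric matrix $I-2\ka\,u u^{T}$ has eigenvalues $1-2\ka\abs u^2$ and $1$. When $\ka\le0$ both are $\ge1$, so the principal part is uniformly positive definite \emph{regardless of whether $u$ vanishes}. Inverting it and bootstrapping in \eqref{TW:weak}---from $u\in H^1_\loc(\R)$ one gets $u''\in L^1_\loc(\R)$, hence $u'\in\boC^0$, then $u''\in\boC^0$, and finally $u\in\boC^\infty(\R)$---yields $u\in\boC^2(\R)$. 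I then invoke Theorem~\ref{thm:classiftwregu}: if $c<\sqrt2$, then $(c,\ka)\in\D_2\subset\D$ and $u$ is classified by part~(ii); if $c\ge\sqrt2$, then $(c,\ka)\notin\D$ and part~(i) forces $u\equiv e^{i\phi}$. This settles all parameters in the complement with $\ka\le0$.

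For $\ka>0$, where $(2-c^2)(1-2\ka)<0$, the principal part may degenerate (at $\abs u^2=1/(2\ka)$), so I argue on the intensity through the first integral \eqref{EDO:intro}. Near any point where $\eta$ vanishes, $u$ is non-zero, the lifting $u=\sqrt{1-\eta}\,e^{i\theta}$ applies, the equation is non-degenerate so $u$ is smooth, and $\eta$ satisfies $(1-2\ka+2\ka\eta)(\eta')^2=\eta^2(2-c^2-2\eta)$, the integration constant being $0$ because $\eta$ and $\eta'$ vanish at $+\infty$. The key observation is that for $\eta$ small the coefficient $1-2\ka+2\ka\eta$ is close to $1-2\ka\ne0$ while the right-hand side behaves like $(2-c^2)\eta^2$; since $\tfrac{2-c^2}{1-2\ka}<0$, this forces $(\eta')^2<0$ unless $\eta=0$. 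Hence $\eta\equiv0$ on a neighbourhood of $+\infty$, and a continuation argument propagates it: setting $\xi_0=\inf\{x:\eta\equiv0\text{ on }[x,\infty)\}$, if $\xi_0$ were finite then $\eta(\xi_0)=0$, so $u\ne0$ near $\xi_0$, the first integral holds there with constant $0$ (as $\eta$ vanishes on a subinterval), and the same sign obstruction excludes $\eta\ne0$ just left of $\xi_0$, a contradiction. Therefore $\eta\equiv0$ on $\R$, so $\abs u\equiv1$; the phase relation $\theta'=\tfrac{c\eta}{2(1-\eta)}$ from the imaginary part of \eqref{TWc} then gives $\theta'\equiv0$, and $u$ is a constant of modulus one.

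The main obstacle is the case $\ka>0$. The two delicate points are: justifying the first integral \eqref{EDO:intro} for a merely $\boX(\R)$ weak solution with the correct constant, which I handle by using it only on neighbourhoods of the zero set of $\eta$---there $u\ne0$, the equation is non-degenerate, the earlier derivation of \eqref{EDO:intro} applies, and the constant is pinned to $0$; and the topological continuation upgrading ``$\eta\equiv0$ near $+\infty$'' to ``$\eta\equiv0$ on $\R$'', which must sidestep possible zeros of $u$ and cusps---both of which, in the end, cannot occur once $\eta\equiv0$ is known. By comparison, the case $\ka\le0$ is soft, relying only on positive-definiteness of the principal part and a routine bootstrap.
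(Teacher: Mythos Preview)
Your argument is correct, and hinges on the same sign obstruction as the paper's: for $(c,\ka)\notin\tilde\D\cup\boC$ with $\ka>0$ one has $(2-c^2)/(1-2\ka)<0$, so the first integral $(1-2\ka+2\ka\eta)(\eta')^2=\eta^2(2-c^2-2\eta)$ forces $\eta=0$ wherever $\eta$ is small. The paper, however, organizes this differently. Rather than splitting on the sign of $\ka$, it argues uniformly: assuming $\Gamma(u)\ne\emptyset$ and $\ka\ne1/2$, the singular set is bounded, \eqref{eta1l} holds on $(\gb,\infty)$, and the sign obstruction at a point near $\gb$ yields a contradiction (this is Lemma~\ref{lem:nonexiw}); hence $\Gamma(u)=\emptyset$, Lemma~\ref{lem:omegaregu} gives $u\in\boC^2(\R)$, and Theorem~\ref{thm:classiftwregu} finishes. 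Your $\ka\le0$ branch is identical to this (since $\Gamma(u)=\emptyset$ trivially there), but for $\ka>0$ you bypass the detour through global smoothness and the smooth classification: you prove $\eta\equiv0$ directly by continuation, then read off $\theta'\equiv0$ from \eqref{TW:weaksystpol1}. This is more self-contained but slightly longer; the paper's route is shorter because it reuses the already-proved Theorem~\ref{thm:classiftwregu}(i) instead of re-running the argument. One small point worth making explicit in your continuation step: once $\eta\equiv0$ on $[\xi_0,\infty)$ you have $|u|^2=1\ne1/(2\ka)$ on that whole ray, so the domain of smoothness containing $\xi_0$ actually extends to $+\infty$, and the integration constants in \eqref{eta1} are fixed by the decay at infinity exactly as in Proposition~\ref{prop:eqeta}---this is cleaner than appealing to ``$\eta$ vanishes on a subinterval''.
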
 
We call a weak solution that is not $\boC^2(\R)$, a singular solution. The rest of this subsection is devoted to explaining our results for singular solutions. We show that if $u_{c,\ka}\in\boX(\R)$ is a singular solution, then the function  $|u_{c,\ka}|^2$ must reach the value $1/(2\ka)$ at some point. Thus, we call the singular set of $u_{c,\ka}$ the nonempty set given by 
\begin{align}\label{def:gamma}
\Gamma(u_{c,\ka})=\Big\{x\in\R:\abs{u_{c,\ka}(x)}^2=\frac{1}{2\ka}\Big \}.
\end{align} This condition comes from the following observation:
Given  a  solution $u_{c,\ka}=u_1+i u_2\in\boX(\R)$ to \eqref{TWc}, with $u_1=\Re(u_{c,\ka})$, $u_2=\Im(u_{c,\ka})$, 
and recalling that $u_{c,\ka}$ is continuous (see Lemma~\ref{lem:finiteenergyassum}),
we can show that \eqref{TW:weak} can be recast 
as  a system of two {\em real} equations, satisfied in the weak sense,
\begin{equation}
	\label{eq:systtw}
	A(u_{c,\ka})\begin{pmatrix}
		u_1\\u_2
	\end{pmatrix}''=
	c	\begin{pmatrix}
		u_1\\u_2
	\end{pmatrix}'
	+(\eta_{c,\ka}-2\ka\abs{u_{c,\ka}'}^2)\begin{pmatrix}
		u_2\\
		-u_1
	\end{pmatrix},
	\text{ with } A(u_{c,\ka})=\begin{pmatrix}
		2\ka u_2u_1&-1+2\ka u_2^2\\
		1-2\ka u_1^2&-2\ka u_1u_2
	\end{pmatrix},
\end{equation}
so that 
\begin{equation}
	\label{determinant}
	\det (A(u))=1-2\ka|u_{c,\ka}|^2=1-2\ka+2\ka\eta_{c,\ka}.
\end{equation}
Thus, the set $\Gamma(u_{c,\ka})$ corresponds to the points where $\det(A(u_{c,\ka}))$
vanishes, so that the dispersion part of equation \eqref{eq:systtw} is singular. 
In this manner, as proved in Lemma~\ref{lem:omegaregu}, $u_{c,\kappa}$ is  smooth on  the complement of $\Gamma(u_{c,\ka})$, denoted by 
$\Omega(u_{c,\ka})=\Gamma(u_{c,\ka})^c$. Also, if we define the set of points where
$u_{c,\ka}$ is not differentiable (in the classical sense)
\begin{equation*}
	\boN(u_{c,\ka})=\{x\in\R : u_{c,\ka} \text { is not differentiable in }x \},
\end{equation*}
then $\boN(u_{c,\ka})\subseteq\Gamma(u_{c,\ka})$. We want to focus now on the case  where $\Gamma(u_{c,\ka})$ is not empty and bounded so that we can define the real numbers
\begin{equation}
\label{def:supcrit}
\ga_{c,\ka}=\inf\Gamma(u_{c,\ka})\quad \text{ and }\quad\gb_{c,\ka}=\sup\Gamma(u_{c,\ka}).
\end{equation}		
For instance, by condition \eqref{nonzero0}, the set  $\Gamma(u_{c,\ka})$ is bounded for any singular solution in the case $\kappa\neq 1/2$. To give explicit formulas for singular solutions,  we use the same approach as in Theorem~\ref{thm:classiftwregu}, introducing
the  intervals 
 \begin{align*}
 	\mathfrak{T}_\ka&=[ 1-1/(2\ka),0),\quad\mathfrak{T}_\ka^\circ=(1-1/(2\ka),0),\quad\text{ if }\kappa\in (0,1/2),\\
 	\mathfrak{J}_\ka&=(0,1-1/(2\ka)],\quad	\mathfrak{J}_\ka^\circ=(0,1-1/(2\ka)),\quad\text{ if }\ka\in (1/2, \infty),
 \end{align*}
and the following functions (see Lemma~\ref{lem:etawfunctions}).\\
 (i) For $(c,\ka)\in \D_1$, we set $f_{c,\ka}:\mathfrak{T}_\ka\rightarrow\R$ the function defined, for all $y\in\mathfrak{T}_\ka$, by
\begin{align}\label{eq:FD1W}
f_{c,\ka}(y)=2\sqrt{\ka}\atan\Big(\sqrt{\ka}\sqrt{\frac{ 2-c^2-2y}{1-2 \ka +2\ka y}}\Big)+2\sqrt{\frac{1-2\ka}{2-c^2}}\atanh\Big(\sqrt{\frac{(2-c^2)(1-2\ka+2\ka y)}{(1-2 \ka) (2-c^2-2y)}}\Big)-\pi\sqrt{\ka}.
 \end{align}
(ii) For $(c,\ka)\in \D_3$, we set $h_{c,\ka}:\mathfrak{J}_\ka\rightarrow\R$ the function defined, for all $y\in\mathfrak{J}_\ka$, by
 \begin{align}\label{eq:FD3W}
h_{c,\ka}(y)=f_{c,\ka}(y).
 \end{align}  
 (iii) For $(\sqrt{2},\ka)\in \B_-$, we set $g_\ka:\mathfrak{T}_\ka\rightarrow\R$ the function defined, for all $y\in\mathfrak{T}_\ka$, by
 \begin{align}\label{eq:FD4W}
 	g_\ka(y)=2\sqrt{\ka}\atan(\sqrt{\frac{-2\ka y}{1-2\ka+2\ka y}})+\sqrt{2}\sqrt{-\frac{1-2\ka+2\ka y}{y}}-\pi\sqrt{\ka},
 \end{align}
 (iv) For $(\sqrt{2},\ka)\in \B_+$, we set $\tilde{g}_\ka:\mathfrak{J}_\ka\rightarrow\R$ the function defined, for all $y\in\mathfrak{J}_\ka$, by
\begin{align}
\label{eq:FD5W}
	\tilde{g}_\ka(y)=g_\ka(y).
\end{align}
By Lemma~\ref{lem:etawfunctions}, the functions  \eqref{eq:FD1W}--\eqref{eq:FD5W} are injective and positive-valued, and their inverse functions 
$f_{c,\ka}^{-1}$,		$h_{c,\ka}^{-1}$, 		$g_{\ka}^{-1}$, $\tilde{g}_{\ka}^{-1}$
are well-defined on $(0, \infty)$. Notice that the functions $f_{c,\kappa}^{-1}$ and $g_{\kappa}^{-1}$ will describe antidark cuspons, while $h_{c,\kappa}^{-1}$ and $\tilde{g}_{\kappa}^{-1}$ will describe dark cuspons (see Figure~\ref{fig:cuspons}). Let $(c,\ka)\in\Tilde{\D}$ and $u_{c,\ka}\in\boX(\R)$ be a singular solution, so that $\ga_{c,\ka}$ and $\gb_{c,\ka}$ given by \eqref{def:supcrit} are reals. The next result establishes that $u_{c,\ka}$
is smooth outside the interval $[\ga_{c,\ka},\gb_{c,\ka}]$, that 
$\ga_{c,\ka}$ and $\gb_{c,\ka}$ belong to $\boN(u_{c,\ka})$,
and that $u_{c,\ka}$ is explicitly given by one of the functions in \eqref{eq:FD1W}--\eqref{eq:FD5W}, i.e.\ that it has a dark or antidark cuspon profile on $\R\setminus [\ga_{c,\ka},\gb_{c,\ka}]$.
\begin{theorem}
 \label{thm:classifcuspon}
 Let $(c,\kappa)\in \tilde{\D}$, if $u_{c,\ka}\in\boX(\R)$ is a singular solution for \eqref{TWc} so that $u_{c,\ka}$ is nontrivial and the critical set $\Gamma(u_{c,\ka})$ is not empty and bounded. 
 Let $\ga=\ga_{c,\ka}$ and $\gb=\gb_{c,\ka}$ be defined as in \eqref{def:supcrit} and assume without loss of generality that $\gb=0$. Then $\eta_{c,\ka}=1-\abs{u_{c,\ka}}^2$ satisfies, for all $x\geq 0$,
	  \begin{equation}\label{eq:etaw}
 \begin{aligned} 
		\eta_{c,\ka}(x)=f_{c,\ka}^{-1}(x),\ \text{ if }(c,\ka)\in \D_1,\quad
				\eta_{c,\ka}(x)=h_{c,\ka}^{-1}(x),\ \text{ if }(c,\ka)\in \D_3,\\
				\eta_{c,\ka}(x)=g_{\ka}^{-1}(x),\ \text{ if }(c,\ka)\in \boB_-,\quad
				\eta_{c,\ka}(x)=\tilde{g}_{\ka}^{-1}(x),\ \text{ if }(c,\ka)\in \boB_+.\\
		\end{aligned}
		\end{equation}
For all $x\leq\ga$, $\eta_{c,\ka}$ is obtained by reflection as $\eta_{c,\ka}(x)=\eta_{c,\ka}(\ga-x)$.
Additionally, for all $x\in(-\infty,\ga)\cup(0, \infty)$, we have 
$$1-\frac{1}{2\ka}<\eta_{c,\ka}(x)<0,\text{ if }(c,\ka)\in\D_1\cup\B_-,\quad\text{ whereas }\quad 0<\eta_{c,\ka}(x)<1-\frac{1}{2\ka},\text{ if }(c,\ka)\in\D_3\cup\B_+,$$
and 
$u_{c,\ka}$ is explicitly given by
\begin{align}\label{formule:cuspon}
    u_{c,\ka}(x)=\sqrt{1-\eta_{c,\ka}(x)}e^{i\theta_{c,\ka}(x)},\quad\text{ with }\quad\theta_{c,\ka}'(x)=c\eta_{c,\ka}(x)/(2-2\eta_{c,\ka}(x)).
\end{align}
Also, 
 $u_{c,\ka}\in\boC^\infty((-\infty,\ga)\cup(0, \infty)))\cap\boC(\R)$. 
 Moreover, 
    $\eta_{c,\ka}'(\ga^-)=-\infty$ and 
  $\eta_{c,\ka}'(0^+)= \infty,$ if $(c,\ka)\in\D_1\cup\B_-,$ while 	$\eta_{c,\ka}'(\ga^-)= \infty$ and 
    $\eta_{c,\ka}'(0^+)=-\infty,$
     if $(c,\ka)\in\D_3\cup\B_+.$	
 
 Finally $u_{c,\ka}$ satisfies the following decay estimates: for all $j\in\N$, there exist $C_1,C_2,C_3>0$ such that, for all $ x\geq 1,$
 $\abs{D^j u_{c,\ka}'(x)}+\abs{D^j\eta_{c,\ka}(x)}\leq C_1e^{-C_2x}$, if $(c,\ka)\in \D_1\cup \D_3$, and 
 $ \abs{ D^j u_{c,\ka}'(x)}+\abs{D^j\eta_{c,\ka}(x)}\leq C_3 x^{-(2+j)}$, 
 if $(c,\ka)\in \boB_+\cup \boB_-$.
\end{theorem}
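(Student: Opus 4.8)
The plan is to reduce everything to the scalar first-order ODE \eqref{EDO:intro} for the intensity profile $\eta=\eta_{c,\ka}$ and to analyze it branch by branch on the connected components of $\Omega(u_{c,\ka})=\Gamma(u_{c,\ka})^c$. First I would record that, since $\Gamma(u_{c,\ka})$ is bounded with $\ga=\inf\Gamma(u_{c,\ka})$ and $\gb=\sup\Gamma(u_{c,\ka})=0$, we have $(-\infty,\ga)\cup(0,\infty)\subseteq\Omega(u_{c,\ka})$, so by Lemma~\ref{lem:omegaregu} the solution $u_{c,\ka}$ is $\boC^\infty$ there, while continuity on all of $\R$ is already guaranteed by Lemma~\ref{lem:finiteenergyassum}. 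On each such component $u_{c,\ka}$ is a classical solution of \eqref{TWc}, and writing $u_{c,\ka}=\sqrt{1-\eta}\,e^{i\theta}$ (valid where $|u_{c,\ka}|>0$) the imaginary part of the equation integrates to the phase relation $\theta'=c\eta/(2-2\eta)$ of \eqref{formule:cuspon}; substituting back into the real part and using $\eta\to0,\ \eta'\to0$ at infinity to fix the integration constant yields \eqref{EDO:intro}, that is
\[
(1-2\ka+2\ka\eta)(\eta')^2=\eta^2(2-c^2-2\eta).
\]

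Next I would solve this separable equation on the right component $(0,\infty)$. The boundary data are $\eta(x)\to0$ as $x\to\infty$ and, since $0=\gb\in\Gamma(u_{c,\ka})$, the critical value $\eta(0^+)=1-1/(2\ka)$, at which the coefficient $1-2\ka+2\ka\eta$ vanishes. The sign of $\eta'$ and the admissible range of $\eta$ are dictated by positivity of the right-hand side: in $\D_1\cup\B_-$ one is forced into $1-1/(2\ka)<\eta<0$, and in $\D_3\cup\B_+$ into $0<\eta<1-1/(2\ka)$, which also determines whether the profile is an antidark or a dark cuspon. Separating variables and integrating from the critical point produces exactly the primitives $f_{c,\ka},h_{c,\ka},g_\ka,\tilde g_\ka$ of \eqref{eq:FD1W}--\eqref{eq:FD5W}, so $\eta$ on $[0,\infty)$ is their inverse; the additive constant $-\pi\sqrt\ka$ is chosen precisely so that the critical point sits at $x=0$. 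On the left component $(-\infty,\ga)$ the profile satisfies the same autonomous equation with the same endpoint data (value $1-1/(2\ka)$ at $\ga^-$, decay at $-\infty$); since the equation is invariant under $x\mapsto\ga-x$ and the monotone branch is unique by separation of variables, the left piece is the reflection $\eta(x)=\eta(\ga-x)$.

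The delicate analytic points are concentrated at the singular endpoints and at infinity. Near the critical value, $1-2\ka+2\ka\eta\to0$ while the numerator $\eta^2(2-c^2-2\eta)$ stays bounded away from zero (one checks $1/\ka-c^2\neq0$ and $1-1/(2\ka)\neq0$ throughout $\tilde\D$), so $(\eta')^2\to+\infty$; a local expansion of the separable integral gives $\eta-\eta(0)\sim c_0\,x^{2/3}$, hence $\eta'(0^+)=\pm\infty$ with the sign fixed by the monotonicity already determined, yielding the stated behavior of $\eta'(\ga^-),\eta'(0^+)$ and placing $\ga,0$ in $\boN(u_{c,\ka})$. This cusp exponent $2/3$ is exactly what keeps $\eta'\in L^2$ near the endpoints, since $(\eta')^2\sim x^{-2/3}$ is integrable, so $u_{c,\ka}$ stays in $\boX(\R)$ and the two smooth tails glue into a genuine weak solution of \eqref{TW:weak}. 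For the decay I would linearize \eqref{EDO:intro} near $\eta=0$: when $c<\sqrt2$ the balance $(\eta')^2\approx\eta^2(2-c^2)/(1-2\ka)$ gives exponential decay on $\D_1\cup\D_3$, whereas when $c=\sqrt2$ the quadratic term drops and $(\eta')^2\approx-2\eta^3/(1-2\ka)$ forces $\eta\sim x^{-2}$, hence $|D^j\eta|\lesssim x^{-(2+j)}$ on $\B_-\cup\B_+$; the bounds on $D^j u_{c,\ka}'$ then follow from \eqref{formule:cuspon} and the chain rule.

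I expect the main obstacle to be the rigorous matching at the singular endpoints: one must justify passing from the weak formulation \eqref{TW:weak} to the classical ODE on each open tail, show that the blow-up of $\eta'$ is exactly of order $|x|^{-1/3}$ so that both the finite-energy condition and the weak equation survive across $\ga$ and $0$, and verify that no spurious non-reflected continuation is compatible with $u_{c,\ka}\in\boX(\R)$. Establishing uniqueness of the monotone branch together with the reflection symmetry, rather than the formula-chasing for $f_{c,\ka},h_{c,\ka},g_\ka,\tilde g_\ka$, is where the real work lies.
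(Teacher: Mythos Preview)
Your approach is essentially the same as the paper's: reduce to the scalar ODE \eqref{EDO:intro} on each tail via Lemma~\ref{lem:omegaregu} and the computations of Proposition~\ref{prop:eqeta}, establish sign and monotonicity of $\eta$ on $(\gb,\infty)$ (the paper packages this as Lemma~\ref{lem:propetaw}), separate variables against the primitives $f_{c,\ka},h_{c,\ka},g_\ka,\tilde g_\ka$ of Lemma~\ref{lem:etawfunctions}, and obtain the left tail by reflection. Your endpoint analysis $\eta-\eta(0)\sim x^{2/3}$, $(\eta')^2\sim x^{-2/3}$ is exactly what the paper does via the change of variable $y=\eta(x)$ in \eqref{eq:intcriti}, and your decay dichotomy is the content of Lemma~\ref{lem:propetaw}.

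One point of clarification: your final paragraph overstates the obstacle. Theorem~\ref{thm:classifcuspon} takes as \emph{given} a weak solution $u_{c,\ka}\in\boX(\R)$ and only describes it on the outer tails $(-\infty,\ga)\cup(0,\infty)$; nothing is asserted about the interval $[\ga,0]$, and no gluing across the singular points is required here. The issue of whether two cuspon tails can be glued into a global weak solution is deferred to Corollary~\ref{cor:cuspon} and handled by Lemma~\ref{lem:buildw}. So the ``real work'' you identify---verifying the weak equation survives across $\ga$ and $0$---is not part of this theorem; what matters here is just that the given $u_{c,\ka}$ has $\eta'\in L^2$ near the endpoints, which your $x^{-1/3}$ cusp exponent (equivalently, the paper's integrability of $s\mapsto s^{-1/2}$) already ensures, so no contradiction with $u_{c,\ka}\in\boX(\R)$ arises.
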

 In Figure~\ref{fig:cuspons}, we place ourselves in the case $\ga_{c,\ka}=\gb_{c,\ka}=0$ to illustrate the antidark cuspon intensity profile given by the function $f^{-1}_{c,\ka}$, and the dark cuspon intensity profile given by $h^{-1}_{c,\ka}$.
We show in Lemma~\ref{lem:Z1/2} that if $\ka=1/2$, then $\ga_{c,\ka}=-\infty$ and $\gb_{c,\ka}=\infty$, so that there are infinitely many points in $\Gamma(u_{c,\ka})$.
Therefore, Theorem~\ref{thm:classifcuspon} allows us to complete the analysis if $u_{c,\ka}$ has only one singular point, as explained in the following result. 
\begin{corollary}[Cuspons]
\label{cor:cuspon}
Let $(c,\ka)\in\tilde{\D}$. Assume that $u_{c,\ka}\in\boX(\R)$ is a nontrivial solution for \eqref{TWc} such that $\Gamma(u_{c,\ka})=\{0\}$. 
Then, up to phase shift, the solution  $u_{c,\ka}$ is explicitly given by $u_{c,\kappa}=\sqrt{1-\eta_{c,\ka}}e^{i\theta_{c,\ka}},$
where  $\eta_{c,\ka}$ is the function in \eqref{eq:etaw}, 
and $\theta_{c,\kappa}(x)=\frac{c}2\int_0^x\frac{\eta_{c,\kappa}(y)}{1-\eta_{c,\kappa}(y)}$. In particular $\boN(u_{c,\ka})=\{0\}$.
\end{corollary}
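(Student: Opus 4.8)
The plan is to obtain Corollary~\ref{cor:cuspon} as the degenerate, single-point specialization of Theorem~\ref{thm:classifcuspon}: the entire content is that a one-point singular set leaves no intermediate interval on which the solution is left uncontrolled, so the external description of Theorem~\ref{thm:classifcuspon} becomes a complete description of $u_{c,\ka}$ on all of $\R$.

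First I would record that the hypothesis $\Gamma(u_{c,\ka})=\{0\}$ makes the singular set nonempty and bounded, with $\ga_{c,\ka}=\inf\Gamma(u_{c,\ka})=0$ and $\gb_{c,\ka}=\sup\Gamma(u_{c,\ka})=0$ by \eqref{def:supcrit}. Hence $u_{c,\ka}$ is exactly a nontrivial singular solution to which Theorem~\ref{thm:classifcuspon} applies, and since the singular point already sits at the origin, the normalization $\gb=0$ holds directly, without invoking any translation. This is consistent with the standing assumption $(c,\ka)\in\tilde{\D}$, i.e.\ $\ka\neq1/2$: by Lemma~\ref{lem:Z1/2} the case $\ka=1/2$ would force $\Gamma(u_{c,\ka})$ to be unbounded, so a one-point singular set can only occur away from $\boC$.

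Next I would read off the explicit profile. For $x\geq\gb=0$, Theorem~\ref{thm:classifcuspon} gives $\eta_{c,\ka}$ by the relevant inverse function in \eqref{eq:etaw}, according to whether $(c,\ka)$ lies in $\D_1$, $\D_3$, $\B_-$ or $\B_+$; for $x\leq\ga=0$ the reflection identity $\eta_{c,\ka}(x)=\eta_{c,\ka}(\ga-x)=\eta_{c,\ka}(-x)$ shows $\eta_{c,\ka}$ is even. Because $\ga=\gb=0$, these two half-line formulas already cover $\R\setminus\{0\}$ with no gap, and they glue at the origin by continuity, $u_{c,\ka}\in\boC(\R)$. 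The modulus is then $\sqrt{1-\eta_{c,\ka}}$, and the phase relation \eqref{formule:cuspon}, namely $\theta_{c,\ka}'=c\eta_{c,\ka}/(2-2\eta_{c,\ka})$ on $\R\setminus\{0\}$, integrates to $\theta_{c,\ka}(x)=\tfrac{c}{2}\int_0^x \eta_{c,\ka}(y)/(1-\eta_{c,\ka}(y))\,dy$ once we fix $\theta_{c,\ka}(0)=0$. The additive constant left undetermined by $\theta_{c,\ka}'$ is precisely the phase-shift invariance, which accounts for the qualifier \emph{up to phase shift} in the statement, and yields the claimed form $u_{c,\ka}=\sqrt{1-\eta_{c,\ka}}\,e^{i\theta_{c,\ka}}$.

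Finally, for the identity $\boN(u_{c,\ka})=\{0\}$ I would combine the inclusion $\boN(u_{c,\ka})\subseteq\Gamma(u_{c,\ka})=\{0\}$, established before the statement of Theorem~\ref{thm:classifcuspon}, with the one-sided blow-up from that theorem: since $\eta_{c,\ka}'(0^+)=\pm\infty$, the map $u_{c,\ka}$ fails to be differentiable at the origin, so $0\in\boN(u_{c,\ka})$ and therefore $\boN(u_{c,\ka})=\{0\}$. I do not expect a genuine obstacle, since Theorem~\ref{thm:classifcuspon} carries all the analytic weight; the only point requiring care is the book-keeping that a single singular point collapses the interval $[\ga,\gb]$ to the origin, turning the exterior characterization into a full characterization on $\R$.
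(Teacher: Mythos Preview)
Your argument is correct for the statement as literally written, and its opening matches the paper's: from $\Gamma(u_{c,\ka})=\{0\}$ one gets $\ga=\gb=0$, Theorem~\ref{thm:classifcuspon} then pins down $\eta_{c,\ka}$ and the phase on $\R\setminus\{0\}$, continuity closes the description at the origin, and the inclusion $\boN\subseteq\Gamma$ together with $|\eta'(0^\pm)|=\infty$ yields $\boN(u_{c,\ka})=\{0\}$.

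The paper, however, reads this corollary as also \emph{constructing} the cuspon (cf.\ the sentence preceding the proof, ``which provides a weak solution to \eqref{TWc}'', and the summary table claiming existence of a unique cuspon). Accordingly its proof does more than yours: after identifying the candidate via Theorem~\ref{thm:classifcuspon}, it verifies that the function so defined is genuinely a weak solution on all of $\R$, by invoking the gluing Lemma~\ref{lem:buildw}. Concretely it checks that $\eta_{c,\ka}\in H^1$ near $0$, that $\eta_{c,\ka}'(x)(1-2\ka+2\ka\eta_{c,\ka}(x))\to 0$ as $x\to 0$ (obtained by multiplying \eqref{eta1l} by $1-2\ka+2\ka\eta$ and taking a square root), and that $\theta_{c,\ka}'(x)\to c\ka(1-2\ka)$ as $x\to 0$. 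Your write-up treats the corollary purely as a characterization of a hypothetically given solution, so you do not need these checks; the paper's approach buys the existence half of the classification, at the cost of the additional gluing machinery.
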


\begin{figure}[ht!]
\centering
\begin{tabular}{cc}
			\resizebox{0.4\textwidth}{!}{
			\begin{overpic}
				[scale=0.45,trim=0 0 0 0,clip]{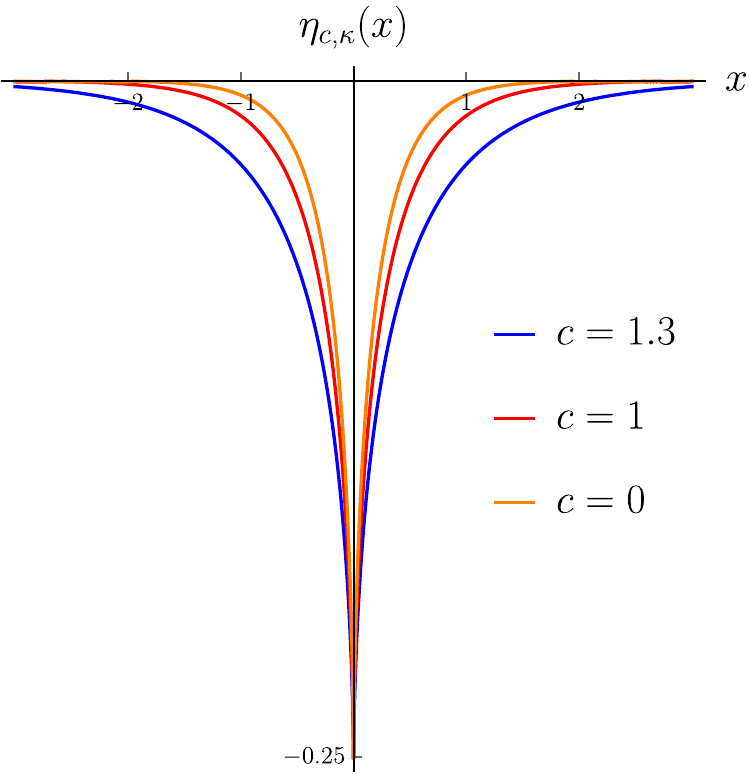}
			\end{overpic}
		}&
  \qquad
		\resizebox{0.4\textwidth}{!}{
			\begin{overpic}
				[scale=0.45,clip]{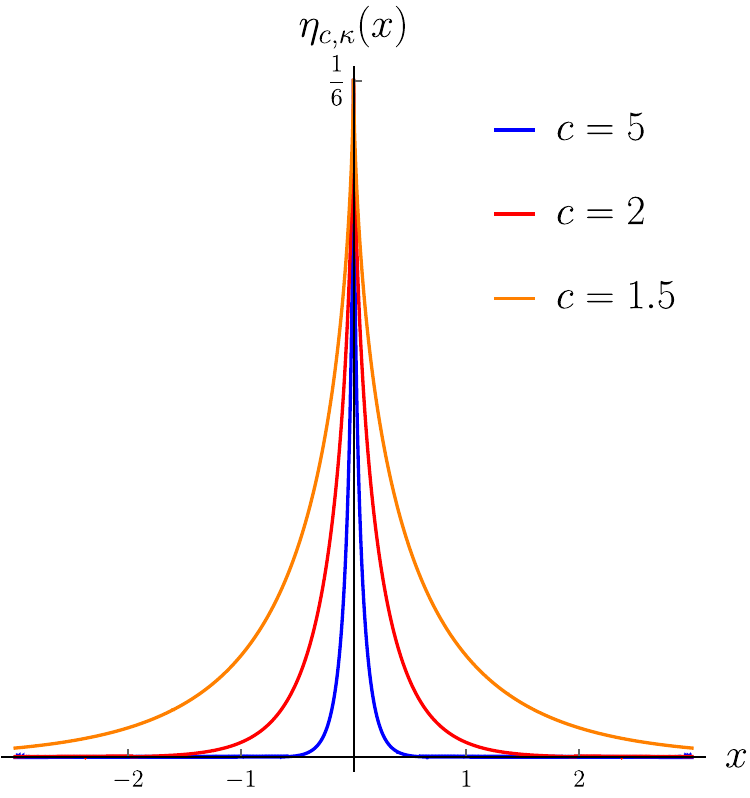}
			\end{overpic}
}
\end{tabular}
  \caption{Plots of the antidark cuspon intensity profile in Corollary~\ref{cor:cuspon}. The left panel displays antidark cuspons for the parameters $(0,0.4)$, $(1,0.4)$ and $(1.3,0.4)$ in  $\D_1$.  On the right, dark cuspons for the parameters $(1.5,0.6)$, $(2,0.6)$ and $(5,0.6)$ in  $\D_3$.}
 \label{fig:cuspons}
\end{figure}
with\subsubsection*{Classification  of solutions with two or more singular points}
We have provided the classification of solutions in the cases $\mathrm{card}(\Gamma(u_{c,\ka}))=0$ and $\mathrm{card}(\Gamma(u_{c,\ka}))=1$. Now we will focus on the case $$\mathrm{card}(\Gamma(u_{c,\ka}))\geq2,$$ 
 i.e.\ when $u_{c,\ka}$ has at least two singular points, and we do not require $\Gamma(u_{c,\ka})$ to be bounded anymore. Between two singular points $a,$ $b\in\Gamma(u_{c,\ka})$, the function $|u_{c,\ka}|^2$ may exhibit one of the following behaviors:
 \begin{enumerate}[leftmargin=4.5ex,topsep=1pt,itemsep=-3pt,partopsep=0ex,parsep=1ex]
     \item\label{intro:item:maxloc} It reaches a non-singular local extremum at some $x_0\in(a,b)$, so that $|u_{c,\ka}(x_0)|^2\ne1/(2\ka)$.
     \item\label{intro:item:constant} It  remains constant so that $|u_{c,\ka}(x)|^2=1/(2\ka)$, for all $x\in(a,b)$.
 \end{enumerate}
Assume that case~\ref{intro:item:maxloc} holds, so that there exists  $x_0\in (a,b)$ belonging  to the set
\begin{align}\label{def:boZ}
	\mathcal{Z}(u_{c,\ka})=\{x\in\R\backslash\Gamma(u_{c,\ka}):x\text{ is a local extremum of }1-\abs{u_{c,\ka}}^2\}.
\end{align}
When $(c,\ka)\in\tilde{\D}$, we know, by Theorem~\ref{thm:classifcuspon}, that $\eta_{c,\ka}$ coincides with a cuspon-like solution outside the bounded set $(\ga_{c,\ka},\gb_{c,\ka})$, so that it is monotonous on $\R\setminus(\ga_{c,\ka},\gb_{c,\ka})$. Therefore, we deduce that $\mathcal{Z}(u_{c,\ka})\subset(\ga_{c,\ka},\gb_{c,\ka})$.
 In conclusion, for every $(c,\ka)\in\tilde{\D},$
  we can define the closest singular points to any $x_0\in\mathcal{Z}(u_{c,\ka})$ as the real numbers
\begin{align}\label{def:ab0}
\ga^0_{c,\ka}=\sup\{x \in [\ga_{c,\ka},x_0) :x\in\Gamma(u_{c,\ka})\},\quad\text{ and }\quad\gb^{0}_{c,\kappa}=\inf\{x \in (x_0,\gb_{c,\ka}]:x\in\Gamma(u_{c,\ka})\}.
\end{align} 
In the case, $\ka=1/2$, by Lemma~\ref{lem:Z1/2}, we have  $\ga_{c,\ka}=-\infty$ and $\gb_{c,\ka}=\infty$,   and we can still define $\ga^0_{c,\ka}$ and $\gb^0_{c,\ka}$ as
above, with the obvious modifications.
 In particular, for any $(c,\ka)\in\tilde{\D}\cap \boC$, by Lemma~\ref{lem:omegaregu}, $u_{c,\ka}$ is smooth on $(\ga^0_{c,\ka},\gb^0_{c,\ka})$, and $\eta'_{c,\ka}(x_0)=0$. Letting $\eta_0=\eta_{c,\ka}(x_0)$, we establish in Lemma~\ref{lem:buleq} that there exist $K_0\geq0$ and $K_1\in\R$ such that $\eta=\eta_{c,\ka}$ satisfies the following ODEs in $(\ga^0_{c,\ka},\gb^0_{c,\ka})$: 
\begin{align}
		\label{eta2bintro}
		2(1-2\ka+2\ka\eta)\eta''+2\ka(\eta')^2&=P(\eta)+(\eta-\eta_0)P'(\eta),\\
		\label{eta1bintro}
		(1-2\ka+2\ka\eta)(\eta')^2&=(\eta-\eta_0)P(\eta),
	\end{align} 
 where 
 \begin{equation}
     \label{def:P}
     P(y)=-2y^2+(2-c^2-2\eta_0)y+(2-c^2)\eta_0-4K_0-4cK_1,\quad\text{ for all } y\in\R.
 \end{equation}
Using \eqref{eta2bintro}--\eqref{eta1bintro}, we establish that for any $x_0\in\mathcal{Z}(u_{c,\ka})$, the function $\eta_{c,\ka}$ must be symmetric on  
$(\ga^0_{c,\ka},\gb^0_{c,\ka})$ with respect to $x_0$, and strictly monotone on $(x_0,\gb^0_{c,\ka})$,
as follows.
\begin{theorem}\label{thm:symmetry}
     Let $(c,\ka)\in\tilde{\D}\cup\boC$. Assume that $u_{c,\ka}\in\boX(\R)$ is a solution to \eqref{TWc} and let $\eta_{c,\ka}=1-\abs{u_{c,\ka}}^2$. Suppose that $\mathrm{card}(\Gamma(u_{c,\ka}))\geq2$ and that there exists  $x_0\in\mathcal{Z}(u_{c,\ka})$, and consider its closest singular points
 $\ga^0_{c,\ka}$ and $\gb^0_{c,\ka}$, as in \eqref{def:ab0}. Then 
$\ga^0_{c,\ka}=2x_0-\gb^0_{c,\ka}$ and $\eta_{c,\ka}(x)=\eta_{c,\ka}(2x_0-x)$, for all $x\in(\ga^0_{c,\ka},x_0)$.
Moreover, if $0<\abs{u_{c,\kappa}(x_0)}^2<1/(2\kappa)$, then $\eta_{c,\ka}'<0$ in $(x_0,\gb^0_{c,\ka})$, while  if 
$\abs{u_{c,\kappa}(x_0)}^2>1/(2\kappa)$, then $\eta_{c,\ka}'>0$ in $(x_0,\gb^0_{c,\ka})$.
\end{theorem}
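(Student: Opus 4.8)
The plan is to exploit the fact that, on the open gap $(\ga^0_{c,\ka},\gb^0_{c,\ka})$ between two consecutive singular points, the profile $\eta_{c,\ka}$ solves the \emph{autonomous} second-order ODE \eqref{eta2bintro} (with first integral \eqref{eta1bintro}), so that reflecting a solution about any of its critical points produces another solution with the same Cauchy data; uniqueness then forces symmetry. First I would record that, by the definition of $\ga^0_{c,\ka}$ and $\gb^0_{c,\ka}$ in \eqref{def:ab0}, the set $\Gamma(u_{c,\ka})$ does not meet $(\ga^0_{c,\ka},\gb^0_{c,\ka})$; hence on this interval $\det(A(u_{c,\ka}))=1-2\ka+2\ka\eta_{c,\ka}$ never vanishes (see \eqref{determinant}), and Lemma~\ref{lem:omegaregu} yields that $u_{c,\ka}$, and thus $\eta:=\eta_{c,\ka}$, is smooth there. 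In particular \eqref{eta2bintro} can be solved for $\eta''$, giving $\eta''=\Phi(\eta,\eta')$ with $\Phi$ smooth on $\{1-2\ka+2\ka\eta\neq0\}$, and $\eta$ carries the Cauchy data $\eta(x_0)=\eta_0$, $\eta'(x_0)=0$ (recall $\eta'(x_0)=0$ since $x_0\in\mathcal{Z}(u_{c,\ka})$).

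Next I would establish the symmetry about $x_0$. Set $\tilde\eta(x)=\eta(2x_0-x)$. Since $(\tilde\eta')^2=(\eta')^2$ and the right-hand side of \eqref{eta2bintro} involves $\eta'$ only through $(\eta')^2$, the function $\Phi$ is even in its second argument, so $\tilde\eta$ solves the same equation $\tilde\eta''=\Phi(\tilde\eta,\tilde\eta')$; moreover $\tilde\eta(x_0)=\eta_0$ and $\tilde\eta'(x_0)=-\eta'(x_0)=0$, so $\tilde\eta$ and $\eta$ share the same Cauchy data at $x_0$. By the Cauchy--Lipschitz uniqueness theorem applied on the gap, $\tilde\eta=\eta$ on their common interval of definition, that is $\eta(x)=\eta(2x_0-x)$.

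To upgrade this to $\ga^0_{c,\ka}=2x_0-\gb^0_{c,\ka}$, I would argue by contradiction. Writing $\eta_\star=1-1/(2\ka)$ for the singular value (so $x\in\Gamma(u_{c,\ka})$ iff $\eta(x)=\eta_\star$), suppose the gap is not symmetric, say $\gb^0_{c,\ka}-x_0<x_0-\ga^0_{c,\ka}$. Then $x^\ast:=2x_0-\gb^0_{c,\ka}$ lies in $(\ga^0_{c,\ka},x_0)$, and letting $x\to(\gb^0_{c,\ka})^-$ in the symmetry relation, together with $\eta(\gb^0_{c,\ka})=\eta_\star$ and the continuity of $\eta$ at the interior point $x^\ast$, gives $\eta(x^\ast)=\eta_\star$, i.e.\ $x^\ast\in\Gamma(u_{c,\ka})\cap(\ga^0_{c,\ka},x_0)$, contradicting that $\ga^0_{c,\ka}$ is the largest singular point below $x_0$. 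The reflected inequality is excluded symmetrically, so $\ga^0_{c,\ka}=2x_0-\gb^0_{c,\ka}$ and $x_0=(\ga^0_{c,\ka}+\gb^0_{c,\ka})/2$; the case $\ka=1/2$ is handled identically, using Lemma~\ref{lem:Z1/2} and the modified definition of $\ga^0_{c,\ka},\gb^0_{c,\ka}$.

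Finally, for the monotonicity I would show $\eta'$ has no zero in $(x_0,\gb^0_{c,\ka})$. If it did, let $x_1$ be the infimum of such zeros; then $\eta$ is strictly monotone on $(x_0,x_1)$ and $\eta'(x_1)=0$. Repeating the reflection argument about $x_1$ shows $\eta'$ is antisymmetric about $x_1$, so it changes sign there, making $x_1$ a genuine local extremum with $x_1\notin\Gamma(u_{c,\ka})$, i.e.\ $x_1\in\mathcal{Z}(u_{c,\ka})$; its closest singular points are again $\ga^0_{c,\ka},\gb^0_{c,\ka}$ (none lies strictly inside the gap), so the midpoint property forces $x_1=(\ga^0_{c,\ka}+\gb^0_{c,\ka})/2=x_0$, contradicting $x_1>x_0$. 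Hence $\eta'$ keeps a constant sign on $(x_0,\gb^0_{c,\ka})$, and since $\eta$ runs continuously from $\eta(x_0)=\eta_0$ to $\eta(\gb^0_{c,\ka})=\eta_\star$, it is decreasing when $\eta_0>\eta_\star$ and increasing when $\eta_0<\eta_\star$; translating through $\eta_0=1-\abs{u_{c,\ka}(x_0)}^2$ and $\eta_\star=1-1/(2\ka)$ yields exactly the two cases according to whether $\abs{u_{c,\ka}(x_0)}^2<1/(2\ka)$ or $\abs{u_{c,\ka}(x_0)}^2>1/(2\ka)$. I expect the main obstacle to be the rigorous passage from the pointwise symmetry to the midpoint identity and the exclusion of further extrema: both push the reflection all the way to the singular endpoints of the gap, where $\eta$ is only continuous and $\eta'$ blows up, so the limiting arguments must be carried out at the level of $\eta$ rather than of the ODE, and the degenerate regime $\ka=1/2$ (with possibly unbounded $\Gamma(u_{c,\ka})$) must be accommodated via Lemma~\ref{lem:Z1/2}.
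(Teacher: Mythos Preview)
Your symmetry and midpoint arguments match the paper's (which defers symmetry to the reflection plus Cauchy--Lipschitz argument of Corollary~\ref{coro:propeta}). For the monotonicity, however, you take a genuinely different and cleaner route. The paper evaluates the first integral \eqref{eta1b} at a hypothetical interior critical point $x_1>x_0$ to obtain $P(\eta(x_1))=0$, then runs a case analysis on the multiplicity and location of the roots of the quadratic $P$, combined with the sign constraint \eqref{eq:eta0alg2}, to reach a contradiction. Your bootstrap---reflect about $x_1$, recognize $x_1$ as a new element of $\mathcal{Z}(u_{c,\ka})$ sharing the same closest singular points $\ga^0_{c,\ka},\gb^0_{c,\ka}$, and reuse the already--proved midpoint identity to force $x_1=x_0$---sidesteps all of this algebra and uses nothing beyond the autonomous nature of \eqref{eta2bintro}. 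The paper's approach, on the other hand, delivers as a byproduct the strict inequality $P<0$ on $[\eta_0,1-1/(2\ka))$ (i.e.\ \eqref{condalgintro}), which is later used as the necessary condition driving the local constructions of Propositions~\ref{prop:exibu}--\ref{prop:compos}; your argument does not produce this directly, though once monotonicity is known it can be read off \eqref{eta1b} a posteriori.

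One small gap to close: when you set $x_1=\inf\{x\in(x_0,\gb^0_{c,\ka}):\eta'(x)=0\}$ you tacitly assume $x_1>x_0$, so that $\eta$ is genuinely strictly monotone on $(x_0,x_1)$ and the reflection about $x_1$ produces a \emph{new} extremum. If instead $x_1=x_0$, zeros of $\eta'$ accumulate at $x_0$ from the right, Rolle's theorem gives $\eta''(x_0)=0$, and \eqref{eta2bintro} evaluated at $x_0$ then yields $P(\eta_0)=0$; hence $(\eta_0,0)$ is an equilibrium of the associated first-order system, and Cauchy--Lipschitz forces $\eta\equiv\eta_0$ on the gap, contradicting $\eta(\gb^0_{c,\ka})=1-1/(2\ka)\neq\eta_0$. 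The paper disposes of this edge case explicitly by the same equilibrium argument.
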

This monotonicity property implies that if $u_{c,\ka}$ vanishes at some points, then these points must be isolated. We deduce the following result that simplifies the analysis. It guarantees that the solutions are nonvanishing if $c>0$, while if $c=0$, the problem \eqref{TWc} reduces to one real differential equation. 
\begin{proposition}\label{prop:nonvanishsingu}
    Let $(c,\ka)\in\tilde{\D}\cup\boC$ and $u_{c,\ka}\in\boX(\R)$ be a solution to \eqref{TWc}. If $c>0$, then $u\in\boN\boX(\R)$. On the other hand, if $c=0$, then there exists $\phi\in\R$ such that $e^{i\phi}u_{c,\ka}(x)\in\R$ for all $x\in\R$.
\end{proposition}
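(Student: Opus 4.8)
The plan is to extract from the weak formulation \eqref{TW:weak} a \emph{global} first integral encoding the conservation of the renormalized momentum, and to read off both cases from it. Writing $u=u_{c,\ka}$ and $\eta=\eta_{c,\ka}$, the engine is the identity $\Im(u'\overline{u})=\tfrac{c}{2}\eta$, which I would establish for \emph{any} weak solution (across the singular set), not merely on $\Omega(u)$.

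First I would test \eqref{TW:weak} with $\phi=iu\psi$, where $\psi\in\boC_0^\infty(\R;\R)$. Although $iu\psi$ lies only in $H^1(\R)$ with compact support, every term in \eqref{TW:weak} extends continuously to such test functions because $u\in\boX(\R)\subset L^\infty(\R)$ and $u'\in L^2(\R)$; hence the identity persists by density of $\boC_0^\infty$. The decisive feature of this choice is that $\langle u,iu\psi\rangle=\psi\,\Re(-i|u|^2)=0$, so the quasilinear term $2\ka\langle u,u'\rangle\langle u,\phi\rangle'$ vanishes identically. Computing the remaining terms with $\Re(u'\overline{u})=-\tfrac12\eta'$ and $\langle u',iu\rangle=\Im(u'\overline{u})$, and integrating by parts, the identity collapses to
\[
\int_\R\psi'\Big(\tfrac{c}{2}\eta-\Im(u'\overline{u})\Big)\,dx=0\qquad\text{for all }\psi\in\boC_0^\infty(\R;\R).
\]
Thus $\Im(u'\overline{u})-\tfrac{c}{2}\eta$ equals a constant a.e.; since both $\Im(u'\overline{u})\in L^2(\R)$ and $\eta\in L^2(\R)$, this constant is $0$, giving $\Im(u'\overline{u})=\tfrac{c}{2}\eta$ on $\R$.

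For $c>0$, suppose $u(x_*)=0$. Then $\eta(x_*)=1\neq 1-\tfrac{1}{2\ka}$ (recall $\ka>0$ on $\tilde{\D}\cup\boC$), so $x_*\notin\Gamma(u)$ and, by Lemma~\ref{lem:omegaregu}, $u$ is smooth near $x_*$, so the relation holds classically there. Evaluating at $x_*$ yields $0=\Im\big(u'(x_*)\cdot\overline{0}\big)=\tfrac{c}{2}\eta(x_*)=\tfrac{c}{2}>0$, a contradiction. Hence $u$ never vanishes; since $|u|$ is continuous and tends to $1$ at $\pm\infty$ (Lemma~\ref{lem:finiteenergyassum}), it is bounded below by a positive constant, i.e.\ $u\in\boN\boX(\R)$. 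For $c=0$ the relation becomes $\Im(u'\overline{u})=0$, which says the argument of $u$ is locally constant on $\{u\neq0\}$. The zeros of $u$ are non-singular (so $u$ is smooth across them) and isolated, which is precisely the monotonicity consequence of Theorem~\ref{thm:symmetry}: a zero is a global maximum of $\eta$ at the value $1$, and $\eta$ is strictly monotone on each side of an interior extremum and is monotone (in fact bounded away from $1$) outside the singular region. Across a simple zero $x_z$ one has $u(x)\sim(x-x_z)u'(x_z)$, so $\arg u$ jumps by exactly $\pi$, while across points of $\Gamma(u)$, where $u\neq0$, it is continuous. Denoting by $\phi_\infty$ the constant value of $\arg u$ near $+\infty$, we conclude $\arg u\in\{\phi_\infty,\phi_\infty+\pi\}\pmod{2\pi}$ throughout $\{u\neq0\}$; hence $e^{-i\phi_\infty}u$ is real-valued on all of $\R$, and $\phi=-\phi_\infty$ proves the claim.

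The main obstacle is the rigorous justification of the global first integral for merely weak solutions: one must confirm that $\phi=iu\psi$ is an admissible test function (the density/approximation step and the $L^1$-integrability of each term, in particular of $\langle u,u'\rangle\langle u,\phi\rangle'$), and, crucially, that no Dirac contributions appear at the points of $\Gamma(u)$, so that $\Im(u'\overline{u})-\tfrac{c}{2}\eta$ is constant across the singular set and not merely on each component of $\Omega(u)$ with possibly different constants. This is exactly what testing the \emph{global} weak formulation delivers. For $c=0$ the secondary difficulty is the global phase gluing, which rests on the isolation of the zeros supplied by Theorem~\ref{thm:symmetry}.
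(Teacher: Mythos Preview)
Your argument is correct and takes a genuinely different route from the paper. The paper's proof for $c>0$ proceeds by contradiction through an integrability obstruction: assuming a zero $x_*$ exists, it first reduces to the case $\card(\Gamma(u))\ge 2$ (the other cases being covered by the explicit formulas of Theorems~\ref{thm:classiftwregu} and~\ref{thm:classifcuspon}), lifts $u$ on $(x_*,\gb_{c,\ka}^0)$, substitutes into $\int |u'|^2$, and shows via the change of variable $y=\eta(x)$ that the integrand behaves like $(1-y)^{-3/2}$ near $y=1$, contradicting $u'\in L^2$. For $c=0$ it runs a local Cauchy--Lipschitz argument at each zero against the real second-order ODE~\eqref{eq:pbreel} and then a continuation argument along the line. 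By contrast, you extract the \emph{global} identity $\Im(u'\bar u)=\tfrac{c}{2}\eta$ directly from the weak formulation by the clever test function $\phi=iu\psi$, which kills the quasilinear term because $\langle u,iu\psi\rangle\equiv 0$; no lifting, no case distinction on $\card(\Gamma(u))$, and no singular-integral estimate are needed. This is strictly simpler and more conceptual: it is exactly the global version of \eqref{eq:phase2} that the paper only obtains on $\Omega(u)$ (Proposition~\ref{prop:eqeta}) or for $u\in\boN\boX(\R)$ (Corollary~\ref{coro:eqpolaire}), and it makes transparent why there can be no Dirac contributions at $\Gamma(u)$.

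Two small points should be tightened in the $c=0$ part. First, your invocation of Theorem~\ref{thm:symmetry} to get isolation of zeros presupposes $\card(\Gamma(u))\ge 2$; the cases $\card(\Gamma(u))\le 1$ should be dispatched separately (they follow at once from Theorem~\ref{thm:classiftwregu} and Corollary~\ref{cor:cuspon}). Second, the step ``$u(x)\sim(x-x_z)u'(x_z)$, so $\arg u$ jumps by $\pi$'' requires $u'(x_z)\neq 0$. This is true---since $x_z\notin\Gamma(u)$, the system~\eqref{eq:systtw} is regular at $x_z$, and $u(x_z)=u'(x_z)=0$ would force $u\equiv 0$ by Cauchy--Lipschitz, contradicting $u\in\boX(\R)$---but deserves one sentence. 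Alternatively, the same Cauchy--Lipschitz observation makes Theorem~\ref{thm:symmetry} unnecessary for the isolation claim, further simplifying the argument.
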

An important remark is that, using $(\eta_{c,\ka}')^2\geq0$, equation \eqref{eta1bintro} yields algebraic constraints on $P$. For instance, we establish in the proof of the Theorem~\ref{thm:symmetry} that $P(\eta_{c,\ka})<0$, for all $x\in[x_0,\gb_{c,\ka}^0)$, thus, using the monotonicity of $\eta_{c,\ka}$, we conclude that
\begin{align}\label{condalgintro}
-2y^2+(2-c^2-2\eta_0)y+(2-c^2)\eta_0-4K_0-4cK_1<0,\text{ for all }y\in[\eta_0,1-1/(2\ka)).
\end{align}
Also, if $u_{c,\ka}$ is a singular solution, then we infer that $K_0=|u_{c,\ka}'(x_0)|^2$. 
 When $\eta_0<1$,  we show in Lemma~\ref{lem:K10} that $K_1=0$ so that,   \eqref{condalgintro} reduces to 
\begin{equation}\label{condalgintro22}
-2y^2+(2-c^2-2\eta_0)y+(2-c^2)\eta_0-c^2\eta_0^2/(1-\eta_0)<0,\text{ for all }y\in[\eta_0,1-1/(2\ka)),
\end{equation} 
whereas if $\eta_0=1$  (and thus $c=0$), then \eqref{condalgintro} becomes
\begin{equation}\label{condalgintro20}
y^2+2K_0-1>0,\text{ for all }y\in(1-1/(2\ka),1].
\end{equation}

Returning to case~\ref{intro:item:constant}, the obtained solution, which remains constant between two singular points $a<b$ is sometimes referred to as a stumpon \cite{lenells2005traveling}. However, the following proposition establishes that case~\ref{intro:item:constant} cannot occur unless $\ka=1/2$, in which case we have $|u(x)|^2=1$, for all $x\in(a,b)$.
\begin{proposition}\label{prop:constant}
    Let $(c,\ka)\in\tilde{\D}\cup\boC$. Assume that  $u_{c,\ka}\in\boX(\R)$ is a solution to \eqref{TWc}. If there exist  real numbers $a<b$ such that 
    \begin{equation}\label{eq:constant}
        |u_{c,\ka}(x)|^2=1/(2\ka),\quad\text{ for all }x\in(a,b),
    \end{equation}
    then $\ka=1/2$.
\end{proposition}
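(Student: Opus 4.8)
The plan is to argue by contradiction: suppose $\ka\neq 1/2$. Since $(c,\ka)\in\tilde{\D}\cup\boC$ and $\boC=\{(c,\ka):c\ge0,\ \ka=1/2\}$, this forces $(c,\ka)\in\tilde{\D}$, and in particular $\ka>0$, so the prescribed value $1/(2\ka)$ is positive. Write $u=u_{c,\ka}$, $u_1=\Re u$, $u_2=\Im u$, and $\eta=\eta_{c,\ka}=1-|u|^2$, so that \eqref{eq:constant} reads $\eta\equiv\eta_\ast:=1-\tfrac{1}{2\ka}$ on $(a,b)$. The first, local, step is to show that on $(a,b)$ the solution is smooth and rigid. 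Because $\eta$ is constant there, $\langle u,u'\rangle=-\tfrac12\eta'=0$ a.e.\ on $(a,b)$, so the quasilinear term of the weak formulation \eqref{TW:weak} drops out for every $\phi\in\boC_0^\infty((a,b);\C)$, and what remains is exactly the weak form of the linear constant-coefficient equation $u''+icu'+\eta_\ast u=0$ on $(a,b)$. A routine bootstrap gives $u\in\boC^\infty((a,b))$; since $|u|^2=1/(2\ka)>0$ there we may lift $u=\rho e^{i\theta}$ with $\rho\equiv(2\ka)^{-1/2}$ constant, and splitting the equation into real and imaginary parts yields $\theta''=0$ together with the characteristic relation
\[
\theta'\equiv\omega,\qquad \omega^2+c\omega=\eta_\ast,
\]
which I call (A).

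The key difficulty is that (A) is satisfied for a whole range of parameters, so no purely local argument can conclude; a global constraint is needed, which I would extract from a momentum-type first integral. All terms of \eqref{TW:weak} are well defined and continuous in $\phi$ for $\phi\in H^1(\R)$ with compact support (using $u\in L^\infty$, $u'\in L^2$ and the one-dimensional embedding $H^1\hookrightarrow L^\infty$), so by density \eqref{TW:weak} remains valid for such $\phi$. Testing with $\phi=iu\psi$, where $\psi\in\boC_0^\infty(\R;\R)$, and using $\langle u,iu\rangle=0$ (which annihilates both the potential term and the quasilinear term) together with $\langle iu',iu\rangle=\langle u',u\rangle=\tfrac12(|u|^2)'$ and $\langle u',iu\rangle=-W$, where $W:=\langle iu',u\rangle=u_1'u_2-u_1u_2'$, one is left with
\[
\int_\R\Big(\tfrac{c}{2}(|u|^2)'\,\psi+W\,\psi'\Big)\,dx=0\qquad\text{for all }\psi,
\]
that is, $\big(W-\tfrac{c}{2}|u|^2\big)'=0$ in the sense of distributions on $\R$. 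Hence $Q:=W-\tfrac{c}{2}|u|^2$ is a.e.\ constant. Since $u\in\boX(\R)$ gives $W\in L^2(\R)$ and $|u|^2-1=-\eta\in L^2(\R)$, the constant $Q+\tfrac{c}{2}=W-\tfrac{c}{2}(|u|^2-1)$ belongs to $L^2(\R)$ and therefore vanishes, so $Q\equiv-\tfrac{c}{2}$ on all of $\R$.

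Evaluating this on $(a,b)$, where $W=-\rho^2\theta'=-\tfrac{1}{2\ka}\omega$ and $|u|^2=\tfrac{1}{2\ka}$, the identity $Q=-\tfrac{1}{2\ka}(\omega+\tfrac{c}{2})=-\tfrac{c}{2}$ gives the second relation
\[
\omega=c\big(\ka-\tfrac12\big),
\]
call it (B). Inserting (B) into (A) and using $\omega(\omega+c)=c^2(\ka-\tfrac12)(\ka+\tfrac12)=c^2(\ka^2-\tfrac14)$ and $\eta_\ast=\tfrac{2\ka-1}{2\ka}$, I obtain $c^2(\ka^2-\tfrac14)=\tfrac{2\ka-1}{2\ka}$; dividing by $\ka-\tfrac12\neq0$ leaves the single scalar constraint
\[
c^2=\frac{2}{\ka(2\ka+1)}.
\]

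It remains to see that this relation is incompatible with $\tilde{\D}$. For $\ka>0$ one computes $c^2-2=\frac{-2(2\ka-1)(\ka+1)}{2\ka^2+\ka}$, whose sign equals that of $1-2\ka$; thus along the curve $\ka<\tfrac12\Rightarrow c>\sqrt2$ while $\ka>\tfrac12\Rightarrow c<\sqrt2$, and $c=\sqrt2$ forces $\ka=\tfrac12$. This contradicts every defining inequality of $\tilde{\D}=\D_1\cup\D_3\cup\B_-\cup\B_+$ (for instance $\D_1$ requires $\ka<\tfrac12$ with $c<\sqrt2$, $\D_3$ requires $\ka>\tfrac12$ with $c>\sqrt2$, and $\B_\pm$ require $c=\sqrt2$ with $\ka\neq\tfrac12$); the degenerate case $c=0$ is excluded a fortiori since the right-hand side is positive. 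This contradiction proves $\ka=1/2$. The main obstacle, and the reason the argument must be routed through the distributional first integral rather than through a pointwise first integral such as \eqref{EDO:intro}, is that $u$ need not be smooth at the endpoints $a,b$: these points lie in the singular set $\Gamma(u)$, where $u'$ may be unbounded, so one cannot simply integrate a classical conserved quantity across them; the $L^2$-membership argument is precisely what fixes the constant $Q\equiv-c/2$ without appealing to pointwise asymptotics at infinity.
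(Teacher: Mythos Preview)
Your proof is correct and follows essentially the same strategy as the paper: derive the local relation $\omega^2+c\omega=\eta_\ast$ on $(a,b)$ (the paper's \eqref{eq:condcst}), combine it with the global phase relation $\omega=c(\ka-\tfrac12)$, and check that the resulting algebraic constraint $c^2=\tfrac{2}{\ka(2\ka+1)}$ is incompatible with $\tilde\D$.

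The one noteworthy difference is how you obtain the global relation (B). The paper splits into two cases: if $u\in\boN\boX(\R)$ it invokes Corollary~\ref{coro:eqpolaire} (which relies on the lifting $u=\rho e^{i\theta}$ on all of $\R$) to get $\theta'=c\eta/(2(1-\eta))$; if $u$ vanishes somewhere it calls on Proposition~\ref{prop:nonvanishsingu} to reduce to $c=0$. You bypass this dichotomy by testing \eqref{TW:weak} directly with $\phi=iu\psi$, which yields the Wronskian conservation law $(W-\tfrac{c}{2}|u|^2)'=0$ in $\mathcal D'(\R)$ without any lifting, and then pin down the constant via the $L^2$ argument. This is a cleaner route: it is exactly the computation hidden inside the proof of Corollary~\ref{coro:eqpolaire} (equation \eqref{TW:weaksystpol0} there), but extracted in a form that does not require $\inf_\R|u|>0$, so no case analysis is needed. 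Your explicit verification of the final incompatibility via the sign of $c^2-2=\tfrac{-2(2\ka-1)(\ka+1)}{\ka(2\ka+1)}$ is also more detailed than the paper's one-line claim that $c^2(\ka-\tfrac12)+c^2-\tfrac1\ka\neq0$ on $\tilde\D$.
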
 
In view of Proposition~\ref{prop:constant}, one might expect that case~\ref{intro:item:constant} implies that $u_{c,\ka}$  is a constant function  on $\R$, with $\abs{u_{c,\ka}}\equiv 1$. However, the critical case $\ka=1/2$ is degenerate, and there are many more possibilities for the solutions. The most remarkable singular solutions are dark compactons, i.e.\ such that the intensity profile $\eta_{c,\ka}=1-\abs{u_{c,\ka}}^2$ has compact support; some of them are explicitly given in Proposition~\ref{prop:compacton}.


So far we ruled out case~\ref{intro:item:constant}, and obtained, in case~\ref{intro:item:maxloc}, the necessary conditions \eqref{condalgintro22}--\eqref{condalgintro20}.
 We show in Propositions~\ref{prop:exibu}--\ref{prop:exibu3} that these conditions are, in fact, sufficient to construct local solutions to \eqref{TWc}.
More precisely, if $\eta_0<1$ is such that \eqref{condalgintro22} holds, then there exists $\ga_{c,\ka}^0<\gb_{c,\ka}^0$ and a unique $\eta\in \boC^2((\ga_{c,\ka}^0,\gb_{c,\ka}^0))\cap H^1((\ga_{c,\ka}^0,\gb_{c,\ka}^0))$ satisfying \eqref{eta2bintro}--\eqref{eta1bintro} with $\eta(0)=\eta_0$ and $K_0=(c\eta_0)^2/(4-4\eta_0)$  so that 
\begin{equation}
\label{localsol}
    u=\sqrt{1-\eta}e^{i\theta},\quad\text{ with }\quad\theta'=\frac{c\eta}{2(1-\eta)},
\end{equation}
is a $\boC^2((\ga_{c,\ka}^0,\gb_{c,\ka}^0))$-solution to \eqref{TWc}.
Similarly, if $\eta_0=1$ and $K_0\geq0$ is such that \eqref{condalgintro20} holds, then  there exists $\ga_{c,\ka}^0<\gb_{c,\ka}^0$ and a unique $\eta\in \boC^2((\ga_{c,\ka}^0,\gb_{c,\ka}^0))\cap H^1((\ga_{c,\ka}^0,\gb_{c,\ka}^0))$ satisfying \eqref{eta2bintro}--\eqref{eta1bintro} with $\eta(0)=1$ so that 
\begin{equation}\label{localsol0}
    u(x)=\pm\sqrt{1-\eta(x)},\quad\text{ for all }\pm x\in[0,\gb_{0,\ka}^0),
\end{equation} 
is a $\boC^2((\ga_{c,\ka}^0,\gb_{c,\ka}^0))$-solution to \eqref{TWc} with $c=0$.
In both cases, we also find that the intensity profile of $u$ satisfies the boundary conditions $\eta(\ga_{c,\ka}^0)=\eta(\gb_{c,\ka}^0)=1-1/(2\ka).$

These local solutions allow us to build every possible solution such that $\card(\mathcal{Z}(u_{c,\ka}))<\infty$. Using Lemma~\ref{lem:buildw} to continuously glue together finitely many local solutions and extending them on $\R$ with a cuspon-like solution (respectively with constants of modulus one if $\ka=1/2$), one obtains a composite wave solution (respectively a compacton). This ends the classification in the case $2\leq\card(\Gamma(u_{c,\ka}))<\infty$, 
since in this case, as explained in Lemma~\ref{lem:classifcard}, we have $\card(\mathcal{Z}(u_{c,\ka}))=\card(\Gamma(u_{c,\ka}))-1$.



We can explicitly compute the set of admissible $\eta_0$ in terms of $(c,\ka)\in\tilde{D}\cup\boC$ using \eqref{condalgintro22}--\eqref{condalgintro20}, however, for the sake of simplicity, we only show that singular solutions $u\in\boX(\R)$ such that $\mathcal{Z}(u)=\{0\}$ exists when $\eta_0$ is negative enough. 
  \begin{proposition}[Composite waves]
\label{prop:compos}
 For any  $(c,\ka)\in\tilde{\D}\cup\boC$, there exists a negative constant $A<1-1/(2\ka)$, such that if $\eta_0\in(-\infty,A)$, then there exists  a unique solution $u_{c,\ka}\in\boX(\R)$, up to phase shift, to \eqref{TWc} satisfying 
 $$\mathcal{Z}(u_{c,\ka})=\{0\} \quad \text{ and }\eta_{c,\ka}(0)=\eta_0.$$ 
  Also, $u\in\boN\boX(\R)$, $\eta_0$ is the global minimum of $\eta_{c,\ka}$, and 
 $\card(\boN(u_{c,\ka}))=2$.
In addition,  if  $(c,\ka)\in\tilde{\D}$, then  $\Gamma(u_{c,\ka})=\boN(u_{c,\ka})$, 
  whereas if $(c,\ka)\in\boC$, then $u_{c,\ka}$ is a compacton (i.e.\ $\eta_{c,\ka}$ is compactly supported).
 	\end{proposition}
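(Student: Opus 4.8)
The strategy is to assemble $u_{c,\ka}$ from a \emph{central piece}, carrying the unique interior extremum at the origin, glued to two \emph{cuspon-like tails} that carry the profile out to the background at infinity. Since $\eta_0<A<0$ gives $\eta_0<1$, Lemma~\ref{lem:K10} forces $K_1=0$, and the interior extremum sitting at a non-singular point fixes $K_0=(c\eta_0)^2/(4-4\eta_0)$ (this is $|u'(0)|^2$ read off from \eqref{localsol}, using $\eta'(0)=0$); hence the admissibility of $\eta_0$ is entirely encoded in the scalar inequality \eqref{condalgintro22} for the polynomial $P$ of \eqref{def:P} with these constants. The heart of the proof is then to invoke the local existence results of Propositions~\ref{prop:exibu}--\ref{prop:exibu3}, which turn \eqref{condalgintro22} into an actual central solution, and to control the gluing at the singular value $\eta=1-1/(2\ka)$ through Lemma~\ref{lem:buildw}.

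First I would produce the threshold $A$. Reading the left-hand side of \eqref{condalgintro22} as $P$, the point is that $P$ is a downward parabola in $y$ whose vertex $y^\ast=(2-c^2-2\eta_0)/4$ escapes to $+\infty$ as $\eta_0\to-\infty$, while the right endpoint $1-1/(2\ka)$ stays fixed. Thus, for $\eta_0$ negative enough, $P$ is increasing on $[\eta_0,1-1/(2\ka))$ and its supremum is the endpoint value. A direct expansion, using $2-c^2-2(1-1/(2\ka))=1/\ka-c^2$ together with $-c^2\eta_0^2/(1-\eta_0)=c^2\eta_0+O(1)$, gives $P(1-1/(2\ka))=\eta_0/\ka+O(1)\to-\infty$; since $\ka>0$ on $\tilde{\D}\cup\boC$, this is negative for $\eta_0$ below some threshold. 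Checking that the endpoint value is eventually monotone in $\eta_0$ makes the threshold uniform for all smaller $\eta_0$, yielding a constant $A=A(c,\ka)<1-1/(2\ka)$, which is automatically negative.

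With $\eta_0<A$ fixed, Propositions~\ref{prop:exibu}--\ref{prop:exibu3} provide a unique symmetric central profile $\eta$ on an interval $(\ga^0,\gb^0)=(-\gb^0,\gb^0)$ solving \eqref{eta2bintro}--\eqref{eta1bintro} with $\eta(0)=\eta_0$, attaining its minimum $\eta_0$ at the origin and the singular value $1-1/(2\ka)$ at $\pm\gb^0$, the corresponding $u$ being given by \eqref{localsol} (or \eqref{localsol0} when $c=0$, via Proposition~\ref{prop:nonvanishsingu}). At $\pm\gb^0$ I would graft the decaying cuspon branches of Theorem~\ref{thm:classifcuspon} when $(c,\ka)\in\tilde{\D}$, and the constant $\eta\equiv0$ when $(c,\ka)\in\boC$ (so that $1-1/(2\ka)=0$), using Lemma~\ref{lem:buildw} to perform the gluing. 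On both sides of $\gb^0$ the quantity $(1-2\ka+2\ka\eta)(\eta')^2$ equals a polynomial in $\eta$ and stays bounded up to the junction, where $\eta$ reaches the value at which $\det A(u)=1-2\ka+2\ka\eta$ vanishes; hence $\eta'$ has only an integrable $|x-\gb^0|^{-1/3}$ singularity, and Lemma~\ref{lem:buildw} certifies that the glued function solves \eqref{TW:weak} with no concentrated contribution at $\pm\gb^0$, while the boundedness of $(1-2\ka+2\ka\eta)(\eta')^2$ makes the energy \eqref{eq:Eupol} finite, so $u\in\boX(\R)$ (indeed $u\in\boN\boX(\R)$ when $c>0$). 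By construction $\eta_0$ is the global minimum, $\mathcal{Z}(u_{c,\ka})=\{0\}$, and $\boN(u_{c,\ka})=\{\ga^0,\gb^0\}$ has cardinality two; moreover $\{\eta=1-1/(2\ka)\}=\{\ga^0,\gb^0\}$ on $\tilde{\D}$, giving $\Gamma(u_{c,\ka})=\boN(u_{c,\ka})$, whereas on $\boC$ the tails make $\eta$ compactly supported, so $u_{c,\ka}$ is a compacton.

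For uniqueness up to a phase shift, the two constraints $\mathcal{Z}(u_{c,\ka})=\{0\}$ and $\eta_{c,\ka}(0)=\eta_0$ pin down the solution: Proposition~\ref{prop:nonvanishsingu} gives $u\in\boN\boX(\R)$ for $c>0$ (and a real profile for $c=0$), Lemma~\ref{lem:classifcard} forces $\card(\Gamma(u_{c,\ka}))=2$, and Theorem~\ref{thm:symmetry} imposes $\ga^0=-\gb^0$ with $\eta$ strictly monotone on $(0,\gb^0)$. Since $\eta_0$ then determines $K_0,K_1$, the central profile is the unique solution of \eqref{eta1bintro} supplied by Propositions~\ref{prop:exibu}--\ref{prop:exibu3}, and the tails are the unique branches of Theorem~\ref{thm:classifcuspon} (or the forced constant on $\boC$). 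The step I expect to be the main obstacle is precisely the gluing across the singular points $\pm\gb^0$: one must show that testing \eqref{TW:weak} against an arbitrary $\phi$ supported near a cusp produces no boundary or defect term even though $u'$ is unbounded there, which is exactly where the vanishing of $\det A(u)$ at the junction and the boundedness of $(1-2\ka+2\ka\eta)(\eta')^2$ are essential; this is the content of Lemma~\ref{lem:buildw}, and it is also what keeps the whole construction inside the finite-energy space $\boX(\R)$.
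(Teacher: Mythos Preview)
Your proposal is correct and follows the same overall architecture as the paper: verify the algebraic condition \eqref{condalgintro22} for $\eta_0$ sufficiently negative, invoke Propositions~\ref{prop:exibu}--\ref{prop:exibu2} to produce the symmetric central piece on $(-\gb^0,\gb^0)$, graft the cuspon tails of Theorem~\ref{thm:classifcuspon} (or constants on $\boC$), and glue via Lemma~\ref{lem:buildw}.

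The one genuine methodological difference is in establishing the threshold $A$. The paper computes the discriminant of $P$, shows it is positive for $\eta_0\ll 0$, and then checks that the smaller root $y_1$ tends to $1$ as $\eta_0\to-\infty$, whence $1-1/(2\ka)<y_1$ and $P<0$ on the whole interval. Your route---observing that the vertex of $P$ escapes to $+\infty$ so that $P$ is increasing on $[\eta_0,1-1/(2\ka))$, and then expanding $P(1-1/(2\ka))=\eta_0/\ka+O(1)\to-\infty$---is a cleaner alternative that avoids any root computation. Both arguments use that $\ka>0$ throughout $\tilde{\D}\cup\boC$.

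Two small caveats. First, your invocation of Lemma~\ref{lem:classifcard} to conclude $\card(\Gamma)=2$ is slightly loose: that lemma has $2\leq\card(\Gamma)<\infty$ as a \emph{hypothesis}, so you must first rule out $\card(\Gamma)\in\{0,1\}$ (the smooth soliton has $\eta(0)=1-c^2/2\neq\eta_0$ once $A$ is taken small enough; the single cuspon has $\mathcal{Z}=\emptyset$) and also $\card(\Gamma)=\infty$ for $\ka\neq 1/2$ (any additional gap in $\Gamma$ would force, via Rolle and Proposition~\ref{prop:constant}, a second element of $\mathcal{Z}$). The paper's proof is equally terse on this point. Second, the reference to \eqref{localsol0} for $c=0$ is a slip: since $\eta_0<A<0<1$, the local solution comes from Proposition~\ref{prop:exibu2} and formula \eqref{localsol} with $\theta'\equiv 0$, not from the vanishing-case formula \eqref{localsol0}.
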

 The left panel in   Figure~\ref{fig:composit} displays a numerical approximation of the solution $\eta_{c,\ka}$ to \eqref{eta2bintro}--\eqref{eta1bintro}, with $\eta_0=-10$, $c=1$, $\ka=1/2$ and $K_0=c\eta_0/(4-4\eta_0)$ (and $K_1=0$), given by Proposition~\ref{prop:compos}.
On the other hand, in the critical case $\ka=1/2$, we can also give
  a  family of explicit compactons by solving \eqref{eta2bintro}--\eqref{eta1bintro}, with $\eta_0=1-c^2/2$, $K_0=(2-c^2)^2/8$ (and $K_1=0$). In fact, with this choice of parameters, equation \eqref{eta1bintro} becomes \eqref{EDO:intro} with $\ka=1/2$, so we recover the explicit solution computed in Proposition~\ref{prop:k1/2} with $\gb_{c,\ka}^0=\pi/\sqrt{2}$, as follows.
\begin{proposition}[Compactons] 
\label{prop:compacton}
Let  $(c,\ka)\in\boC$ with $c\ne\sqrt{2}$. For $j\geq1$ an odd integer, set the interval $I_j=(-j\pi/\sqrt{2},j\pi/\sqrt{2})$. If $c=0$, we define
 $$u^{(j)}_{c,1/2}(x)=\sin(x/\sqrt{2}),\quad  \text{ for all } I_j,$$
whereas if $c> 0$ with $c\ne\sqrt{2}$, we define,  for all $x\in I_j$,
$$u^{(j)}_{c,1/2}(x)=\sqrt{1-\frac{(2-c^2)}{2}\cos^2\Big(\frac{x}{\sqrt{2}}\Big)}\, e^{i\theta(x)},\text{ with }
\theta(y)=\frac{\pi}2+k\pi-\frac{c y}2  -
 							\atan\Big(\frac c{\sqrt 2}\cot\Big(\frac y{\sqrt 2}\Big) \Big),
$$
for $y\in (k\sqrt 2\pi, (k+1)\sqrt 2\pi)\cap I_j$, for all $k\in\Z$.
 In both cases, we extend $u^{(j)}_{c,1/2}$ to $\R$ as a continuous function, which is constant outside $I_j$. Then  $u^{(j)}_{c,1/2}\in\boX(\R)$ and is a weak solution to \eqref{TWc}.		    
\end{proposition}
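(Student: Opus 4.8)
The plan is to handle both cases ($c=0$ and $c>0$) through the single intensity profile $\eta(x)=\tfrac{2-c^2}2\cos^2(x/\sqrt2)$ on $I_j$, to show that the associated field $u$ is a classical solution of \eqref{TWc} on every interval where it is smooth, and then to extend it by modulus-one constants outside $I_j$ via the gluing procedure of Lemma~\ref{lem:buildw}. First I would record that $\eta$ is exactly the local solution of Proposition~\ref{prop:k1/2}: with $\ka=1/2$, $\eta_0=1-c^2/2$, $K_0=(2-c^2)^2/8$ and $K_1=0$, the polynomial in \eqref{def:P} collapses to $P(y)=-2y^2$, so \eqref{eta1bintro} reads $\eta(\eta')^2=-2\eta^2(\eta-\eta_0)$, i.e.\ $(\eta')^2=\eta(2-c^2-2\eta)$ wherever $\eta\ne0$, and differentiating recovers \eqref{eta2bintro}. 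A direct substitution of $\eta=\tfrac{2-c^2}2\cos^2(x/\sqrt2)$ verifies this first-order relation, using $2-c^2-2\eta=(2-c^2)\sin^2(x/\sqrt2)$.

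Next I would check the phase. Differentiating the closed form of $\theta$ and simplifying $1+\tfrac{c^2}2\cot^2(x/\sqrt2)$ to $(1-\eta)/\sin^2(x/\sqrt2)$ gives $\theta'=\tfrac{c\eta}{2(1-\eta)}$, matching \eqref{localsol}; the additive constants $\tfrac\pi2+k\pi$ are precisely what make $\theta$ continuous across the points $x=k\sqrt2\pi$, where $\cot$ blows up but the $\atan$ term jumps by $\pi$. Since $1-\eta\ge c^2/2>0$ for $c>0$, the amplitude $\rho=\sqrt{1-\eta}$ and $\theta$ are smooth on $I_j$, so by the construction \eqref{localsol} the function $u=\rho e^{i\theta}$ is a classical $\boC^\infty$ solution of \eqref{TWc} on $I_j$ (for $c=0$ one checks directly that the real field $u=\sin(x/\sqrt2)$ solves \eqref{TWc}, which is the case \eqref{localsol0} with $\eta_0=1$).

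The third step is the gluing. Because $j$ is odd, $\cos(j\pi/2)=0$, so at the endpoints $x=\pm j\pi/\sqrt2$ one has $\eta=0$, $|u|=1$, and moreover $\eta'\propto\sin(\sqrt2\,x)$ vanishes there, whence $\rho'=0$ and $\theta'=0$; consequently $u'=0$ from inside $I_j$ and matches the (zero) derivative of the constant extension. Thus the extended $u^{(j)}_{c,1/2}$ lies in $\boC^1(\R)\cap\boC^\infty(\R\setminus\{\pm j\pi/\sqrt2\})$, with $u''$ and $\eta''$ bounded (only jumping at the two endpoints). Since $1-|u|^2=\eta$ is bounded with support in $\overline{I_j}$ and $u'$ is bounded with the same support, both belong to $L^2(\R)$, so $u^{(j)}_{c,1/2}\in\boX(\R)$.

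Finally, the weak-solution identity \eqref{TW:weak}. On $\R$ minus the two endpoints $u$ solves \eqref{TWc} classically, so I would integrate by parts the two terms $-\langle u',\phi'\rangle$ and $2\ka\langle u,u'\rangle\langle u,\phi\rangle'=-\ka\eta'\langle u,\phi\rangle'$ over each smooth subinterval. This produces $\int\langle u'',\phi\rangle$ and $\ka\int\eta''\langle u,\phi\rangle$ plus boundary terms at $x=\pm j\pi/\sqrt2$ proportional to $u'$ and to $\eta'$; since both $u'$ and $\eta'$ are continuous (indeed vanish) there, all boundary contributions cancel, and \eqref{TW:weak} reduces to $\int_\R\langle icu'+u''+u(1-|u|^2)+\ka u(1-|u|^2)'',\phi\rangle=0$, which holds because the integrand vanishes a.e.\ by the classical equation. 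The main obstacle is exactly this last step at the singular set $\Gamma(u^{(j)}_{c,1/2})=\{\eta=0\}$, where $\det A(u)=1-2\ka+2\ka\eta$ degenerates: one must confirm that no Dirac mass is created in $u''$ or in $\eta''$. This is guaranteed by the $\boC^1$ matching, noting that the interior zeros of $\eta$ pose no difficulty (there the oscillating formula is genuinely smooth), so that only the endpoints could contribute boundary terms, and the odd-$j$ condition kills those.
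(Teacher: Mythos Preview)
Your proof is correct and follows essentially the same strategy as the paper: recognize the interior profile as the explicit periodic solution from Proposition~\ref{prop:k1/2} (and Remark~\ref{rem:sol05}), then glue it to modulus-one constants at the endpoints $\pm j\pi/\sqrt2$ using that $\eta$ and $\eta'$ both vanish there. The paper simply invokes Proposition~\ref{prop:k1/2} and Lemma~\ref{lem:buildw} rather than rechecking the formulas and redoing the integration-by-parts argument by hand, but the substance is identical.
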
    
Notice that the family of compactons $(u^{(j)})_{j\in\N^*}$ given by Proposition~\ref{prop:compacton},
 satisfies $\mathcal{Z}(u_{c,\ka}^{(j)})=\{k\sqrt{2}\pi~|~ k\in\Z,~|k|<|j|\}$ and $\boN(u_{c,\ka})=\emptyset$, yet $(u^{(j)})_{j\in\N^*}$ is not a family of $\boC^2(\R)$-solutions because the second order derivative is discontinuous at $x=\pm j\pi/\sqrt{2}$. 
In Figure~\ref{fig:composit}, we also plot the intensity profile of the compacton $u^{3}_{1,1/2}$ in the center panel, and its phase in the right panel.
\begin{figure}[ht!]
\begin{tabular}{ccc}
	  \resizebox{0.3\textwidth}{!}{
		\begin{overpic}
			[scale=0.5,clip]{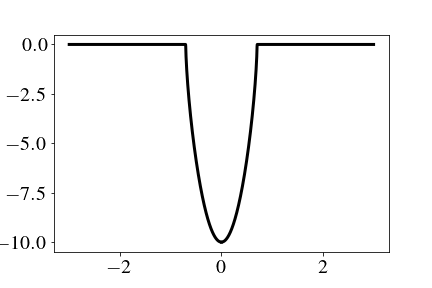}	
		\end{overpic}}
	&
				\resizebox{0.3\textwidth}{!}{
			\begin{overpic}
				[scale=0.5,clip]{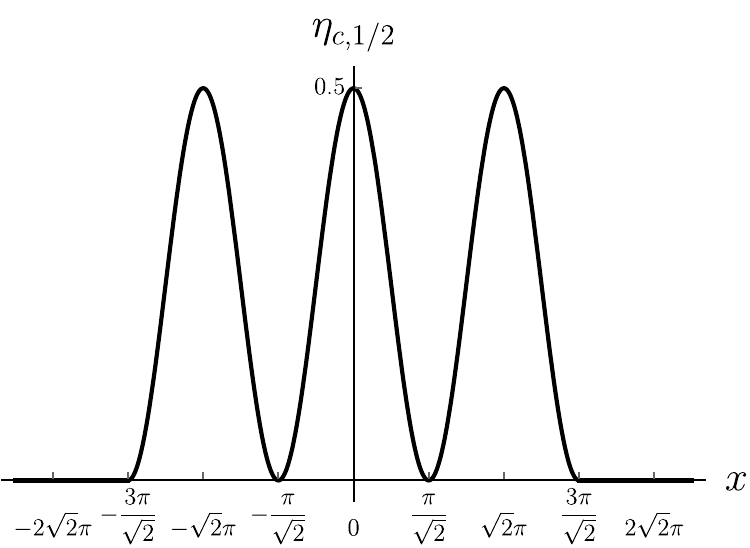}
			
			\end{overpic}
    }
		&\resizebox{0.3\textwidth}{!}{
			\begin{overpic}
				[scale=0.5,clip]{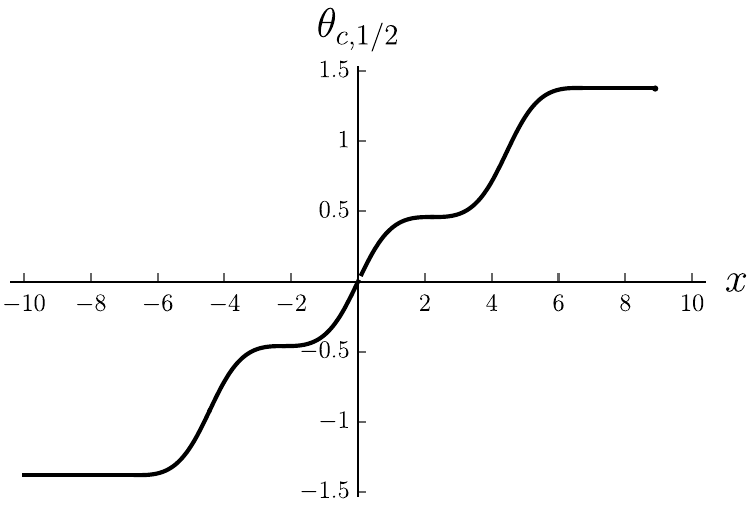}
			
			\end{overpic}	
		}

	\end{tabular}
\caption{
	On the left, the intensity profile of the compacton given by Proposition~\ref{prop:compos}  with $\eta_0=-10$, $c=1$ and $\kappa=1/2$.
The center and the right panel display, respectively, the intensity profile and the phase of the compacton $u_{1,1/2}^{(3)}$ in  Proposition~\ref{prop:compacton}.
}
\label{fig:composit}

\end{figure}

To our knowledge, only few results deal with dark solitons for \eqref{QGP} when $\kappa\neq 0$. A branch of explicit dark solitons was found in \cite{krolikowski2000} for $\kappa\in [0,1/2)$. Also, in the setting of Korteweg models, Benzoni-Gavage, Danchin, Descombes and Jamin~\cite{benzoniStab} obtained existence of smooth homoclinic and heteroclinic traveling waves by classical ODE phase portrait analysis. Their result applies to \eqref{QGP} and provides the existence of dark solitons for $c\in(0,\sqrt 2)$ and $\ka\leq0$, which corresponds to homoclinic waves in their setting. In view of the uniqueness stated in  Theorem~\ref{thm:classiftwregu}, these homoclinic waves are equal to our smooth dark solitons. However, it is not clear how to extend the method in \cite{benzoniStab} to classify dark solitons when $\ka>0$, because one needs to prevent the coefficient  $(1-2\kappa\abs{u}^2)$ in \eqref{EDO:intro} from vanishing. Therefore, the analysis presented above extends the results \cite{krolikowski2000,benzoniStab}  for all $\ka\in\R$, and provides a complete classification of finite energy localized solutions, including exotic solitons.

Concerning the higher dimensional case, the only result the authors are aware of is the work by Audiard~\cite{audiard2017}, proving existence of small energy dark solitons, for $\kappa<0$, in two dimensions by constrained minimization, in the spirit of \cite{bethuel}.

\subsection{Variational characterization and stability}
We explain now the strategy to study the orbital stability of solitons associated with parameters in $\boD_2$
by using a variational characterization.
In Lemma~\ref{lem:eulerlag}, we show that equation \eqref{TWc} can be recast in terms of the energy $E_\ka$ and the momentum $p$ as 
	\begin{equation}
 \label{EL}
	    \d E_\kappa(u)=c\, \d p(u),
	\end{equation}
so that it is natural to consider the following minimization problem 
	\begin{align}
		\label{min}
		\boE_{\kappa}(\gq)&=	\inf\{E_\ka(u) : u\in \boN\boX(\R),~p(u)=\gq\}.				\end{align} 
Indeed, if a minimizer exists, it must satisfy the Euler--Lagrange equation \eqref{EL}, and thus \eqref{TWc}, where $c\in\R$  appears as a Lagrange multiplier.
We will show in  Proposition~\ref{prop:Edegen}, that $\boE_{\kappa}(\gq)=-\infty$, if $\kappa>0$, so that we restrict ourselves to the case $\kappa\leq 0$. 
This minimization problem was studied for $\kappa=0$ in \cite{bethuel2008existence,deLGrSm1}, where the authors showed that the curve $\gq\mapsto \boE_0(\gq)$ is a well-defined function on $\R$, that is even and continuous, so it suffices to consider $\q\geq 0$. Moreover, for $\gq\in(0,\pi/2)$, the minimizer is attained, and corresponds, up to invariances, to the dark soliton $u_{c,0}$ in \eqref{sol:1D}, where the  speed $c\in(0,\sqrt 2) $ is given by the equation
$$\pi/2-\atan(c/\sqrt{2-c^2})-c\sqrt{2-c^2}/2=\gq.$$
Also, the energy of the solution is $\boE_0(\gq)=E(u_{c,0})=(2-c^2)^{3/2}/3$.
For $\gq>\pi/2$, the curve $\boE_0$ is constant, and the minimizers are not reached.

Therefore, we focus now on the case $\ka<0$. In view of Theorems~\ref{thm:classiftwregu} and \ref{thm:non-singular-sol}, if a minimizer of $\boE_{\kappa}$ is attained, then it is 
given, for $c\in(0,\sqrt2)$, by the dark soliton in \eqref{formule:soli}, 
i.e.\
\begin{align}\label{formule:soli:D2}
u_{c,\kappa}(x)=\sqrt{1- \boG_{c,\kappa}(x)}\, e^{i\theta_{c,\kappa}(x)},\  
\text{ with }\ \theta_{c,\kappa}(x)=\frac{c}2\int_0^x\frac{\boG_{c,\kappa}(y)}{1-\boG_{c,\kappa}(y)}dy, \text{ for all }x\in\R,
\end{align}
depicted in the center panel of Figure~\ref{fig:solitons}.

A complete study of the function $\boE_\kappa$ is done in Section~\ref{sec:minimization}. In particular, we will prove that $\boE_\ka$ is even and continuous
on $\R$, and also that it is nondecreasing, concave, and strictly subadditive on $\R^+$. These properties will enable us to apply the concentration-compactness argument to study the minimizers of $\boE_\kappa$.

On the other hand, the definition of $\boE_\kappa$ is not well-adapted to study solitons that vanish at some point, such as the black solitons. Indeed, their analysis is much more involved, as explained in \cite{blacksoliton,asymptstabblack,AlejoCorchoQuinticStab,DuanWangDarkAndBlackStab}. Hence, we only consider here nonvanishing solitons. Moreover, to prove the existence of minimizers, we need to guarantee that the limit of the minimizing sequences can be lifted, so that the momentum of the limit function is well-defined. For this purpose, we follow the 
method developed by de Laire and Mennuni in~\cite{delaire-mennuni}, by introducing the following the critical value for the momentum 
		\begin{align}
			\label{q_*}
			\gq_\ka^*=\sup\{\gq>0 : \forall v \in \mathcal{E}(\mathbb{R}),~ E_\ka(v)\leq \boE_\ka(\gq)\Rightarrow \underset{\mathbb{R}}\inf|v|>0\}.
		\end{align}
We show that  $\gq_\ka^*$ is related to the energy of the black soliton $u_{0,\ka},$ 
\begin{equation}
\label{def:k:blacksoliton}
u_{0,\ka}(x)=\pm\sqrt{1-\boG_{0,\ka}(x)},  \text{ for all }\pm x\geq0.
\end{equation}
 Indeed, it will be key to introduce the critical value for the speed 
 \begin{equation}
     \label{def:c*}
     c_\ka^*=\max\{c\in [0,\sqrt 2) : E_\ka(u_{c,\ka})=E_\ka(u_{0,\kappa})\},
 \end{equation}
that enables us to characterize $\gq_\ka^*$ in the following manner:
 \begin{equation}
\label{characterization-q}
\gq_\ka^*=p(u_{c^*_{\ka},\ka}), \ \text{if}\  c_\ka^*>0,
\quad \text{ and }\quad \gq_\ka^*=\pi/2,\  \text{ if }c_\ka^*=0.
\end{equation}
In Section~\ref{sec:energymoment}, we obtain formulas for the energy and momentum of solitons and cuspons, in particular for the energy and momentum of dark solitons $u_{c,\ka}$ with $\ka<0$ and $0\leq c<\sqrt{2}$. This enables us to show in Lemma~\ref{lem:allure} and Corollary~\ref{prop:difeo} that there exists a threshold  $\ka_0\approx-3.636$ such that we have
  $c_\ka^*=0$ if $\ka\in [\ka_0,0)$, and 
  $c_\ka^*\in(0,\sqrt 2)$ if $\ka<\ka_0.$
  Corollary~\ref{prop:difeo} also shows that the function  $p_\ka : [c_\ka^*,\sqrt{2}]\to [0,\gq_\ka^*]$, given by
 $p_\kappa(c)= p(u_{c,\ka})$, is continuous, bijective, and strictly decreasing, so its inverse  is well-defined, and we denote it by 
 $\gc_\kappa : [0,\gq_\ka^*] \to [c_\ka^*,\sqrt{2}].$ 
  Moreover, we introduce the number $\tilde{c}_\kappa\in[0, c^*_\ka]$ satisying
\begin{equation}\label{def:tildec}
    \frac{d}{d c}p(u_{c,\ka})\geq0,\quad\text{ for all }c\in(0,\tilde{c}_\kappa),\quad\text{ and, }\quad\frac{d}{d c} p(u_{c,\ka})<0,\quad\text{ for all }c\in(\tilde{c}_\kappa,\sqrt{2}).
\end{equation}
In Figure~\ref{fig:EP:kappa50}, we plot, for $\kappa=-50$, the functions $E_\kappa(u_{c,\ka})$ and $p(u_{c,\ka})$ depending on $c\in(0,\sqrt{2})$, where the quantities defined above are approximatively $c_\ka^*\approx0.75$,  $\tilde c_{\kappa}\approx0.51$, and $\mathfrak{q}_\ka^*\approx3.5$.

\begin{figure}[ht!]
\centering
	\begin{tabular}{ccc}
		\resizebox{0.45\textwidth}{!}{
			\begin{overpic}
				[scale=0.5,trim=0 0 0 24,clip]{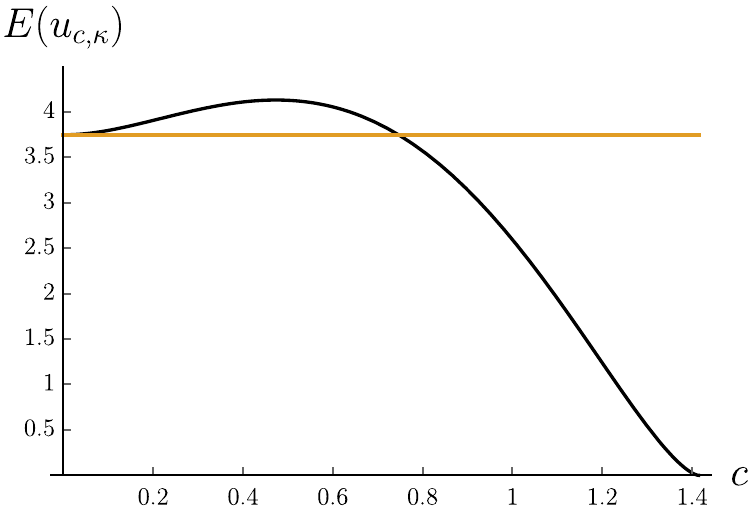}
				\dottedline{3}(52.5,7)(52.5,50)
                \dottedline{3}(38,7)(38,55)    
                \put(50,7){\tiny{$c^*_\kappa$}}
                \put(38,7){\tiny{$\tilde c_\kappa$}}
                \put(-4,62){\small{$E_\kappa(u_{c,\ka})$}}
			\end{overpic}
		}
		\resizebox{0.45\textwidth}{!}{
			\begin{overpic}
				[scale=0.5,trim=0 0 0 24,clip]{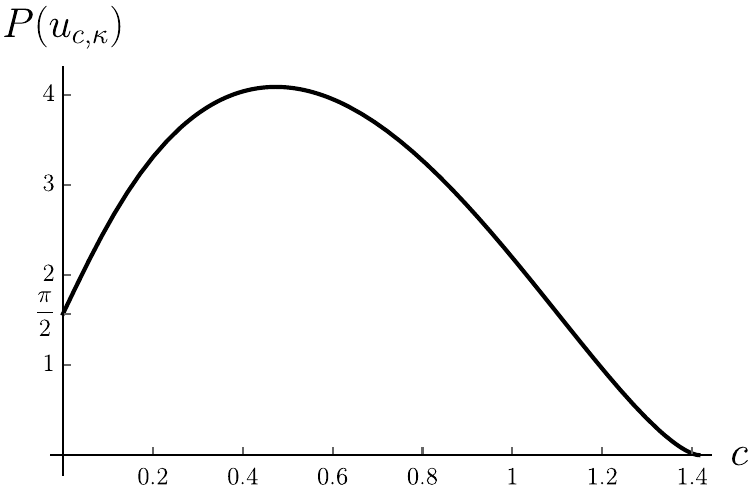}
                \dottedline{3}(38,7)(38,54)  
                \dottedline{3}(9,47)(52,47)  
                \dottedline{3}(53,7)(53,47)  
                \put(3.5,47){\tiny{$\gq_0$}}
                \put(-4,62){\small{$p(u_{c,\ka})$}}
                \put(38,7){\tiny{$\tilde c_\kappa$}}
             \put(53,7){\tiny{$c^*_\kappa$}}
			\end{overpic}
		}
	\end{tabular}	
	\caption{Plot of the energy $E_\ka(u_{c,\ka})$ and momentum $p(u_{c,\ka})$ of the dark solitons $u_{c,\kappa}$, for $(c,\kappa)\in \D_2$, with $\kappa=-50$. The left panel displays $E_\ka(u_{c,\ka})$ and the constant line crossing $E_\ka(u_{0,\ka})$ in orange. The value $\tilde c_\kappa\approx0.51$ is defined in \eqref{def:tildec}, $c^*_\kappa\approx0.75$ is the point defined in \eqref{def:c*}. The right panel depicts $p(u_{c,\ka})$, and the values of  $\Tilde{c}_\ka$ and $c_\ka^*$.} 
 \label{fig:EP:kappa50}
\end{figure}
There are   two possible behaviors for the energy-momentum diagram of dark solitons, 
depending on the critical value $\ka_0$.
For $\ka\in [\ka_0,0)$, the shape of the energy-momentum diagram is a concave increasing curve, as the one depicted in the left panel in Figure~\ref{fig:EPdiag} for $\kappa =-3$. 
For $\ka\in (-\infty,\ka_0)$, there is a cusp in the diagram, as seen in the right panel in Figure~\ref{fig:EPdiag} for $\kappa =-50$. To explain the cusp at $c=\tilde{c_\ka}$, recall the Hamiltonian group property (see Lemma~\ref{lem:hamiltongp}):
\begin{align}\label{eq:hamiltongp}
	\frac{d}{d c}E_\ka(u_{c,\ka})=c\frac{d}{d c}p(u_{c,\ka}).
\end{align}
Hence, due to the change in the monotonicity of the functions $c\mapsto p_{\ka}(u_{c,\ka})$ and $c\mapsto E_{\ka}(u_{c,\ka}$) at $c=\tilde{c}_\ka$, we observe the cusp in the right diagram of Figure~\ref{fig:EPdiag} because the tangent vector must jump from $(1,\tilde{c}_\ka^-)$ to $-(1,\Tilde{c}_\ka^+)$.

\begin{figure}[ht!]
\centering
\begin{tabular}{lr}
\resizebox{0.35\textwidth}{!}{
  			\begin{overpic}
				[scale=0.4,trim=0 0 0 0,clip]{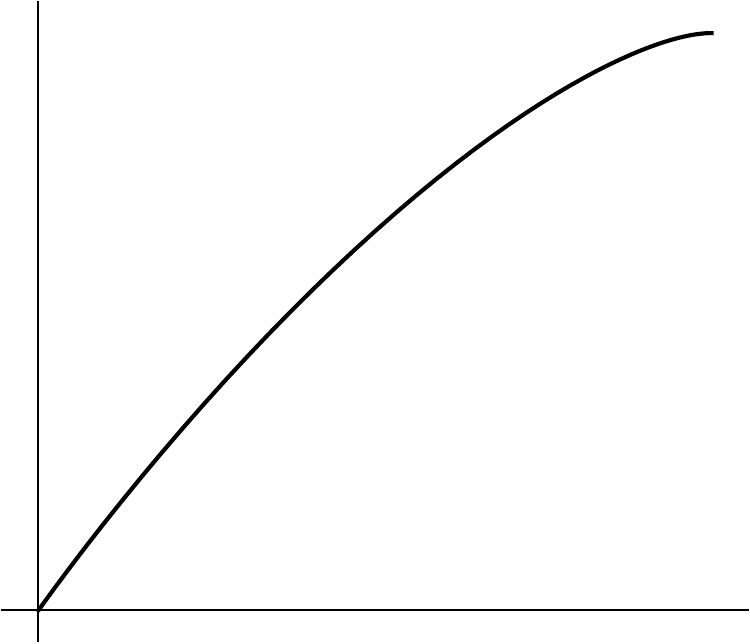}
                      \put(-14,80){1.347}
                       \put(4,82){\line(1,0){3}}
                \dottedline{2}(4,82)(94.5,82)
                \dottedline{2}(94.5,0)(94.5,82)      
				\put(84,74){\color{red}\vector(3,2){9}}
				 \put(74,68){\color{red}{$c\to0$}}
				\put(18,9){\color{red}\vector(-3,-1){10}}
				\put(20,9){\color{red}{$c\to\sqrt{2}$}}
                \put(96,8){$p(u_{c,\ka})$}
                \put(0,90){$E_\ka(u_{c,\ka})$}
                \put(93,-2){$\frac{\pi}{2}$}
                \put(94.5,4){\line(0,1){2}}
                \put(99.8,4.45){\line(1,0){10}}
			\end{overpic}
}
  &
  \qquad \qquad
\resizebox{0.35\textwidth}{!}{
			\begin{overpic}
				[scale=0.4,trim=0 0 0 0,clip]{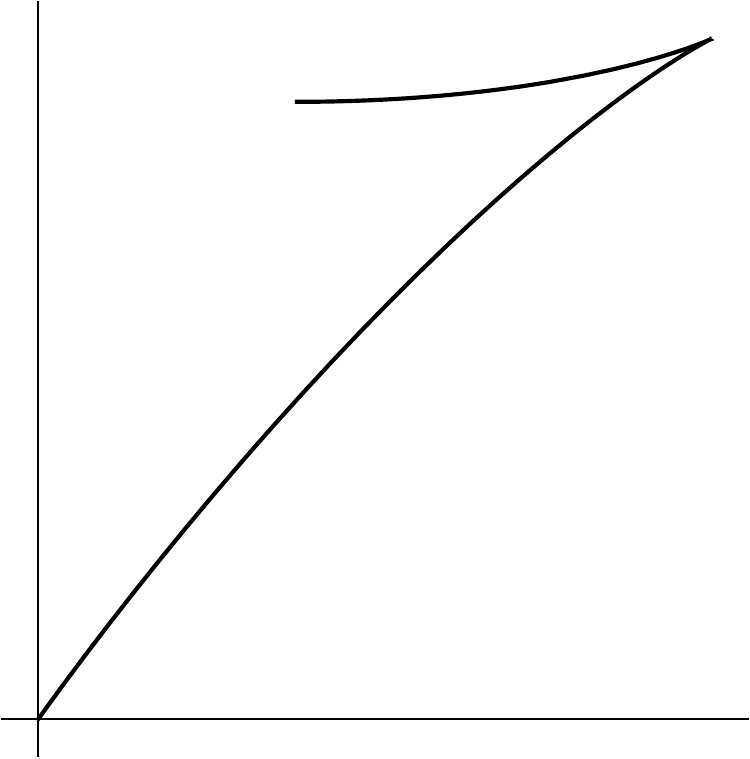}
				\put(31,83){\color{red}\vector(3,1){7}}
				\put(11.5,80){\color{red}{$c\to0$}}
				\put(21,11){\color{red}\vector(-3,-1){10}}
				\put(22,10){\color{red}{$c\to\sqrt{2}$}}
				\put(88,84){\color{red}\vector(-3,1){7}}
				\put(89,82){\color{red}{$c_\ka^*$}}
				\dottedline{2}(81,86.5)(81,5.2)
                \put(79,0){$\gq_\ka^*$}
                \dottedline{2}(5,86.5)(81,86.5)
                \put(-12,85){3.748}
                 \put(92,8){$p(u_{c,\ka})$}
                \put(5.5,95){$E_\ka(u_{c,\ka})$}
                 \put(36,-1){$\frac{\pi}{2}$}
                \put(38.5,5){\line(0,1){2}}
				\put(95,94){\color{red}{$\tilde{c}_\ka$}}
		\end{overpic} 
  }
     \end{tabular}
   \caption{Energy-momentum diagram of dark solitons with parameters in $\boD_2$, with  $\ka=-3$ (left) and $\ka=-50$  (right).}
   \label{fig:EPdiag}
   \end{figure}

Using $\gq_\ka^*$ in \eqref{q_*} and the function $\mathfrak{c}_\ka$, we can state our main result concerning the variational characterization of the dark solitons in the region $\boD_2$ as follows. 

\begin{theorem}\label{thm:min}
Let $\ka\leq0$ and $\gq\in (0,\gq_\ka^*)$. Then the infimum for the minimization problem \eqref{min} is attained at
		 the dark soliton $u_{\gc(\gq),\kappa}$ in \eqref{formule:soli:D2}, i.e.\
$\boE_\kappa(\gq)=E_\kappa(u_{\gc(\gq),\kappa}),$
and is the only minimizer, up to invariances. Moreover, $\boE_\ka(\gq)=E_\ka(u_{0,\ka})$ for all $\gq>\gq_\ka^*$ and this infimum is not attained. 
\end{theorem}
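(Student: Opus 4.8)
The plan is to use the concentration-compactness principle of Lions to establish compactness of minimizing sequences for $\boE_\kappa(\gq)$, for $\gq\in(0,\gq_\kappa^*)$, and then to identify the limit with the dark soliton $u_{\gc(\gq),\kappa}$ via the uniqueness from Theorem~\ref{thm:classiftwregu}. First I would record the qualitative properties of the curve $\gq\mapsto\boE_\kappa(\gq)$ that are asserted to be proved in Section~\ref{sec:minimization}: evenness, continuity, monotonicity, concavity, and \emph{strict subadditivity} on $\R^+$. Strict subadditivity is the crucial ingredient, since it rules out the dichotomy scenario in the concentration-compactness trichotomy: if a minimizing sequence split into two pieces carrying momenta $\gq_1+\gq_2=\gq$, subadditivity $\boE_\kappa(\gq)<\boE_\kappa(\gq_1)+\boE_\kappa(\gq_2)$ would contradict minimality. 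I would take a minimizing sequence $(v_n)\subset\boN\boX(\R)$ with $p(v_n)\to\gq$ and $E_\kappa(v_n)\to\boE_\kappa(\gq)$, and apply the trichotomy to the concentration function built from the energy density.

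The second, and more delicate, step is to \emph{rule out vanishing} and, simultaneously, to guarantee that the limit can be lifted so that its momentum is well-defined. This is precisely where the threshold $\gq_\kappa^*$ and the nonvanishing energy space $\boN\boX(\R)$ enter, and it is the reason the method of de~Laire and Mennuni~\cite{delaire-mennuni} is invoked. The definition \eqref{q_*} of $\gq_\kappa^*$ is engineered so that whenever $E_\kappa(v)\leq\boE_\kappa(\gq)$ with $\gq<\gq_\kappa^*$, one has $\inf_\R|v|>0$; hence any weak limit $v$ of the minimizing sequence stays uniformly away from zero, its phase can be lifted via Lemma~\ref{lem:phaseregu}, and $p(v)$ is well-defined and lower semicontinuous under the relevant convergence. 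This uniform lower bound, combined with the energy expression \eqref{eq:Eupol}, lets me upgrade weak convergence to strong convergence (after the customary translations $v_n(\cdot-y_n)$ supplied by compactness), so that $p(v)=\gq$ and $E_\kappa(v)=\boE_\kappa(\gq)$; thus the infimum is attained.

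Once a minimizer $v$ is secured, it satisfies the Euler--Lagrange equation $\d E_\kappa(v)=c\,\d p(v)$ from Lemma~\ref{lem:eulerlag}, with $c\in\R$ the Lagrange multiplier, hence it solves \eqref{TWc}. Since $v\in\boN\boX(\R)$ is nonvanishing and $\kappa\leq0$, Theorem~\ref{thm:non-singular-sol} forces $v\in\boC^2(\R)$, and then Theorem~\ref{thm:classiftwregu} identifies $v$, up to invariances, with the dark soliton $u_{c,\kappa}$ of \eqref{formule:soli:D2} for some $c\in(0,\sqrt2)$; the relation $p(u_{c,\kappa})=\gq$ together with the strict monotonicity of $p_\kappa$ on $[c_\kappa^*,\sqrt2]$ (Corollary~\ref{prop:difeo}) pins down $c=\gc(\gq)$ uniquely, giving both the characterization $\boE_\kappa(\gq)=E_\kappa(u_{\gc(\gq),\kappa})$ and uniqueness up to invariances. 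For the regime $\gq>\gq_\kappa^*$, I would argue that the value $\boE_\kappa(\gq)$ coincides with the black-soliton energy $E_\kappa(u_{0,\kappa})$: the curve $\boE_\kappa$ becomes constant beyond $\gq_\kappa^*$ because minimizing sequences necessarily approach the vanishing (black) profile, whose momentum can be increased without energy cost, and nonattainment follows since any putative minimizer would have to be a nonvanishing soliton with momentum exceeding the maximal value $\gq_\kappa^*$ attainable along the branch $c\mapsto p(u_{c,\kappa})$, a contradiction.

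**Main obstacle.** I expect the hardest part to be the nonvanishing/liftability step: verifying that the characterization \eqref{characterization-q} of $\gq_\kappa^*$ via the critical speed $c_\kappa^*$ genuinely prevents the weak limit from touching zero throughout the momentum range $(0,\gq_\kappa^*)$, and establishing the strict subadditivity of $\boE_\kappa$ on $\R^+$ (which in turn rests on the fine analysis of the energy-momentum diagram and the threshold $\kappa_0\approx-3.636$ distinguishing the two diagram shapes in Figure~\ref{fig:EPdiag}). The interplay between the possible cusp at $\tilde c_\kappa$ and the concavity of $\boE_\kappa$ makes the subadditivity argument nontrivial when $\kappa<\kappa_0$.
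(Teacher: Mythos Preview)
Your proposal is correct and follows essentially the same route as the paper: properties of the curve $\boE_\kappa$ (concavity, strict subadditivity), a concentration-compactness argument in the style of \cite{delaire-mennuni}, the threshold $\gq_\kappa^*$ to secure nonvanishing of limits, and then identification of the minimizer via the Euler--Lagrange equation and the classification Theorem~\ref{thm:classiftwregu}.

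One correction to your assessment of the obstacles: in the paper the strict subadditivity does \emph{not} depend on the threshold $\kappa_0$ or the cusp structure of the energy--momentum diagram. It follows from concavity of $\boE_\kappa$ on $\R^+$, the value $\boE_\kappa^+(0)=\sqrt{2}$ of the right derivative at the origin (obtained from the two-sided bound $\sqrt{2}\gq-K_0\gq^{3/2}\leq\boE_\kappa(\gq)\leq\sqrt{2}\gq$), and the strict inequality $\boE_\kappa(\gq)<\sqrt{2}\gq$ for $\gq>0$, which is proved by a Taylor expansion of $E_\kappa(u_{c,\kappa})$ and $p(u_{c,\kappa})$ near $c=\sqrt{2}$; Lemma~3.16 of \cite{delaire-mennuni} then converts these into strict subadditivity. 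The constant $\kappa_0$ enters only in the determination of $c_\kappa^*$ and hence of $\gq_\kappa^*$, not in the subadditivity argument. A second minor point: the paper first proves compactness for $\gq\in(0,\gq_0(\kappa))$ with $\gq_0(\kappa)$ defined in Corollary~\ref{prop:difeo}, and only at the end (Step~6) shows $\gq_0(\kappa)=\gq_\kappa^*$; you implicitly use this identification from the start, which is harmless but worth noting.
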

\begin{remark}\label{rem:notmin}
   Theorem~\ref{thm:min} establishes that the curve $\boE_\ka$ coincides the energy of the dark solitons $u_{c,\ka}$ with moment equal to $\gq\in(0,\gq_\ka^*)$ and speed $c\in(c_\ka^*,\sqrt{2})$. Moreover, the curve $\boE_\ka$ is constant equal to $E_\ka(u_{0,\ka})$ in  $(\gq_\ka^*,\infty)$. In particular, if $\ka\in (-\infty,\ka_0),$ so that $c_\ka^*\in (0,\sqrt{2})$, we see  that the energy of the soliton $u_{c,\ka}$ with $c\in(0,c_\ka^*)$ lies above the curve $\boE_\kappa$,  as illustrated in the right panel of Figure~\ref{fig:EPdiag}. Therefore, solitons with speed $c\in (0,c_\kappa^*)$ are not minimizers for $\boE_\ka.$
\end{remark}

We will see in Section~\ref{section:cauchy} that the Cauchy problem for \eqref{quasilin}, with $k<0$, is locally well-posed in $u_{c,\kappa}+H^s(\R)$, for $s> 5/2$ as stated in Corollary~\ref{coro:lwp},
including the conservation of energy and momentum by the flow.
Therefore,  by using the Cazenave--Lions argument \cite{cazlions},
and endowing $\boX(\R)$ with 
the distance: $$d(u_1,u_2)=\norm{u_1'-u_2'}_{L^2(\R)}+\norm{|u_1|-|u_2|}_{L^2(\R)}+|u_1(0)-u_2(0)|,$$
 we will deduce that the variational characterization leads to the orbital stability of the dark solitons, as follows.
\begin{theorem}\label{thm:stability}
Let $\ka<0$ and $c\in (c_\ka^*,\sqrt{2})$, then the dark soliton $u_{c,\kappa}$ in \eqref{formule:soli:D2} is orbitally stable in $(\boX(\R),d)$, in the following sense.
For all $\varepsilon>0,$ there exists $\delta >0$ such that if 
 $\Psi_0\in u_{c,\ka}+H^s(\R)$, $s>5/2$, satisfies $\inf_\R|\Psi_0|>0$ and 
  $$ d(\Psi_0,u_{c,\kappa})\leq \delta,$$
then,  for all $ t\in [0,T_{\Psi_0})$,
 $$ \inf_{(y,\phi)\in \R^2}d(\Psi(\cdot,t),e^{i\phi}u_{c,\kappa}(\cdot -y))\leq \varepsilon,$$
 where  $\Psi\in\boC([0,T_{\Psi_0});u_{c,\ka}+H^s(\R)$ is the solution in Corollary~\ref{coro:lwp}, with initial condition $\Psi_0$.
	\end{theorem}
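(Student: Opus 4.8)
The plan is to deduce orbital stability from the variational characterization of $u_{c,\ka}$ by the Cazenave--Lions argument \cite{cazlions}. The two inputs are Theorem~\ref{thm:min}, which identifies $u_{c,\ka}$ (for $c\in(c_\ka^*,\sqrt2)$, equivalently for the momentum $\gq:=p(u_{c,\ka})\in(0,\gq_\ka^*)$ via the decreasing bijection $p_\ka$) as the \emph{unique} minimizer, up to invariances, of the constrained problem \eqref{min}, and Corollary~\ref{coro:lwp}, which provides local well-posedness in $u_{c,\ka}+H^s(\R)$ together with conservation of $E_\ka$ and $p$ along the flow. The core is a compactness statement: any sequence that almost minimizes $\boE_\ka(\gq)$ converges, up to translations and phase shifts, to $u_{c,\ka}$ in the distance $d$. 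Granting this, stability follows by contradiction.

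First I would negate the conclusion: assume there exist $\varepsilon>0$, initial data $\Psi_0^n\in u_{c,\ka}+H^s(\R)$ with $\inf_\R|\Psi_0^n|>0$ and $d(\Psi_0^n,u_{c,\ka})\to0$, and times $t_n\in[0,T_{\Psi_0^n})$ such that, writing $v_n=\Psi^n(\cdot,t_n)$,
\[
\inf_{(y,\phi)\in\R^2} d\big(v_n,\, e^{i\phi}u_{c,\ka}(\cdot-y)\big)\geq \varepsilon \quad\text{for all }n.
\]
By the conservation laws of Corollary~\ref{coro:lwp}, $E_\ka(v_n)=E_\ka(\Psi_0^n)$ and $p(v_n)=p(\Psi_0^n)$; since $E_\ka$ and $p$ are continuous for $d$, the convergence $d(\Psi_0^n,u_{c,\ka})\to0$ gives $E_\ka(v_n)\to E_\ka(u_{c,\ka})=\boE_\ka(\gq)$ and $\gq_n:=p(v_n)\to\gq$.

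Next I would verify that $(v_n)$ is essentially a minimizing sequence for $\boE_\ka(\gq)$ lying in $\boN\boX(\R)$. The crucial point is the nonvanishing: because $\gq<\gq_\ka^*$ and $\boE_\ka$ is strictly increasing on $(0,\gq_\ka^*)$, we may pick $\gq'\in(\gq,\gq_\ka^*)$ with $\boE_\ka(\gq')>\boE_\ka(\gq)$; since $E_\ka(v_n)\to\boE_\ka(\gq)$, the definition \eqref{q_*} of $\gq_\ka^*$ then forces $\inf_\R|v_n|>0$ for $n$ large, so that $v_n\in\boN\boX(\R)$ and $p(v_n)$ is well-defined. Using the continuity of $\boE_\ka$ we have $E_\ka(v_n)-\boE_\ka(\gq_n)\to0$, so $(v_n)$ almost minimizes along the slowly varying constraint $\gq_n\to\gq$; the concentration--compactness can be run directly with this drifting constraint, or after a small correction fixing the momentum exactly at $\gq$ as in \cite{delaire-mennuni}. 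I would then invoke the concentration--compactness analysis of Section~\ref{sec:minimization} underlying Theorem~\ref{thm:min}, ruling out vanishing by the lower bound $\inf_\R|v_n|>0$ and dichotomy by the strict subadditivity and concavity of $\boE_\ka$ on $\R^+$, to extract $y_n,\phi_n$ such that $e^{i\phi_n}v_n(\cdot-y_n)$ converges in $d$ to a minimizer of $\boE_\ka(\gq)$. By the uniqueness in Theorem~\ref{thm:min} this limit is $u_{c,\ka}$, whence $\inf_{(y,\phi)}d(v_n,e^{i\phi}u_{c,\ka}(\cdot-y))\to0$, contradicting the lower bound $\varepsilon$ and proving the theorem.

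I expect the main obstacle to be the compactness step, and specifically the propagation of the nonvanishing condition $\inf_\R|v_n|>0$ so that the momentum stays well-defined and the concentration--compactness can be carried out entirely in $\boN\boX(\R)$: this is exactly what the threshold $\gq_\ka^*$, hence the restriction $c\in(c_\ka^*,\sqrt2)$, is designed to guarantee, and without it (e.g.\ for $\gq>\gq_\ka^*$, where the infimum is not attained) stability in this form fails. A secondary difficulty is upgrading the energy-type convergence delivered by concentration--compactness to convergence in the precise metric $d$: the terms $\norm{v_n'-u_{c,\ka}'}_{L^2}$ and $\norm{\,|v_n|-|u_{c,\ka}|\,}_{L^2}$ are controlled by the energy, but the pointwise term $|v_n(0)-u_{c,\ka}(0)|$ requires a uniform local bound via the embedding $\boX(\R)\subset\boC(\R)$ together with the correct choice of the translation parameter $y_n$, so that the dip of $v_n$ is recentered at the origin.
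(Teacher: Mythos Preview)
Your proposal is correct and follows essentially the same route as the paper: a Cazenave--Lions contradiction argument combining the conservation laws from Corollary~\ref{coro:lwp}, the continuity of $E_\ka$ and $p$ with respect to $d$, the nonvanishing propagation via the threshold $\gq_\ka^*$, and the compactness of almost-minimizing sequences. The only cosmetic difference is that the paper packages the compactness step directly as Theorem~\ref{thm:compacite} (which already allows the drifting constraint $p(u_n)\to\gq$ and delivers the convergences \eqref{cvuniforme}--\eqref{cvfortegradient}), whereas you phrase it as ``the concentration--compactness analysis underlying Theorem~\ref{thm:min}''; your remark on the pointwise term $|v_n(0)-u_{c,\ka}(0)|$ is well taken and is handled in the paper via the $L^\infty_{\loc}$ convergence \eqref{cvuniforme}.
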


\begin{remark}
Equation \eqref{quasilin0} can be written in a more general form as
		\begin{equation}\label{quasilin}
			i\ptl_t\Phi=\Delta\Phi+V(x)\Phi+\gs\Phi(f(|\Phi|^2)\Phi+\ka h'(\abs{\Phi}^2)\Delta h(|\Phi|^2)),\quad\text{ in }\R^d\times\R,
		\end{equation} 
with $f(y)=h(y)=y$ for all $y\in\R$ and $V\equiv0$. 
Considering  vanishing conditions at infinity with focusing nonlinearities $\gs=1$ and $\ka\leq0$, the existence and orbital stability of bright solitons for equation \eqref{quasilin} has been addressed in \cite{COLINdual,Colin2}  
for $V\equiv0$ , and in \cite{LIUgroundstate,Ruizgroundstate} for wide classes of $V\in\boC(\R,\R)$ bounded from below. 
After a phase shift, these solutions are real-valued, which simplifies the problem.
This fact is key to using the duality method in \cite{COLINdual,Colin2}, 
 which enables the transformation of the elliptic quasilinear problem into a semilinear one.
As explained by Selvitella \cite{AlessandroDual}, the dual method does not work for complex-valued functions, thus it is not well-suited for the study of \eqref{TWc} for $c\neq 0$.

\end{remark}

\begin{remark}
\label{rem:nonlocal}
The global well-posedness in the energy space and the properties of dark soliton for \eqref{eq:nonloc} have been addressed by the first author in \cite{delaire,delaire-mennuni}. Assuming that $\wh \W$ is {\em bounded}, with $\widehat{\boW}(\xi)\geq(1-\ka\xi^2)^+$, for all $\xi\in\R$, with $\kappa\in [0,1/2],$ and some additional conditions, it was shown that there exists a branch of dark solitons to \eqref{eq:nonloc}.
Although there is no analytical formula for these dark solitons, it was proven in \cite{delaire-mennuni} that some of them can be obtained by minimization at fixed momentum,
using the value $\gq_\ka^*$ in \eqref{q_*}. However, obtaining good estimates for $\gq_\ka^*$ in the nonlocal case remains an open problem.
\end{remark}

As discussed in Remark~\ref{rem:notmin}, some dark solitons of the branch are not global minimizers, with fixed momentum.
The analysis of the orbital stability of the other dark solitons with parameters in $\boD_1$ and $\boD_2$ is done in the companion paper~\cite{lequiniou2024stability}. More precisely, in \cite{lequiniou2024stability} the author shows that 
\begin{enumerate}
    \item[(a)]\label{item:stabD1} for $(c,\kappa)\in \D_1$, $c\neq 0$, dark solitons $u_{c,\ka}$ are orbitally stable.
    \item[(b)]\label{item:stabD2} for $(c,\kappa)\in \D_2$  dark solitons are stable whenever $c\in(\tilde{c}_\ka,\sqrt 2)$ and unstable if $c\in(0,\tilde{c}_\ka)$.
\end{enumerate}

The idea relies on the general Grillakis--Shatah--Strauss theory \cite{grillakisshatah}, which  reduces the stability of solitons to  
the study of the second derivative of the action $d(c)= E_\kappa(u_{c,\kappa})-c p(u_{c,\kappa}),$
in addition to some spectral conditions. From \eqref{eq:hamiltongp}, we get 
$d''(c)=-\frac{d}{dc}p(u_{c,\kappa})$, so that  $d''(c)>0$  is equivalent to the  so-called Vakhitov--Kolokolov stability criterion: 
$\frac{d}{dc}p(u_{c,\kappa})<0.$
In view of \eqref{def:tildec}, this criterion explains the result in \hyperref[item:stabD2]{b}.

The proofs in \cite{lequiniou2024stability} use the explicit formulas for the momentum in Section~\ref{sec:energymoment}, and the arguments introduced in  \cite{benzoniStab,audiard2017}  in the context of the Euler--Korteweg system. We refer to \cite{lequiniou2024stability} for more details.

To conclude this introduction, we provide two curves of the energy of traveling waves in $\boD_1$ and $\boD_3$, using the formulas in Section~\ref{sec:energymoment}.
Figure~\ref{fig:Energicompa} displays on the same plot the energy of dark solitons $E_\ka(c)$ (in black) and the energy of cuspons $\tilde E_\ka(c)$ (in orange) given by \eqref{def:Eck}--\eqref{def:tildeEck}. 
 The left panel corresponds dark solitons and antidark cuspons in $\boD_1$, with  $\kappa=0.4$, and the right panel illustrates the behavior
of antidark solitons and dark cuspons  in $\boD_3$, with  $\kappa=0.6$.
We see that in both cases the least energy solution does not remain on the same branch of solutions as $c$ varies. In view of \eqref{eq:hamiltongp}, the energy and the momentum have the same monotony, as functions of $c$. 
Thus, as stated in (\hyperref[item:stabD1]{a}), the whole branch of dark solitons in $\boD_1$ is stable, which is illustrated by the decrease of the energy in the left panel. The stability or instability of the other branches
is an open problem.

\begin{figure}[ht!]
\centering
 \begin{tabular}[b]{cc}
\resizebox{0.4\textwidth}{!}{
      \begin{overpic}
				[scale=0.5,trim=0 0 0 50,clip]{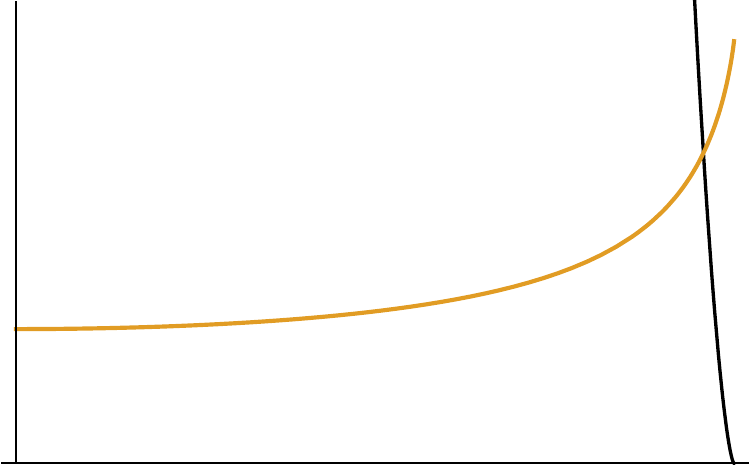}
   \put(98,0){\line(0,1){2}}
    \put(94,-6){$\sqrt{2}$}	 
     \put(101,0){$c$}

      \put(-12,16){$0.005$}
       \put(-12,48){$0.017$}
			\end{overpic}
}  &
  \qquad \
\resizebox{0.4\textwidth}{!}{
      \begin{overpic}
				[scale=0.5,trim=0 0 0 0,clip]{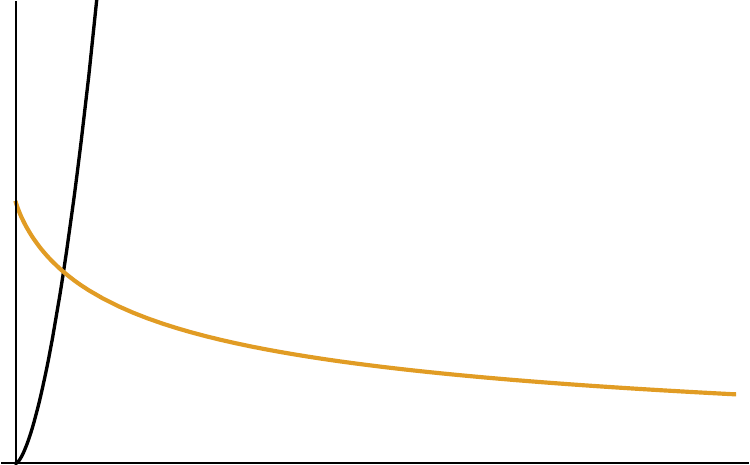}
				\put(101,0){$c$}
     \put(-12,31){$0.008$}
       \put(-12,60){$0.015$}
			\end{overpic}
} 
\end{tabular}
\caption{Plot of $E_\ka(c)$ in black and $\tilde{E}_\ka(c)$ in orange. On the left panel, we take parameters in $\D_1$,
with $\ka=0.4$ and $0\leq c<\sqrt{2}$. On the right panel,  the parameters belong to $\D_3$, with $\ka=0.6$ and $\sqrt{2}<c\leq 2$.}
\label{fig:Energicompa}
  \end{figure}




%
The outline of this paper is the following.  In Section~\ref{sec:properties}, 
we deduce the ODEs for the intensity profile.
Section~\ref{sec:construction:smooth} 
is devoted to the classification and construction of smooth solutions for the ODEs, while the analysis of weak solutions is done in Section~\ref{sec:construction:weak}.
In Section~\ref{sec:energymoment}, we provide some formulas for the energy and momentum 
of traveling waves. We study the minimization of the energy at fixed momentum in Section~\ref{sec:minimization}.
 in Section~\ref{section:cauchy}, we show the local well-posedness of \eqref{QGP} and the stability of dark solitons.
\paragraph{Notations.}
		The usual Lebesgue and Sobolev spaces of real-valued functions will be denoted, respectively, by $L^p(\R)$ and $W^{k,p}(\R)$, for $p\in [1,\infty]$ and $k\in\N$. Moreover, $W^{k,2}(\R)=H^k(\R)$. If $\Omega\subset\R$ is an open interval, then $H^1_0(\Omega)$  denotes the closure of $\boC_0^\infty(\Omega)$ in $H^1(\Omega)$. The notation for the Lebesgue spaces of complex-valued functions will be $L^p(\R;\C)$, and analogously for the Sobolev spaces of complex-valued functions, or simply $L^p(\R)$, if there is no ambiguity. For $k\geq1$, we introduce the homogeneous space $\dot{H}^{k}(\Omega)=\{u\in L^1_{\loc}(\R):u'\in H^{k-1}(\Omega)\}$ where $L^1_{\loc}(\R)$ is the Lebesgue space of functions integrable on every compact subset of $\R$. 
  Given a function $f$, $f(a^+)$ and $f(a^-)$ denote 
   the lateral limits of $f(x)$, as $x\to a^+$ and 
$x\to a^-$, respectively. We denote by  $\inner{~}{~}$ the real scalar product on $\C$:
		\[\inner{z_1}{z_2}=\Re(z_1\bar{z_2}).\]
For a complex-valued function $u$ (typically a function in $\boX(\R)$), 
 we define its intensity profile as the real-valued function $\eta=\eta_u=1-|u|^2$.
  In addition, we use the notation $u_{c,\ka}$  for a solution to \eqref{TWc} and 
 $\eta_{c,\ka}$ for its intensity profile, but 
  we will remove the subscripts ${c,\ka}$ in the proofs, when there is no ambiguity.

\section{Equations for the intensity profile}

\label{sec:properties}
We start by recalling properties satisfied by functions in $\boX(\R)$.
		\begin{lemma}
			\label{lem:finiteenergyassum}
			Let $u\in\boX(\R)$. Then $u$ belongs to $\boC^{1/2}(\R)\cap L^\infty(\R)$, and its intensity profile
   $\eta_u=1-|u|^2$ lies in $H^1(\R)$, with,
			\begin{equation}
				\label{eq:limeta}
	\lim_{\abs{x}\to \infty}\eta_u=0.
			\end{equation} 			
		\end{lemma}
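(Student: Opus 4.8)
The plan is to extract the two pieces of information encoded in the definition of $\boX(\R)$, namely $u'\in L^2(\R)$ and $\eta_u=1-\abs{u}^2\in L^2(\R)$, and to combine them. First I would record the global H\"older bound coming from $u'\in L^2$, then upgrade it to an $L^\infty$ bound using the $L^2$ control of $\eta_u$, and finally deduce that $\eta_u\in H^1(\R)$, whose Sobolev embedding yields both continuity and the decay \eqref{eq:limeta}.

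For the H\"older continuity: since $u\in H^1_{\loc}(\R)$, it is absolutely continuous on every compact interval, so for all $x,y\in\R$ I can write $u(x)-u(y)=\int_y^x u'(t)\,dt$ and apply Cauchy--Schwarz to obtain $\abs{u(x)-u(y)}\le \norm{u'}_{L^2(\R)}\abs{x-y}^{1/2}$. This proves $u\in\boC^{1/2}(\R)$ with a uniform seminorm, and in particular $\rho=\abs{u}$ satisfies the same estimate because $\bigl|\,\abs{u(x)}-\abs{u(y)}\,\bigr|\le\abs{u(x)-u(y)}$.

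The main obstacle is the $L^\infty$ bound, since a priori $\abs{u}$ could grow, and I would derive it from a localization argument linking the H\"older modulus to the $L^2$ control of $\eta_u$ (the case $u$ constant being trivial). Set $M=\norm{u'}_{L^2(\R)}>0$ and $\delta=1/(4M^2)$, so that the H\"older bound gives $\abs{\rho(x)-\rho(x_0)}\le 1/2$ whenever $\abs{x-x_0}\le\delta$. Then, at any point $x_0$ with $\rho(x_0)\ge 2$, one has $\rho(x)\ge\rho(x_0)-1/2>1$ on the whole interval $[x_0-\delta,x_0+\delta]$, whence $\abs{\eta_u(x)}=\rho(x)^2-1\ge(\rho(x_0)-1/2)^2-1$ there. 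Integrating this lower bound over the interval of fixed length $2\delta$ gives $\norm{\eta_u}_{L^2(\R)}^2\ge 2\delta\bigl[(\rho(x_0)-1/2)^2-1\bigr]^2$, which forces a uniform bound on $\rho(x_0)$; combined with the trivial bound at points where $\rho(x_0)<2$, this yields $u\in L^\infty(\R)$.

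With $u\in L^\infty(\R)$ in hand, the rest is routine. Differentiating $\eta_u=1-\abs{u}^2$ gives $\eta_u'=-2\inner{u'}{u}$, so $\abs{\eta_u'}\le 2\norm{u}_{L^\infty(\R)}\abs{u'}$ shows $\eta_u'\in L^2(\R)$; together with $\eta_u\in L^2(\R)$ this proves $\eta_u\in H^1(\R)$. Finally, the one-dimensional embedding $H^1(\R)\hookrightarrow\boC_0(\R)$ shows that $\eta_u$ is continuous and tends to $0$ at infinity, which is exactly \eqref{eq:limeta} (and recovers \eqref{nonzero0}). I expect the only delicate point to be the balancing of constants in the localization step; everything else is a direct application of Cauchy--Schwarz and the Sobolev embedding.
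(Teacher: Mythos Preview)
Your proof is correct and follows essentially the same route as the paper's: H\"older continuity from $u'\in L^2$, then $\eta_u'=-2\inner{u'}{u}\in L^2$, hence $\eta_u\in H^1(\R)$ and the embedding $H^1(\R)\hookrightarrow\boC_0(\R)$ gives \eqref{eq:limeta}. The only difference is that the paper compresses the $L^\infty$ bound into the phrase ``follows from the Morrey inequality,'' whereas you supply an explicit localization argument showing how the $L^2$ control on $\eta_u$ combines with the uniform H\"older modulus to bound $\abs{u}$; your version is more self-contained on that point, but the underlying mechanism is the same.
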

		\begin{proof}
The fact that $u$ is bounded and 1/2-H\"older continuous follows from the Morrey inequality. Then, using that $\eta_u'=-2\inner{u'}{u}$, we conclude that  $\eta_u'\in L^2(\R)$, so that $\eta_u\in H^1(\R)$ and therefore  \eqref{eq:limeta}  holds.
		\end{proof}
		
For notational simplicity, we omit from now on the subscript $u$ in the profile intensity $\eta_u$, if there is no ambiguity.

For the sake of completeness, we also recall the following result concerning the lifting of smooth functions.
\begin{lemma}\label{lem:phaseregu}
If $u\in\boC^k(\R;\C)$, for some $k\in\N^*$,  satisfies that $u(x)\ne0$, for all $x\in\R$, then there exists a phase $\theta\in\boC^k(\R,\R)$, such that the lifting $u=\rho e^{i\theta}$ holds in $\R$, where 
$\rho\in\boC^k(\R;\R)$ is given by $\rho={\abs{u}}$.
Similarly, if $u\in\boN\boX(\R)$,  then there is $\theta\in\boC(\R;\R)\cap \dot{H}^1(\R)$ such that  $u=\rho e^{i\theta}$ in $\R$, and also 
$\rho=\abs{u}\in\boC(\R;\R)\cap \dot{H}^1(\R)$.

\end{lemma}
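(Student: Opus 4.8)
The plan is to construct the phase by integrating the logarithmic derivative of $u$, treating both claims with the same formula. Throughout set $\rho=\abs{u}$, and record the two pointwise (resp.\ a.e.) identities $\rho'/\rho=\Re(u'/u)$, obtained by differentiating $\rho^2=u\bar u$, and $\langle iu,u'\rangle/\abs{u}^2=\Im(u'/u)$, which serves as the candidate for $\theta'$.

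For the smooth case, since $u$ never vanishes and $\abs{u}^2$ is a strictly positive $\boC^k$ function, $\rho=\sqrt{\abs{u}^2}\in\boC^k(\R)$. The function $\Im(u'/u)$ is then $\boC^{k-1}$, so $\theta(x):=\theta_0+\int_0^x \langle iu,u'\rangle/\abs{u}^2$ defines an element of $\boC^k(\R)$, where $\theta_0$ is fixed by $e^{i\theta_0}=u(0)/\abs{u(0)}$. To verify the lifting, put $g:=u\,e^{-i\theta}$ and compute $g'/g=u'/u-i\theta'=\Re(u'/u)=\rho'/\rho$; hence $(g/\rho)'\equiv 0$, so $g/\rho$ is the constant $1$, giving $u=\rho\,e^{i\theta}$.

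For $u\in\boN\boX(\R)$ the construction is identical but must be read in the weak sense. By Lemma~\ref{lem:finiteenergyassum}, $u$ is continuous and bounded with $\eta=1-\abs{u}^2\in H^1(\R)$, and $m:=\inf_\R\abs{u}>0$ by definition of $\boN\boX(\R)$; in particular $\eta\le 1-m^2<1$. Writing $\rho=\sqrt{1-\eta}$ and applying the chain rule for $H^1$ composed with the map $s\mapsto\sqrt{1-s}$ (smooth, hence Lipschitz, on the compact range of $\eta$), we get $\rho\in\boC(\R)$ with $\rho'=-\eta'/(2\rho)$; since $1/(2\rho)\le 1/(2m)$ is bounded and $\eta'\in L^2(\R)$, this yields $\rho\in\boC(\R)\cap\dot H^1(\R)$. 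Defining $\theta$ by the same integral and using $\abs{\langle iu,u'\rangle}/\abs{u}^2\le\abs{u'}/m\in L^2(\R)$, the integrand lies in $L^2(\R)$, so $\theta\in\boC(\R)\cap\dot H^1(\R)$.

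To close, set $v:=\rho\,e^{i\theta}$ and $g:=u/v$; since $\abs{v}=\rho\ge m>0$ and every factor is $H^1_{\loc}$, the quotient $g$ is again $H^1_{\loc}$, and its weak logarithmic derivative is $g'/g=u'/u-\rho'/\rho-i\theta'=0$, using the two identities above. Thus $g$ is constant, equal to $g(0)=1$, and $u=\rho\,e^{i\theta}$. I expect the only real difficulty to be precisely this last verification in low regularity: justifying the chain, product, and quotient rules for weak derivatives, and the cancellation in $g'/g$, when $u$ is merely $H^1_{\loc}$. Here one uses crucially that we work in one dimension, where $H^1_{\loc}\hookrightarrow\boC(\R)$ and products or quotients (with denominator bounded away from $0$) of $H^1_{\loc}$ functions stay in $H^1_{\loc}$ and obey the Leibniz rule; all displayed identities are then honest $L^2$ identities, and $g'\equiv 0$ forces $g$ constant by absolute continuity.
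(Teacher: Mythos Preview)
Your proof is correct and follows essentially the same route as the paper: both define $\theta$ by integrating $\langle iu,u'\rangle/|u|^2=-\langle iu',u\rangle/|u|^2$ and then verify the lifting by showing that $ue^{-i\theta}/\rho$ (equivalently, the paper's $we^{-i\tilde\theta}$ with $w=u/|u|$) has vanishing derivative. The only cosmetic difference is ordering: you establish $\rho,\theta\in\dot H^1(\R)$ directly from the integrand bounds before proving the lifting, whereas the paper first obtains the lifting and then reads off $\rho',\theta'\in L^2(\R)$ from $u'=\rho'e^{i\theta}+i\rho\theta'e^{i\theta}$.
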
 
\begin{proof}
If $u\in\boC^k(\R;\C)$ does not vanish, it is immediate that  $\rho=\sqrt{u\bar{u}}$ is of class $\boC^k.$ Now, let $w(x)=u(x)/\abs{u(x)}$ and $\tilde {\theta}=\int_0^x-iw'(s)\bar{w}(s)ds=\int_0^x-\inner{iu'(s)}{u(s)}/|u|^2(s)ds$, then we have $
	(w(x)e^{-i\tilde{\theta}(x)})'=e^{-i\tilde{\theta}}(w'-ww'\bar{w})=0,$
	since $w\bar{w}\equiv1$. Hence, $w(x)=w(0)e^{i\tilde{\theta}(x)}$, and we the conclusion follows by setting $\theta(x)=\tilde{\theta}(x)+\mathop{\mathrm{arg}}(w(0))$, where $\mathop{\mathrm{arg}}$ is any continuous determination of the argument defined near $w(0)$.

 The same argument holds for $u\in\boN\boX(\R)$, showing that $\theta$ is continuous. Also, since $u\in\boX(\R)$, we have
			\begin{equation}\label{eq:uprimpolaire}
				u'=\rho'e^{i\theta}+i\rho\theta'e^{i\theta}\in L^2(\R),
			\end{equation} 
			which implies that  $\rho', \rho\theta'\in L^2(\R)$. Since $\inf_{\R} \rho>0$, we conclude that $\theta'\in L^2(\R)$.
 \end{proof}
 
For a  nonvanishing solution $u$ to \eqref{TWc},  the lifting  $u=\rho e^{i\theta}$  is commonly used to derive a system of equations satisfied by $\rho$ and $\eta$, which are related to the hydrodynamical formulation for the
Gross--Pitaevskii equation \cite{CarlesDanSaut}. The equivalence between the hydrodynamical formulation and the traveling wave equation has been obtained for nonlocal Gross--Pitaevskii equations in  \cite{dLMar2022}. However, they use that the solutions are smooth. We verify now that this hydrodynamical formulation also holds for weak solutions to \eqref{TWc}.

    	\begin{corollary}
			\label{coro:eqpolaire}
			Let $u=\rho e^{i\theta}\in\boN\boX(\R)$, with  $\rho,\theta \in \boC(\R)\cap \dot{H}^1(\R)$ be weak solution to  \eqref{TWc}.
   Then $\theta'\in H^1(\R)$ and $(\rho,\theta)$ satisfies the following system:
			\begin{align}\label{TW:weaksystpol1}
				\theta'&=\frac{c(1-\rho^2)}{2\rho^2}.\\
				\label{TW:weaksystpol2}
				\Big(\rho'(1-2\ka\rho^2)\Big)'&=-2\ka\rho(\rho')^2-\rho(1-\rho^2)+\frac{c^2(1-\rho^2)^2}{4\rho^3}+\frac{c^2(1-\rho^2)}{2\rho}.
			\end{align}

Conversely, if there exists a function $\rho\in \boC(\R)$ such that $\inf_\R \rho>0$ and $1-\rho^2\in H^1(\R)$,  satisfying \eqref{TW:weaksystpol2},  then there exists a unique (up to a constant) function  $\theta\in\boC^1(\R)\cap \dot{H}^2(\R)$ satisfying \eqref{TW:weaksystpol1}. Moreover, the function defined by $u=\rho e^{i\theta}$ belongs to  $\boN\boX(\R)$ and is a solution to \eqref{TWc}. 
		\end{corollary}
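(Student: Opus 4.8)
The plan is to reduce the complex equation \eqref{TW:weak} to its two scalar ``hydrodynamic'' components by testing against the two real one-parameter families $\phi=iu\psi$ and $\phi=u\psi$, with $\psi\in\boC_0^\infty(\R;\R)$. Since $u\in\boN\boX(\R)$ gives $\inf_\R|u|>0$ and $u\in H^1_{\loc}(\R)\cap L^\infty(\R)$, both $u\psi$ and $iu\psi$ lie in $H^1(\R)$ with compact support; as every term of \eqref{TW:weak} depends continuously on $\phi$ in the $H^1$ topology, the identity extends from $\boC_0^\infty$ to all such test functions by density. These two families suffice: pointwise $\{u,iu\}$ is an $\R$-basis of $\C$ because $\rho=|u|>0$, so any $\phi$ decomposes as $\phi=\alpha u+\beta\,iu$ with $\alpha=\langle\phi,u\rangle/\rho^2$ and $\beta=\langle\phi,iu\rangle/\rho^2$ real-valued, of class $H^1$ with compact support.

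For the forward implication I would first insert $\phi=iu\psi$. The crucial simplification is $\langle u,iu\rangle=\Re(u\,\overline{iu})=0$, so $\langle u,\phi\rangle\equiv0$ and the entire quasilinear $\ka$-term disappears. Using $u'\bar u=\rho\rho'+i\rho^2\theta'$ one finds $\langle icu'+u(1-\rho^2),iu\psi\rangle=\tfrac{c}{2}\psi(\rho^2)'$ and $\langle u',(iu\psi)'\rangle=\psi'\rho^2\theta'$, so after one integration by parts \eqref{TW:weak} reads $\int_\R\psi'\big(\tfrac{c}{2}\rho^2+\rho^2\theta'\big)=0$ for all $\psi$. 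Hence $\tfrac{c}{2}\rho^2+\rho^2\theta'$ is a.e.\ constant; since $\rho\to1$ at infinity (Lemma~\ref{lem:finiteenergyassum}) and $\rho^2\theta'\in L^2(\R)$ (Lemma~\ref{lem:phaseregu}) forces $\rho^2\theta'\to0$ along a sequence, that constant is $c/2$, which is exactly \eqref{TW:weaksystpol1}. Writing $\theta'=\tfrac{c\eta}{2(1-\eta)}$ with $\eta=1-\rho^2\in H^1(\R)$ bounded away from $1$, the chain rule gives $\theta'\in H^1(\R)$. I would then insert $\phi=u\psi$, where $\langle u,\phi\rangle=\psi\rho^2$; computing each term with $|u'|^2=(\rho')^2+\rho^2(\theta')^2$ and $\langle u,u'\rangle=\tfrac12(\rho^2)'$, and replacing $\rho^2(\theta')^2$ by $\tfrac{c^2(1-\rho^2)^2}{4\rho^2}$ and $c\rho^2\theta'$ by $\tfrac{c^2}{2}(1-\rho^2)$ via \eqref{TW:weaksystpol1}, turns \eqref{TW:weak} into a weak identity for $\rho$ alone. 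A purely algebraic rearrangement using $(\rho^2)''=2(\rho')^2+2\rho\rho''$ then recasts the principal part as the divergence form $(\rho'(1-2\ka\rho^2))'$, giving precisely \eqref{TW:weaksystpol2}; this divergence form is what keeps the identity meaningful even where $1-2\ka\rho^2$ degenerates, so no regularity beyond $\rho\in\dot{H}^1(\R)$ is needed.

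For the converse I would define $\theta$ by integrating the explicit right-hand side of \eqref{TW:weaksystpol1}; as above $\theta'\in H^1(\R)$, hence $\theta\in\boC^1(\R)\cap\dot{H}^2(\R)$, unique up to an additive constant. Setting $u=\rho e^{i\theta}$, the bounds $\inf_\R\rho>0$, $1-\rho^2\in L^2(\R)$, together with $u'=(\rho'+i\rho\theta')e^{i\theta}\in L^2(\R)$ (here $\rho'\in L^2$ follows from $1-\rho^2\in H^1$ and $\inf\rho>0$), show $u\in\boN\boX(\R)$. To verify \eqref{TW:weak}, I would run the two previous computations in reverse: \eqref{TW:weaksystpol1} makes the weak functional vanish on every $iu\psi$, and \eqref{TW:weaksystpol2} makes it vanish on every $u\psi$; the decomposition $\phi=\alpha u+\beta\,iu$ combined with $H^1$-continuity of each term then yields \eqref{TW:weak} for all $\phi\in\boC_0^\infty(\R;\C)$.

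I expect the main obstacle to be the amplitude step: organizing the weak identity produced by $\phi=u\psi$ into the exact divergence form \eqref{TW:weaksystpol2} while only assuming $\rho\in\dot{H}^1(\R)$, so that $\rho''$ exists merely as a distribution and $1-2\ka\rho^2$ may vanish at the singular set of $u$. Keeping every integration by parts legitimate at this regularity, and confirming that the boundary constant in the phase equation is pinned down correctly by the conditions at infinity, are the points requiring the most care; the remainder is bookkeeping of the $\R$-bilinear form $\langle\cdot,\cdot\rangle$.
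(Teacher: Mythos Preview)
Your proposal is correct and follows essentially the same strategy as the paper: both reduce \eqref{TW:weak} to two real scalar identities by rotating the test function into the frame of $u$ via the polar decomposition. The paper sets $\phi=e^{i\theta}\varphi$ with $\varphi$ real or purely imaginary, whereas you set $\phi=u\psi$ and $\phi=iu\psi$; since $u=\rho e^{i\theta}$, your test functions are the paper's with the specific choice $\varphi_1=\rho\psi$.

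One small point worth noting: for the amplitude identity the paper's choice $\phi=e^{i\theta}\varphi_1$ with $\varphi_1$ arbitrary in $H^1(\R;\R)$ yields directly the divergence form $(\rho'(1-2\ka\rho^2))'=\cdots$ of \eqref{TW:weaksystpol2}, while your choice $\phi=u\psi$ naturally produces the weak identity for $(\rho\,\rho'(1-2\ka\rho^2))'$. You then need one extra (legitimate) step, dividing by $\rho$ using that $\rho$ is continuous and bounded away from zero, to recover the stated form. This is the ``algebraic rearrangement'' you allude to, but it is not merely the formal identity $(\rho^2)''=2(\rho')^2+2\rho\rho''$; rather it is the distributional product rule $\rho\cdot(\rho'(1-2\ka\rho^2))'=(\rho\rho'(1-2\ka\rho^2))'-(\rho')^2(1-2\ka\rho^2)$ together with multiplication by $1/\rho\in C(\R)\cap L^\infty(\R)$. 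With that clarification, your argument goes through.
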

		\begin{proof}
	Let $u=\rho e^{i\theta}\in\boN\boX(\R)$.	To show \eqref{TW:weaksystpol1}--\eqref{TW:weaksystpol2}, we take $\varphi\in H^1(\R)$ and write \eqref{TW:weak} with $\phi=e^{i\theta}\varphi$.
			We have $\phi\in H^1(\R)$ and the equation reads, using that $\inner{z_1e^{i\theta}}{z_2e^{i\theta}}=\inner{z_1}{z_2}$, for all $z_1,$ $z_2\in\C$ and all $\theta\in\R$,
			\begin{align}\label{TW:weakpol}
				\int_\R\inner{ic\rho'-c\theta'\rho+\rho(1-\rho^2)}{\varphi}-\inner{\rho'+i\rho\theta'}{i\theta'\varphi+\varphi'}+2\ka\inner{\rho}{\rho'+i\theta'\rho}\inner{\rho}{\varphi}'=0.
			\end{align}
			Taking $i\varphi_1$ and then $\varphi_1$ for any $\varphi_1\in H^1(\R;\R)$ in place of $\varphi$ in \eqref{TW:weakpol}, we deduce:
			\begin{align}\label{TW:weaksystpol0}
				&	\int_\R c\rho'\varphi_1+\rho'\theta'\varphi_1-\theta'\rho\varphi_1'=0,\\
				\label{TW:weaksystpol05}
				&\int_\R(-c\theta'\rho+\rho(1-\rho^2))\varphi_1-(\theta')^2\rho\varphi_1-\rho'\varphi_1'+2\ka\rho\rho'(\rho\varphi_1)'=0.
			\end{align}
			Setting $\varphi_1=\rho\varphi_2\in H^1(\R;\R)$ in \eqref{TW:weaksystpol0}, we obtain  $\int_\R-c(1-\rho^2)'\varphi_2/2-\theta'\rho^2\varphi_2'=0$, it follows, by integration by parts of the first term, that $(-c(1-\rho^2)/2+\theta'\rho^2)'=0$ in the distributional sense. We conclude from the integrability at infinity  of  $1-\rho^2$ and  $\theta'^2$ that equation \eqref{TW:weaksystpol1} is satisfied in $L^2(\R)$, which in turn implies that $\theta'\in H^1(\R)$. In particular, \eqref{TW:weaksystpol1} is satisfied pointwisely. To deduce equation \eqref{TW:weaksystpol2}, we just replace $\theta'$ by $c(1-\rho^2)/(2\rho^2)$ in \eqref{TW:weaksystpol05}.

			We now prove the converse; it follows from the assumptions that $\rho$ and $1/\rho$ are essentially bounded, with weak derivatives in $L^2(\R)$. Similarly to what has been done to obtain \eqref{TW:weaksystpol1}--\eqref{TW:weaksystpol2} from  \eqref{TWc}, choosing appropriate test functions in $H^1(\R)$, we deduce that $u=\rho e^{i\theta}$ belongs to $\boN\boX(\R)$ and satisfies equation \eqref{TWc} in the weak sense.
		\end{proof}
		\begin{remark}\label{rem:localeq}
We also deduce from the proof of Corollary~\ref{coro:eqpolaire} that  if $u\in \boX(\R)$ is a solution to  \eqref{TWc}, with 
$\inf_{[a,b]}\abs{u}>0$, for some interval $(a,b)$, so that  $u=\rho e^{i\theta}$ in $[a,b]$, then 	 there exists a constant $K\in\R$ such that 
\begin{equation}
   \theta'=\frac{c(1-\rho^2)}{2\rho^2}+K, \quad  \text{ in }(a,b).
\end{equation}
		\end{remark}
The next result shows that equation \eqref{TWc} can be recast as two equations for the (real-valued) intensity profile $\eta$, which is key for our classification results, in the same spirit of  \cite{bethuel2008existence,dLMar2022,deLaire4,deLaireLopez2}.
  \begin{proposition}
  \label{prop:eqeta}
		Let $u_{c,\ka}\in\boC^2(\R)\cap \boX(\R)$ be a solution  to \eqref{TWc}. Then $\eta_{c,\ka}=1-|u_{c,\ka}|^2,$ satisfies 
			\begin{align}
				\label{eta2}
    			(1- 
 2\ka+2\ka\eta_{c,\ka})\eta_{c,\ka}''+\ka(\eta_{c,\ka}')^2=-3\eta_{c,\ka}^2+(2-c^2)\eta_{c,\ka}, \quad \text{ in }\R.\\
				\label{eta1}
				(1-2\ka+2\ka\eta_{c,\ka})(\eta_{c,\ka}')^2=\eta_{c,\ka}^2(2-c^2 -2\eta_{c,\ka}), \quad \text{ in }\R.
			\end{align}
			In particular, assuming without loss of generality that 
			$\abs{\eta_{c,\ka}}$  reaches a global maximum at the origin, we obtain either
			\begin{align}
				\label{eq:maxeta}
				\eta(x)=0,\text{ for all }x\in\R,\quad\text{ or }\quad\eta(0)=1-\frac{c^2}{2}.
			\end{align}
		\end{proposition}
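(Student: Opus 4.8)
The plan is to extract from \eqref{TWc} two scalar first integrals, one for $\eta$ and one for the real current $J:=\inner{iu'}{u}$, and then to combine them with a purely algebraic identity. Throughout I write $u$, $\eta=1-|u|^2$ for $u_{c,\ka}$, $\eta_{c,\ka}$, and I use that $u\in\boC^2(\R)$, so all the pointwise computations are licit. The basic bookkeeping identities are $\eta'=-2\inner{u'}{u}$ and $\eta''=-2\inner{u''}{u}-2|u'|^2$, together with
\begin{equation*}
|u'|^2|u|^2=\inner{u'}{u}^2+\inner{iu'}{u}^2=\tfrac14(\eta')^2+J^2,
\end{equation*}
which holds everywhere and will be the device used to eliminate $|u'|^2$.

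First I would produce the two first integrals. Pairing \eqref{TWc} with $iu$ (equivalently computing $J'=\inner{iu''}{u}$ and inserting $u''$ from the equation, using $\inner{iu}{u}=0$ and $\inner{u'}{u}=-\eta'/2$) gives $J'=-\tfrac{c}{2}\eta'$, hence $J=-\tfrac{c}{2}\eta+C$ for some constant $C$. Pairing \eqref{TWc} with $u'$ (using $\inner{iu'}{u'}=0$, $\inner{u''}{u'}=\tfrac12(|u'|^2)'$, $\inner{u}{u'}=-\eta'/2$) gives $\tfrac{d}{dx}\big(|u'|^2-\tfrac12\eta^2-\tfrac{\ka}{2}(\eta')^2\big)=0$, hence $|u'|^2-\tfrac12\eta^2-\tfrac{\ka}{2}(\eta')^2=E_0$. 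Separately, inserting $u''$ from \eqref{TWc} into $\eta''=-2\inner{u''}{u}-2|u'|^2$ and using $\inner{u}{u}=1-\eta$ yields the relation $\eta''(1-2\ka+2\ka\eta)=2cJ+2\eta(1-\eta)-2|u'|^2$.

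The crux is to show $C=E_0=0$, and this is where the finite-energy hypothesis enters. For $E_0$: since $u\in\boX(\R)$ and $\eta\in H^1(\R)$ by Lemma~\ref{lem:finiteenergyassum}, the three functions $|u'|^2$, $\eta^2$, $(\eta')^2$ all lie in $L^1(\R)$, so integrating the energy first integral over $\R$ forces $\int_\R E_0=0$, i.e.\ $E_0=0$. For $C$: bounding $J^2=\inner{iu'}{u}^2\le|u'|^2|u|^2\le\norm{u}_{L^\infty}^2|u'|^2$ shows $J^2\in L^1(\R)$; since $J$ is continuous and $J\to C$ at infinity (because $\eta\to0$ by \eqref{eq:limeta}), a nonnegative continuous $L^1$ function with limit $C^2$ must satisfy $C=0$. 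With $C=E_0=0$ we have $J=-\tfrac{c}{2}\eta$ and $|u'|^2=\tfrac12\eta^2+\tfrac{\ka}{2}(\eta')^2$. Substituting these into the relation $\eta''(1-2\ka+2\ka\eta)=2cJ+2\eta(1-\eta)-2|u'|^2$ gives \eqref{eta2}; substituting them instead into the algebraic identity $|u'|^2(1-\eta)=\tfrac14(\eta')^2+J^2$ and collecting the $(\eta')^2$ terms (whose coefficient is exactly $-\tfrac14(1-2\ka+2\ka\eta)$) gives \eqref{eta1}. Finally, for \eqref{eq:maxeta}, the assumption that $|\eta|$ attains its global maximum at $0$ gives $\eta'(0)=0$; evaluating \eqref{eta1} at $0$ yields $\eta(0)^2\big(2-c^2-2\eta(0)\big)=0$, so either $\eta(0)=0$, whence $|\eta|\le|\eta(0)|=0$ forces $\eta\equiv0$, or $\eta(0)=1-c^2/2$.

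The step I expect to be the main obstacle is the rigorous determination of the constants $C$ and $E_0$: the pointwise manipulations leading to the first integrals are routine, but one must pin down the constants using \emph{only} the finite-energy information ($u'\in L^2(\R)$, $\eta\in H^1(\R)$, $\eta\to0$) and the mere $\boC^2$ regularity, rather than the exponential decay established later. The $L^1$-integration argument for $E_0$ and the continuity-plus-integrability argument for $C$ are precisely what make this clean without presupposing any pointwise asymptotics for $u'$.
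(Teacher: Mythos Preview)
Your proof is correct and follows the same overall structure as the paper's: both extract the two first integrals $J=-\tfrac{c}{2}\eta$ and $|u'|^2=\tfrac{1}{2}\eta^2+\tfrac{\ka}{2}(\eta')^2$, and then combine them with the relation coming from pairing \eqref{TWc} with $u$ to obtain \eqref{eta2}. Two differences are worth noting. First, to pin down the integration constants, the paper integrates from $x$ to a sequence $R_n\to\infty$ chosen so that $u'(R_n)\to 0$ (such a sequence exists since $u'\in L^2(\R)\cap\boC(\R)$), whereas your $L^1$ arguments avoid any pointwise information on $u'$ at infinity. Second, and more substantively, the paper obtains \eqref{eta1} by multiplying \eqref{eta2} by $\eta'$ and integrating once more (again using the sequence $R_n$ to kill the new constant), while you bypass this extra integration entirely via the Lagrange-type identity $|u'|^2|u|^2=\tfrac{1}{4}(\eta')^2+J^2$, which delivers \eqref{eta1} purely algebraically from the two first integrals already in hand. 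Your route is slightly more economical; the paper's has the mild advantage that the passage from \eqref{eta2} to \eqref{eta1} by multiplying by $\eta'$ and integrating is reusable later (e.g.\ in Lemma~\ref{lem:buleq}).
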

		\begin{proof}
			Let $u=u_1+iu_2$, writing the equations satisfied by $u_1$ and $u_2$, we obtain
			\begin{equation}\label{eq:Reu}
				u_1''-cu_2'+u_1(\eta+\ka\eta'')=0,\quad \text{in }\R,
			\end{equation}
			\begin{equation}\label{eq:Imu}
				u_2''+cu_1'+u_2(\eta+\ka\eta'')=0,\quad \text{in }\R.
			\end{equation}
			Multiplying \eqref{eq:Reu} by $-u_2$, and \eqref{eq:Imu} by $u_1$, and adding these equations, we get
			\begin{align*}
				(u_1u_2'-u_1'u_2)'=\frac{c}{2}\eta'.
			\end{align*}
			Since $u'\in L^2(\R)\cap\boC(\R)$, there exists a sequence $(R_n)_{n\in\N}$ such that $\lim_{n\rightarrow \infty}R_n=\infty$ and $u'(R_n)=u_1'(R_n)+iu_2'(R_n)\rightarrow0\text{ as }n\rightarrow \infty.$
			Integrating from $x\in\R$ to $R_n$, we obtain 
			\begin{equation}\label{eq:phase1}
				(u_1u_2'-u_1'u_2)(x)=\frac{c}{2}\eta(x)+K_n,\quad\text{ for all }x\in\R,
			\end{equation}
			where $K_n= (u_1u_2'-u_1'u_2)(R_n)-\frac{c}{2}\eta(R_n)\rightarrow0$, as $n\rightarrow \infty$.
			Thus equation \eqref{eq:phase1} reads
			\begin{equation}\label{eq:phase2}
				(u_1u_2'-u_1'u_2)=\frac{c}{2}\eta,\quad\text{in }\R.
			\end{equation}
			On the other hand, multiplying \eqref{eq:Reu} by $u_1'$ and \eqref{eq:Imu} by $u_2'$, and adding these equations, we have
			\begin{equation*}
				\frac{1}{2}\Big((u_1')^2+(u_2')^2\Big)'=\frac{\eta'}{2}(\eta+\ka\eta''),
			\end{equation*}
			so integrating this relation from $x$ to $R_n$, and taking the limit as before,  we obtain
			\begin{align}\label{eq:quadratic}
				\abs{u'}^2&=\frac{1}{2}(\eta^2+\ka(\eta')^2).
			\end{align}
			In addition, multiplying \eqref{eq:Reu} by $u_1$, \eqref{eq:Imu} by $u_2$ and adding these equations, we have
			\begin{align*}
				c(u_1u_2'-u_1'u_2)=u_1u_1''+u_2u_2''+|u|^2(\eta+\ka\eta'').
			\end{align*}
		We are now in a position to deduce \eqref{eta2}. Indeed, since
			$\eta''=-2(|u'|^2+u_1u_1''+u_2u_2'')$, using \eqref{eq:phase2}--\eqref{eq:quadratic}, we get
			\begin{align*}
				\eta''&=-(\eta^2+\ka(\eta')^2)-{c^2}\eta+2\abs{u}^2(\eta+\ka\eta''),
			\end{align*}
			which is exactly \eqref{eta2}.
			To obtain \eqref{eta1}, we notice  that \begin{align}\label{eq:eta21}
				\Big(\frac{(\eta')^2}{2}(1-2\ka+2\ka\eta)\Big)'=\eta''\eta'(1-2\ka+2\ka\eta)+\ka(\eta')^3,
			\end{align}
			so that  multiplying \eqref{eta2} by $\eta'$, integrating from $x$ to $R_n$ and taking the limit as $n\to\infty$, we finally deduce \eqref{eta1}. Since $\eta$ satisfies \eqref{eq:limeta}, it must reach a global extremum at some $x_0\in\R$. We fix $x_0=0$ and evaluate \eqref{eta1} at $0$ to obtain \eqref{eq:maxeta}.
		\end{proof}
		\begin{remark}\label{rem:etaeq}
			Notice that if $\eta\in\boC^2(\R)$ satisfies \eqref{eta1}, and if $\eta'$ only vanishes on a set of measure zero, then $\eta$ satisfies \eqref{eta2}. Conversely, if $\eta\in\boC^2(\R)$ satisfies \eqref{eta2} and if $\eta\in H^1(\R)$, then $\eta$ satisfies \eqref{eta1}.  This remark motivates our choice to study solutions to both \eqref{eta2}--\eqref{eta1} to shorten the statements of the results.
   \end{remark}
		Equation \eqref{eta2}--\eqref{eta1} provides a simpler formulation to \eqref{TWc} given by two real-valued
		equations. Combining Proposition~\ref{prop:eqeta} with the following result, we deduce the equivalence of these problems for regular solutions with nonzero speed $c$. 
		
		\begin{proposition}\label{prop:exiu}
			If $c>0$ and $\eta_{c,\ka}\in \boC^2(\R)$ is a solution to  \eqref{eta2}--\eqref{eta1}, then, $\eta_{c,\ka}<1$ on $\R$, and the function  
			\begin{equation}\label{eq:deftheta}
				\theta_{c,\ka}(x)=\frac{c}{2}\int_a^x\Big(\frac{\eta_{c,\ka}(y)}{1-\eta_{c,\ka}(y)}\Big)dy,\quad\text{ for all } x\in\R,
			\end{equation} is well-defined in $\R$, for all $a\in\R$.
			Moreover, the function $u_{c,\ka}=\sqrt{1-\eta_{c,\ka}} e^{i\theta_{c,\ka}}$ belongs to $\boC^2(\R;\C)$ and satisfies \eqref{TWc}. Also, if $\eta_{c,\ka}\in  H^1(\R)$, then $u_{c,\ka}\in\boN\boX(\R)$.
		\end{proposition}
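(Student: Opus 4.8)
The plan is to first pin down the sign of $1-\eta_{c,\ka}$, then construct the phase and verify \eqref{TWc} in polar form, and finally read off the functional-analytic conclusions. Throughout I drop the subscripts and write $\eta=\eta_{c,\ka}$ and $\rho=\sqrt{1-\eta}$.

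\emph{Step 1: $\eta<1$ on $\R$.} I would argue by contradiction using \eqref{eta1}. If $\eta(x_0)=1$ at some point, the left-hand side of \eqref{eta1} equals $(\eta'(x_0))^2\ge0$ while the right-hand side equals $-c^2<0$, impossible since $c>0$. Hence $\eta$ never takes the value $1$, and since $\eta$ is continuous and $\R$ is connected, $\eta(\R)$ lies entirely in $(-\infty,1)$ or in $(1,\infty)$. To exclude $\eta>1$ everywhere, observe that the right-hand side $\eta^2(2-c^2-2\eta)$ of \eqref{eta1} is strictly negative for $\eta>1$, so $\eta'$ cannot vanish there; thus $\eta$ is strictly monotone on $\R$, and being bounded below by $1$ it admits a finite limit $\ell\ge1$ on at least one end, along which a sequence with $\eta'\to0$ can be extracted. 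Passing to the limit in \eqref{eta1} forces $\ell^2(2-c^2-2\ell)=0$, which contradicts $\ell\ge1$ (as then $2-c^2-2\ell\le-c^2<0$). Therefore $\eta<1$ on $\R$.

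\emph{Step 2: the phase and the solution.} Since $\eta\in\boC^2(\R)$ and $\eta<1$, the integrand $\eta/(1-\eta)$ in \eqref{eq:deftheta} is continuous on $\R$, so $\theta=\theta_{c,\ka}$ is well-defined and of class $\boC^2$ (indeed $\theta'=\tfrac{c}{2}\,\eta/(1-\eta)$ is $\boC^2$), while $\rho=\sqrt{1-\eta}>0$ is $\boC^2$; hence $u=\rho e^{i\theta}\in\boC^2(\R;\C)$. To check $u$ solves \eqref{TWc}, I would divide by $e^{i\theta}$ and separate real and imaginary parts. The choice $\theta'=c(1-\rho^2)/(2\rho^2)$ makes the imaginary part $c\rho'+2\rho'\theta'+\rho\theta''=0$ hold identically, via the short computation $\theta''=-c\rho'/\rho^3$. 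For the real part $\rho''-\rho(\theta')^2-c\rho\theta'+\rho\eta+\ka\rho\eta''=0$, I would substitute $\rho=\sqrt{1-\eta}$ and $\theta'=\tfrac{c}{2}\eta/(1-\eta)$, clear denominators, and use the factorization $-2(1-\eta)+4\ka(1-\eta)^2=-2(1-\eta)(1-2\ka+2\ka\eta)$ to replace the $\eta''$-terms via \eqref{eta2} and the resulting $(\eta')^2$-terms via \eqref{eta1}; after collecting powers of $\eta$ everything cancels to $0=0$. This is the converse of the computation in Proposition~\ref{prop:eqeta}.

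\emph{Step 3: the $H^1$ case.} If moreover $\eta\in H^1(\R)$, then $1-|u|^2=\eta\in L^2(\R)$, and the identity $|u'|^2=(\rho')^2+\rho^2(\theta')^2=\tfrac12(\eta^2+\ka(\eta')^2)$, namely \eqref{eq:quadratic} (equivalent to \eqref{eta1}), shows $u'\in L^2(\R)$; together with $u\in\boC^2$ this gives $u\in\boX(\R)$. Finally, the Sobolev embedding yields $\eta(x)\to0$ as $\abs{x}\to\infty$, so by continuity and $\eta<1$ the maximum of $\eta$ is attained and is $<1$, whence $\inf_\R\abs{u}=\sqrt{1-\max_\R\eta}>0$ and $u\in\boN\boX(\R)$. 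The only genuinely non-routine point is excluding $\eta>1$ in Step~1: this cannot be detected pointwise and requires the global monotonicity-and-asymptotics argument; the verification that $u$ solves \eqref{TWc} is lengthy but purely algebraic once both \eqref{eta1} and \eqref{eta2} are used.
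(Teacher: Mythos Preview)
Your proof is correct and follows essentially the same route as the paper's: rule out $\eta=1$ via \eqref{eta1}, build the phase from \eqref{eq:deftheta}, verify \eqref{TWc} by checking real and imaginary parts in polar form using \eqref{eta2}--\eqref{eta1}, and read off $u\in\boN\boX(\R)$ from $\eta\in H^1(\R)$. Your Step~1 is in fact slightly more careful than the paper's, which writes $\rho=\sqrt{1-\eta}$ and argues $\rho>0$ without explicitly excluding the branch $\eta>1$ everywhere; your monotonicity-and-limit argument fills that gap cleanly.
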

		\begin{proof}
			Let $u=\rho e^{i\theta}$, with $\rho=\sqrt{1-\eta}$,  we show first that $\rho(x)>0,$ for all $x\in\R$. Suppose that $\rho(x_0)=0$, for some $x_0\in\R$, then $\eta(x_0)=1$ and is necessarily a global maximum of $\eta$. Equation \eqref{eq:maxeta} implies that $c=0$, contradicting our assumptions.
				We conclude that $\theta$ is well-defined and belongs to $\boC^3(\R)$.	In view of \eqref{eq:deftheta}, we have
			\begin{equation}\label{eq:uprim mod}
				u'(x)=-\frac{e^{i\theta(x)}\eta'(x)}{2\sqrt{1-\eta(x)}}+ie^{i\theta(x)}\frac{c\eta(x)}{2\sqrt{1-\eta(x)}}.
			\end{equation} Computing the left-hand side of \eqref{TWc} using \eqref{eq:uprim mod}, one can check that the imaginary part is zero. Using \eqref{eta2}--\eqref{eta1}, computations yield that the real part is zero too, thus $u$  satisfies \eqref{TWc}. Now suppose additionally that $\eta\in H^1(\R)$. Using \eqref{eq:uprim mod} we get \begin{equation}\label{eq:L2 uprim polar}
				\int_\R |u'|^2=\int_\R \frac{\eta'^2}{4(1-\eta)}+\frac{c^2\eta^2}{4(1-\eta)},
			\end{equation}
			Then $|u'|^2$ is integrable, since $\eta\in H^1(\R)$ and $\inf_\R(1-\eta)>0$. Therefore $u\in\boN\boX(\R)$.
		\end{proof}
		\begin{remark}\label{rem:equiv}
			
   If $u=\sqrt{1-\eta}e^{i\theta}\in\boN\boX(\R)$ is a solution to \eqref{TWc}, 
     then the formulas for  $E_\ka(u)$ and $p(u)$ in \eqref{eq:Eupol} and \eqref{def:mom2},
can be simplified as    
			\begin{equation}
				\label{eq:Euregu}E_\ka(u)=\frac{1}{2}\int_\R\eta^2\quad 		\text{ and }\quad
				p(u)=\frac{c}{4}\int_\R\frac{\eta^2}{1-\eta},
			\end{equation}
		by using respectively \eqref{eq:quadratic} and \eqref{TW:weaksystpol1}. 
		\end{remark}
\section{Classification of smooth traveling waves}
\label{sec:construction:smooth}
		
Equation \eqref{eta1}  yields necessary conditions on $(c,\ka)$ for the existence of nontrivial traveling waves. For instance, 
	taking $x$ large enough, we expect that 
		\begin{equation}
			\label{condition}
			0<(2-c^2)(1-2\ka), \text{ or equivalently }(c,\ka)\in \D, 
		\end{equation}
		is a necessary condition for the existence of nontrivial solutions.
	 Corollary~\ref{coro:crittrivial} and Proposition~\ref{prop:k1/2} aim to provide a rigorous proof of this fact.
		\begin{corollary}
			\label{coro:crittrivial}	
Let $c\geq 0$ and $\ka\in \R$, with $\ka\ne1/2$. Assume that $(c,\kappa)\notin \D$ and that  $u\in\boC^2(\R)\cap\boX(\R)$ is a solution to \eqref{TWc}.
Then $u$ is a trivial solution, i.e., there is a constant $\phi \in \R$ such that 
$u(x)=e^{i\phi}$, for all $x\in \R$.
	\end{corollary}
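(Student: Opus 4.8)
The plan is to reduce the statement to showing that the intensity profile $\eta=\eta_u=1-|u|^2$ vanishes identically, and then to recover the triviality of $u$ from the pointwise identities already established. First I would invoke Proposition~\ref{prop:eqeta}: since $u\in\boC^2(\R)\cap\boX(\R)$ solves \eqref{TWc}, its profile $\eta$ satisfies the two ODEs \eqref{eta1}--\eqref{eta2}, as well as the dichotomy \eqref{eq:maxeta} (after translating so that $|\eta|$ attains its global maximum at the origin). I would also record that, by the equivalence in \eqref{condition}, the hypothesis $(c,\ka)\notin\D$ is the same as the sign condition $(2-c^2)(1-2\ka)\leq 0$, while the assumption $\ka\neq 1/2$ guarantees $1-2\ka\neq 0$. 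The goal then becomes to prove $\eta\equiv 0$.

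Suppose, for contradiction, that $\eta\not\equiv 0$. The borderline speed $c=\sqrt 2$ is immediate: then \eqref{eq:maxeta} forces $\eta(0)=1-c^2/2=0$, but $\eta(0)$ is the global maximum of $|\eta|$, so $\eta\equiv 0$, a contradiction. Hence $c\neq\sqrt 2$, and $2-c^2$ and $1-2\ka$ are nonzero with strictly opposite signs. Now I examine \eqref{eta1} far from the origin. Since $\eta\in H^1(\R)$ by Lemma~\ref{lem:finiteenergyassum}, we have $\eta(x)\to 0$ as $|x|\to\infty$, so the leading coefficient $1-2\ka+2\ka\eta$ tends to $1-2\ka$ and the factor $2-c^2-2\eta$ tends to $2-c^2$. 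Choosing $R$ large enough that both keep their limiting sign for $|x|\geq R$, equation \eqref{eta1} equates a term with the sign of $1-2\ka$ times $(\eta')^2$ to $\eta^2$ times a term with the sign of $2-c^2$. Because these signs are opposite, the two sides have opposite signs and must both vanish; in particular $\eta^2(2-c^2-2\eta)=0$ with the second factor nonzero, whence $\eta(x)=0$ for all $|x|\geq R$.

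It remains to propagate $\eta\equiv 0$ inward, which is the crux of the argument. Here I would use the second-order equation \eqref{eta2}. At $x=R$ we have $\eta(R)=0$ and $\eta'(R)=0$ (the latter since $\eta$ vanishes on $[R,\infty)$ and $\eta\in\boC^2$), and the leading coefficient equals $1-2\ka\neq 0$ there. Consequently, near any zero of $\eta$ the coefficient $1-2\ka+2\ka\eta$ stays nonzero, so \eqref{eta2} can be written in normal form $\eta''=G(\eta,\eta')$ with $G$ smooth, and the zero function is the unique solution with vanishing Cauchy data. A standard continuation argument then finishes: setting $x_*=\inf\{t:\eta\equiv 0\text{ on }[t,\infty)\}$, if $x_*$ were finite then $\eta(x_*)=\eta'(x_*)=0$ and Cauchy--Lipschitz uniqueness at $x_*$ (where the coefficient is again $1-2\ka\neq 0$) would extend the vanishing slightly to the left of $x_*$, contradicting minimality; hence $x_*=-\infty$ and $\eta\equiv 0$ on $\R$, contradicting our assumption. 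I expect this propagation to be the main obstacle, but the difficulty—non-degeneracy of \eqref{eta2} along the continuation—is automatic, since the coefficient takes the value $1-2\ka\neq 0$ wherever $\eta=0$.

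Finally, with $\eta\equiv 0$, hence $|u|\equiv 1$, I would conclude using the pointwise identity \eqref{eq:quadratic}, namely $|u'|^2=\tfrac12\big(\eta^2+\ka(\eta')^2\big)=0$. Thus $u'\equiv 0$, so $u$ is constant, and $|u|=1$ gives $u\equiv e^{i\phi}$ for some $\phi\in\R$, as claimed.
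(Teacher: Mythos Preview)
Your proof is correct but follows a different route from the paper's. The paper exploits the extremum value: assuming $\eta\not\equiv 0$, the dichotomy \eqref{eq:maxeta} gives $\eta(0)=1-c^2/2$, and then in each case ($c>\sqrt 2,\ \ka<1/2$ or $c<\sqrt 2,\ \ka>1/2$) the intermediate value theorem produces a single point $x_1$ where $\eta(x_1)$ takes an explicitly chosen value (e.g.\ $\max\{(2-c^2)/4,(2\ka-1)/(4\ka)\}$) for which $1-2\ka+2\ka\eta(x_1)$ and $2-c^2-2\eta(x_1)$ have strict opposite signs, contradicting \eqref{eta1} at that point. Your argument instead works at infinity, using $\eta\to 0$ to force $\eta\equiv 0$ on $\{|x|\ge R\}$, and then propagates the vanishing inward by Cauchy--Lipschitz uniqueness for \eqref{eta2} in normal form at the nondegenerate value $1-2\ka\neq 0$. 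The paper's method is slightly more self-contained (no ODE uniqueness invoked) and uses \eqref{eq:maxeta} directly; your method avoids the case-by-case choice of $x_1$ and is arguably more robust. You also make explicit the final step from $\eta\equiv 0$ to $u\equiv e^{i\phi}$ via the identity \eqref{eq:quadratic}, which the paper leaves implicit.
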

\begin{proof}
Let $\eta\in H^1(\R)$ be the solution to \eqref{eta2}--\eqref{eta1} given by Proposition \ref{prop:eqeta}. We distinguish three cases for the parameters:
(i) $c=\sqrt{2}$ and $\ka\in\R$, (ii) $c>\sqrt{2}$ and $\ka<1/2$, (iii) $0\leq c<\sqrt{2}$ and $\ka>1/2$. We show now that in each case we can conclude that $\eta\equiv0$  in $\R$.

If (i) is satisfied, then \eqref{eq:maxeta} implies that $0$ is the only possible global extremum of $\eta$, thus $\eta\equiv0$.
In the case (ii), suppose by contradiction that $\eta$ is not the zero function, then, using \eqref{eq:maxeta},  $\eta(0)=1-c^2/2<0$. By \eqref{eq:limeta} and,  using the intermediate value theorem, we infer that there exists $x_1\in\R$ such that \[\eta(x_1)=\mathrm{max}\Big\{\frac{2-c^2}{4},\frac{2\ka-1}{4\ka}\Big\} \text{ if }\ka>0,\text{ and }\eta(x_1)=\frac{2-c^2}{4} \text{ if }\ka\leq0.\]
Hence $\eta(x_1)^2(2-c^2-2\eta(x_1))<0$ and $1-2\ka+2\ka\eta(x_1)>0$. Therefore, computing the sign of both sides of equation \eqref{eta1}, we obtain a contradiction. We conclude that $\eta\equiv0$.
	Case (iii) can be treated analogously to case (ii), by taking $\eta(x_1)=\mathrm{min}\Big\{\frac{2-c^2}{4},\frac{2\ka-1}{4\ka}\Big\}.$
\end{proof}
Corollary~\ref{coro:crittrivial} provides a first nonexistence result for nontrivial finite energy solutions to \eqref{TWc}, where the case $\ka=1/2$ was excluded.
We will handle the case $\ka=1/2$, finding the explicit smooth solutions to \eqref{TWc}, and checking that this solution does not belong to the energy space. This explicit solution will also enable us to construct the dark compactons in Subsection~\ref{subsec:buildw}.

\begin{proposition}\label{prop:k1/2}
	Let $\ka=1/2$ and $c\geq0$. Assume that $\eta \in C^2(\R)$ is a  nonzero solution to  \eqref{eta2}--\eqref{eta1}, satisfying \eqref{eq:maxeta}. Then for all $x\in\R$, we have
\begin{equation}\label{eq:formuleeta05}
	\eta(x)=\frac{(2-c^2)}{2}\cos^2\Big(\frac{x}{\sqrt{2}}\Big).
\end{equation}
In particular, there is no nontrivial  solution to \eqref{TWc}
with $\ka=1/2$ in $\boC^2(\R)\cap\boX(\R)$. 
\end{proposition}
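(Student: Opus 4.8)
The plan is to reduce, at $\kappa = 1/2$, the system \eqref{eta2}--\eqref{eta1} to the elementary constant-coefficient equation $\eta'' + 2\eta = 2-c^2$ wherever $\eta$ does not vanish, and then to propagate this identity to all of $\R$ using the $\boC^2$ regularity. Set $a = (2-c^2)/2$; since $1-2\kappa = 0$ and $2\kappa = 1$, equation \eqref{eta1} becomes $\eta(\eta')^2 = \eta^2(2a - 2\eta)$ and \eqref{eta2} becomes $\eta\eta'' + \tfrac12(\eta')^2 = -3\eta^2 + 2a\eta$. First I would rule out $c = \sqrt2$: in that case $a = 0$ and \eqref{eta1} reads $\eta\big[(\eta')^2 + 2\eta^2\big] = 0$, which is impossible at any point where $\eta \neq 0$, so $\eta \equiv 0$, against nontriviality. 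Thus $c \neq \sqrt2$ and $a \neq 0$; moreover, since $\eta \not\equiv 0$, \eqref{eq:maxeta} gives $\eta(0) = a$, and evaluating \eqref{eta1} at $x = 0$ yields $a(\eta'(0))^2 = a^2(2a - 2a) = 0$, hence $\eta'(0) = 0$.

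On the open set $\{\eta \neq 0\}$ I would divide \eqref{eta1} by $\eta$ to obtain $(\eta')^2 = 2a\eta - 2\eta^2$, and then substitute this into twice \eqref{eta2}, i.e.\ into $2\eta\eta'' + (\eta')^2 = -6\eta^2 + 4a\eta$. The quadratic terms combine, and after dividing by $2\eta$ one is left with the linear relation
\begin{equation*}
\eta'' + 2\eta = a = \frac{2-c^2}{2}, \qquad \text{on } \{\eta \neq 0\}.
\end{equation*}
Both sides are continuous (as $\eta \in \boC^2$), so this identity persists on $\overline{\{\eta \neq 0\}}$.

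The crux — and the main obstacle — is to show that $\{\eta = 0\}$ has empty interior, so that $\overline{\{\eta \neq 0\}} = \R$; this is exactly the step that excludes compacton-type gluings (an arc of $\cos^2$ joined to the constant $0$), and it is here that the $\boC^2$ hypothesis is essential rather than merely $\boC^1$. I would argue by contradiction: suppose $\eta$ vanishes identically on a maximal open interval $I$. Since $\eta \not\equiv 0$ we have $I \neq \R$, so $I$ admits a finite endpoint $q$. On $I$ one has $\eta \equiv \eta'' \equiv 0$, whence $\eta(q) = 0$ and, by continuity of $\eta''$, $\eta''(q) = 0$. On the other hand, maximality of $I$ forces points of $\{\eta \neq 0\}$ to accumulate at $q$ from the other side, so $q \in \overline{\{\eta \neq 0\}}$ and the linear relation gives $\eta''(q) + 2\eta(q) = a$, i.e.\ $a = 0$, contradicting $a \neq 0$.

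Therefore $\eta'' + 2\eta = a$ holds on all of $\R$, and with the data $\eta(0) = a$, $\eta'(0) = 0$, uniqueness for this linear ODE yields $\eta(x) = \tfrac{a}{2}\big(1 + \cos(\sqrt2\, x)\big) = a\cos^2(x/\sqrt2)$, which is \eqref{eq:formuleeta05}. For the nonexistence claim, if $u \in \boC^2(\R)\cap\boX(\R)$ were a nontrivial solution to \eqref{TWc}, then by Proposition~\ref{prop:eqeta} its profile $\eta = 1-|u|^2$ would be a $\boC^2$ solution of \eqref{eta2}--\eqref{eta1} satisfying \eqref{eq:maxeta}; if $\eta \not\equiv 0$ the above forces $\eta = a\cos^2(\cdot/\sqrt2)$ with $a \neq 0$, which is periodic and does not tend to $0$ at infinity, contradicting $\eta \in H^1(\R)$ from Lemma~\ref{lem:finiteenergyassum}; while if $\eta \equiv 0$ then $|u| \equiv 1$, forcing the phase to be constant, so $u$ is trivial.
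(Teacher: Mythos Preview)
Your proof is correct and takes a genuinely different route from the paper's. The paper integrates the first-order relation \eqref{eta1} directly: it fixes the sign of $\eta'$ on a maximal interval by a Rolle-type argument, separates variables to get $\eta'/\sqrt{\eta(2-c^2-2\eta)}=-1$, integrates to recover the $\cos^2$ formula on $[0,\pi/\sqrt2)$, computes $\eta,\eta',\eta''$ at the endpoint, and then iterates the same monotonicity argument interval by interval (with a parallel treatment for $c>\sqrt2$). Your approach instead combines \eqref{eta1} and \eqref{eta2} algebraically on $\{\eta\neq0\}$ to obtain the \emph{linear} equation $\eta''+2\eta=a$, and then uses the $\boC^2$ hypothesis (via the empty-interior argument) to propagate this to all of $\R$, after which the formula drops out of the linear initial value problem. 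This is cleaner and handles the cases $c<\sqrt2$ and $c>\sqrt2$ simultaneously, whereas the paper treats them separately. The paper's method, on the other hand, is closer in spirit to the analysis for general $\kappa$ in Section~\ref{sec:construction:smooth}, where no such linear reduction is available. One small point: in the final line, ``forcing the phase to be constant'' when $\eta\equiv0$ is correct but tacitly uses either Corollary~\ref{coro:eqpolaire} (which gives $\theta'=0$ when $\rho\equiv1$) or a direct computation from \eqref{TWc} together with $u'\in L^2$; it would be worth saying which.
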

\begin{proof}
	If $c=\sqrt{2}$, then \eqref{eq:maxeta} shows that 
	$\eta\equiv 0$ is the only solution, so \eqref{eq:formuleeta05} is trivially satisfied.

Let us now verify formula \eqref{eq:formuleeta05} in the case $0\leq c<\sqrt{2}$. Notice first that \eqref{eta2} reads 
	\begin{align}
	\label{eta3}
	\eta\eta''+(\eta')^2/2=-3\eta^2+(2-c^2)\eta, \quad \text{ in }\R.
\end{align}
By \eqref{eq:maxeta}, we can assume that $\abs \eta$ reaches a global maximum at $x=0$ with $\eta(0)=1-c^2/2$,
hence $\eta'(0)=0$ and \eqref{eta3} yields
 $\eta''(0)<0$. Since $\eta$ is strictly concave at $x=0$,  $\eta(0)$ is the global maximum of $\eta$ and letting $R=\sup\{x>0 : \eta(s)>0,~\text{ for all } s\in(0,x)\}$, we infer that $\eta'<0$ on $(0,R)$. Indeed, otherwise setting  $x_1=\inf\{x>0,~\eta'(x)=0\},$
 we would have $x_1<R$ and $0<x_1$ by concavity. Then, evaluating \eqref{eta1} at $x_1$ yields $\eta(x_1)=1-c^2/2$ since $\eta(x_1)>0$ by definition of $R$.  Using Rolle's theorem, the latter identity contradicts the minimality of $x_1$. We deduce  again from \eqref{eta1} that 
	\begin{align}\label{eq:impli05}
		\frac{\eta'}{\sqrt{\eta(2-c^2-2\eta)}}=-1,\quad\text{ in }(0,R).
	\end{align}
By direct integration, we obtain $R=\pi/\sqrt{2}$ and
	\begin{equation}
		\label{eq:eta05}
		\eta(x)=\frac{2-c^2}{2\tan^2\big(x/\sqrt{2}\big)+2}=\frac{(2-c^2)}{2}\cos^2
			\big(x/\sqrt{2}\big),\quad x\in\Big[0,\frac{\pi}{\sqrt{2}}\Big).
	\end{equation}
The same argument shows that the formula \eqref{eq:eta05} remains valid for $-{\pi}/{\sqrt{2}}<x<0$, so \eqref{eq:formuleeta05} holds on  $(-{\pi}/{\sqrt{2}},{\pi}/{\sqrt{2}})$. It follows that, 
\begin{align}\label{eq:etap05}
	\eta'(x)&=-\frac{2-c^2}{\sqrt{2}}\sin\Big(\frac{x}{\sqrt{2}}\Big)\cos\Big(\frac{x}{\sqrt{2}}\Big),\quad\text{ for all }x\in\Big(-\frac{\pi}{\sqrt{2}},\frac{\pi}{\sqrt{2}}\Big),
\\
\label{eq:etapp05}
	\eta''(x)&=-\frac{2-c^2}{2}\Big(\cos^2\Big(\frac{x}{\sqrt{2}}\Big)-\sin^2\Big(\frac{x}{\sqrt{2}}\Big)\Big),\quad\text{ for all }x\in\Big(-\frac{\pi}{\sqrt{2}},\frac{\pi}{\sqrt{2}}\Big).
\end{align}
Since $\eta\in \boC^2(\R)$, we conclude from these explicit formulas that 
 $\eta(\pm\pi/\sqrt{2})=0$,  $\eta'(\pm\pi/\sqrt{2})=0$, and  $\eta''(\pm\pi/\sqrt{2})=(2-c^2)/2.$
Because $\eta$ is strictly convex at $x=-\pi/\sqrt{2}$, letting $$R_2=\inf\{x<-\pi/\sqrt{2}:\eta(s)<1-c^2/2, \text{ for all }s\in(x,-\pi/\sqrt{2})\},$$ we deduce that \eqref{eq:impli05} still holds replacing $(0,R)$ by $(R_2,-\pi/\sqrt{2})$,  arguing as before.
Integrating \eqref{eq:impli05}, we obtain $R_2=-\sqrt{2}\pi$, and \eqref{eq:eta05} remains true in $(-\sqrt{2}\pi,-\pi/\sqrt{2}]$. From the same arguments and an induction procedure, we infer that $\eta$ satisfies \eqref{eq:formuleeta05}. 
In the case $c>\sqrt{2}$, similar ideas allow us to deduce that \eqref{eq:formuleeta05} also holds.

Finally, suppose by contradiction that $u\in\boC^2(\R)\cap\boX(\R)$ satisfies \eqref{TWc} with $\ka=1/2$ and is nontrivial. Then $\eta=1-\abs{u}^2$ satisfies \eqref{eta2}--\eqref{eta1} and \eqref{eq:maxeta}, by Proposition~\ref{prop:eqeta}. Hence \eqref{eq:formuleeta05} holds, and therefore $\eta\notin L^2(\R)$, contradicting  $u\in\boX(\R)$. Additionally, we can check that \eqref{eq:Euregu} also holds in that case, so that we have $E_\ka(u)= \infty$. 
\end{proof}

\begin{remark}\label{rem:sol05}
	Using \eqref{eq:formuleeta05} and \eqref{eq:deftheta}, we deduce that the only nontrivial $\boC^2(\R)$-solution $u$ to (TW$(c,1/2))$ such that $\eta=1-\abs{u}^2$ satisfies \eqref{eq:maxeta} is given, up to invariances and for any $c>0$ by
	$$u(x)=\sqrt{{1-\frac{(2-c^2)}{2}\cos^2\Big(\frac{x}{\sqrt{2}}\Big)}}e^{i\theta(x)},
	$$
	where $\theta$ is a smooth odd function satisfying, for all $x\geq 0$  and all $k\in \N$,
	$$
	\theta(x)=-\frac{cx}2  -
	\Bigg(
	\atan\Big( \frac c{\sqrt 2}\cot\Big(\frac x{\sqrt 2}\Big) \Big)
	-\frac{\pi}2-k\pi
	\Bigg)	 , \quad \text { if } x\in \big(k\sqrt 2\pi, (k+1)\sqrt 2\pi\big).
	$$
In the case $c=0$, this solution degenerates to the periodic  solution to (TW$(0,1/2))$ given by
$u(x)=i\sin(x/\sqrt{2})$.
\end{remark}
  We continue the study of the equations \eqref{eta2}--\eqref{eta1} in Proposition~\ref{prop:eqeta} when the parameters $(c,\ka)$ belong to $\D$, using the regions $\D_1$, $\D_2$ and $\D_3$, defined in\eqref{def:D1}--\eqref{def:D3}.

\begin{corollary}\label{coro:propeta}
	Let $(c,\ka)\in \D$ and assume that 
$\eta\in\boC^2(\R)$ is a nonzero solution to \eqref{eta2}--\eqref{eta1} satisfying \eqref{eq:limeta}. 
Then up to translation,  $\eta$ is even,  reaching a  global extremum at the origin with $\eta(0)=1-c^2/2$. 
Moreover,  we have $\eta>0$ and $\eta'<0$ on $(0,\infty)$ if $(c,\ka)\in \D_1\cup \D_2$, while  $\eta<0$ and $\eta'>0$ on $(0,\infty)$ if $(c,\ka)\in \D_3$.
Additionally, the function $\eta$ belongs to $\boC^\infty(\R)$ and  is exponentially decaying, as well as all its derivatives, i.e.\ for every $j\in\N$ there exists positive constants $C_0$ and $C$ such that   
\[\abs{D^j\eta(x)}\leq C_0e^{-C(\abs{x}-1)},\quad\text{ for all }\abs{x}\geq1.\]
\end{corollary}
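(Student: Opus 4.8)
The plan is to extract all the qualitative information from the first-order relation \eqref{eta1} and the second-order relation \eqref{eta2}, treating $\eta$ as the solution of an autonomous ODE whose dispersion coefficient $1-2\ka+2\ka\eta$ stays away from zero on the relevant range. First I would locate the extremum. Since $\eta\in\boC^2(\R)$ is nonzero and satisfies \eqref{eq:limeta}, $|\eta|$ attains a positive global maximum at some point, which after a translation I place at the origin, so that $\eta'(0)=0$. Evaluating \eqref{eta1} at $0$ forces $\eta(0)^2(2-c^2-2\eta(0))=0$, and since $\eta(0)\ne0$ this gives $\eta(0)=1-c^2/2$, exactly as in \eqref{eq:maxeta}. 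Substituting this into \eqref{eta2} at $0$ and using $\eta'(0)=0$ yields $(1-\ka c^2)\eta''(0)=-(2-c^2)^2/4<0$, where I used $1-2\ka+2\ka\eta(0)=1-\ka c^2$. A case check on the regions shows $1-\ka c^2>0$ on $\D_1\cup\D_2$ (so $\eta''(0)<0$, a strict local maximum, and $\eta(0)=(2-c^2)/2>0$) and $1-\ka c^2<0$ on $\D_3$ (so $\eta''(0)>0$, a strict local minimum, and $\eta(0)<0$); in particular the dispersion coefficient is nonzero at the origin.

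Next I would run a continuation argument on $(0,\infty)$ to obtain the sign and monotonicity. Treating first $\D_1\cup\D_2$, set $R=\sup\{x>0:\eta>0\text{ on }(0,x]\}$. On $(0,R)$ one has $0<\eta\le(2-c^2)/2$ (the upper bound because $(2-c^2)/2$ is the global maximum of $\eta$), hence $1-2\ka+2\ka\eta\ge 1-2\ka>0$ for $\D_1$ and $\ge 1$ for $\D_2$, so \eqref{eta1} can be written as $\eta'=-\eta\sqrt{(2-c^2-2\eta)/(1-2\ka+2\ka\eta)}$ once $\eta'<0$ is known. The crucial nonvanishing step is that $\eta'$ cannot return to $0$ inside $(0,R)$: at a first such zero $x_1$, relation \eqref{eta1} would force $\eta(x_1)=(2-c^2)/2=\eta(0)$, contradicting that $\eta$ is strictly decreasing on $(0,x_1)$. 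Finally, if $R<\infty$ then $\eta(R)=0$, and \eqref{eta1} gives $\eta'(R)=0$ as well (the coefficient $1-2\ka$ being nonzero); Cauchy--Lipschitz uniqueness for \eqref{eta2} solved for $\eta''$ then forces $\eta\equiv0$ near $R$, a contradiction, so $R=\infty$. Thus $\eta>0$ and $\eta'<0$ on $(0,\infty)$. The region $\D_3$ is identical after flipping signs: there $\eta<0$, the coefficient satisfies $1-2\ka+2\ka\eta<1-2\ka<0$, and the admissible range $(2-c^2)/2\le\eta<0$ makes both sides of \eqref{eta1} nonpositive, yielding $\eta'>0$ on $(0,\infty)$.

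For evenness, regularity and decay I would proceed as follows. Writing \eqref{eta2} as $\eta''=(-3\eta^2+(2-c^2)\eta-\ka(\eta')^2)/(1-2\ka+2\ka\eta)$, the right-hand side is a smooth function of $(\eta,\eta')$ wherever the denominator is nonzero, which holds along the whole solution; hence $x\mapsto\eta(-x)$ solves the same equation with the same data $\eta(0)$, $\eta'(0)=0$ at the origin, and local uniqueness gives $\eta(-x)=\eta(x)$, so $\eta$ is even and the description on $(-\infty,0)$ follows by reflection. The same formula gives $\boC^\infty$ regularity by a standard bootstrap: $\eta\in\boC^k$ makes the right-hand side $\boC^{k-1}$, hence $\eta\in\boC^{k+1}$. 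For the decay I would linearize \eqref{eta1} near $\eta=0$: on $(0,\infty)$ the quotient $\eta'/\eta=-\sqrt{(2-c^2-2\eta)/(1-2\ka+2\ka\eta)}$ tends to $-\mu$ with $\mu=\sqrt{(2-c^2)/(1-2\ka)}>0$ (positive precisely because $(c,\ka)\in\D$), so $\eta(x)\le C_0 e^{-Cx}$ for any $C<\mu$ and $x$ large; then $|\eta'|\le(\sup g)\,|\eta|$ decays at the same rate, and differentiating the ODE repeatedly propagates the exponential bound to every $D^j\eta$, giving the stated estimate.

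The hard part will be the continuation step of the second paragraph: one must simultaneously keep $\eta$ inside the admissible range (so that the dispersion coefficient keeps its sign and the square root in \eqref{eta1} is real) and rule out an interior critical point of $\eta$ before it reaches the background value $0$. This is exactly where the algebraic structure of \eqref{eta1}--\eqref{eta2}, the sign of $1-\ka c^2$ in each region, and the identity $\eta(0)=1-c^2/2$ are used in an essential way; everything else is either elementary ODE uniqueness or a routine bootstrap.
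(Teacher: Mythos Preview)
Your proposal is correct and follows essentially the same route as the paper: locate the extremum via \eqref{eta1}, determine the sign of $\eta''(0)$ from \eqref{eta2}, rule out an interior zero of $\eta$ by the Cauchy--Lipschitz argument on the ODE \eqref{eta2} (since $\eta=\eta'=0$ is an equilibrium), exclude an interior critical point of $\eta$ via \eqref{eta1}, then get evenness, $\boC^\infty$-regularity, and exponential decay from the ODE structure. The only cosmetic differences are that the paper orders the steps as positivity $\Rightarrow$ evenness $\Rightarrow$ monotonicity and uses Rolle's theorem for the critical-point exclusion, whereas you intertwine positivity and monotonicity on $(0,R)$ and argue directly from strict decrease; both are equivalent.
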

\begin{proof}
We recall that $\D=\D_1\cup\D_2\cup \D_3$.	Let us treat case first the case $(c,\ka)\in \D_1\cup \D_2$. Using \eqref{eq:maxeta}, we assume that $\eta$ reaches a {\it positive} global  maximum $1-c^2/2$  at $x=0$.
	Notice that $\eta>0$, indeed, otherwise we would have $\eta(x_1)=0$, for some $x_1\in\R$. Then, equation \eqref{eta1} yields $\eta'(x_1)=0$. Using $\eqref{eta2}$, we infer that $(\eta,\eta')^T$ satisfies \begin{equation}
		\label{eq:odeetaord2}
		\begin{pmatrix}
			\eta\\\eta'
		\end{pmatrix}'(x)=\begin{pmatrix}
			\eta'(x)\\\frac{-3\eta^2(x)+(2-c^2)\eta(x)-\ka(\eta'(x))^2}{1-2\ka+2\ka\eta(x)}
		\end{pmatrix}=F((\eta,\eta')^T),
	\end{equation} in a neighborhood of $x_1$. Since $F$ is well-defined and locally Lipschitz in $(\R\backslash\{1-1/(2\ka)\})\times\R$, by Cauchy--Lipschitz theorem, for any initial condition in $(\R\backslash\{1-1/(2\ka)\})\times\R$, there is a unique maximal solution of \eqref{eq:odeetaord2}. In particular, since $(\eta(x_1),\eta'(x_1))^T=(0,0)^T$, is an equilibrium, we deduce that  $\eta\equiv0$ in $\R$, which is a contradiction. 
 
 The fact that $\eta$ is even also follows from the Cauchy--Lipschitz theorem.
 Indeed, setting $\tilde{\eta}(x)=\eta(-x)$, for all $x\in \R$,  we deduce  that 
$(\tilde{\eta},\tilde{\eta}')^T$ satisfies  \eqref{eq:odeetaord2} with initial condition $(\tilde{\eta}(0),\tilde{\eta}'(0))^T=(1-c^2/2,0)^T$. Thus $\eta_2=\eta$. 

We show now that $\eta'(x)<0$, for all $x\in(0,\infty)$. Evaluating equation \eqref{eta2} at $x=0$, yields  $\eta''(0)<0$, hence $\eta'$ is decreasing in the vicinity of $0$. This implies that $\eta'(x)<0$, for $x>0$ near $0$. Now suppose by contradiction that $\eta'(x_1)=0$, for some $x_1>0$, and let $x_2=\inf\{x>0 : \eta'(x)=0\}$. Since $\eta(0)=1-c^2/2$, we infer from Rolle's theorem that we cannot have $\eta(x_2)=1-c^2/2$. Hence, evaluating \eqref{eta1} at $x=x_2$, we obtain $\eta(x_2)=0$, which contradicts the positivity of $\eta$. Thus $\eta'$ does not change sign and remains negative in $(0,\infty)$. 

To prove the remainder properties, we use that $\eta>0$ on $\R$, thus 
 \[1-2\ka+2\ka\eta(x)>1-2\ka>0,\quad\text{ for all }x\in\R,\]
 and therefore $\eta$ remains in the domain of $F$, so it is a global solution to \eqref{eq:odeetaord2}. Thus, the smoothness of $\eta$ follows by an induction argument on the ODE \eqref{eq:odeetaord2}. For the decay estimates, since $\eta$ is decreasing in $(0,\infty)$, we get from \eqref{eta1}
\begin{align}
	\label{eq:odeeta1}
	\eta'(x)=-\eta(x)\sqrt{\frac{-2\eta(x)+(2-c^2)}{1-2\ka+2\ka \eta(x)}},\quad\text{ for all }x>0.
\end{align} 
Thus $\eta'\leq-C\eta(x)$, for all $x\geq1$, where $C=\inf_{x\geq1}\Big\{\sqrt{\frac{-2\eta(x)+(2-c^2)}{1-2\ka+2\ka \eta(x)}}\Big\}>0.$
Integrating this differential inequality, we get  $\eta(x)\leq\eta(1)e^{-C(x-1)}$. The decay of higher other derivatives is then obtained by differentiating \eqref{eq:odeeta1}, together with an induction argument.

Finally, the case $(c,\kappa)\in \D_3$ can be treated similarly,  noticing  that $\eta$ also satisfies \eqref{eq:odeeta1}, since  $\eta<0$ and $\eta'>0$ on $(0, \infty)$.
 \end{proof}
Now we prove the main result of this subsection: the existence and uniqueness (up to translation)  of $\eta_{c,\ka}\in\boC^2(\R)\cap H^1(\R)$ satisfying \eqref{eta2}--\eqref{eta1}, for any $(c,\ka)\in \D$. Then, in the case $c>0$, Theorem \ref{thm:classiftwregu} is deduced using the equivalence stated in Proposition \ref{prop:exiu}. 
We prove beforehand, that the functions \eqref{eq:FD1}--\eqref{eq:FD3} are well-defined, and injective so that we can define their inverse. The idea behind \eqref{eq:FD1}--\eqref{eq:FD3} is to integrate explicitly the 
ODE \eqref{eq:odeeta1} taking into account the properties of the solutions stated in Corollary~\ref{coro:propeta}.
\begin{lemma}\label{lem:etafunctions}
	The functions  $F_{c,\ka}$, $G_{c,\ka}$ and $H_{c,\ka}$ are well-defined. Moreover,\\
\textup{(i)}		 $F_{c,\ka}$ is decreasing, belongs to $\boC^\infty(\boI_c^\circ)\cap \boC(\boI_c)$, and 
		\begin{equation}\label{eq:impliFprim}
			\lim_{y\to 0^+}F_{c,\ka}(y)=\infty, \
			F_{c,\ka}(1-c^2/2)=0, \quad F_{c,\ka}'(y)=-\frac{1}{y}\sqrt{\frac{1-2\ka+2\ka y}{2-c^2-2y}},\ \text{ for }y \in \boI_c^\circ.
		\end{equation}
\textup{(ii)} $H_{c,\ka}$ is increasing, belongs to $\boC^\infty(\boJ_c^\circ)\cap \boC(\boJ_c)$, and 
		\begin{equation}\label{eq:impliHprim}
			H_{c,\ka}(1-c^2/2)=0,\  	\lim_{y\to 0^-}H_{c,\ka}(y) =\infty, 
			\quad H_{c,\ka}'(y)=-\frac{1}{y}\sqrt{\frac{1-2\ka+2\ka y}{2-c^2-2y}},\ \text{ for }y \in \boJ_c^\circ.
		\end{equation}
\textup{(iii)} $G_{c,\ka}$ is decreasing, belongs to $\boC^\infty(\boI_c^\circ)\cap \boC(\boI_c)$, and 
		\begin{equation}\label{eq:impliGprim}
			\lim_{y\to 0^+}G_{c,\ka}(y)=\infty, \
			G_{c,\ka}(1-c^2/2)=0, \quad G_{c,\ka}'(y)=-\frac{1}{y}\sqrt{\frac{1-2\ka+2\ka y}{2-c^2-2y}},\ \text{ for }y \in \boI_c^\circ.
	\end{equation}
\end{lemma}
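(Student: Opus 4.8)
The three functions are, by construction, the explicit primitives of the reciprocal of the right-hand side of the first-order ODE \eqref{eq:odeeta1}, so the entire content of the lemma is to carry out that integration and read off the consequences. Throughout I would abbreviate $P(y)=2-c^2-2y$ and $Q(y)=1-2\ka+2\ka y$, so that $P'=-2$ and $Q'=2\ka$, and record the two algebraic identities that drive everything:
\begin{equation*}
Q+\ka P=1-\ka c^2,\qquad (2-c^2)Q-(1-2\ka)P=2y\,(1-\ka c^2).
\end{equation*}

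First I would establish well-definedness by a sign analysis. For $(c,\ka)\in\D_1$ and $y\in\boI_c$ one has $P\geq0$ and $Q\geq1-2\ka>0$, so the $\atan$-argument is legitimate, while the $\atanh$-argument satisfies $0\leq \frac{(1-2\ka)P}{(2-c^2)Q}<1$, since $(2-c^2)Q-(1-2\ka)P=2y(1-\ka c^2)>0$ for $y>0$, using $\ka c^2<1$ on $\D_1$. The cases $\D_2$ (where $Q\geq1$ and $-\ka P/Q<1$ follows from $Q+\ka P=1-\ka c^2\geq1$) and $\D_3$ (where $P,Q<0$, hence $P/Q>0$, and the same difference $2y(1-\ka c^2)$ is positive as a product of two negative factors) are settled by identical elementary estimates.

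The main computation, and the only delicate step, is the derivative formula. Differentiating \eqref{eq:FD1} and using $\frac{d}{dy}\sqrt{P/Q}=-\sqrt{Q/P}\,\frac{Q+\ka P}{Q^2}$, the $\atan$-term contributes $-\frac{2\ka}{\sqrt{PQ}}$ after the factor $Q+\ka P$ cancels. For the $\atanh$-term, writing $w=\sqrt{\frac{(1-2\ka)P}{(2-c^2)Q}}$ gives $1-w^2=\frac{2y(1-\ka c^2)}{(2-c^2)Q}$, and simplification yields the contribution $-\frac{(1-2\ka)(Q+\ka P)}{y(1-\ka c^2)\sqrt{PQ}}$. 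Adding the two and clearing $\sqrt{PQ}$, the claimed identity $F_{c,\ka}'(y)=-\frac1y\sqrt{Q/P}$ reduces to
\begin{equation*}
2\ka y\,(1-\ka c^2)+(1-2\ka)(Q+\ka P)=Q\,(1-\ka c^2),
\end{equation*}
which collapses at once upon substituting $Q+\ka P=1-\ka c^2$. The one point requiring care is the bookkeeping of $\sqrt{Q^2}=|Q|$ and $\sqrt{(1-2\ka)^2}=|1-2\ka|$: on $\D_1$ both $Q$ and $1-2\ka$ are positive, whereas on $\D_3$ both are negative, so the two sign changes compensate and the same formula $-\frac1y\sqrt{Q/P}$ results.

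Finally, the stated properties follow directly from the derivative formula. Monotonicity is immediate: $-\frac1y\sqrt{Q/P}<0$ on $\boI_c^\circ$ (so $F_{c,\ka}$ and $G_{c,\ka}$ decrease) and $>0$ on $\boJ_c^\circ$, where $y<0$ (so $H_{c,\ka}$ increases). Smoothness on the open intervals holds because each function is a composition of $\boC^\infty$ maps whose arguments avoid the singular loci $y=0$, $P=0$, and the endpoints $\pm1$ of the domain of $\atanh$; continuity up to $y=1-c^2/2$ holds because there $P=0$ forces both transcendental arguments to vanish, giving $F_{c,\ka}(1-c^2/2)=0$ (and likewise for $G_{c,\ka}$ and $H_{c,\ka}$); and the divergence as $y\to0^+$ (resp.\ $y\to0^-$ for $H_{c,\ka}$) comes from the $\atanh$-argument tending to $1^-$ while the bounded first term stays finite. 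I would write the argument in full for $F_{c,\ka}$, then note the two verbatim adaptations: for $G_{c,\ka}$ the first term $-2\sqrt{-\ka}\,\atanh(\sqrt{-\ka}\sqrt{P/Q})$ has the same derivative $-\frac{2\ka}{\sqrt{PQ}}$ as the $\atan$-term (analytic continuation in $\ka$, since here $\ka\leq0$), and $H_{c,\ka}=F_{c,\ka}$ with the sign bookkeeping already handled in the main step.
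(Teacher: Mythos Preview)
Your proposal is correct and follows essentially the same approach as the paper: direct differentiation of the explicit formulas together with a sign analysis for well-definedness. Your presentation is slightly more systematic—you isolate the identities $Q+\ka P=1-\ka c^2$ and $(2-c^2)Q-(1-2\ka)P=2y(1-\ka c^2)$ and track the sign bookkeeping on $\D_3$ explicitly—whereas the paper states the intermediate derivatives of the $\atan$ and $\atanh$ terms already simplified and leaves the final addition as a routine check, but the substance is identical.
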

\begin{proof}
		 It is immediate to check that 
	\begin{gather}
		\label{denom1}
		(\sigma,(c,\ka)) \in \boI_c\times \D_1
		\Longrightarrow 1-2\ka+2\ka \sigma \geq 1-2\ka>0,\\
		\label{denom2}
		(\sigma,(c,\ka)) \in \boI_c\times  \D_2
		\Longrightarrow 1-2\ka+2\ka \sigma \geq 1-\ka c^2>0,
		\\
		\label{denom3}
		(\sigma,(c,\ka)) \in \boJ_c\times \D_3 \
		\Longrightarrow 1-2\ka+2\ka \sigma < 1-2\ka<0,
	\end{gather}
	so the square roots in \eqref{eq:FD1}--\eqref{eq:FD2} and in \eqref{eq:impliFprim}--\eqref{eq:impliGprim} are well-defined.	Since the domain of definition of $\atanh$ is $(-1,1)$, in case (i),
	it remains to check that  $(1-2\ka)(2-c^2-2y)<(2-c^2)(1-2\ka+2\ka y)$, which is equivalent to the condition 
	\begin{equation}
		\label{cond:atanh}
		y(c^2\ka-1)<0,
	\end{equation} which is satisfied in this case, since $y>0$. Thus, $F_{c,\ka}$ belongs to $\boC^\infty(\boI_c^\circ)\cap \boC(\boI_c)$, and 
	one gets immediately the values of $F_{c,\ka}$ at $y=0^+$ and  $y=1-c^2/2$ in \eqref{eq:impliFprim}.
	Finally, simple computations give
	$$\frac{d}{d y}
	\Bigg( \atan\Big(\sqrt{\ka}\sqrt{\frac{ 2-c^2-2y}{1-2 \ka +2\ka y}}\Big)
	\Bigg)=-\frac{\sqrt{\ka}}{\sqrt{(2-c^2-2y)(1-2 \ka +2\ka y)}},
	$$
	and 
	$$\frac{d}{d y}
	\Bigg(
	\atanh\Big(\sqrt{\frac{(1-2 \ka) (2-c^2-2y)}{(2-c^2)(1-2\ka+2\ka y)}}\Big)
	\Bigg)=
	-\frac{\sqrt{(1-2\ka)(2-c^2)}}{2y\sqrt{(2-c^2-2y)(1-2 \ka +2\ka y)}}, 
	$$
	so that we obtain the derivative of $F_{c,\ka}$ in \eqref{eq:impliFprim}.
	
	The case (ii) follows using the same computations, noticing that 
 \eqref{cond:atanh} is still satisfied in this case, since $y<0$.
	
	In case (iii), we check that for all $y\in \boI_c$,
	$$0\leq-\ka(2-c^2-2y)<1-2 \ka +2\ka y,$$
	and \eqref{cond:atanh} is  still satisfied, since $\ka<0$ and $y>0$.
	Therefore, both $\atanh$ in \eqref{eq:FD2} are well-defined. Noticing that
	$$\frac{d}{d y}
	\Bigg( \atanh\Big(\sqrt{-\ka}\sqrt{\frac{ 2-c^2-2y}{1-2 \ka +2\ka y}}\Big)
	\Bigg)=-\frac{\sqrt{-\ka}}{\sqrt{(2-c^2-2y)(1-2 \ka +2\ka y)}},
	$$
	the conclusion follows as in the previous cases.
\end{proof}

In view of Lemma~\ref{lem:etafunctions}, the inverse functions of $F_{c,\ka}$,
$H_{c,\ka}$ and $G_{c,\ka}$ are well-defined in $(0, \infty)$:
\begin{equation}
	\label{def:inverses2}
	\boF_{c,\ka}=F^{-1}_{c,\ka}, \text{ for }  (c,\ka)\in \D_1;  
	\quad 
	\boG_{c,\ka}=G^{-1}_{c,\ka}, \text{ for }  (c,\ka)\in \D_2;\quad 
	\boH_{c,\ka}=H^{-1}_{c,\ka}, \text{ for }  (c,\ka)\in \D_3. 
\end{equation}
We show that these functions can be smoothly extended to $\R$.

\begin{lemma}
	\label{lem:etafunctions2}
	Let $(c,\ka)\in \D$ and $\boF_{c,\ka}$, $\boG_{c,\ka}$		and $\boH_{c,\ka}$	
	be defined as in \eqref{def:inverses2}.
	Then they extend to $\R$ as even functions, that we still  denote by  
	$\boF_{c,\ka}$, $\boG_{c,\ka}$		and $\boH_{c,\ka}$, respectively, 
	and satisfy
	\begin{equation}
		\Im(\boF_{c,\ka})=(0,1-c^2/2], \quad 
		\Im(\boG_{c,\ka})=(0,1-c^2/2], \quad 
		\Im(\boH_{c,\ka})=[1-c^2/2,0). 
	\end{equation}
	In addition, 	these extensions belong to $\boC^\infty(\R)$.
\end{lemma}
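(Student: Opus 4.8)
The plan is to read the image sets straight off Lemma~\ref{lem:etafunctions}, and to reduce the smoothness of the even extensions to the regularity theory for the second-order equation \eqref{eta2}; the crux is that the inverse function theorem degenerates precisely at the turning value $1-c^2/2$, and must be bypassed.

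First I would record the image sets. By Lemma~\ref{lem:etafunctions}, each of $F_{c,\ka}$, $G_{c,\ka}$, $H_{c,\ka}$ is a continuous, strictly monotone bijection of its domain onto $[0,\infty)$: indeed $F_{c,\ka}(1-c^2/2)=0$ and $F_{c,\ka}(0^+)=\infty$ by \eqref{eq:impliFprim}, and likewise for the others by \eqref{eq:impliGprim}--\eqref{eq:impliHprim}. Hence the inverses in \eqref{def:inverses2} are well-defined and continuous on $[0,\infty)$, with $\Im(\boF_{c,\ka})=\Im(\boG_{c,\ka})=(0,1-c^2/2]$ and $\Im(\boH_{c,\ka})=[1-c^2/2,0)$. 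Since an even extension does not change the range, the three displayed image identities follow at once. Smoothness on $(0,\infty)$ is also immediate: $F_{c,\ka}\in\boC^\infty(\boI_c^\circ)$ with $F_{c,\ka}'\neq 0$ on $\boI_c^\circ$ by \eqref{eq:impliFprim}, so the inverse function theorem gives $\boF_{c,\ka}\in\boC^\infty((0,\infty))$, and similarly for $\boG_{c,\ka}$ and $\boH_{c,\ka}$.

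The substance is smoothness of the even extension at the origin, where $\boF_{c,\ka}'(0)=0$. I would argue as follows for $\eta:=\boF_{c,\ka}$, the cases of $\boG_{c,\ka}$ and $\boH_{c,\ka}$ being identical up to signs. Differentiating the inverse relation and inserting the explicit derivative of \eqref{eq:impliFprim} shows that $\eta$ solves the first-order equation \eqref{eq:odeeta1} on $(0,\infty)$; squaring yields \eqref{eta1}, and since $\eta'<0$ there, Remark~\ref{rem:etaeq} promotes this to the second-order equation \eqref{eta2}. Moreover $\eta(0^+)=1-c^2/2$, and because $F_{c,\ka}'(y)\to-\infty$ as $y\to(1-c^2/2)^-$, the inverse-function relation gives $\eta'(0^+)=0$.

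Now I would pass to the regular second-order problem in order to cross the turning point. Writing \eqref{eta2} as the first-order system \eqref{eq:odeetaord2}, the vector field $F$ is smooth near $(1-c^2/2,0)$, since its only singularity sits where $1-2\ka+2\ka\eta$ vanishes, and at $\eta=1-c^2/2$ this coefficient equals $1-\ka c^2$, which is nonzero on all of $\D$ (positive on $\D_1\cup\D_2$, negative on $\D_3$). Hence there is a unique $\boC^\infty$ local solution $\zeta$ of \eqref{eq:odeetaord2} with $\zeta(0)=1-c^2/2$ and $\zeta'(0)=0$; by uniqueness $\zeta=\eta$ on $[0,\delta)$. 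Since \eqref{eta2} is invariant under $x\mapsto -x$, the map $x\mapsto\zeta(-x)$ solves the same system with the same initial data, so $\zeta$ is even. Therefore the even extension of $\eta$ agrees with the smooth even function $\zeta$ near $0$ and with the smooth $\boF_{c,\ka}(|x|)$ away from $0$, giving $\boF_{c,\ka}\in\boC^\infty(\R)$; the same reasoning applies verbatim to $\boG_{c,\ka}$ and $\boH_{c,\ka}$ (alternatively, one may invoke Corollary~\ref{coro:propeta} once $\boC^2$-regularity at $0$ is secured). The main obstacle is exactly this matching at the origin: the first-order formulation \eqref{eq:odeeta1} is singular in $\eta$ at the turning value $1-c^2/2$ (the square root has a branch point there), so smoothness cannot be drawn from \eqref{eq:odeeta1} or from the inverse function theorem directly, and one must exploit the nondegeneracy $1-\ka c^2\neq 0$ of \eqref{eta2} together with its reflection symmetry.
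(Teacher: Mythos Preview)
Your argument is correct and takes a genuinely different route from the paper. The paper works directly with the explicit first-order relation
\[
\boF_{c,\ka}'(x)=-\boF_{c,\ka}(x)\sqrt{\frac{2-c^2-2\boF_{c,\ka}(x)}{1-2\ka+2\ka\boF_{c,\ka}(x)}},
\]
obtains $\boC^1$-regularity at the origin by the mean value theorem (the right-hand side tends to $0$ as $x\to 0$), and then bootstraps by differentiating this formula and iterating the mean-value argument. This is elementary but requires noticing that the apparent singularity coming from the branch point of the square root at $\boF=1-c^2/2$ cancels in each differentiation (essentially because $g'g$ stays bounded even though $g'$ blows up). Your approach sidesteps this bookkeeping entirely: you observe that the second-order equation \eqref{eta2} is manifestly regular at the turning value since $1-\ka c^2\neq 0$ on all of $\D$, and then Cauchy--Lipschitz plus reflection symmetry deliver $\boC^\infty$ in one stroke. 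This is conceptually cleaner and is in fact the mechanism the paper itself relies on elsewhere (e.g.\ in Corollary~\ref{coro:propeta} and Proposition~\ref{prop:globaleta}). The only small point you glide over is that invoking uniqueness on $[0,\delta)$ requires knowing $\eta$ is $\boC^2$ up to the endpoint; this follows immediately by taking $x\to 0^+$ in \eqref{eta2} and using $1-\ka c^2\neq 0$, so it is harmless.
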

\begin{proof}
We prove the statements only for $\boF_{c,\ka}$, since the proofs for $\boG_{c,\ka}$ and $\boH_{c,\ka}$ are similar. Since $F'_{c,\ka}\in\boC^\infty(\boI_c^\circ;(0, \infty))$ is negative valued, we have $\boF_{c,\ka}=F_{c,\ka}^{-1}\in\boC^1((0,\infty))$
with 
\begin{align}\label{eq:boFp}
	\boF_{c,\ka}'(x)=\frac{1}{F_{c,\ka}'(\boF_{c,\ka}(x))}=-\boF_{c,\ka}(x)\sqrt{\frac{2-c^2-2\boF_{c,\ka}(x)}{1-2\ka+2\ka\boF_{c,\ka}(x)}},
\end{align}
in fact, this yields $\boF_{c,\ka}\in\boC^\infty((0, \infty);\boI_c^\circ).$
Using \eqref{eq:boFp} we know the monotonicity of $\boF_{c,\ka}$,
and we conclude that  $\boF_{c,\ka}\in\boC([0,\infty);\boI_c)$, with $\boF_{c,\ka}(0)=1-1/c^2$. Symmetrizing $\boF_{c,\ka}$ with respect to the ordinate axis, and still denoting by $\boF_{c,\ka}$ the extension, we deduce that $\boF_{c,\ka}$ defines an even function and belongs to $\boC(\R)\cap\boC^\infty(\R\backslash\{0\})$. It remains to show that $\boF_{c,\ka}$ is smooth at the origin. For this purpose, let $h\in\R\setminus\{0\}$, so that, by the mean value theorem, there exists $0<|x_h|<|h|$ such that
$$
	\boF_{c,\ka}(h)-\boF_{c,\ka}(0)=h\boF_{c,\ka}'(x_h).
$$
Regarding \eqref{eq:boFp}, we have $\boF_{c,\ka}'(x_h)\rightarrow 0$ as $h\rightarrow0$, therefore $\boF_{c,\ka}\in\boC^1(\R;\boI_c)$ and $\boF_{c,\ka}'(0)=0$. Differentiating \eqref{eq:boFp} and proceeding by induction,  we infer that $\boF_{c,\ka}\in\boC^\infty(\R;\boI_c)$,  using the same  arguments.
\end{proof}

We are in a position to show that the function  $\eta_{c,\ka}$
defined in \eqref{def:eta:c} is the nonzero smooth solution to \eqref{eta2}--\eqref{eta1}.
\begin{proposition}\label{prop:globaleta}
Let $(c,\ka)\in \D$ and  $\eta_{c,\ka}$ defined in~\eqref{def:eta:c}.
Then $\eta_{c,\ka}$ belongs to $\boC^\infty(\R)\cap H^1(\R)$ and is a nonzero to solution to~\eqref{eta2}--\eqref{eta1}. Moreover, 
	it is, up to a translation,  the unique nonzero solution to~\eqref{eta2}--\eqref{eta1} in $\boC^2(\R)\cap H^1(\R)$.
\end{proposition}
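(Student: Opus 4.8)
The plan is to treat the representative case $(c,\ka)\in\D_1$, where $\eta_{c,\ka}=\boF_{c,\ka}$; the cases $\D_2$ and $\D_3$ are identical after replacing $\boF_{c,\ka}$ by $\boG_{c,\ka}$ or $\boH_{c,\ka}$ and invoking the corresponding parts of Lemmas~\ref{lem:etafunctions} and \ref{lem:etafunctions2}. Smoothness and the basic structure come for free: Lemma~\ref{lem:etafunctions2} already gives $\eta_{c,\ka}\in\boC^\infty(\R)$, even, and since $F_{c,\ka}(1-c^2/2)=0$ we get $\eta_{c,\ka}(0)=\boF_{c,\ka}(0)=1-c^2/2\neq0$, so $\eta_{c,\ka}$ is nonzero; moreover $\lim_{y\to0^+}F_{c,\ka}(y)=\infty$ from \eqref{eq:impliFprim} forces $\eta_{c,\ka}(x)\to0$ as $\abs{x}\to\infty$, i.e.\ \eqref{eq:limeta} holds.

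First I would verify \eqref{eta1}. On $(0,\infty)$ the identity \eqref{eq:boFp} reads $\boF_{c,\ka}'=-\boF_{c,\ka}\sqrt{(2-c^2-2\boF_{c,\ka})/(1-2\ka+2\ka\boF_{c,\ka})}$; squaring and clearing the denominator yields exactly \eqref{eta1} with $\eta=\boF_{c,\ka}$ on $(0,\infty)$. Because $\eta_{c,\ka}$ is even, the quantities $(\eta_{c,\ka}')^2$, $\eta_{c,\ka}^2$ and $\eta_{c,\ka}^3$ are even, so \eqref{eta1} also holds on $(-\infty,0)$, and by continuity of both sides (they both vanish at $x=0$) it holds on all of $\R$. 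To pass to \eqref{eta2} I would invoke Remark~\ref{rem:etaeq}: by \eqref{eq:boFp} one has $\boF_{c,\ka}'<0$ strictly on $(0,\infty)$, so $\eta_{c,\ka}'$ vanishes only at the origin, a set of measure zero, whence \eqref{eta1} together with $\eta_{c,\ka}\in\boC^2(\R)$ gives \eqref{eta2}. At this point $\eta_{c,\ka}$ is a nonzero $\boC^2(\R)$-solution of \eqref{eta2}--\eqref{eta1} satisfying \eqref{eq:limeta}, so Corollary~\ref{coro:propeta} applies and provides exponential decay of $\eta_{c,\ka}$ and all its derivatives; in particular $\eta_{c,\ka},\eta_{c,\ka}'\in L^2(\R)$, hence $\eta_{c,\ka}\in H^1(\R)$.

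For uniqueness I would start from an arbitrary nonzero $\tilde\eta\in\boC^2(\R)\cap H^1(\R)$ solving \eqref{eta2}--\eqref{eta1}. Since $\tilde\eta\in H^1(\R)$, it tends to $0$ at $\pm\infty$, so \eqref{eq:limeta} holds and Corollary~\ref{coro:propeta} applies: after a translation, $\tilde\eta$ is even with $\tilde\eta(0)=1-c^2/2$ and $\tilde\eta'<0$ on $(0,\infty)$. Taking the negative square root in \eqref{eta1} then gives $\tilde\eta'=-\tilde\eta\sqrt{(2-c^2-2\tilde\eta)/(1-2\ka+2\ka\tilde\eta)}$ on $(0,\infty)$, which is \eqref{eq:odeeta1}. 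The decisive computation is that, combining this with the derivative formula for $F_{c,\ka}$ in \eqref{eq:impliFprim}, the two square-root factors cancel, so $\frac{d}{dx}\big(F_{c,\ka}(\tilde\eta(x))\big)=F_{c,\ka}'(\tilde\eta)\,\tilde\eta'=1$ for $x>0$. Hence $F_{c,\ka}(\tilde\eta(x))=x+\mathrm{const}$; letting $x\to0^+$ and using $\tilde\eta(0)=1-c^2/2$ together with $F_{c,\ka}(1-c^2/2)=0$ pins the constant to $0$, so $\tilde\eta=F_{c,\ka}^{-1}=\boF_{c,\ka}=\eta_{c,\ka}$ on $(0,\infty)$, and by evenness on all of $\R$.

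I expect the only delicate point to be bookkeeping rather than a genuine obstacle: one must keep the chain of invocations non-circular (establish \eqref{eta1} algebraically from \eqref{eq:boFp} first, then deduce \eqref{eta2} via Remark~\ref{rem:etaeq}, and only afterwards call Corollary~\ref{coro:propeta} both for the decay and for the structural description of a general solution), and one must note that the boundary value $\tilde\eta(0)=1-c^2/2$ lies at the closed endpoint of $\boI_c$ where $F_{c,\ka}$ is merely continuous, so the integration constant has to be fixed by the one-sided limit $x\to0^+$ and not by an evaluation at $x=0$.
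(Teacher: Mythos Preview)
Your proof is correct and, for the existence part, matches the paper's argument essentially step for step: verify \eqref{eta1} from \eqref{eq:boFp}, pass to \eqref{eta2} via Remark~\ref{rem:etaeq}, then invoke Corollary~\ref{coro:propeta} for the exponential decay that yields $H^1(\R)$.

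The only substantive difference is in the uniqueness argument. The paper observes that any other nonzero solution $\tilde\eta$ must (after translation) satisfy $(\tilde\eta(0),\tilde\eta'(0))=(1-c^2/2,0)$ and then appeals directly to the Cauchy--Lipschitz theorem applied to the second-order ODE \eqref{eq:odeetaord2}, concluding $\tilde\eta=\eta_{c,\ka}$. You instead feed the structural information from Corollary~\ref{coro:propeta} into the first-order equation \eqref{eq:odeeta1} and integrate it explicitly via $F_{c,\ka}$, pinning the constant with the boundary value at $0^+$. Both routes are entirely standard; the paper's Cauchy--Lipschitz argument is a bit shorter and avoids the mild care you flagged about evaluating at the closed endpoint of $\boI_c$, while your integration argument is more self-contained (it does not require re-invoking the well-posedness of \eqref{eq:odeetaord2}) and makes the identification with the explicit inverse $\boF_{c,\ka}$ completely transparent.
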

\begin{proof}
We show the result only for $(c,\ka)\in \D_1$, so that $\eta_{c,\ka}=\boF_{c,\ka}$, since the other cases are 
	analogous. For the sake of simplicity, we omit the subscripts $c$ and $\ka$. By definition, $\eta$ is even, with $\eta(0)=1-c^2/2$,  $\eta'(0)=0$, $\eta>0$ on $\R$, and 
	\begin{equation}
		\label{dem:der:eta}
		F_{c,\ka}(\eta(x))=x, \quad \text{ for } x>0.
	\end{equation}
	In particular, $\eta$ satisfies~\eqref{eta1} for $x=0$.
	Bearing in mind that $\eta(x)\in \boI_c$, for all $x>0$, we can invoke Lemma~\ref{lem:etafunctions} to differentiate~\eqref{dem:der:eta}, using~\eqref{eq:impliFprim}, to obtain 
	\begin{equation}
		\label{eq:firstint}
		\frac{\eta'(x)}{\eta(x)}\sqrt{\frac{1-2\ka+2\ka\eta(x)}{2-c^2-2\eta(x)}}=-1,\quad\text{ for all }x>0,
	\end{equation}
	so that $\eta$ satisfies \eqref{eta1} for all $x\geq0$. Since $\eta$ is even, we conclude that 
	$\eta$ satisfies \eqref{eta1} in $\R.$ Also, by differentiating~\eqref{eta1}, we deduce that $\eta$ also solves \eqref{eta2}, using Remark~\ref{rem:etaeq}.
	
	The property $\eta\in H^1(\R)$ is a consequence of the decay of global solutions to \eqref{eta2}--\eqref{eta1}, stated in Corollary~\ref{coro:propeta}.

	Now, for the uniqueness, suppose that there exists another function $\tilde \eta\in\boC^2(\R)\cap H^1(\R)$, a nonzero solution to \eqref{eta2}--\eqref{eta1}. Then \eqref{eq:maxeta} applies, so that we can assume, up to a translation, that $\tilde\eta$ reaches a global extremum at the origin, with $\tilde \eta(0)=1-c^2/2$. Thus $(\tilde\eta,\tilde\eta')$ is also a global solution to the ODE~\eqref{eq:odeetaord2} with initial condition $(\tilde\eta,\tilde\eta')(0)=(1-c^2/2,0).$ 	Therefore, by Cauchy--Lipschitz theorem, we conclude that  $\eta=\tilde \eta$.
\end{proof}
Now we can prove Theorem \ref{thm:classiftwregu}.
\begin{proof}[Proof of Theorem~\ref{thm:classiftwregu}]
The assertion~\ref{thm:item:regunoexi} is a direct consequence of Corollary~\ref{coro:crittrivial} and Proposition~\ref{prop:k1/2}.

To prove \ref{thm:item:reguexi}, let $(c,\ka)\in\D1$, let $\eta$ be the unique (up to a translation) global solution to \eqref{eta2}--\eqref{eta1} given by Proposition~\ref{prop:globaleta}. 
 In the case $c>0$, from Proposition~\ref{prop:exiu}, $u=\sqrt{1-\eta}e^{i\theta}$ with $\theta$ given by \eqref{eq:deftheta} is up to invariances the unique finite energy solution of \eqref{TWc}. The smoothness and decay of $u$ is a consequence of the smoothness and the decay of $\eta$.

 It remains to analyze the case $c=0$, where $\eta(0)=1$ and $0<\eta<1$ on $\R\setminus\{0\}$, and 
 $u$ is the odd real function defined in \eqref{black-soliton}, so that $u(0)=0$. 
Hence,  we need to modify the arguments given above. 
It is clear that $u$ is smooth for $x\neq 0$. To avoid tedious computations, 
we use the following argument to verify that $u$ is a smooth solution to \eqref{TWc}.

Let us define $\tilde u\in\boC^\infty((-R,R))$ as the local real solution, given  by the Cauchy--Lipschitz theorem, of 
\begin{align}\label{eq:pbreel}
&(1-2\ka \tilde u^2)\tilde u''+\tilde u(1-\tilde u^2-2\ka (\tilde u')^2)=0,\\
&\label{eq:pbreel2}	\tilde u(0)=0,~
\tilde u'(0)=1/{\sqrt{2}},
\end{align}
for some $R>0$, since $1-2\ka y^2>0$, for $y$ near $0$. In this manner, $\tilde u$ satisfies 
\eqref{TWc} with $c=0$ on $(-R,R)$. Notice that we chose the value of $\tilde u'(0)$ to have compatibility with the identity in \eqref{eq:quadratic}, since $\eta(0)=1$ and $\eta'(0)=0$.
Let $\tilde\eta=1-\tilde u^2$. Since $\tilde\eta\leq 1$ in $(-R,R)$, with 
 $\tilde\eta(0)=1$, we infer that $\tilde\eta$ reaches a global maximum at the origin, so that $\tilde\eta'(0)=0$.
 Arguing as in the proof of Proposition~\ref{prop:eqeta}, with $u_2\equiv 0$ and $c=0$, but integrating between $0$ and $x$, instead of $x$ and $R_n$, we conclude that  $\tilde\eta$  satisfies equation \eqref{eta2} with $c=0$ in $(-R,R)$. Thus, $\tilde\eta$ and $\eta$ satisfy the same ODE problem \eqref{eq:odeetaord2}, with the same initial condition. By Cauchy--Lipschitz theorem, we have $\tilde\eta=\eta$, in $(-R,R)$, i.e.
 \begin{equation}
      \label{eq:black}
 \abs{\tilde u(x)}=\abs{u(x)}, \quad \text{ for all }x\in(-R,R).
 \end{equation}
 Recall that $u>0$ in $(0,\infty)$, and that $u<0$ in $(-\infty,0)$. Since  $\tilde u'(0)>0$, we deduce that there is some $R_0 \in(0,R]$ such that 
 $\tilde u> 0$ in $(0,R_0)$ and $\tilde u<0$ in $(0, R_0)$. 
We conclude from \eqref{eq:black} that $\tilde u=u$ in $(-R_0,\tilde R_0)$, so that 
$u\in\boC^\infty(\R)$. This also implies that $\tilde u$
is a global solution to \eqref{eq:pbreel}--\eqref{eq:pbreel2}, and that $\tilde u=u$ in $\R$, so that $u$
is solution to \eqref{TWc}, with $c=0$.
	
Finally, we need to prove the uniqueness of the black soliton $u$ in \eqref{black-soliton}, up to invariances. Let us assume that  $\check u \in\boC^\infty(\R;\C)\cap\boX(\R)$ is another solution to \eqref{TWc} with $c=0$. Setting  $\check \eta =1-|\check u|^2$, 
  we deduce that $\check\eta$ satisfies \eqref{eta2}--\eqref{eta1} by Proposition~\ref{prop:eqeta}, so that
  Proposition~\ref{prop:globaleta} implies that 
 $\check\eta =\eta(\cdot-x_0)$ for some $x_0\in\R$.
  Up to a translation, we assume that $x_0=0$.
Also, by Corollary \eqref{coro:propeta}, we deduce that  
\begin{equation}\label{born:check:u}
\abs{\check{u}}\leq 1 \text{ on }\R. 
\end{equation}
 In this manner, we have $\check\eta(0)=1$ and  $\check\eta'(0)=0$, so that 
$\check u(0)=0$. By using \eqref{eq:quadratic}, we also get $|\check u'|(x_0)=1/\sqrt 2,$ thus $\check u'(0)=e^{i \phi}/\sqrt 2$, for some $\phi\in\R$.

To conclude, we consider the function  
 $v= e^{-i\phi}\check u$. It is clear that $v$ satisfies \eqref{TWc} with $c=0$. 
 Therefore, the real-valued function $V=(\Re(v),\Im(v))$
satisfies the ODE system in \eqref{eq:systtw}, with $c=0$, that we rewrite as 
\begin{equation}\label{eq:GV}
    V''=G(V,V'),
\end{equation}
with initial condition 
\begin{equation}
\label{IC:V}
    V(0)=(0,0), \quad V(0)=(1/\sqrt 2,0).
\end{equation}
Notice that $G(V,V')$ includes the multiplication by $A(V)^{-1}$, which is well-defined on $\R$, in view of \eqref{born:check:u}, since $2\kappa<1$.
Because the black soliton $u$ satisfies \eqref{eq:pbreel}--\eqref{eq:pbreel2},  
the function  $U=(u,0)$ is also  a solution to \eqref{eq:GV}--\eqref{IC:V}.
Therefore, by the Cauchy--Lipschitz theorem, we conclude that $U=V$, i.e.\ that 
$\check u=e^{i\phi }u$ on $\R$, completing the proof.
\end{proof}
We end this section with a word on the regularity of $u_{c,\ka}$ with respect to $(c,\ka)$. Because $\eta_{c,\ka}$ satisfies the autonomous ODE \eqref{eq:odeetaord2} and since the right-hand side is a smooth vector field $F=F(c,\ka,\eta,\eta_2)$, the regularity of the flow with respect to the initial condition (see e.g \cite{hartman1982ordinary}) yields $\eta\in\boC^\infty(\D\times\R)$, where $\eta(c,\ka,x)=\eta_{c,\ka}(x).$ We prove  by induction that $\eta\in\boC^\infty(\D,H^k(\R))$ for all $k\in\N$.
\begin{proposition}\label{prop:reguparam1}
	For every multi-index $\alpha=(\alpha_1,\alpha_2,j)\in\N^3$, $(c,\ka)\in \D$, there exists $R$, 
 $C_0$, $C$ positive constants depending continuously on $(c,\ka)$ but not on $x\geq R$, such that for all $\abs{x}\geq R$
	\begin{align}\label{eq:decayck}
	|D^\alpha_{c,\ka,x}\eta_{c,\ka}(x)|\leq C_0 e^{-C(|x|-R)},
	\end{align} where $D^\alpha_{c,\ka,x}=\partial_c^{\alpha_1}\partial_\ka^{\alpha_2}\partial_x^j$.
	Moreover, $\eta\in\boC^\infty(\D;H^j(\R))$ where $\eta(c,\ka,x)=\eta_{c,\ka}(x)$.
\end{proposition}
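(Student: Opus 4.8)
The plan is to regard $Y_{c,\ka}:=(\eta_{c,\ka},\eta_{c,\ka}')$ as the solution of the autonomous system \eqref{eq:odeetaord2}, $Y'=F(c,\ka,Y)$, with initial datum $Y(0)=(1-c^2/2,0)$ depending smoothly on $(c,\ka)$. By Corollary~\ref{coro:propeta} this solution is global and its orbit stays in the region where $1-2\ka+2\ka\eta$ is bounded away from $0$, so $F$ is $\boC^\infty$ along the trajectory, jointly in $(c,\ka,Y)$. The classical theorem on smooth dependence of ODE solutions on parameters and initial data (see e.g.\ \cite{hartman1982ordinary}) yields at once that $(c,\ka,x)\mapsto\eta_{c,\ka}(x)$ belongs to $\boC^\infty(\D\times\R)$; in particular every mixed derivative $D^\alpha_{c,\ka,x}\eta_{c,\ka}$ exists and is continuous. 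Since $\eta_{c,\ka}$ is even it suffices to treat $x\to+\infty$, and it remains to upgrade this pointwise smoothness to the quantitative decay \eqref{eq:decayck} and to the $H^j(\R)$-valued statement.

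The core is an induction on the number $m=\alpha_1+\alpha_2$ of parameter derivatives, the base case $m=0$ (all $j\in\N$) being the decay of Corollary~\ref{coro:propeta}, which one checks is locally uniform in $(c,\ka)$. Fix a compact $K\subset\D$ and set $\lambda=\lambda(c,\ka)=\sqrt{(2-c^2)/(1-2\ka)}>0$, continuous and positive on $\D$, with $\lambda_K=\min_K\lambda$. For a parameter multi-index $\beta$ with $|\beta|=m$, differentiating the system shows that $w:=D^\beta_{c,\ka}Y_{c,\ka}$ solves a linear inhomogeneous equation $w'=D_YF(c,\ka,Y_{c,\ka}(x))\,w+g(x)$, where $g$ is a polynomial in $Y_{c,\ka}$ and in the lower-order derivatives $D^{\beta'}Y_{c,\ka}$, $|\beta'|<m$; by the inductive hypothesis $g$ decays exponentially, uniformly on $K$. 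The coefficient matrix $A(x)=D_YF(c,\ka,Y_{c,\ka}(x))$ converges, as $x\to+\infty$, to the hyperbolic matrix $A_\infty=\begin{pmatrix}0&1\\\lambda^2&0\end{pmatrix}$ with eigenvalues $\pm\lambda$, the convergence being exponential since $Y_{c,\ka}\to0$ exponentially. As $A(x)$ is then an exponentially small perturbation of a hyperbolic constant matrix, it admits an exponential dichotomy on a half-line $[R,\infty)$ (with $R$ chosen uniformly on $K$), and the associated variation-of-constants formula produces, from the exponentially small $g$, a solution decaying like $C_0e^{-C(|x|-R)}$ for any $C<\lambda_K$. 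Differentiating the system propagates the decay to all $x$-derivatives $\partial_x^jw$, closing the induction and giving \eqref{eq:decayck}.

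The delicate point, which I regard as the \emph{main obstacle}, is to identify our particular $w$ with the decaying solution furnished by the dichotomy, i.e.\ to rule out a component along the growing mode $e^{+\lambda x}$. This is where hyperbolicity combines with the geometry: the whole trajectory $Y_{c,\ka}$ lies on the one-dimensional stable manifold of the saddle $0$, which by the parametrized stable-manifold theorem is a $\boC^\infty$ graph depending smoothly on $(c,\ka)$; hence the parameter derivatives of $Y_{c,\ka}$ are tangent to this smoothly varying stable manifold and carry no unstable component, so $w$ is the unique bounded solution of its variational equation and must decay. Equivalently, once $w$ is known bounded on $[R,\infty)$, the unstable part of a bounded solution of $\xi_u'=\lambda\xi_u+O(e^{-Cx})$ is forced to equal $-\int_x^\infty e^{\lambda(x-t)}O(e^{-Ct})\,dt$ and decays, while a Gronwall estimate handles the stable part. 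The explicit inverse representation $\eta_{c,\ka}=\boF_{c,\ka},\boG_{c,\ka},\boH_{c,\ka}$ from \eqref{def:eta:c} can be used to verify these bounds by hand in each region, at the cost of case distinctions and of harmless polynomial prefactors, absorbed by taking $C<\lambda_K$.

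Finally, $\eta\in\boC^\infty(\D;H^j(\R))$ follows routinely from the locally uniform bounds \eqref{eq:decayck}. These bounds show that each candidate derivative $D^\beta_{c,\ka}\eta_{c,\ka}$, together with its $x$-derivatives up to order $j$, is square-integrable, hence lies in $H^j(\R)$, and they provide, on compact subsets of $\D$, an integrable majorant independent of $(c,\ka)$. Joint continuity of the derivatives in $(c,\ka,x)$ then permits passage to the limit in the $H^j$-norm by dominated convergence, yielding continuity of $(c,\ka)\mapsto D^\beta_{c,\ka}\eta_{c,\ka}\in H^j(\R)$ and convergence of the difference quotients; an induction on $|\beta|$ upgrades this to $\boC^\infty$ regularity of the $H^j(\R)$-valued map.
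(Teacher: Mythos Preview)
Your argument is correct, but it takes a genuine detour compared to the paper. You work with the second-order system $Y=(\eta,\eta')$, whose equilibrium at $0$ is a saddle; this is what creates your ``main obstacle'', the possible unstable component $e^{+\lambda x}$, which you then eliminate via exponential dichotomy and the parametrized stable-manifold theorem. The paper sidesteps this entirely by exploiting the monotonicity of $\eta_{c,\ka}$ on $(0,\infty)$ (Corollary~\ref{coro:propeta}) to work directly with the \emph{scalar first-order} equation \eqref{eq:odeeta1}, i.e.\ $\eta'=-F(c,\ka,\eta)$ with $F(c,\ka,y)=y\sqrt{(2-c^2-2y)/(1-2\ka+2\ka y)}$. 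Differentiating in $c$ gives $\partial_x(\partial_c\eta)=-\partial_yF(c,\ka,\eta)\,\partial_c\eta+g$, with $g$ exponentially small by induction; since $\partial_yF(c,\ka,0)=\lambda>0$, the linearization is purely stable and a direct Gronwall estimate (multiplying by $e^{\int\partial_yF}$ and integrating) yields the decay \eqref{eq:decayck} with no unstable mode ever appearing. In effect, the paper's first-order equation \emph{is} the stable-manifold parametrization you invoke, so your detour through the second-order system and dichotomy machinery ultimately reduces to the same computation.

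One imprecision worth noting: your claim that $w=D^\beta_{c,\ka}Y_{c,\ka}$ ``carries no unstable component'' because it is tangent to the stable manifold is not literally correct, since $W^s(c,\ka)$ itself moves with $(c,\ka)$. What is true is that the deviation from tangency equals $\partial_{(c,\ka)}h(c,\ka,\eta_{c,\ka}(x))$ for the graph $\eta'=h(c,\ka,\eta)$, and since $h(\cdot,\cdot,0)=0$ this is $O(\eta_{c,\ka}(x))$, exponentially small; that suffices for your dichotomy argument, but the wording should be adjusted. Your treatment of the $H^j$-valued smoothness by dominated convergence matches the paper's.
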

\begin{proof}
 By simplicity, we prove \eqref{eq:decayck} only for $\alpha_2=0$, since the case $\alpha_2\geq 1$ follows by induction on $\alpha_2$.

 By induction on $\alpha_1$, the case $\alpha_1=0$ is just the exponential decay of $\eta$ and all its derivatives in $x$. Since $\eta_{c,\ka}$ satisfies \eqref{eta2}--\eqref{eta1}, from Corollary~\ref{coro:propeta} and the regularity of the flow, we have for all $\alpha_1\geq0$ and all $x>0$
	\begin{align}\label{eq:flowreg}
		\partial_x\partial_c^{\alpha_1}\eta_{c,\ka}(x)=-\frac{d^{\alpha_1}}{(d c)^{\alpha_1}} F(c,\ka,\eta_{c,\ka}(x)),\quad\text{ where }F(c,\ka,y)=y\sqrt{\frac{2-c^2-2y}{1-2\ka+2\ka y}}.
	\end{align} We infer that using the induction hypothesis, there exists $R>1$ such that for all $x\geq R$
\begin{align}\label{eq:majgronvck}
	|\partial_x\partial_c^{\alpha_1}\eta_{c,\ka}(x)+\partial_yF(c,\ka,\eta_{c,\ka}(x))\partial_c^{\alpha_1}\eta_{c,\ka}(x)|\leq C_1e^{-C_2(x-R)}\text{, for some } C_1,C_2>0.
\end{align}
For instance, with $\alpha_1=1,$ \eqref{eq:flowreg} yields
\begin{align}\label{eq:edoflotc}
    \partial_x\partial_c\eta_{c,\ka}(x)=-\frac{d}{d c} F(c,\ka,\eta_{c,\ka}(x))=-\partial_c\eta_{c,\ka}(x)\partial_yF(c,\ka,\eta_{c,\ka})-\eta_{c,\ka}\frac{\partial_cF(c,\ka,\eta_{c,\ka})}{\eta_{c,\ka}},
\end{align} Using $|\eta(x)|\leq|\eta(1)|<|1-c^2/2|$ for all $x\geq 1$, we can bound $(\partial_cF)/\eta$ independently of $x\geq 1$, we can conclude that \eqref{eq:edoflotc} implies \eqref{eq:majgronvck}. On the other hand, we have
\begin{align}
\partial_yF(c,\ka,\eta_{c,\ka})=\frac{(1-2\ka+2\ka \eta_{c,\ka})(2-c^2-2\eta_{c,\ka})+\eta_{c,\ka}(\ka c^2-1)}{\sqrt{(1-2\ka+2\ka\eta_{c,\ka})^3(2-c^2-2\eta_{c,\ka})}}.
\end{align}
Since $\partial_yF(c,\ka,\eta_{c,\ka}(x))$ tends to $\sqrt{(2-c^2)/(1-2\ka)}$, as $x\to\infty$, we get  $C_3<\partial_yF(c,\ka,\eta_{c,\ka}(x))$ for all $x\geq R$ and some $C_3>0$ and $R>1$. From \eqref{eq:majgronvck}, we deduce the decay estimate as follows. Let $G(x)=\int_R^x\partial_yF(c,\ka,\eta_{c,\ka}(s))ds$, then, multiply \eqref{eq:majgronvck} by $e^{G(x)}$ and integrate from $R$ to $w\geq R$. By integration by part of $\int_R^w e^{G(x)}\partial_x\partial_c^{\alpha_1}\eta_{c,\ka}(x)dx$, we obtain
\begin{equation}\label{eq:decay05}-C_1 \int_R^w e^{G(x)-C_2(x-R)}dx\leq e^{G(w)}\partial_c^{\alpha_1}\eta_{c,\ka}(w)-\partial_c^{\alpha_1}\eta_{c,\ka}(R)\leq C_1 \int_R^w e^{G(x)-C_2(x-R)}dx.
\end{equation}
Moreover, we can check that for all $R\leq x\leq w$, we have $G(x)-G(w)\leq -C_3(w-x)$.
Adding $\partial_{c,\ka}\eta_{(c,\ka)}(R)$ and multiplying by $e^{-G(w)}$ every side of \eqref{eq:decay05} we get, using the latter estimate on $G$
\begin{equation}\label{eq:decay051}
\partial_{c,\ka}\eta_{c,\ka}(R)e^{-G(w)}-C_1 \int_R^w e^{-C_3(w-x)-C_2(x-R)}dx\leq\partial_c^{\alpha_1}\eta_{c,\ka}(w),
\end{equation}
and also the upper bound on $\partial_c^{\alpha_1}\eta_{c,\ka}$
\begin{equation}\label{eq:decay0512}
\partial_c^{\alpha_1}\eta_{c,\ka}(w)\leq \partial_c^{\alpha_1}\eta_{c,\ka}(R)e^{-G(w)}+C_1\int_R^w e^{-C_3(w-x)-C_2(x-R)}dx.
\end{equation}
If $C_2=C_3$ we obtain using \eqref{eq:decay051}--\eqref{eq:decay0512}
\begin{equation}\label{eq:decay0513}
	|\partial_c^{\alpha_1}\eta_{c,\ka}(w)|\leq( |\partial_c^{\alpha_1}\eta_{c,\ka}(R)|+C_1(w-R))e^{-C_2(w-R)},
\end{equation}
while if $C_2\ne C_3$, computing the integrals in \eqref{eq:decay051}--\eqref{eq:decay0512} yields
\begin{equation}\label{eq:decay0514}
	|\partial_c^{\alpha_1}\eta_{c,\ka}(w)|\leq |\partial_c^{\alpha_1}\eta_{c,\ka}(R)|e^{-C_3(w-R)}+ \frac{C_1}{|C_3-C_2|}(e^{-C_2(w-R)}+e^{-C_3(w-R)})
\end{equation}
From \eqref{eq:decay0513}--\eqref{eq:decay0514} follows the exponential decay estimate \eqref{eq:decayck} for $j=0$. Using the same method, we get \eqref{eq:decayck} for $x\leq-R$. The case $j\geq1$ is then obtained by induction, differentiating \eqref{eq:flowreg} $j-1$ times with respect to $x$.

Since $(\eta_{c+h,\ka}(x)-\eta_{c,\ka}(x))/h-\partial_c\eta_{c,\ka}(x)$ converges to $0$ for every $x\in\R$ as $h\rightarrow0$ and $\sup_{c_h\in(c-h,c+h)}|\partial_c\eta_{c_h,\ka}(\cdot)|$ is a continuous exponentially decaying function uniformly in $(c,\ka)$ since the constant in \eqref{eq:decayck} are continuous with respect to $(c,\ka)\in\D$. We also get the convergence to 0 in the $L^2$-norm using the dominated convergence theorem. Induction of this argument ensures that $\eta\in\boC^\infty(\D;H^k(\R))$.
\end{proof}
\section{Classification of singular traveling waves}
\label{sec:construction:weak}
In the following subsection, we prove Theorems~\ref{thm:non-singular-sol}, \ref{thm:classifcuspon}, \ref{thm:symmetry}, and Propositions~\ref{prop:nonvanishsingu} and~\ref{prop:constant} about qualitative properties on the weak solutions to \eqref{TWc}.}
\subsection{Properties of singular solutions}\label{subseq:propw}
Let $(c,\ka)\in[0,\infty)\times\R$, throughout this subsection, we assume that $u_{c,\ka}\in\boX(\R)$ is a solution to \eqref{TWc}.  
Let  $u_1=\Re(u_{c,\ka})$, $u_2=\Im(u_{c,\ka})$ and  $\eta_{c,\ka}=1-\abs{u_{c,\ka}}^2$. 
Taking $\phi=\phi_1\in H^1(\R;\R)$ and $\phi=\phi_2\in H^1(\R;\R)$, 
in \eqref{TW:weak}, we see that \eqref{TW:weak} is equivalent to the system of two real equations in \eqref{eq:systtw}, satisfied in the weak sense.
Therefore, since $u_{c,\ka}$ is continuous, we expect that solutions to  \eqref{TWc} 
are smooth on the open set 
\begin{equation}\label{eq:defomega}
	\Omega(u_{c,\ka}):=\Big\{x\in\R : \abs{u_{c,\ka}}^2\ne\frac{1}{2\ka}\Big\}=\Gamma(u_{c,\ka})^c. 
\end{equation}
This is exactly the conclusion of the result below.
Note that if $ \ka\leq0$, we trivially have $\Omega(u_{c,\ka})=\R$. Also,
in the case $\ka\ne1/2$, bearing in mind \eqref{eq:limeta}, we can find $R>0$ such that $-\infty<-R\leq\ga_{c,\ka}\leq\gb_{c,\ka}\leq R<\infty$ and $(-\infty,-R)\cup (R,\infty)\subset \Omega(u_{c,\ka})$, where $\ga_{c,\ka}$, $\gb_{c,\ka}$ are given by \eqref{def:supcrit}.

\begin{lemma}
	\label{lem:omegaregu}
	Let $(c,\ka)\in[0, \infty)\times\R$. If $u_{c,\ka}\in\boX(\R)$ satisfies \eqref{TWc}, then $u_{c,\ka}\in \boC^\infty(\Omega(u_{c,\ka}))$. In particular, if $\ka\ne1/2$ so that  $-\infty<\ga_{c,\ka}\leq\gb_{c,\ka}<\infty$, then $u_{c,\ka}$ is smooth in $(-\infty,\ga_{c,\ka})\cup(\gb_{c,\ka},\infty)$.
\end{lemma}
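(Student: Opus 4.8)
The plan is to perform an elliptic-type bootstrap on the open set $\Omega(u_{c,\ka})$, using that the leading coefficient of \eqref{TWc} degenerates exactly on $\Gamma(u_{c,\ka})$. Identify $\C$ with $\R^2$, writing $u=u_{c,\ka}$ as the column vector $(u_1,u_2)^T$, and recall that \eqref{TW:weak} is the weak formulation. The first step is to recast \eqref{TW:weak} in divergence form. Expanding $\langle u,\phi\rangle'=\langle u',\phi\rangle+\langle u,\phi'\rangle$ and collecting the terms paired with $\phi'$, a direct computation gives $\langle u',\phi'\rangle-2\ka\langle u,u'\rangle\langle u,\phi'\rangle=\langle M(u)u',\phi'\rangle$, where $M(u)=I_2-2\ka\,uu^T$ is the symmetric matrix with entries $\delta_{ij}-2\ka u_iu_j$. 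Hence \eqref{TW:weak} reads
\[
\int_\R\langle M(u)u',\phi'\rangle=\int_\R\langle g,\phi\rangle,\qquad g:=icu'+u\,\eta_{c,\ka}+2\ka\langle u,u'\rangle u',
\]
for all $\phi\in\boC_0^\infty(\R;\C)$. Note that $\det M(u)=1-2\ka\abs{u}^2=\det A(u)$, so $M(u)$ is invertible precisely on $\Omega(u_{c,\ka})$. Since $u\in\boX(\R)\subset L^\infty(\R)$, $u'\in L^2(\R)$ and $\eta_{c,\ka}\in H^1(\R)$ by Lemma~\ref{lem:finiteenergyassum}, one checks that $g\in L^1_{\loc}(\R)$; the displayed identity then says exactly that $(M(u)u')'=-g$ in $\mathcal{D}'(\R)$.

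The crucial regularity gain is the next step. As $M(u)u'\in L^1_{\loc}(\R)$ has weak derivative $-g\in L^1_{\loc}(\R)$, we obtain $M(u)u'\in W^{1,1}_{\loc}(\R)\hookrightarrow\boC(\R)$. On $\Omega(u_{c,\ka})$ the matrix $M(u)$ is continuous (as $u$ is, by Lemma~\ref{lem:finiteenergyassum}) and invertible, so $M(u)^{-1}$ is continuous there; consequently $u'=M(u)^{-1}\big(M(u)u'\big)$ agrees a.e.\ on $\Omega(u_{c,\ka})$ with a continuous function, i.e.\ $u\in\boC^1(\Omega(u_{c,\ka}))$. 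I expect this to be the main obstacle: it is exactly here that the nondegeneracy $\det M(u)\ne0$ is indispensable, since one must upgrade the a priori merely $L^2$ information on $u'$ to pointwise continuity, which fails at points of $\Gamma(u_{c,\ka})$.

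Once $u\in\boC^1(\Omega(u_{c,\ka}))$, the bootstrap is routine and I would close it by induction. Suppose $u\in\boC^k(\Omega(u_{c,\ka}))$ for some $k\geq1$; then $u'\in\boC^{k-1}$ and $\eta_{c,\ka}=1-\abs{u}^2\in\boC^k$, so $g=icu'+u\,\eta_{c,\ka}+2\ka\langle u,u'\rangle u'\in\boC^{k-1}(\Omega(u_{c,\ka}))$. Thus $M(u)u'$ has a $\boC^{k-1}$ derivative, hence $M(u)u'\in\boC^{k}(\Omega(u_{c,\ka}))$, and since $M(u)^{-1}\in\boC^{k}(\Omega(u_{c,\ka}))$ we recover $u'=M(u)^{-1}(M(u)u')\in\boC^{k}$, that is $u\in\boC^{k+1}(\Omega(u_{c,\ka}))$. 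Iterating yields $u\in\boC^\infty(\Omega(u_{c,\ka}))$.

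It remains to deduce the ``in particular'' statement. When $\ka\ne1/2$, the nonzero background condition \eqref{eq:limeta} gives $\abs{u}^2\to1\ne1/(2\ka)$ as $\abs{x}\to\infty$, so $\Gamma(u_{c,\ka})$ is bounded and the numbers $\ga_{c,\ka}=\inf\Gamma(u_{c,\ka})$ and $\gb_{c,\ka}=\sup\Gamma(u_{c,\ka})$ from \eqref{def:supcrit} are finite. By their definition $(-\infty,\ga_{c,\ka})\cup(\gb_{c,\ka},\infty)\subset\Omega(u_{c,\ka})$, whence $u$ is smooth on that set by the first part.
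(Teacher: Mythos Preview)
Your proof is correct and follows essentially the same bootstrap strategy as the paper: exploit that the leading-order coefficient matrix is invertible precisely on $\Omega(u_{c,\ka})$ to gain one derivative, then iterate. The only difference is cosmetic packaging---you write the equation in divergence form $(M(u)u')'=-g$ with the symmetric matrix $M(u)=I_2-2\ka\,uu^T$, whereas the paper works with the non-divergence system $A(u)U''=F(U,U')$; both matrices have the same determinant $1-2\ka|u|^2$, and your formulation is arguably cleaner since it is literally what the weak identity \eqref{TW:weak} expresses, so the step $M(u)u'\in W^{1,1}_{\loc}(\R)\hookrightarrow\boC(\R)$ requires no interpretation of products of distributions.
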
 
\begin{proof}
As usual, let  $u=u_{c,\ka}$ and write $u=u_1+iu_2$,  so that $U=(u_1,u_2)$ satisfies the system \eqref{eq:systtw}, that we recast as 
	$$A(u)U''=F(U,U'),$$ in the distributional sense,  		where $F(U,U')$ denotes the right-hand side in \eqref{eq:systtw}.
	Because $F(U,U')$ belongs to $L^1_{\loc}( \R)$, we conclude that 	$A(u)U''$	
	also belongs to $L^1_{\loc}( \R).$ Since the determinant \eqref{determinant} in nonzero on $\Omega(u)$, we deduce that $(A(u))^{-1}\in L^\infty_{\loc}( \Omega(u))$, and thus 
	$ U''=(A(u))^{-1}F(U,U') \in L^1_{\loc}( \Omega(u))$.
	Therefore, $U'\in  W^{1,1}_\mathrm{loc}(\Omega(u))$.
	Using that, by the Sobolev embedding theorem, the functions $W_{\rm{loc}}^{1,1}(\Omega(u))$ are continuous, we conclude that  $u'$ is in $\boC(\Omega(u))$. This implies that $F(U,U')\in\boC(\Omega(u))$, so $u$ is twice continuously differentiable in $\Omega(u)$. The smoothness follows using a  bootstrap argument. 
\end{proof}

If $ \abs{u_{c,\ka}(x)}^2\ne1/(2\ka)$, for all $x\in\R$, then by Lemma~\ref{lem:omegaregu}, $u_{c,\ka}\in\boC^2(\R)$ so we can refer to Theorem \ref{thm:classiftwregu} to determine $u_{c,\ka}$. Hence, from now on, we always assume $\Omega(u_{c,\ka})\ne\R$ (or equivalently $\Gamma(u_{c,\ka})\ne\emptyset$) in an attempt to describe singular solutions to \eqref{TWc}.
To analyze the behavior at infinity of singular solutions (see Theorem~\ref{thm:classifcuspon}), we introduce the following variant of equations \eqref{eta2}--\eqref{eta1}. Let $u_{c,\ka}\in\boX(\R)$ be a singular solution to \eqref{TWc} such that $\Gamma(u_{c,\ka})$ is bounded, and define the real numbers $\ga_{c,\ka}$, $\gb_{c,\ka}$ according to \eqref{def:supcrit}. Repeating the arguments of Proposition~\ref{prop:eqeta}, we deduce that $\eta_{c,\ka}=1-\abs{u_{c,\ka}}^2$ satisfies the following ODEs:
	\begin{align}
	\label{eta2l}
	(1-2\ka+2\ka\eta_{c,\ka})\eta_{c,\ka}''+\ka(\eta_{c,\ka}')^2=-3\eta_{c,\ka}^2+(2-c^2)\eta_{c,\ka}, \quad \text{ in }(-\infty,\ga_{c,\ka})\cup(\gb_{c,\ka}, \infty),\\
	\label{eta1l}
	(1-2\ka+2\ka\eta_{c,\ka})(\eta_{c,\ka}')^2=\eta_{c,\ka}^2(2-c^2 -2\eta_{c,\ka}), \quad \text{ in }(-\infty,\ga_{c,\ka})\cup(\gb_{c,\ka}, \infty).
\end{align}
In particular, the sign constraints in equation \eqref{eta1l} imply some conditions on $(c,\ka)$, which we discuss in the following lemma.
\begin{lemma}\label{lem:nonexiw}
	Let $(c,\ka)\in[0, \infty)\times\R$ and assume that $\ka\ne1/2$. If $u_{c,\ka}\in\boX(\R)$ satisfies \eqref{TWc} and $\Gamma(u_{c,\ka})\ne\emptyset$, then we necessarily have $(c,\ka)\in\tilde{\D}$.
\end{lemma}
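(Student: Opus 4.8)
The plan is to first pin down the sign of $\ka$ and then to read off the admissible range of $c$ from the one-sided constraint that equation \eqref{eta1l} imposes on the exterior of the singular set. Since $\Gamma(u_{c,\ka})=\{x\in\R:\eta_{c,\ka}(x)=1-1/(2\ka)\}$ and $|u_{c,\ka}|^2\geq0$, a nonempty $\Gamma(u_{c,\ka})$ forces $1/(2\ka)\geq0$, hence $\ka>0$; together with the hypothesis $\ka\ne1/2$ this leaves exactly the two cases $0<\ka<1/2$ and $\ka>1/2$, which I would treat in parallel. In either case, because $\ka\ne1/2$ the set $\Gamma(u_{c,\ka})$ is bounded, and being closed (as $|u_{c,\ka}|^2$ is continuous by Lemma~\ref{lem:finiteenergyassum}) it contains its supremum $\gb:=\gb_{c,\ka}$, with $\eta_{c,\ka}(\gb)=1-1/(2\ka)$. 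On the half-line $(\gb,\infty)$ the solution is smooth by Lemma~\ref{lem:omegaregu} and $\eta_{c,\ka}$ solves \eqref{eta1l}, while $\eta_{c,\ka}\to0$ at $+\infty$ by \eqref{eq:limeta}.

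The key observation is that the coefficient $1-2\ka+2\ka\eta_{c,\ka}$ keeps a constant sign on $(\gb,\infty)$: it vanishes precisely where $\eta_{c,\ka}=1-1/(2\ka)$, i.e.\ on $\Gamma(u_{c,\ka})$, which does not meet $(\gb,\infty)$ since $\gb=\sup\Gamma(u_{c,\ka})$. Evaluating at $+\infty$ gives the value $1-2\ka$, so the coefficient is positive on $(\gb,\infty)$ when $0<\ka<1/2$ and negative when $\ka>1/2$. Now I feed values into \eqref{eta1l}. For $0<\ka<1/2$ we have $1-1/(2\ka)<0$, so by the intermediate value theorem applied to the continuous $\eta_{c,\ka}$ running from $\eta_{c,\ka}(\gb)=1-1/(2\ka)$ to $0$, every $v\in(1-1/(2\ka),0)$ is attained at some $x_v>\gb$. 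At $x_v$ the coefficient is positive and $v^2>0$, so after dividing \eqref{eta1l} by the coefficient the nonnegativity of $(\eta_{c,\ka}')^2$ forces $2-c^2-2v\geq0$; letting $v\to0^-$ yields $c^2\leq2$, hence $(c,\ka)\in\D_1\cup\B_-\subset\tilde{\D}$. Symmetrically, for $\ka>1/2$ we have $1-1/(2\ka)>0$; every $v\in(0,1-1/(2\ka))$ is attained, the coefficient is negative there, and \eqref{eta1l} now forces $2-c^2-2v\leq0$; letting $v\to0^+$ gives $c^2\geq2$, hence $(c,\ka)\in\D_3\cup\B_+\subset\tilde{\D}$.

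The only delicate points, which I would state carefully rather than compute, are: (i) that $\eta_{c,\ka}$ genuinely passes through all intermediate values strictly between $1-1/(2\ka)$ and $0$ while staying off the value $1-1/(2\ka)$ that makes the coefficient vanish---this is exactly why the intermediate value theorem must be applied on $(\gb,\infty)$ and not across $\Gamma(u_{c,\ka})$---and (ii) the correct reading of the sign constraint in \eqref{eta1l}, namely that division by the nonvanishing coefficient converts its sign into the one-sided bound on $2-c^2-2v$. I expect (i) to be the genuine crux: the argument must be anchored at the finite endpoint $\gb$, where $\eta_{c,\ka}$ takes the boundary value $1-1/(2\ka)$, so that the limit $v\to0$ produces the borderline cases $c=\sqrt2$ (that is $\B_\pm$) and not merely a strict inequality, thereby covering all of $\tilde{\D}$.
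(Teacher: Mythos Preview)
Your proof is correct and follows essentially the same approach as the paper: both arguments exploit the sign constraint that \eqref{eta1l} imposes on $(\gb_{c,\ka},\infty)$. The paper phrases it by contradiction (assuming $(c,\ka)\notin\tilde\D$ and exhibiting, as in Corollary~\ref{coro:crittrivial}, a single point where $(2-c^2-2\eta)/(1-2\ka+2\ka\eta)<0$), whereas you proceed directly by sweeping through all intermediate values via the IVT and passing to the limit $v\to0^\pm$ to capture the borderline cases $\B_\pm$; the substance is the same.
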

\begin{proof}
Let us remove the subscripts of $u_{c,\ka}$.
	We saw above that since $\ka\ne1/2$, $\Gamma(u)$ is bounded, so that the function $\eta=1-\abs{u}^2$ satisfies \eqref{eta2l}--\eqref{eta1l}. By contradiction, assume that $(c,\ka)\in
	([0, \infty)\times\R)\backslash\tilde{\D}$. Then, since $\ka\ne1/2$, we have either $(c,\ka)\in\D_2$ or $(c,\ka)$ meet the assumption of Corollary~\ref{coro:crittrivial}. 
 By definition of $\Gamma(u)$, if $\ka\leq0$, then $\Gamma(u)=\emptyset$, therefore $(c,\ka)$ meet the assumption of Corollary~\ref{coro:crittrivial}.
Using ideas along the same lines as in Corollary~\ref{coro:crittrivial}, we infer that there exists $x_0>\gb$ such that $(2-c^2-2\eta(x_0))/(1-2\ka+2\ka\eta(x_0))<0$, which further implies that $(\eta'(x_0))^2<0$ in \eqref{eta1l}, a contradiction. We conclude that $(c,\ka)\in\tilde{\D}$.
\end{proof}
We are now in a position to prove Theorem~\ref{thm:non-singular-sol}, using the previous lemmas.
\begin{proof}[Proof of Theorem~\ref{thm:non-singular-sol}]
Let $(c,\ka)\notin(\tilde{D}\cup\boC)$ and assume by contradiction that $\Gamma(u_{c,\ka})\ne\emptyset$. Since $(c,\ka)\notin\boC$, we can apply Lemma~\ref{lem:nonexiw} to deduce that $(c,\ka)\in\tilde{\D}$, contradicting the assumptions on $(c,\ka)$.
Thus $\Gamma(u_{c,\ka})=\emptyset$, and using Lemma~\ref{lem:omegaregu}, we deduce that $u_{c,\ka}\in\boC^2(\R)$, so that the conclusion follows from  Theorem~\ref{thm:classiftwregu}.
\end{proof}

Let $(c,\ka)\in\tilde{\D}$, and $\ga\leq\gb$ be given real numbers. We discuss the behavior of any solution $\eta$ to \eqref{eta2l}--\eqref{eta1l} with $\ga_{c,\ka}=\ga$ and $\gb_{c,\ka}=\gb$ reaching the critical value at the boundaries $\eta(\ga^-)=\eta(\gb^+)=1-1/(2\ka)$. We will show later,  in the proof of Theorem~\ref{thm:classifcuspon},  that such a solution is uniquely determined using \eqref{eq:etaw} and satisfies $\eta(x)=\eta(\ga-x+\gb)$, for all $x<\ga$. Therefore, in the next result, we consider only the properties of the function on  $(\gb, \infty)$. Most of the ideas in this result come from Corollary~\ref{coro:propeta}.  
\begin{lemma}\label{lem:propetaw}
	Let $(c,\ka)\in\tilde{\D}$ and assume that one of the following conditions holds:
	 \begin{center}
		\textup{($i$)}	$(c,\ka)\in \D_1\cup \B_-$\quad or \quad \textup{($ii$)} $(c,\ka)\in \D_3\cup \B_+$.
	\end{center} 
Suppose that $\eta\in\boC^2((-\infty,\ga)\cup(\gb,\infty))$  satisfies \eqref{eq:limeta}, \eqref{eta2l}--\eqref{eta1l}, and the boundary conditions 
$$\eta(\ga^-)=\eta(\gb^+)=1-1/(2\ka).$$
Then $\eta\in\boC^\infty((\gb,\infty))$,   we have $\eta<0$ and $\eta'>0$ on $(\gb,\infty)$ in case $(i)$, while in case (ii), $\eta>0$ and $\eta'<0$ on $(\gb,\infty)$.
In addition, if  $(c,\ka)\in \D_1\cup \D_3$, for all $j\in\N$, there exist $C, C_0>0$ such that   $$	|\partial_x^j\eta(x)|\leq C_0e^{-C_1(|x|-\gb-1)},\quad\text{ for all }\abs{x}\geq\gb+1,$$
Finally,  if $(c,\ka)\in \B_-\cup \B_+$, for all $j\in\N$, there exist $C_0$, $C_1>0$ such that 
\begin{align}\label{eq:decaycrit}
	|\partial_x^j\eta(x)|\leq \frac{C_0}{(x-\gb-1+C_1)^{2+j}},\quad\text{ for all }\abs{x}\geq\gb+1.
\end{align}
\end{lemma}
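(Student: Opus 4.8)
My plan is to mirror the smooth analysis of Corollary~\ref{coro:propeta}, transplanted to the half-line $(\gb,\infty)$, with the crucial difference that the barrier is now the singular value $1-1/(2\ka)$, at which the coefficient $1-2\ka+2\ka\eta$ degenerates. Throughout I drop the subscripts $c,\ka$, and I treat case $(i)$, $(c,\ka)\in\D_1\cup\B_-$ (so that $1-1/(2\ka)<0$), the case $(ii)$ being entirely symmetric upon reversing the signs of $\eta$ and $\eta'$. \textbf{Confinement.} First I would show that $\eta$ never reaches the singular value on the open interval: if $\eta(x_2)=1-1/(2\ka)$ for some $x_2>\gb$, the left side of \eqref{eta1l} vanishes while its right side equals $(1-1/(2\ka))^2\,(1/\ka-c^2)$, which is nonzero. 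Here the case distinction enters through the sign of $1/\ka-c^2$ (positive on $\D_1\cup\B_-$, negative on $\D_3\cup\B_+$), giving the required contradiction. Hence $1-2\ka+2\ka\eta\neq0$ on $(\gb,\infty)$, so the vector field $F$ of \eqref{eq:odeetaord2} is well-defined and locally Lipschitz there. Next, $\eta$ cannot vanish: if $\eta(x_1)=0$, then \eqref{eta1l} forces $\eta'(x_1)=0$, so $(\eta,\eta')$ sits at the equilibrium $(0,0)$ of \eqref{eq:odeetaord2}, and Cauchy--Lipschitz yields $\eta\equiv0$, contradicting the boundary value $\eta(\gb^+)=1-1/(2\ka)\neq0$. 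Since $\eta$ is continuous on the connected set $(\gb,\infty)$, avoids both $1-1/(2\ka)$ and $0$, and satisfies $\eta(\gb^+)=1-1/(2\ka)$ and $\eta\to0$ at $+\infty$ by \eqref{eq:limeta}, it must remain in the open interval $(1-1/(2\ka),0)$ (and in $(0,1-1/(2\ka))$ in case $(ii)$).

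\textbf{Monotonicity and smoothness.} On $(\gb,\infty)$ the two factors on the right of \eqref{eta1l} are nonzero of definite sign, so $(\eta')^2>0$; thus $\eta'$ is continuous and nonvanishing, hence of constant sign, and comparing $\eta(\gb^+)=1-1/(2\ka)$ with $\eta(+\infty)=0$ fixes $\eta'>0$ in case $(i)$ and $\eta'<0$ in case $(ii)$. Because $1-2\ka+2\ka\eta\neq0$, I can solve \eqref{eta2l} for $\eta''$ with a smooth right-hand side and bootstrap to conclude $\eta\in\boC^\infty((\gb,\infty))$.

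\textbf{Decay.} The monotonicity lets me select the correct square root in \eqref{eta1l}, giving the first-order equation $\eta'=-\eta\sqrt{(2-c^2-2\eta)/(1-2\ka+2\ka\eta)}$, exactly as in \eqref{eq:odeeta1}. If $(c,\ka)\in\D_1\cup\D_3$, then $c\neq\sqrt2$ and the radicand tends to $(2-c^2)/(1-2\ka)>0$; writing $w=|\eta|$ I obtain $w'\leq-Cw$ for $x\geq\gb+1$, and Gronwall yields exponential decay, with the same estimate for every derivative by differentiating the first-order equation and inducting, precisely as in Corollary~\ref{coro:propeta}. If instead $(c,\ka)\in\B_-\cup\B_+$, then $c=\sqrt2$, so $2-c^2-2\eta=-2\eta$ and the equation degenerates to $w'=-w\sqrt{2w/(1-2\ka+2\ka\eta)}$, which near $+\infty$ satisfies $-C_a\,w^{3/2}\leq w'\leq-C_b\,w^{3/2}$ for constants $C_a\geq C_b>0$. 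Integrating the inequality $(w^{-1/2})'=-\tfrac12 w^{-3/2}w'\geq\tfrac12 C_b$ gives $w(x)\leq C_0/(x-\gb-1+C_1)^2$, which is \eqref{eq:decaycrit} for $j=0$; the bounds $x^{-(2+j)}$ then follow by induction, differentiating the first-order equation and noting that each $x$-differentiation of its right-hand side gains one factor $|\eta|^{1/2}\sim x^{-1}$. The left tail $(-\infty,\ga)$ is handled identically, since the same equations \eqref{eta2l}--\eqref{eta1l}, the limit \eqref{eq:limeta}, and the boundary value $\eta(\ga^-)=1-1/(2\ka)$ hold there.

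The routine part is the confinement and the exponential ($\D_1\cup\D_3$) decay, which copy Corollary~\ref{coro:propeta}. I expect the main obstacle to be the $\B_\pm$ decay: the vanishing of $2-c^2$ removes the linear term of the first-order equation, so the decay is genuinely algebraic rather than exponential and Gronwall no longer applies; one must instead run a two-sided comparison for the degenerate equation $w'\asymp-w^{3/2}$ and then propagate the sharp rate $x^{-(2+j)}$ through all derivatives by a careful induction.
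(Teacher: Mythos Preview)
Your proof is correct and follows essentially the same approach as the paper's: nonvanishing via the Cauchy--Lipschitz equilibrium argument, sign and monotonicity extracted from \eqref{eta1l}, exponential decay by Gronwall when $(c,\ka)\in\D_1\cup\D_3$, and algebraic decay by integrating the degenerate first-order equation $w'\asymp -w^{3/2}$ when $(c,\ka)\in\B_\pm$. The only differences are cosmetic---you add an explicit confinement step excluding the singular value $1-1/(2\ka)$, and for the higher derivatives in $\B_\pm$ you differentiate the first-order equation whereas the paper uses \eqref{eta2l}; both variants work.
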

\begin{proof}
	We place ourselves in case (i). We prove that the function $\eta$ does not vanish in $(\gb, \infty)$ using the same ODE argument as in Corollary~\ref{coro:propeta}. Then, since $\eta(\gb^+)<0$, we get $\eta<0$ in $(\gb, \infty)$. Using condition \eqref{eq:limeta}, there must exist $R>\gb$ such that $1-1/(2\ka)<\eta(R)<0$, hence by the mean value theorem, there exists $\gb<x_0<R$ such that $\eta'(x_0)(R-\gb)=\eta(R)-1+1/(2\ka)$. We conclude that $\eta'>0$ is the neighborhood of $x_0$. Now suppose that $\eta'(\tilde x)=0$, for some $\tilde x\in(\gb, \infty)$. From \eqref{eta1l}, we infer that $\eta(\tilde x)=0$ or $\eta(\tilde  x)=1-c^2/2\geq0$, which is absurd since $\eta$ cannot vanish in $(\gb,\infty)$. Therefore, $\eta'>0$ in $(\gb, \infty)$. Proceeding along the same lines as in Corollary~\ref{coro:propeta} we recover the exponential decay of $\eta$ if $c\ne\sqrt{2}$.
 It remains to prove \eqref{eq:decaycrit} for $(\sqrt{2},\ka)\in\B_-$. In this case, since $\eta<0$ and $\eta'>0$ in $(\gb,\infty)$, we obtain using \eqref{eta1l}
	\begin{align}\label{eq:decayeq1}\eta'(x)=\frac{\sqrt{-2\eta^3(x)}}{\sqrt{1-2\ka+2\ka\eta(x)}},\quad\text{ for all }x>\gb.
	\end{align}
    Thus $ \eta'(x)\geq \sqrt{-2\eta^3(x)}/\sqrt{1-2\ka}$, for all $x\geq\gb+1$, and we obtain \eqref{eq:decaycrit} for $j=0$ by integrating this inequality. We get \eqref{eq:decaycrit} for higher order derivatives using \eqref{eq:decayeq1} for $j=1$, and differentiating  \eqref{eta2l} with respect to $x$, together with an induction argument for $j\geq2$. Case (ii) is analogous, bearing in mind that $\eta(\gb^+)=1-1/(2\ka)>0$, so that $\eta>0$ and $\eta'<0$ in $(\gb,\infty)$.
\end{proof}
We now prove an adapted version of Lemma~\ref{lem:etafunctions} to the functions \eqref{eq:FD1W}--\eqref{eq:FD5W}, using the same arguments.
\begin{lemma}\label{lem:etawfunctions}
The functions  $f_{c,\ka}$, $g_{\ka}$, $\tilde{g}_{\ka}$ and  $h_{c,\ka}$ are well-defined. Moreover, 
 \begin{enumerate}
 [leftmargin=4.5ex,topsep=1pt,itemsep=-5pt,partopsep=0ex,parsep=1ex]
		\item\label{lem:item:etaf} $f_{c,\ka}$ is increasing, belongs to $\boC^\infty(\mathfrak{T}_\ka^\circ)\cap \boC(\mathfrak{T}_\ka)$, and 
		\begin{equation}\label{eq:implifwprim}
		f_{c,\ka}(1-1/(2\ka))=0, \	\lim_{y\to 0^-}f_{c,\ka}(y)=\infty, \ f_{c,\ka}'(y)=-\frac{1}{y}\sqrt{\frac{1-2\ka+2\ka y}{2-c^2-2y}},\ \text{ for }y \in \mathfrak{T}^\circ_\ka.
		\end{equation}
  	\item\label{lem:item:etag} $g_{\ka}$ is increasing, belongs to $\boC^\infty(\mathfrak{T}_\ka^\circ)\cap \boC(\mathfrak{T}^\circ_\ka)$, and 
		\begin{equation}\label{eq:impligwprim}
			g_{\ka}(1-1/(2\ka))=0,\  	\lim_{y\to 0^-}g_{\ka}(y) =\infty, 
			\ g_{\ka}'(y)=-\frac{1}{y}\sqrt{\frac{1-2\ka+2\ka y}{-2y}},\ \text{ for }y \in \mathfrak{T}^\circ_\ka.
		\end{equation}
  \item\label{lem:item:etagt} $\tilde{g}_{\ka}$ is decreasing, belongs to $\boC^\infty(\mathfrak{J}_\ka^\circ)\cap \boC(\mathfrak{J}_\ka)$, and 
	\begin{equation}\label{eq:impligjwprim}
			\lim_{y\to 0^+}\tilde{g}_{\ka}(y) =\infty,\  \tilde{g}_{\ka}(1-1/(2\ka))=0, 
		\ \tilde{g}_{\ka}'(y)=-\frac{1}{y}\sqrt{\frac{1-2\ka+2\ka y}{-2y}},\ \text{ for }y \in \mathfrak{J}_\ka.
	\end{equation}
		\item\label{lem:item:etah} $h_{c,\ka}$ is decreasing, belongs to $\boC^\infty(\mathfrak{J}_\ka^\circ)\cap \boC(\mathfrak{J}^\circ_\ka)$, and 
	\begin{equation}\label{eq:implihwprim}
		 	\lim_{y\to 0^+}h_{c,\ka}(y)=\infty,\ h_{c,\ka}(1-1/(2\ka))=0, \ h_{c,\ka}'(y)=-\frac{1}{y}\sqrt{\frac{1-2\ka+2\ka y}{2-c^2-2y}},\ \text{ for }y \in \mathfrak{J}^\circ_\ka.
	\end{equation}

		\end{enumerate} Consequently, the functions in \eqref{eq:FD1W}--\eqref{eq:FD5W} are injective and their inverse lie in $\boC^\infty(\mathfrak{T}^\circ_\ka)\cap\boC(\mathfrak{T}_\ka)$ in cases \ref{lem:item:etaf}--\ref{lem:item:etag} (respectively in $\boC^\infty(\mathfrak{J}^\circ_\ka)\cap\boC(\mathfrak{J}_\ka)$ in cases \ref{lem:item:etagt}--\ref{lem:item:etah}).
\end{lemma}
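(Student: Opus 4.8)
The plan is to follow verbatim the strategy of Lemma~\ref{lem:etafunctions}, of which this is the singular-solution analogue: by construction the four functions are antiderivatives of $-\frac1y\sqrt{\frac{1-2\ka+2\ka y}{2-c^2-2y}}$ (and of $-\frac1y\sqrt{\frac{1-2\ka+2\ka y}{-2y}}$ when $c=\sqrt2$), obtained by separating variables in the ODE \eqref{eta1l} over the range of $\eta$ lying between the singular value $1-1/(2\ka)$ and $0$. First I would fix a region and establish the sign bookkeeping on the relevant interval. On $\mathfrak{T}_\ka=[1-1/(2\ka),0)$ (for $\ka\in(0,1/2)$) the quantity $1-2\ka+2\ka y$ is increasing in $y$, vanishing at the left endpoint and equal to $1-2\ka>0$ at $y=0$, hence nonnegative; on $\mathfrak{J}_\ka=(0,1-1/(2\ka)]$ (for $\ka>1/2$) the same quantity is nonpositive. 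Combined with the signs of $2-c^2$ and of $y$ in each region, this shows that the radicands in \eqref{eq:FD1W}--\eqref{eq:FD5W} and in \eqref{eq:implifwprim}--\eqref{eq:implihwprim} are nonnegative, so the square roots are well-defined.

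Next I would check that the $\atanh$-arguments stay in $[0,1)$. Writing $z$ for such an argument in the $f_{c,\ka}$/$h_{c,\ka}$ family, a direct computation gives $1-z^2=\frac{-2y(1-\ka c^2)}{(1-2\ka)(2-c^2-2y)}$; the point is that $1-\ka c^2$ keeps a constant sign in each region ($\ka c^2<1$ in $\D_1$, $\ka c^2>1$ in $\D_3$) and that this sign matches those of $y$, $1-2\ka$ and $2-c^2-2y$, so that $1-z^2>0$ throughout the open interval (for $g_\ka,\tilde g_\ka$ the analogous term is a plain square root not involving $\atanh$). This also shows that each function is $\boC^\infty$ on the open interval and continuous up to the endpoint $1-1/(2\ka)$.

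The core computation is the differentiation. For the $\atan$-term of $f_{c,\ka}$/$h_{c,\ka}$ I would reuse the identity already established in Lemma~\ref{lem:etafunctions}, yielding $-\frac{2\ka}{\sqrt{(2-c^2-2y)(1-2\ka+2\ka y)}}$; differentiating the $\atanh$-term (using the expression for $1-z^2$ above together with $\frac{d}{dy}z^2$) gives $-\frac{1-2\ka}{y\sqrt{(2-c^2-2y)(1-2\ka+2\ka y)}}$, where one must read $\sqrt{(1-2\ka)^2}=|1-2\ka|$ consistently in each region. Adding the two and factoring $2\ka+\frac{1-2\ka}{y}=\frac{1-2\ka+2\ka y}{y}$ collapses everything to the claimed $f_{c,\ka}'(y)=-\frac1y\sqrt{\frac{1-2\ka+2\ka y}{2-c^2-2y}}$; the $g_\ka$ case ($2-c^2=0$) is handled by the same direct differentiation of its two explicit terms, producing $g_\ka'(y)=-\frac1y\sqrt{\frac{1-2\ka+2\ka y}{-2y}}$. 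Since $-1/y$ has a fixed sign on each interval, the derivative is strictly positive on $\mathfrak{T}_\ka^\circ$ (so $f_{c,\ka},g_\ka$ increase) and strictly negative on $\mathfrak{J}_\ka^\circ$ (so $h_{c,\ka},\tilde g_\ka$ decrease), which is exactly the asserted monotonicity.

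Finally I would read off the boundary behavior. At $y=1-1/(2\ka)$ one has $1-2\ka+2\ka y=0$, so the $\atan$-argument diverges and $\atan(\infty)=\pi/2$, while the $\atanh$-term (resp.\ the second square root of $g_\ka$) vanishes; the subtracted constant $-\pi\sqrt\ka$ is chosen precisely so that the value equals $2\sqrt\ka\cdot\frac\pi2-\pi\sqrt\ka=0$. As $y\to0$, the $\atanh$-argument tends to $1$ (resp.\ the square root in $g_\ka$ blows up), forcing the function to $+\infty$. Strict monotonicity together with continuity up to the endpoint then gives injectivity, and the inverse function theorem yields the inverses with the stated regularity. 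The only genuine difficulty is the sign bookkeeping: the factors $1-2\ka$, $2-c^2$ and $1-\ka c^2$ all change sign between the families $\{\D_1,\B_-\}$ and $\{\D_3,\B_+\}$, so one must verify that these sign changes cancel in pairs, leaving both the radicands and the final derivative formula in the same compact form across all cases.
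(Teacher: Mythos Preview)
Your proposal is correct and follows the same approach as the paper's proof: both reduce to Lemma~\ref{lem:etafunctions}, noting that the key difference in the formulas is the inversion of the $\atanh$-argument (which you phrase as the computation of $1-z^2$, and the paper phrases as the inequality $y(c^2\ka-1)>0$), and both reuse the derivative of the $\atan$-term verbatim from that earlier lemma. Your version is in fact more thorough than the paper's, which only treats case~(i) explicitly and dismisses the others as ``analogous,'' whereas you correctly flag and resolve the sign-cancellation issue across the regions $\D_1\cup\B_-$ versus $\D_3\cup\B_+$.
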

\begin{proof}
	For case \ref{lem:item:etaf}, it can be proved just as in Lemma~\ref{lem:etafunctions} that the square roots are well-defined for all $y\in\mathfrak{T}_\ka^\circ$. The only difference between the formula of $f_{c,\ka}$ and $F_{c,\ka}$ given by \eqref{eq:FD1}, is the inversion of the argument in $\atanh$. Here, we have $y(c^2\ka-1)>0$, which is equivalent to the condition $(1-2\ka)(2-c^2-2y)>(2-c^2)(1-2\ka+2\ka y)>0$. Therefore, the 
	$\atanh$ in $f_{c,\ka}$ is well-defined.
	Moreover, we have for all $y\in\mathfrak{T}_\ka^\circ$,
	$$\frac{d}{d y}
	\Bigg(
	\atanh\Big(\sqrt{\frac{(2-c^2)(1-2\ka+2\ka y)}{(1-2 \ka) (2-c^2-2y)}}\Big)
	\Bigg)=
	-\frac{\sqrt{(1-2\ka)(2-c^2)}}{2y\sqrt{(2-c^2-2y)(1-2 \ka +2\ka y)}}, 
	$$
	We refer to Lemma~\ref{lem:etafunctions} for the formula of the derivative of the other terms in $f_ {c,\ka}$.
	The other cases are analogous to \ref{lem:item:etaf}.
\end{proof} 
We are in a position to prove Theorem~\ref{thm:classifcuspon}.
\begin{proof}[Proof of Theorem~\ref{thm:classifcuspon}]
    Recall that $u_{c,\ka}\in\boC^\infty((-\infty,\ga_{c,\ka})\cap(\gb_{c,\ka},\infty))$ with $-\infty<\ga_{c,\ka}\leq\gb_{c,\ka}<\infty$ and that $\eta_{c,\ka}$ satisfies \eqref{eta2l}--\eqref{eta1l}. Without loss of generality, we assume that $\gb_{c,\ka}=0$ and we set $\ga=\ga_{c,\ka}$.
	Let us treat the case $(c,\ka)\in \D_1$, using Lemma~\ref{lem:propetaw}, we get $1-2\ka<\eta_{c,\ka}(x)<0,$ (i.e.\ $\eta_{c,\ka}(x)\in\mathfrak{T}^\circ_\ka$) for all $x>0$ and that $\eta_{c,\ka}'>0$ for all $x>0$. From this and equation \eqref{eta1l}, we have in the fashion of \eqref{eq:odeeta1} 
	\begin{align}\label{eq:intweak}
		\frac{\eta_{c,\ka}'(x)}{\eta_{c,\ka}(x)}\sqrt{\frac{1-2\ka+2\ka\eta_{c,\ka}(x)}{2-c^2-2\eta_{c,\ka}(x)}}=-1,\quad\text{ for all }x>0.
	\end{align}
	Integrating \eqref{eq:intweak} from $\varepsilon$ to $x$ and taking the limit as $\varepsilon\to0$, we obtain from \eqref{eq:implifwprim} that  $f_{c,\ka}(\eta_{c,\ka}(x))=x$ for all $x\geq0$ and we obtain \eqref{eq:etaw} by applying  $f_{c,\ka}^{-1}$ to both sides. Additionally, the symmetry formula $\eta_{c,\ka}(x)=\eta_{c,\ka}(\ga-x)$ holds for all $x\geq0$. Indeed, using ideas from Lemma~\ref{lem:propetaw}, we can show that $\eta_{c,\ka}<0$ and $\eta_{c,\ka}'<0$ in $(-\infty,\ga)$. Thus, $\eta_{c,\ka}(\ga-x)\in\mathfrak{T}_{\ka}$, for all $x\geq0$, and also satisfies \eqref{eq:intweak}. Then, we infer that
 $$f_{c,\ka}(\eta_{c,\ka}(\ga-x))=f_{c,\ka}(\eta(x))=x,\quad\text{ for all }x\geq0,$$ and we conclude by applying $f_{c,\ka}^{-1}$ to the previous equation. Since $1-1/(2\ka)<\eta_{c,\ka}(x)<0,$ we have $\eta_{c,\ka}(x)<1$ for all $x\in(-\infty,\ga)\cup(0, \infty)$. Hence, we recover $u_{c,\ka}=\sqrt{1-\eta_{c,\ka}}e^{i\theta_{c,\ka}}$ with $\theta_{c,\ka}'=c\eta_{c,\ka}/(2-2\eta_{c,\ka})$ in $(-\infty,\ga)\cup(0,\infty)$ using ideas similar to the ones in Corollary~\ref{coro:eqpolaire} adapted to intervals of the form $(-\infty,\ga)\cup(0,\infty)$. On the other hand, the limits of $\eta_{c,\ka}'$ as $x\to\ga_{c,\ka}^-$ and $x\to0^+$ are obtained taking the limit in \eqref{eq:intweak} and using $\eta_{c,\ka}(x)=\eta_{c,\ka}(\ga-x)$ for all $x\leq\ga$.
 The general result for $(c,\ka)\in\tilde{D}$ follows from the same arguments because we can show that the formula \eqref{eq:intweak} still holds in all cases.
 Notice that formula \eqref{eq:etaw} does not contradict the finite energy assumption on $u_{c,\ka}.$ Indeed, using ideas along the same line as in the proof of Proposition~\ref{prop:exiu}, we deduce that the condition $u'_{c,\ka}\in L^2((-\infty,\ga)\cup((0,\infty))$ reduces to $\eta_{c,\ka}\in H^1((-\infty,\ga)\cup((0,\infty))$. From the symmetry formula, this question further simplifies to whether or not $\eta_{c,\ka}\in H^1((0,\infty))$. For all $\ve>0$ due to the exponential decay estimates in Lemma~\ref{lem:propetaw}, we have $\eta_{c,\ka}\in H^1((\ve,\infty))$. Since $\eta_{c,\ka}$ in continuous in $(0,\infty)$, we have $\eta_{c,\ka}\in L^2((0,\ve))$, and, concerning $\eta'_{c,\ka}$, which is unbounded near 0, we get using  equation \eqref{eq:intweak},
	\begin{align}\label{eq:intcriti}
		\int_{0}^{\varepsilon}\eta'(x)^2dx=-\int_{0}^{\varepsilon}\eta'(x)\eta(x)\sqrt{\frac{2-c^2-2\eta(x)}{1-2\ka+2\ka\eta(x)}}dx.
	\end{align}
Performing the change of variable $y=\eta(x)$, the integrability problem in \eqref{eq:intcriti} simplifies to the integrability of $s\mapsto(\sqrt{s})^{-1}$ near 0. Therefore, $\eta_{c,\ka}\in H^1((-\infty,\ga)\cup((0,\infty))$ and $u'_{c,\ka}\in L^2((-\infty,\ga)\cup((0,\infty))$ follows.
\end{proof}

At this stage, we could prove Corollary~\ref{cor:cuspon}, which provides a weak solution to \eqref{TWc}. However, we postpone its proof to Subsection~\ref{subsec:buildw},  where we establish a more general result to glue local pointwise solutions to form a global weak solution to \eqref{TWc} (see  Lemma~\ref{lem:buildw}).
\subsubsection*{Properties of solution with two or more singular points}
We are now interested in the behavior of a singular solution $u_{c,\ka}\in\boX(\R)$ in the interval $(\ga_{c,\ka},\gb_{c,\ka})$. We assume that this interval is nonempty, i.e.\  that $\card(\Gamma(u_{c,\ka}))\geq2.$ 
If $(c,\ka)\in\tilde{\D}$ so that $-\infty<\ga_{c,\ka}<\gb_{c,\ka}<\infty$, then from the variations of $\eta_{c,\ka}$ in $(-\infty,\ga_{c,\ka})\cup(\gb_{c,\ka},\infty)$ (see the proof of Theorem~\ref{thm:classifcuspon}) we have \begin{equation}\label{eq:Zinclu}
    \mathcal{Z}(u_{c,\ka})\subset(\ga_{c,\ka},\gb_{c,\ka}),\quad\text{ and }\quad-\infty<\ga_{c,\ka}^0<x_0<\gb_{c,\ka}^0<\infty,
    \end{equation}
   where $\ga_{c,\ka}^0$ and $\gb_{c,\ka}^0$ are the closest singular points to $x_0\in\mathcal{Z}(u_{c,\ka})$ defined in \eqref{def:ab0}. Notice that we omit the dependence of  $\ga_{c,\ka}^0$ and $\gb_{c,\ka}^0$ on $x_0$ for notational simplicity. 
The following lemma establishes that if $\ka=1/2$, then $\ga_{c,1/2}=-\infty$ and $\gb_{c,1/2}=\infty$, so that there are infinitely many points in $\Gamma(u_{c,1/2})$ and \eqref{eq:Zinclu} still holds unless $u_{c,1/2}$ is trivial. 

 Whether the set $\Gamma(u_{c,1/2})$ contains countably or uncountably many points is nontrivial and out of the scope of this work. For instance, a solution $u_{c,1/2}\in\boX(\R)$ consisting of countably many bubbles with a smaller and smaller amplitude, and such that the set $\Gamma(u_{c,1/2})$ countable, may exist.
  
\begin{lemma}\label{lem:Z1/2}
	Let $(c,\ka)\in\boC$. If $u\in\boX(\R)$ satisfies \eqref{TWc} with $\ka=1/2$, then $\ga_{c,1/2}=-\infty$ and $\gb_{c,1/2}= \infty$. In particular $|u(x)|=1$ for an infinite number of $x\in\R$.
\end{lemma}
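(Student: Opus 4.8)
The plan is to argue by contradiction, showing that the supremum $\gb_{c,1/2}=\sup\Gamma(u)$ cannot be finite; the infimum $\ga_{c,1/2}$ is then handled by the mirror argument. First I would record the structural simplification coming from $\ka=1/2$: since $1/(2\ka)=1$, the singular set is $\Gamma(u)=\{x\in\R:\eta(x)=0\}$ with $\eta=1-\abs{u}^2$, and it is closed because $\eta$ is continuous (Lemma~\ref{lem:finiteenergyassum}). Moreover $\Gamma(u)\neq\emptyset$: otherwise $\eta$ never vanishes, so $u\in\boC^2(\R)$ by Lemma~\ref{lem:omegaregu}, and then Proposition~\ref{prop:k1/2} forces $u$ to be trivial with $\abs{u}\equiv1$, whence $\eta\equiv0$ and $\Gamma(u)=\R$, a contradiction. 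Thus $\gb:=\gb_{c,1/2}$ is well defined, and I assume for contradiction that $\gb<\infty$, so that $\eta(\gb)=0$ while $\eta\neq0$ on the connected component $(\gb,\infty)$ of $\Omega(u)$.

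On $(\gb,\infty)$ the solution is smooth by Lemma~\ref{lem:omegaregu}, and $\eta$ keeps a constant sign there by continuity. Since this component is unbounded to the right, where $\eta\to0$ and $u'(R_n)\to0$ along some sequence $R_n\to\infty$ (recall $u'\in L^2$), I can reproduce the integration-to-infinity argument of Proposition~\ref{prop:eqeta} on $(\gb,\infty)$: it yields the relations \eqref{eq:phase2} and \eqref{eq:quadratic}, and hence the first integral \eqref{eta1}, on this half-line. Specializing to $\ka=1/2$, where $1-2\ka+2\ka\eta=\eta$, this reads $(\eta')^2=\eta(2-c^2-2\eta)$ on $(\gb,\infty)$, where I may divide by $\eta$ since $\eta$ does not vanish there.

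Next I would extract a finite ``travel time'' to the next zero. Because $\eta$ is continuous on $[\gb,\infty)$, vanishes at both ends and is nonzero (say positive; the negative case is identical) on $(\gb,\infty)$, it attains a strictly positive maximum at some interior point $x_m$; there $\eta'(x_m)=0$ and $\eta(x_m)\neq0$, so the reduced equation forces $\eta(x_m)=1-c^2/2=:M$. If $c=\sqrt{2}$ then $M=0$ and $(\eta')^2=-2\eta^2\leq0$ gives $\eta\equiv0$ on $(\gb,\infty)$, an immediate contradiction, so I may assume $c\neq\sqrt2$. Integrating $\eta'/\sqrt{\eta(2-c^2-2\eta)}=\mp1$ away from $x_m$ exactly as in Proposition~\ref{prop:k1/2}, and using that $\int_0^{M}dy/\sqrt{y(2-c^2-2y)}=\pi/\sqrt2$ is finite (the integrand has only integrable square-root singularities at $y=0$ and $y=M$), I conclude that $\eta$ returns to $0$ at the finite point $x^*=x_m+\pi/\sqrt2$. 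Then $x^*\in\Gamma(u)$ with $x^*>x_m>\gb$, contradicting $\gb=\sup\Gamma(u)$. Hence $\gb_{c,1/2}=\infty$, and the symmetric argument on $(-\infty,\ga_{c,1/2})$ gives $\ga_{c,1/2}=-\infty$. In particular $\Gamma(u)$ is unbounded, hence infinite, so $\abs{u(x)}=1$ at infinitely many points.

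The main obstacle is that $\gb$ is precisely a zero of $\eta$, where $u$ is only continuous and the first-integral equation degenerates (the field $\sqrt{\eta(2-c^2-2\eta)}$ fails to be Lipschitz at $\eta=0$), so one cannot simply run a uniqueness or continuation argument through $\gb$. The resolution is to replace uniqueness by the quantitative finite-travel-time estimate already contained in Proposition~\ref{prop:k1/2}: it shows that any solution of the reduced equation detaching from the value $M$ is driven back to $0$ within length $\pi/\sqrt2$, which is exactly what produces a new singular point beyond $\gb$ and closes the contradiction.
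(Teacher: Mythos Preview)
Your proposal is correct and follows essentially the same approach as the paper: both argue by contradiction, derive the first integral \eqref{eta1} on $(\gb,\infty)$, locate an interior extremum $x_m$ of $\eta$, and then use the explicit $\cos^2$ solution from Proposition~\ref{prop:k1/2} to produce a new zero of $\eta$ at $x_m+\pi/\sqrt{2}>\gb$, contradicting $\gb=\sup\Gamma(u)$. Your version is somewhat more explicit than the paper's (you handle $c=\sqrt{2}$ separately and spell out the travel-time integral $\int_0^M dy/\sqrt{y(2-c^2-2y)}=\pi/\sqrt{2}$, whereas the paper simply invokes ``ideas along the same lines as in Proposition~\ref{prop:k1/2}''), but the logical skeleton is identical.
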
  
\begin{proof}
	If $\Gamma(u)=\emptyset$, then from Lemma~\ref{lem:omegaregu}, we have $u\in\boC^2(\R)$, and Proposition~\ref{prop:k1/2} yields that $u$ is the trivial solution. Now assume that $\Gamma(u)\ne\emptyset$ so that $\ga_{c,1/2}\leq\gb_{c,1/2}$. By contradiction, if $-\infty<\ga_{c,1/2}\leq\gb_{c,1/2}<\infty$ , then $\eta=1-\abs{u}^2$ satisfies \eqref{eta2l}--\eqref{eta1l}. In view of \eqref{eq:limeta}, the definition of $\gb_{c,1/2}$ implies that $\eta$ must reach a nonzero extremum  at some $x_0>\gb_{c,1/2}$ . Hence, ideas along the same lines as in Proposition~\ref{prop:k1/2} yield $\eta(x)=(1-c^2/2)\cos^2((x-x_0)/\sqrt{2})$, for all $x>\gb_{c,1/2}$; in particular, $\eta(x_0+\pi/\sqrt{2})=0$. This implies that $\gb\geq x_0+\pi\sqrt{2}$, which is absurd. This proves that $\gb_{c,1/2}= \infty$, and the proof of $\ga_{c,1/2}=-\infty$ is analogous.
\end{proof}

In the fashion of Proposition~\ref{prop:eqeta}, we obtain near any local extremum $x_0\in\mathcal{Z}(u_{c,\ka})$ a system of equations similar to \eqref{eta2}--\eqref{eta1} satisfied by $\eta=1-\abs{u_{c,\ka}}^2$ in $(\ga_{c,\ka}^0,\gb_{c,\ka}^0)$, making the analysis tractable.
\begin{lemma}\label{lem:buleq}
Let $(c,\ka)\in\tilde{\D}\cup\boC$. Consider  $u=u_{c,\ka}\in\boX(\R)$ a nontrivial solution to \eqref{TWc} with  $\card(\Gamma(u_{c,\ka}))\geq2$, so that there exists $x_0\in\mathcal{Z}(u_{c,\ka})$ satisfying   $-\infty<\ga^0_{c,\ka}<x_0<\gb^0_{c,\ka}< \infty$. Then, denoting $\eta_0=\eta(x_0)$ and $K_0=|u'(x_0)|^2$, we have $u\in\boC^\infty((\ga^0_{c,\ka},\gb^0_{c,\ka}))$,  and there exists $K_1\in\R$ such that $\eta=1-\abs{u}^2$ satisfies in $(\ga^0_{c,\ka},\gb^0_{c,\ka})$
	\begin{align}
		2(1-2\ka+2\ka\eta)\eta''+2\ka(\eta')^2&=P(\eta)+(\eta-\eta_0)P'(\eta)\label{eta2b},\\
		\label{eta1b}
		(1-2\ka+2\ka\eta)(\eta')^2&=(\eta-\eta_0)P(\eta),
	\end{align}
 where $P(y)=-2y^2+(2-c^2-2\eta_0)y+(2-c^2)\eta_0-4K_0-4cK_1$, for all $y\in\R$.
\end{lemma}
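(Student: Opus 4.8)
The plan is to repeat the computation of Proposition~\ref{prop:eqeta}, but on the bounded interval $(\ga^0_{c,\ka},\gb^0_{c,\ka})$ rather than on all of $\R$, replacing the use of decay at infinity (which is no longer available) by the conditions at the interior extremum $x_0$. First I would record the smoothness: by the definition \eqref{def:ab0} of the closest singular points, and since $\Gamma(u)$ is closed (because $|u|^2$ is continuous) while $x_0\in\mathcal{Z}(u)\subset\R\setminus\Gamma(u)$, the open interval $(\ga^0_{c,\ka},\gb^0_{c,\ka})$ contains no point of $\Gamma(u)$, hence lies in $\Omega(u)$. Lemma~\ref{lem:omegaregu} then gives $u\in\boC^\infty((\ga^0_{c,\ka},\gb^0_{c,\ka}))$, so $\eta$ is smooth there and $\eta'(x_0)=0$ because $x_0$ is an interior local extremum of $\eta$.

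Writing $u=u_1+iu_2$, smoothness yields the real equations \eqref{eq:Reu}--\eqref{eq:Imu} on the interval, and I would form the same three linear combinations as in Proposition~\ref{prop:eqeta}. Multiplying \eqref{eq:Reu} by $-u_2$ and \eqref{eq:Imu} by $u_1$ gives $(u_1u_2'-u_1'u_2)'=\tfrac{c}{2}\eta'$; integrating \emph{without} discarding the constant yields $u_1u_2'-u_1'u_2=\tfrac{c}{2}\eta+K_1$, which defines the constant $K_1$ of the statement. Multiplying \eqref{eq:Reu} by $u_1'$ and \eqref{eq:Imu} by $u_2'$ gives $\tfrac12(|u'|^2)'=\tfrac{\eta'}{2}(\eta+\ka\eta'')$; integrating and evaluating at $x_0$, where $\eta'(x_0)=0$, pins the constant and gives $|u'|^2=\tfrac12(\eta^2+\ka(\eta')^2)+K_0-\tfrac12\eta_0^2$ with $K_0=|u'(x_0)|^2$. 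Finally, multiplying \eqref{eq:Reu} by $u_1$ and \eqref{eq:Imu} by $u_2$, together with $\eta''=-2(|u'|^2+u_1u_1''+u_2u_2'')$, removes the second derivatives of $u_1,u_2$; substituting the two previous relations produces the single ODE $(1-2\ka+2\ka\eta)\eta''+\ka(\eta')^2=-3\eta^2+(2-c^2)\eta+\eta_0^2-2K_0-2cK_1$. A direct expansion then shows its right-hand side equals $\tfrac12\big(P(\eta)+(\eta-\eta_0)P'(\eta)\big)$ for $P$ as in the statement, which is exactly \eqref{eta2b}.

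To obtain \eqref{eta1b}, I would multiply \eqref{eta2b} by $\eta'$ and use \eqref{eq:eta21} to recognize the left-hand side as $\big(\tfrac{(\eta')^2}{2}(1-2\ka+2\ka\eta)\big)'$, just as in the passage from \eqref{eta2} to \eqref{eta1}. Integrating gives $\tfrac{(\eta')^2}{2}(1-2\ka+2\ka\eta)=\tilde Q(\eta)+\text{const}$, where $\tilde Q$ is the explicit cubic primitive of the right-hand side, and the constant is fixed to $-\tilde Q(\eta_0)$ by evaluating at $x_0$ and using $\eta'(x_0)=0$. A final polynomial identity verifies $2(\tilde Q(\eta)-\tilde Q(\eta_0))=(\eta-\eta_0)P(\eta)$, yielding \eqref{eta1b}.

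The only genuine difference from Proposition~\ref{prop:eqeta} is that the two integration constants cannot be eliminated using the behavior at infinity; instead they are determined by the interior data $\eta'(x_0)=0$ and $K_0=|u'(x_0)|^2$, the first integration constant being precisely $K_1$. I expect the main (though routine) chore to be the polynomial bookkeeping matching the quadratic and cubic right-hand sides against $P$, $(\eta-\eta_0)P'$, and $(\eta-\eta_0)P$; the conceptual content lies entirely in the localization to $(\ga^0_{c,\ka},\gb^0_{c,\ka})$ and in the choice of the evaluation point $x_0$.
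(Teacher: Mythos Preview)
Your proposal is correct and follows essentially the same approach as the paper: both adapt the computations of Proposition~\ref{prop:eqeta} to the bounded interval $(\ga^0_{c,\ka},\gb^0_{c,\ka})$, integrating from the interior extremum $x_0$ instead of exploiting decay at infinity, so that the two integration constants survive as $K_1$ and $K_0$. The paper's write-up is slightly terser (it fixes $x_0=0$ and derives \eqref{eta2b} by differentiating the factorized form \eqref{eta1b} rather than expanding $P(\eta)+(\eta-\eta_0)P'(\eta)$ directly), but the substance is identical.
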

\begin{proof}
	 By simplicity, we fix $x_0=0$ and
	 we adapt arguments from Proposition~\ref{prop:eqeta} to obtain \eqref{eta2b}--\eqref{eta1b}, taking this time $R_n=x_0=0$, for all $n\in\N$. Writing $u_{c,\ka}=u_1+iu_2$, we obtain similarly to \eqref{eq:phase1}
	 \begin{equation}\label{eq:polairebu}
	 	(u_1u_2'-u_1'u_2)(x)=\frac{c}{2}\eta(x)+K_1,\quad\text{ in }(\ga_{c,\ka}^0,\gb_{c,\ka}^0),
	 \end{equation}
	 where $K_1=(u_1u_2'-u_1'u_2)(0)-c\eta_0/2$. We also get in the fashion of \eqref{eq:quadratic},
	 \begin{align}
	 	2|u'|^2&=\eta^2+\ka(\eta')^2-\eta_0^2+2K_0,\quad\text{ in }(\ga_{c,\ka}^0,\gb_{c,\ka}^0).
	 \end{align} 
	 From $u_1u_1''+u_2u_2''=c(u_1u_2'-u_1'u_2)-\abs{u}^2(\eta+\ka\eta'')$ and $\eta''=-2(\abs{u'}^2+u_1u_1''+u_2u_2'')$, we recover
	 $\eta''=-(\eta^2+\ka(\eta')^2-\eta_0^2+2K_0 )-c^2\eta-2cK_1+2\abs{u}^2(\eta+\ka\eta'')$, which can be recast as 
  \begin{align}\label{eq:eta2b}
      (1-2\ka+2\ka\eta)\eta''+\ka(\eta')^2&=-3\eta^2 +(2-c^2)\eta+\eta_0^2-2K_0-2cK_1\quad\text{ in }(\ga_{c,\ka}^0,\gb_{c,\ka}^0).
  \end{align}
 Multiplying \eqref{eq:eta2b} by $\eta'$ and integrating from $0$ to $x$ we get, using \eqref{eq:eta21},
  $$(1-2\ka+2\ka\eta)(\eta')^2=\eta^2(2-c^2-2\eta) +(2\eta_0^2-4K_0-4cK_1)\eta-(2-c^2)\eta_0^2+4K_0\eta_0+4cK_1\eta_0,$$
  which can be \emph{algebraically} factorized into \eqref{eta1b} due to the choice of integration bounds. From this factorization, we deduce that \eqref{eq:eta2b} can be recast as \eqref{eta2b} by differentiation of \eqref{eta1b} with respect to $x$.
\end{proof}
 Equations \eqref{eta2b}--\eqref{eta1b} imply constraints on $P(\eta)$: By the intermediate value theorem, $\eta$ reaches at least once every number in $I=(\eta_0,1-1/(2\ka))$. Thus, equation \eqref{eta1b} yields 
 \begin{align}\label{eq:eta0alg2}
	\frac{(y-\eta_0)P(y)}{1-2\ka+2\ka y}\geq0,\quad\text{ for all }y\in I.
\end{align}
We are in a position to prove Theorem~\ref{thm:symmetry}. A consequence of this result is that the strict inequality in \eqref{condalgintro} holds.
\begin{proof}[Proof of Theorem~\ref{thm:symmetry}]
The symmetry property follows from ODEs arguments along the same lines as in Corollary~\ref{coro:propeta}. For the monotonicity of $\eta$ in $(x_0,\gb_{c,\ka}^0)$, we assume without loss of generality that $x_0=0$ and that $\eta_0<1-1/(2\ka)$. Suppose by contradiction that $\eta'(x)=0$ for some $x\in(0,\gb^0_{c,\ka})$ and let $x_1=\inf\{x>0:\eta'(x)=0\}$. If $x_1=0$, then there is an infinite sequence $(x_n)$ of zeros of $\eta'$ tending to 0. This yields $\eta''(0)=0$, further implying that $P(\eta_0)=0$ so that $\eta_0$ is an equilibrium of equation \eqref{eta2b}, a contradiction to $\eta(\gb_{c,\ka}^0)=1-1/(2\ka)$. Therefore, $x_1>0$ and we necessarily have $\eta(x_1)\ne\eta_0$, otherwise Rolle's theorem provides a contradiction to the minimality of $x_1$. Hence, evaluating equation \eqref{eta1b} at $x_1$, we get $P(\eta(x_1))=0$. If $\eta(x_1)$ is a zero of $P$ of multiplicity 2, notice that $\eta(x_1)$ is then an equilibrium point of equation \eqref{eta2b}. Therefore, we can apply ODE arguments along the same lines as in Corollary~\ref{coro:propeta} to deduce that $\eta\equiv\eta(x_1)$ in $(\ga^0_{c,\ka},\gb^0_{c,\ka})$, contradicting that $\eta_0\ne\eta(x_1)$. Thus $\eta(x_1)$ is a zero of $P$ of multiplicity 1, and we denote by $y_1$ the other root of $P$. By contradiction, if $\eta_0$ is a local maximum of $\eta$, then by definition of $x_1$, we have  $\eta'<0$ in $(0,x_1)$. In particular $(\eta-\eta_0)<0$ and $1-2\ka+2\ka\eta<1-2\ka+2\ka\eta_0<0$ in $(0,x_1)$, therefore the condition \eqref{eq:eta0alg2} yields $P(\eta)\geq0$ in $(0,x_1)$. We deduce by analyzing the sign of $P$ that $y_1>\eta(x)>\eta(x_1)$, for all $x\in[0,x_1)$. Notice that the inequality $y_1>\eta$ is strict because otherwise, $\eta_0$ would be a root of multiplicity 2 in the right-hand side of \eqref{eta1b}, thus an equilibrium point of \eqref{eta2b}, which contradicts that $\eta(\gb_{c,\ka}^0)\ne\eta_0$.

Since $\eta(\gb^0_{c,\ka})=1-1/(2\ka)>\eta_0$, using the intermediate value theorem, there exists $x_2\in(x_1,\gb_{c,\ka}^0)$ such that $\eta_0<\eta(x_2)<y_1$ so that $P(\eta(x_2))>0$. Since $x_2<\gb_{c,\ka}^0$, we have $1-2\ka+2\ka\eta(x_2)<0$, therefore \eqref{eq:eta0alg2} does not hold for $y=\eta(x_2)$. We deduce that $\eta_0$ is not a local maximum of $\eta$.
	Then, we can assume that $\eta_0$ is a local minimum. Analogously to the previous case, we have $\eta'>0$ and $\eta-\eta_0>0$ in $(0,x_1)$. By the definition of $\gb_{c,\ka}^0$, we have  $\eta(x_1)<1-1/(2\ka)$, and using the intermediate value theorem, we can find $x_2\in(x_1,\gb_{c,\ka}^0)$ satisfying $\eta(x_1)<\eta(x_2)<1-1/(2\ka)$. Since $\eta'>0$ in $(0,x_1)$, we obtain $1-2\ka+2\ka\eta<0$ in $(0,x_1)$. Condition \eqref{eq:eta0alg2} implies that $P(\eta(x))\leq0$, for all $x\in(0,x_1)$, and $P(\eta(x_2))\leq0$, therefore the analysis of sign of $P$ gives $\eta(x)\leq\eta(x_1)<y_1\leq\eta(x_2)$. Once again, by the intermediate value theorem, there exists $x_1\leq x_3\leq x_2$ satisfying $\eta(x_1)<\eta(x_3)<y_1$. 
This relation implies that $\eta(x_3)-\eta_0>0$, and, $\eta(x_3)<y_1\leq\eta(x_2)$ yields $1-2\ka+2\ka\eta(x_3)<0$. Since $P(\eta(x_3))>0$, condition \eqref{eq:eta0alg2} is not fulfilled for $y=\eta(x_3)$ yielding a contradiction. We conclude that $\eta'\ne0$ in $(0,\gb_{c,\ka}^0)$. By the mean value theorem, we deduce that $\eta'>0$ in $(0,\gb_{c,\ka}^0)$. 
 
 In the case  $\eta_0>1-1/(2\ka)$, the result follows using the same ideas.
\end{proof} 
By the monotonicity of $\eta_{c,\ka}$, we can check that if $u(x_0)=0$, then $|u_{c,\ka}|>0$ in $(x_0,\gb^0_{c,\ka})$, so that, using Remark~\ref{rem:localeq}, we can lift  $u_{c,\ka}$ in $(x_0,\gb^0_{c,\ka})$  as
\begin{equation}\label{eq:polaireloc}
   u_{c,\ka}=\sqrt{1-\eta_{c,\ka}}e^{i\theta_{c,\ka}}\quad\text{ with }\quad\theta_{c,\ka}'=\frac{c\eta_{c,\ka}}{2(1-\eta)}+K, 
\end{equation}
for some $K\in\R$.  We will use this fact in the
proof of Proposition~\ref{prop:nonvanishsingu}, as follows.
\begin{proof}[Proof of Proposition~\ref{prop:nonvanishsingu}]
    Assume by contradiction that  $u_{c,\ka}(0)=0$ and $c>0$. If $\card(\Gamma(u_{c,\ka}))<2$, this contradicts the explicit formula of $u_{c,\ka}$ given in Theorems~\ref{thm:classiftwregu} and~\ref{thm:classifcuspon}. Hence,  $\card(\Gamma(u_{c,\ka}))\geq2$ and we infer that $0\in\mathcal{Z}(u_{c,\ka})$. We deduce that \eqref{eq:polaireloc} holds, with $\eta=\eta_{c,\ka}$ solution to \eqref{eta2b}--\eqref{eta1b} in $(0,\gb_{c,\ka}^0)$. The contradiction comes from the fact that $u_{c,\ka}'$  cannot be square integrable near 0.
    Indeed, from \eqref{eq:polaireloc},
    $$u_{c,\ka}'=e^{i\theta}\Big(\frac{\eta'}{2\sqrt{1-\eta}}+i\frac{c\eta}{2\sqrt{1-\eta}}+iK\sqrt{1-\eta}\Big),\quad\text{ in }(0,\gb_{c,\ka}^0).$$
    Since $u_{c,\ka}'\in L^2(\R)$, we deduce that $c\eta/(2\sqrt{1-\eta})+K\sqrt{1-\eta}\in L^2((0,\gb_{c,\ka}^0))$, which implies that 
    \begin{equation}\label{eq:vanishl2}
    \int_0^{\gb_{c,\ka}^0}\frac{c^2\eta^2}{4(1-\eta)}<\infty.
    \end{equation}
    To compute \eqref{eq:vanishl2}, we proceed as in \eqref{eq:intcriti}. Precisely, using \eqref{eta1b} and inequality \eqref{condalgintro}, we have
    \begin{equation}\label{eq:firstintbu}
    \eta'(x)\sqrt{\frac{1-2\ka+2\ka\eta}{(\eta-1)P(\eta)}}=-1,
    \end{equation}
     so plugging  \eqref{eq:firstintbu} into \eqref{eq:vanishl2}, and performing the change of variable $y=\eta(x)$, we obtain
    $$\int_{1-1/(2\ka)}^1\frac{cy^2}{4(1-y)}\sqrt{\frac{1-2\ka+2\ka y}{(1-y)P(y)}}<\infty.$$
    However, this is a contradiction with the non-integrability of $(1-y)^{-3/2}$ near 1. We conclude that if $c>0$, then $\inf_{x\in\R}|u_{c,\ka}(x)|>0$. 
     
     Now suppose that $c=0$ and $u_{0,\ka}(0)=0$. We observe a weaker correlation between the two real equations in \eqref{eq:systtw}, indeed, if $u_{0,\ka}'(0)=re^{i\phi}$ for some $r\geq0$ and $\phi\in\R$, then the Cauchy--Lipschitz theorem yields that $\tilde{u}=u_{0,\ka}e^{-i\phi}$ must coincide with the local real solution to the simpler problem \eqref{eq:pbreel} with initial condition $(0,r)$. Thus $\Tilde{u}(x)\in\R$, for all $x\in(-R,R)$, for some $R>0$. Assume by contradiction that there exists $\tilde x>0$ such that $\Im(\Tilde{u})(\tilde x)\ne0$  and let $x_0=\sup\{\tau>0:\Im(\Tilde{u})=0,\text{ in }(0,\tau)\}$. By continuity, we infer that $x_0\leq \tilde x$ and that $\Im\Tilde{u}(x_0)=0$. If $\Tilde{u}(x_0)=0$, then the latter argument applies so that there exists $\phi_0\in\R$ such that $e^{-i\phi_0}\Tilde{u}\in\R$ in $(x_0-R_0,x_0+R_0)$, for some $R_0>0$. But then, since $\Tilde{u}(x)\in\R$, for all $0<x<x_0$, we conclude that $\phi_0=2l\pi$ for some $l\in\N$, contradicting the definition of $x_0$. We deduce that  $\Tilde{u}(x_0)\ne0$ so that we can lift $\Tilde{u}=\rho e^{i\theta}$ in a neighborhood of $x_0$ with $\theta'=K$ for some $K\in\R$ by Remark~\ref{rem:localeq}.
     Once again, since $\Tilde{u}(x)\in\R$ for all $0<x<x_0$, the constant $K$ must be 0, which further implies that $\theta$ is constant in the neighborhood of $x_0$. Therefore, $\Tilde{u}$ remains real-valued in the neighborhood of $x_0$, contradicting its definition. Consequently, there is no $\tilde{x}>0$ such that   $\Im\Tilde{u}(\Tilde{x})\ne0$. The same holds for all $x<0$ with similar arguments, which means that $\Tilde{u}$ is real-valued. If $c=0$, and $u_{c,\ka}\in\boN\boX(\R)$, the proof is easier since we get from Corollary~\ref{coro:eqpolaire} that we can lift $u_{c,\ka}=\rho e^{i\theta}$ with $\theta'\equiv0$ in $\R$.
    \end{proof}
    In the setting of Lemma~\ref{lem:buleq}, the next result establishes that the constant $K_1$ in \eqref{eta2b}--\eqref{eta1b} must be zero.
   \begin{lemma}\label{lem:K10}
       Let $(c,\ka)\in\tilde{\D}\cup\boC$ and $u=u_{c,\ka}\in\boX(\R)$ be a solution to \eqref{TWc} satisfying $\card(\Gamma(u_{c,\ka}))\geq2$ so that there exists $x_0\in\mathcal{Z}(u_{c,\ka})$ and Lemma~\ref{lem:buleq} applies.
       Then  $\eta=1-\abs{u}^2$ satisfies \eqref{eta2b}--\eqref{eta1b} near $x_0$ and the constant $K_1$ in those equations must be 0.
   \end{lemma}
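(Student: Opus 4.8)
The plan is to exploit two independent expressions for the quantity $u_1u_2'-u_1'u_2=\Im(\bar u u')$ on the interval $(\ga^0_{c,\ka},\gb^0_{c,\ka})$ and to compare them, splitting the argument according to whether $c>0$ or $c=0$. Recall that the proof of Lemma~\ref{lem:buleq} has already produced, through \eqref{eq:polairebu}, the identity
\begin{equation*}
(u_1u_2'-u_1'u_2)(x)=\frac{c}{2}\eta(x)+K_1,\quad\text{ for all } x\in(\ga^0_{c,\ka},\gb^0_{c,\ka}),
\end{equation*}
so it suffices to derive a second formula for the left-hand side and to match the two.

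In the case $c>0$ I would first invoke Proposition~\ref{prop:nonvanishsingu} to guarantee $u\in\boN\boX(\R)$, so that $u$ never vanishes and admits a lifting $u=\rho e^{i\theta}$ with $\rho=\sqrt{1-\eta}$. A direct computation gives $\Im(\bar u u')=\rho^2\theta'$, i.e.\ $u_1u_2'-u_1'u_2=(1-\eta)\theta'$. Since $\inf_{(\ga^0_{c,\ka},\gb^0_{c,\ka})}|u|>0$, Remark~\ref{rem:localeq} supplies a constant $K\in\R$ with $\theta'=c\eta/(2(1-\eta))+K$ on this interval, whence $(1-\eta)\theta'=\frac{c}{2}\eta+K(1-\eta)$. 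Comparing with the displayed identity yields $K_1=K(1-\eta(x))$ for every $x\in(\ga^0_{c,\ka},\gb^0_{c,\ka})$. Because $\eta$ is strictly monotone on $(x_0,\gb^0_{c,\ka})$ by Theorem~\ref{thm:symmetry}, the function $1-\eta$ is nonconstant there, so $K(1-\eta)$ can equal the constant $K_1$ only if $K=0$, which forces $K_1=0$.

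In the case $c=0$ the lifting may fail, since $u$ can vanish (for instance at a black-soliton point where $\eta_0=1$); I would instead use the second conclusion of Proposition~\ref{prop:nonvanishsingu}, namely that there is $\phi\in\R$ with $v:=e^{i\phi}u_{c,\ka}$ real-valued on $\R$. As $\Im(\bar u u')$ is invariant under multiplication by a constant phase, $u_1u_2'-u_1'u_2=\Im(\bar u u')=\Im(\bar v v')=0$ because $v$ and $v'$ are real. With $c=0$ the displayed identity then reduces to $0=K_1$, finishing this case.

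The main obstacle is the reconciliation of the two representations in the nonvanishing regime: one must ensure that the lifting constant $K$ from Remark~\ref{rem:localeq} is the same throughout the interval and that $\eta$ is genuinely nonconstant there, which is exactly where the strict monotonicity from Theorem~\ref{thm:symmetry} is indispensable. The degenerate case $c=0$ with a vanishing point cannot be handled by lifting and instead relies on the separate real-valuedness input of Proposition~\ref{prop:nonvanishsingu}.
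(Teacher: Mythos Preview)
Your proof is correct and follows the same overall strategy as the paper: compare the identity \eqref{eq:polairebu} with a second expression for $u_1u_2'-u_1'u_2$ coming from a lifting, and treat the possibly-vanishing case via the real-valuedness in Proposition~\ref{prop:nonvanishsingu}.

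The one noteworthy difference is in the $c>0$ case. You use the \emph{local} lifting of Remark~\ref{rem:localeq} on $(\ga^0_{c,\ka},\gb^0_{c,\ka})$, which carries an a priori unknown constant $K$, and then eliminate $K$ by invoking the strict monotonicity of $\eta$ from Theorem~\ref{thm:symmetry}. The paper instead uses the \emph{global} lifting of Corollary~\ref{coro:eqpolaire}: since $u\in\boN\boX(\R)$, one has $\theta'=c\eta/(2(1-\eta))$ on all of $\R$ (the constant is forced to vanish by square-integrability of $\theta'$ and $\eta$ at infinity), so that evaluating $K_1=(1-\eta(x_0))\theta'(x_0)-\tfrac{c}{2}\eta(x_0)$ immediately gives $K_1=0$ without any appeal to Theorem~\ref{thm:symmetry}. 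Your route works, but the global lifting makes the argument shorter and avoids the monotonicity input. For $c=0$, your argument via $\Im(\bar v v')=0$ is slightly cleaner than the paper's case split on whether $u(x_0)=0$.
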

   \begin{proof}
        If $u\in\boN\boX(\R)$, then we can lift $u=\sqrt{1-\eta}e^{i\theta}$ with $\theta'=c\eta/(2-2\eta)$ by Corollary~\ref{coro:eqpolaire}. Clearly the expression for $K_1=(u_1u_2'-u_1'u_2)(x_0)-c\eta(x_0)/2$ can be recast as \begin{equation}\label{eq:K10}
            K_1=(1-\eta(x_0))\theta'(x_0)-c\eta(x_0)/2,
        \end{equation}
       so that $K_1=0.$   
       On the other hand, if $u$ vanishes at some point, then by Proposition~\ref{prop:nonvanishsingu}, we must have $c=0$, and we can find $\phi\in\R$ so that $e^{i\phi}u(x)\in\R$ for all $x\in\R$. 
       If $u(x_0)=0$ then we get $K_1=0,$ whereas if $u(x_0)\ne0$ we can lift $u=\sqrt{1-\eta}e^{i\theta}$ near $x_0$, so that we can rewrite $K_1$ with \eqref{eq:K10} and $c=0$. We must have $\theta'=0$ in the neighborhood of $x_0$ otherwise contradicting the fact that $e^{i\phi}u(x)\in\R$ for all $x\in\R.$ This lets us conclude that $K_1=0$  
   \end{proof} 
   We deduce that \eqref{condalgintro} simplifies to
   \begin{align}\label{condalgintro2}
-2y^2+(2-c^2-2\eta_0)y+(2-c^2)\eta_0-4K_0<0,\text{ for all }y\in[\eta_0,1-1/(2\ka)).
\end{align}
 We now prove Proposition~\ref{prop:constant} stating that unless $\ka=1/2$, $u_{c,\ka}$  cannot be of constant intensity in subintervals $(a,b)\subset(\ga_{c,\ka},\gb_{c,\ka})$. Indeed,
assume that $|u_{c,\ka}|(x)=1/(2\ka)$ for all $x\in(a,b)$, then $u_{c,\ka}\ne0$ in $(a,b)$ so that we can apply the same reasoning as in Remark~\ref{rem:localeq} to recover that $u_{c,\ka}=\rho e^{i\theta}$ in $(a,b)$ with $\theta'\equiv\dot{\theta_0}\in\R$ and $\rho\equiv1/\sqrt{2\ka}$ satisfying
 \begin{equation}\label{eq:condcst}
 (\dot{\theta_0})^2+c\dot{\theta_0}+\rho^2-1=0.
 \end{equation}
Although one can build local solutions in $(a,b)$, choosing adequate $\dot{\theta_0}$, we show that global solutions add constraints on $\dot{\theta_0}$ preventing this behavior.
 \begin{proof}[Proof of Proposition~\ref{prop:constant}]
     On one hand, if $u_{c,\ka}\in\boN\boX(\R)$, then we can write $u=\rho e^{i\theta}$ with $\theta$ and $\rho$ satisfying \eqref{TW:weaksystpol1}--\eqref{TW:weaksystpol2}. Setting $\rho\equiv1/\sqrt{2\ka}$ in \eqref{TW:weaksystpol1} yields $\dot{\theta_0}=c(\ka-1/2)$, which replaced in \eqref{eq:condcst} implies that
     $(\ka-1/2)(c^2(\ka-1/2)+c^2-1/\ka)=0$. Since $(c^2(\ka-1/2)+c^2-1/\ka)\ne0$, for all $(c,\ka)\in\tilde{\D}$, we deduce that $\ka=1/2$.
     
     On the other hand, if $u_{c,\ka}(x_0)=0$ for some $x_0\in\R$, we deduce by Proposition~\ref{prop:nonvanishsingu} that $c=0$ so that  $\ka\leq1/2$. Indeed, if $\ka>1/2$ we deduce that $|u(x)|^2=1$ for all $x\in\R$ by Theorem~\ref{thm:non-singular-sol}, in particular $|u(x)|\ne1/(2\ka)$ for all $x\in\R.$
 Thus, equation \eqref{eq:condcst} writes
    $ (\dot{\theta_0})^2+1/(2\ka)-1=0$, which further implies that $\ka=1/2$ and $\dot{\theta_0}=0$.    
     \end{proof}
    
    \subsection{Construction of singular solutions}\label{subsec:buildw}
 We are now in a position to show the existence of singular solutions. 
We start with the case $\card(\Gamma(u_{c,\ka}))=1$, i.e. the cuspons 
in Corollary~\ref{cor:cuspon}, constructed using the functions in Theorem~\ref{thm:classifcuspon}. To show that the cuspons defined by parts is indeed a global weak solution to \eqref{TWc},  we use the following lemma, which provides sufficient conditions for gluing two strong solutions into a weak solution.


\begin{lemma}\label{lem:buildw}
	Let $c\geq 0$ and $k\in\R\setminus\{0\},$
	 and let $I_1, I_2$ be two nonempty disjoint open intervals such that $I_1$ and $I_2$ share one bound $a\in\R$, i.e.\  $\mathrm{cl}(I_1)\cap\mathrm{cl}(I_2)=\{a\}$, where $\mathrm{cl}$ is the usual closure  on $\R$, and set $I =I_1\cup  I_2\cup\{a\}$.
  Suppose that $u\in \boC(I)\cap \boC^2(I_1\cup I_2)$ is a solution to \eqref{TWc} on $I_1\cup I_2$, and assume that $|u(a)|^2=1/(2\ka)$, so that  we can write $u=\sqrt{1-\eta}e^{i\theta}$ with $\eta<1$ in $(a-2\delta,a+2\delta)$ for some $\delta>0$. If $\theta'$ has a well-defined limit as $x\to a$, and if the function $\eta$ belongs to  $H^1((a-\delta,a+\delta))$, with 
		\begin{equation}
		\label{cond:weak}
  \lim_{x\to a }\eta'(x)(1-2\ka+2\ka\eta(x))=0,
	\end{equation}
then $u$ is a weak solution to \eqref{TWc} on $I$, i.e.\ $u$ satisfies \eqref{TW:weak} for every $\phi\in\boC_0^\infty(I;\C)$.
\end{lemma}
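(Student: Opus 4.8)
The plan is to verify the weak formulation \eqref{TW:weak} directly for an arbitrary test function $\phi\in\boC_0^\infty(I;\C)$, reducing everything to a careful analysis of the boundary contributions at the singular point $a$. Write $I_1=(b_1,a)$ and $I_2=(a,b_2)$, so $I=(b_1,b_2)$ and $\supp\phi\subset(b_1,b_2)$. Since $\{a\}$ has measure zero, the integral in \eqref{TW:weak} splits as $\int_{I_1}+\int_{I_2}$, and on each piece $u$ is a classical $\boC^2$ solution of \eqref{TWc}. Using $2\ka\inner{u}{u'}=-\ka\eta'$, the weak integrand reads $\inner{icu'+u\eta}{\phi}-\inner{u'}{\phi'}-\ka\eta'\inner{u}{\phi}'$. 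First I would integrate by parts the last two terms over the truncated intervals $(b_1,a-\ve)$ and $(a+\ve,b_2)$: the bulk terms reassemble into $\inner{icu'+u''+u\eta+\ka\eta''u}{\phi}$, which vanishes pointwise because $u$ solves \eqref{TWc} there, while the outer boundary terms at $b_1,b_2$ vanish as $\phi$ is compactly supported. This leaves only the boundary terms at $a\mp\ve$, namely $\pm\big(\inner{u'}{\phi}+\ka\eta'\inner{u}{\phi}\big)$ evaluated at $a\mp\ve$.

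The crux is to show these terms have finite limits as $\ve\to0$. Using the lifting $u=\rho e^{i\theta}$ with $\rho=\sqrt{1-\eta}$, valid near $a$ since $\eta(a)=1-1/(2\ka)<1$ forces $\rho$ to stay bounded away from $0$, I would decompose $u'=e^{i\theta}\rho'+ie^{i\theta}\rho\theta'$ with $\rho'=-\eta'/(2\rho)$. The term $ie^{i\theta}\rho\theta'$ is bounded and has a limit at $a$ because $\theta'$ does, whereas the singular part is $e^{i\theta}\rho'=-\eta' u/(2(1-\eta))$. Combining it with $\ka\eta'\inner{u}{\phi}$ yields
\begin{equation*}
\inner{u'}{\phi}+\ka\eta'\inner{u}{\phi}=-\frac{\eta'\,(1-2\ka+2\ka\eta)}{2(1-\eta)}\inner{u}{\phi}+\inner{ie^{i\theta}\rho\theta'}{\phi}.
\end{equation*}
By hypothesis \eqref{cond:weak} the factor $\eta'(1-2\ka+2\ka\eta)\to0$, while $1-\eta\to1/(2\ka)\ne0$ and $\inner{u}{\phi}\to\inner{u(a)}{\phi(a)}$ stay finite; thus the first term vanishes in the limit. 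Hence $\inner{u'}{\phi}+\ka\eta'\inner{u}{\phi}$ converges, as $x\to a^\pm$, to $\inner{ie^{i\theta(a)}\rho(a)\theta'(a^\pm)}{\phi(a)}$.

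Assembling the two sides, $\int_{I_1}$ of the weak integrand equals $-\inner{ie^{i\theta(a)}\rho(a)\theta'(a^-)}{\phi(a)}$ and $\int_{I_2}$ equals $+\inner{ie^{i\theta(a)}\rho(a)\theta'(a^+)}{\phi(a)}$, so the left-hand side of \eqref{TW:weak} is $\inner{ie^{i\theta(a)}\rho(a)\,(\theta'(a^+)-\theta'(a^-))}{\phi(a)}$. Since $u\in\boC(I)$ and $\rho(a)>0$, the phase $e^{i\theta}$ is continuous at $a$, and the assumption that $\theta'$ has a well-defined (two-sided) limit at $a$ gives $\theta'(a^+)=\theta'(a^-)$; the boundary contribution therefore cancels and \eqref{TW:weak} holds. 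Along the way I must also confirm that the weak integrand is integrable up to $a$, which follows from $\eta\in H^1$ near $a$: indeed $u'\in L^2$ (hence $L^1$ on a bounded piece) and the worst term $\eta'\inner{u'}{\phi}$ behaves like $(\eta')^2\in L^1$, legitimizing the limit $\int_{a+\ve}^{b_2}\to\int_{I_2}$ and its analogue on $I_1$.

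The \emph{main obstacle} is precisely this boundary analysis: although $u'$ blows up at the cusp, the particular combination dictated by the quasilinear dispersion is tamed exactly by condition \eqref{cond:weak}, and the residual regular parts from the two sides cancel thanks to the existence of the limit of $\theta'$. Every hypothesis of the lemma enters here, so I expect the bookkeeping of signs and the justification that the singular part of the boundary term truly vanishes (rather than merely being finite) to require the most care.
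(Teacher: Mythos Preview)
Your proof is correct and follows essentially the same route as the paper's: integrate by parts on the excised domain, observe that the bulk vanishes because $u$ is a classical solution there, and then analyze the boundary contribution at $a\pm\ve$ via the polar decomposition, where the singular piece carries exactly the factor $\eta'(1-2\ka+2\ka\eta)$ killed by \eqref{cond:weak} and the regular piece $\propto\rho\theta'$ cancels between the two sides thanks to the two-sided limit of $\theta'$. Your bookkeeping of the boundary term and the integrability justification (via $u'\in L^2$ near $a$, hence $\eta'\langle u',\phi\rangle\in L^1$) are both in line with the paper's argument.
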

\begin{remark}
It is enough to assume that the limit in \eqref{cond:weak} exists, but this limit is going to be equal to zero in all our applications.
\end{remark}
\begin{remark}
    This result can be adapted to handle strong solution in $\bigcup_{j\in J} I_j$ where $(I_j)_{j\in J}$ is a finite family of open intervals touching at one of their bounds and to recover a weak solution in $\mathrm{cl}(\bigcup_{j\in J} I_j)$
\end{remark}
\begin{proof}
Since we are assuming that $u$ lies in  $\boC(I)\cap \boC^2(I_1\cup I_2)$, we deduce that $\theta$ is also is continuous on $I$. In addition, 
since $\theta'$ has a limit as $x\to a$, we infer that $\theta'$ is bounded in 
$(a-\delta,a+\delta)$. Therefore, using that 
\begin{equation}
		\label{weak-der2}
		u'(x)=\frac{e^{i\theta(x)} }{2\sqrt{1-\eta(x)}} \big( -\eta'(x)+i (1-\eta) \theta'(x)\big),
	\end{equation}
we conclude that  $u'\in L^2((a-\delta,a+\delta)).$
 
To check that $u$ is a weak solution, let us take without loss of generality $a=0$, $I_1=(r_1,0)$ and $I_2=(0,r_2)$, for some $-\infty\leq r_1<0<r_2\leq \infty$, $0<\varepsilon<\delta$ small, and   $I_\ve =I\setminus (-\ve,\ve)$,
	so that $u\in \boC^2(\mathrm{cl}(I_\ve))$. Taking  $\varphi\in \boC_0^\infty(I;\C)$,  and integrating by parts, we deduce that
	\begin{align}
		\label{ipp-weak}
		\Re\int_{I_\ve}(icu'+u\eta)\bar\varphi-u'\bar\varphi'+2\ka\inner{u}{u'}(u\bar{\varphi})'=&\Re\int_{I_\ve}(icu'+u\eta+u''-2\ka u(|u'|^2+\inner{u}{u''}))\bar{\varphi}\notag\\
		&+\Re\Big[-u'\bar\varphi+ 2\ka\inner{u}{u'}(u\bar{\varphi})\Big]_{x=\ve}^{x=-\ve}
	\end{align}
	Since $u \in \dot{H}^1((-\ve,\ve))\cap C(I)$, we can use the dominated convergence theorem to conclude that the integral in the left-hand side of \eqref{ipp-weak} converges to the integral in $I$, as $\ve\to 0$.
	On the other hand, the integral in the right-hand side of \eqref{ipp-weak} vanishes because $u$ is a pointwise solution to \eqref{TWc} on $I_\ve$.
	
	We verify now that the last term in \eqref{ipp-weak}  goes to zero, as $\ve\to 0$. Using \eqref{weak-der2}, we have for all $x\in[-\ve,\ve]$ with $x\ne0$
	\begin{equation*}
		-u'(x)+ 2\ka\inner{u(x)}{u'(x)}u(x)=\frac{ e^{i\theta} }{ 2\sqrt{1-\eta(x)} }
		\Big( \eta'(x) (1-2\ka-2\ka\eta(x))-i (1-\eta(x))\theta'(x)\Big).
	\end{equation*}
	Finally, invoking \eqref{cond:weak}, the continuity of $\eta$ and $\theta$ and the fact that $\theta'$ has a limit as $x\to 0$, we conclude that the last term in \eqref{ipp-weak}  goes to zero, as $\ve\to 0$, which completes the proof.
\end{proof}
Applying this result we deduce Corollary~\ref{cor:cuspon}.
\begin{proof}[Proof of Corollary~\ref{cor:cuspon}]
 Since $\Gamma(u_{c,\ka})=\{0\}$, we deduce from Theorem~\ref{thm:classifcuspon}
 that $\ga_{c,\ka}=\gb_{c,\ka}=0$, and that, up to phase shift,
  $u_{c,\ka}$ must be given by $u_{c,\kappa}=\sqrt{1-\eta_{c,\ka}}e^{i\theta_{c,\ka}},$
where  $\eta_{c,\ka}$ is the function in \eqref{eq:etaw}, 
and $\theta_{c,\kappa}(x)=\frac{c}2\int_0^x\frac{\eta_{c,\kappa}(y)}{1-\eta_{c,\kappa}(y)}$.
 In the course of the proof of Theorem~\ref{thm:classifcuspon}, we established that $u_{c,\ka}\in\boX(\R)$,
 thus it remains to verify that $u_{c,\ka}$ is indeed a weak solution. Since $\eta_{c,\ka}$ satisfies \eqref{eta1l} in $\R\backslash\{0\}$, we can check that \eqref{cond:weak} holds multiplying \eqref{eta1l} by $1-2\ka+2\ka\eta_{c,\ka}$, and taking the square root of both sides of the equation.
	It is straightforward to check that $\theta_{c,\ka}$ is well-defined and that $u\in\boC(\R)\cap\boC^2(\R\backslash\{0\})$ satisfies \eqref{TWc} in $\R\backslash\{0\}$ using ideas similar to the ones in Proposition~\ref{prop:exiu}.
 Finally, 
	$\theta_{c,\ka}'$ tends to $c\ka(1-2\ka)$, as $x\to0$, therefore we can apply Lemma~\ref{lem:buildw} to conclude.
\end{proof}
We focus now on the construction of solutions satisfying $\card(\Gamma(u_{c,\ka}))\geq 2$. Thus,  
  we need to find solutions to \eqref{eta2b}--\eqref{eta1b} satisfying the boundary condition $\eta(\gb^0_{c,\ka})=1-1/(2\ka)$, for some $\gb_{c,\ka}^0\in\R$. We show now that such solutions exist when the numbers $\eta_0$ and $K_0$ satisfy \eqref{condalgintro2}.
\begin{proposition}\label{prop:exibu}
    Let $(c,\ka)\in\tilde{\D}\cup\boC$. For any   $\eta_0\leq1$, with $\eta_0\ne1-1/(2\ka)$, and any $K_0\geq0$ satisfying  \eqref{condalgintro2},  there exist a nonempty interval $(\ga^0_{c,\ka},\gb^0_{c,\ka})$ containing 0, and a unique solution $\eta$ to \eqref{eta2bintro}--\eqref{eta1bintro}, with $K_1=0$, on $(\ga^0_{c,\ka},\gb^0_{c,\ka})$, such that $\eta(0)=\eta_0$. Moreover $\ga^0_{c,\ka}=-\gb^0_{c,\ka}$, the function $\eta$ is even in $(\ga^0_{c,\ka},\gb^0_{c,\ka})$, and monotonous in $(0,\gb^0_{c,\ka})$. Also $\eta$ belongs to $H^1(\ga_{c,\ka}^0,\gb_{c,\ka}^0)) \cap \boC([\ga_{c,\ka}^0,\gb_{c,\ka}^0])$,  
    with 
    \begin{equation}
    \label{dem:limits}
        \eta(\ga^0_{c,\ka})=\eta(\gb^0_{c,\ka})=1-1/(2\ka).
    \end{equation}
    \end{proposition}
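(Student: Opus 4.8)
The plan is to reduce the system \eqref{eta2bintro}--\eqref{eta1bintro} to an autonomous second-order equation, solve it by Cauchy--Lipschitz, recover \eqref{eta1bintro} as a first integral, and then read off the monotonicity, the finite endpoint $\gb^0_{c,\ka}$, the regularity and the boundary value from a separation-of-variables (time-map) computation. Throughout I fix $x_0=0$, write $D(y)=1-2\ka+2\ka y$ (so $D$ is increasing and vanishes only at $y_\ast:=1-1/(2\ka)$, since $\ka>0$ on $\tilde{\D}\cup\boC$), and let $J$ denote the open interval with endpoints $\eta_0$ and $y_\ast$. The first task is the sign analysis: condition \eqref{condalgintro2} yields $P<0$ throughout $J$ and at the endpoint $\eta_0$, while $D$ keeps a constant sign on $J$ (negative if $\eta_0<y_\ast$, positive if $\eta_0>y_\ast$). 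Combining these with the sign of $(y-\eta_0)$ shows that the radicand
\[
R(y):=\frac{(y-\eta_0)P(y)}{D(y)}
\]
is strictly positive on $J\setminus\{\eta_0\}$, with no interior zero precisely because $P$ has no root in $J$; this is exactly the sign needed for \eqref{eta1bintro} to have a real solution, and the resulting direction of monotonicity is consistent with Theorem~\ref{thm:symmetry} (recall $|u_{c,\ka}(x_0)|^2=1-\eta_0$).

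Next I would treat local existence and uniqueness near $0$. Since $\eta_0\neq y_\ast$ we have $D(\eta_0)\neq0$, so \eqref{eta2bintro} can be solved for $\eta''$ with a right-hand side that is smooth, hence locally Lipschitz, in a neighborhood of $(\eta_0,0)$; Cauchy--Lipschitz then gives a unique maximal $\boC^2$ solution with $\eta(0)=\eta_0$ and $\eta'(0)=0$. To see that it also satisfies \eqref{eta1bintro}, I set $Q:=D(\eta)(\eta')^2-(\eta-\eta_0)P(\eta)$ and compute $Q'=\eta'\big(2D\eta''+2\ka(\eta')^2-P-(\eta-\eta_0)P'\big)=0$ using \eqref{eta2bintro}, together with $Q(0)=0$; hence $Q\equiv0$, which is \eqref{eta1bintro}. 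Evaluating \eqref{eta2bintro} at $0$ gives $\eta''(0)=P(\eta_0)/(2D(\eta_0))\neq0$, so $\eta_0$ is a strict extremum, and on the branch $x>0$ the derivative $\eta'$ cannot vanish again while $\eta\in J\setminus\{\eta_0\}$, because there $(\eta')^2=R(\eta)>0$. Thus $\eta$ remains in $J$ and is strictly monotone as long as it has not reached $y_\ast$.

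The boundary behavior is the step I expect to be the main obstacle, because of the degeneracy $D(y_\ast)=0$, which produces the cusp. On the monotone branch, separating variables gives $x=\int_{\eta_0}^{\eta(x)}R(s)^{-1/2}\,ds$, and I would define
\[
\gb^0_{c,\ka}:=\int_{\eta_0}^{y_\ast}\frac{ds}{\sqrt{R(s)}},
\]
and show it is finite: near $s=\eta_0$ the integrand behaves like $|s-\eta_0|^{-1/2}$, and near $s=y_\ast$ it behaves like $\sqrt{|D(s)|}$, which stays bounded (and vanishes when $P(y_\ast)<0$); both ends are integrable. Hence $\eta$ reaches $y_\ast$ at the finite point $\gb^0_{c,\ka}$ and extends continuously with $\eta(\gb^0_{c,\ka})=1-1/(2\ka)$. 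For the $H^1$ bound I change variables via $s=\eta(x)$ to obtain $\int_0^{\gb^0_{c,\ka}}(\eta')^2\,dx=\int_{\eta_0}^{y_\ast}\sqrt{R(s)}\,ds$, whose only possible singularity, at $s=y_\ast$, is of order $|s-y_\ast|^{-1/2}$ and therefore integrable; this is exactly the computation already carried out in \eqref{eq:intcriti} in the proof of Theorem~\ref{thm:classifcuspon}.

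Finally, evenness follows by reflection: the map $x\mapsto\eta(-x)$ solves \eqref{eta2bintro} with the same data $(\eta_0,0)$ at the origin, so by the uniqueness established above it coincides with $\eta$; setting $\ga^0_{c,\ka}=-\gb^0_{c,\ka}$ then gives $\eta(\ga^0_{c,\ka})=\eta(\gb^0_{c,\ka})=1-1/(2\ka)$ and monotonicity on $(0,\gb^0_{c,\ka})$. Uniqueness of the full solution is inherited directly from the Cauchy--Lipschitz step, once one notes that any solution of \eqref{eta1bintro} with $\eta(0)=\eta_0$ necessarily has $\eta'(0)=0$: putting $x=0$ in \eqref{eta1bintro} gives $D(\eta_0)(\eta'(0))^2=0$, and $D(\eta_0)\neq0$. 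This completes the construction of the unique even, monotone, $H^1\cap\boC$ solution with the prescribed endpoint values.
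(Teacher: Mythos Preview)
Your proposal is correct and follows essentially the same route as the paper: the sign analysis of $P$ from \eqref{condalgintro2}, the separation-of-variables time map to define the finite endpoint $\gb^0_{c,\ka}$, the change of variables $s=\eta(x)$ for the $H^1$ bound, and Cauchy--Lipschitz on \eqref{eta2bintro} for uniqueness all appear in the paper's own argument. The only organizational difference is that the paper first constructs $\eta$ explicitly as the inverse of the time-map function $F(y)=\int_{\eta_0}^y R(s)^{-1/2}\,ds$ and then checks it solves \eqref{eta2bintro}--\eqref{eta1bintro} (with smoothness at $0$ handled as in Lemma~\ref{lem:etafunctions2}), whereas you start from the Cauchy--Lipschitz solution of \eqref{eta2bintro} and recover \eqref{eta1bintro} as a first integral; this reordering slightly streamlines the regularity at the origin but is otherwise the same argument. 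One small point: near $y_\ast$ you assert the time-map integrand behaves like $\sqrt{|D(s)|}$, which tacitly assumes $P(y_\ast)\neq0$; condition \eqref{condalgintro2} only gives $P(y_\ast)\le0$, and the paper handles this by the cruder bound $O(|s-y_\ast|^{-1/2})$, which still yields integrability.
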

 \begin{proof}
 Let us assume first that $\eta_0<1-1/(2\ka)$.
 We look for a solution $\eta$ in $(0,\gb_{c,\ka}^0)$ with $\eta(0)=\eta_0$  and $\eta((\gb^0_{c,\ka})^-)=1-1/(2\ka)$, with  $\gb_{c,\ka}^0>0$ to be determined.
 Let $y\in[\eta_0,1-1/(2\ka)],$ and 
 \begin{equation*}
F(y)=\int_{\eta_0}^yf(s)ds,\text{ where }       f(s)=-\sqrt{\frac{1-2\ka+2\ka s}{(s-\eta_0)P(s)}},\text{ for all }s\in(\eta_0,1-1/(2\ka)).
 \end{equation*}
 Using condition \eqref{condalgintro2}, which can be recast as 
 $P(y)<0$ in $[\eta_0,1-1/(2\ka))$ with $P$ given by \eqref{def:P}, we see that $f$ is a well-defined smooth function in $(\eta_0,1-1/(2\ka)).$ Also, $f(s)={O}((s-1+1/(2\ka))^{-1/2})$, as $s\to1-1/(2\ka)$, and
 since $P(\eta_0)<0$, it follows that $f(s)={O}((s-\eta_0)^{-1/2})$ as $s\to\eta_0$, so that $f$ is locally integrable in $(\eta_0,1-1/(2\ka))$. We deduce that $F\in\boC^\infty((\eta_0,1-1/(2\ka)))$ is a well-defined function, which we can extend by continuity to $[\eta_0,1-1/(2\ka)]$, with $F(\eta_0)=0$ and $F(1-1/(2\ka)):=\gb_{c,\ka}^0$.
 
Since $F'(y)<0$, for all $y\in(\eta_0,1-1/(2\ka)),$ we deduce that
 $F$ is bijective,  and its inverse function $\boF=F^{-1}$ lies in $\boC([0,\gb_{c,\ka}^0])\cap\boC^\infty((0,\gb_{c,\ka}^0))$. In addition, $\boF$ can be extended by reflection to $[-\gb_{c,\ka}^0,\gb_{c,\ka}^0]$ as an even function, that we still denote by $\boF$. This extension satisfies $\mathrm{Im}(\boF)=[\eta_0,1-1/2\ka]$ with $\boF(0)=\eta_0$ and $\boF(\pm\gb_{c,\ka}^0)=1-1/(2\ka)$. Moreover, we can prove, using ideas similar to the ones in Lemma~\ref{lem:etafunctions2}, that $\boF\in\boC^\infty((-\gb_{c,\ka}^0,\gb_{c,\ka}^0))$. Letting $\eta=\boF$, we get $\eta'=1/f(\eta)$ in $(-\gb_{c,\ka}^0,\gb_{c,\ka}^0)$. We see that this imply that $\eta$ is not only a solution to \eqref{eta1b}, but also satisfies \eqref{eta2b} in $(-\gb_0,\gb_0)\backslash\{0\}$. Since $\eta\in\boC^\infty((-\gb_{c,\ka}^0,\gb_{c,\ka}^0))$, it must satisfy \eqref{eta2b} at $x=0$, by a continuity argument. It remains to check that $\eta\in H^1((\ga_{c,\ka}^0,\gb_{c,\ka}^0))$. Since $\eta$ is a continuous function, we just need to show that $\eta'$ is square integrable near $\gb_{c,\ka}^0$ to conclude. 
 Indeed, since
 \begin{equation}\label{eq:buh1}
     \int_0^{\gb_{c,\ka}}(\eta')^2=-\int_0^{\gb_{c,\ka}^0}\eta'\sqrt{\frac{(\eta-\eta_0)P(\eta)}{1-2\ka+2\ka\eta}},
 \end{equation}
 the change of variable $y=\eta$ in \eqref{eq:buh1}, yields
 \begin{align*}
    \int_0^{\gb_{c,\ka}}(\eta')^2\leq
    \Bigg|\int_{\eta_0}^{1-1/(2\ka)}\sqrt{\frac{(y-\eta_0)P(y)}{1-2\ka+2\ka y}}\Bigg|<\infty,
 \end{align*}
 where we used that $y\mapsto(1-2\ka+2\ka y)^{-1/2}$ is integrable near $1-1/(2\ka).$
 This concludes the existence part of the result.

 For the uniqueness, let $\check{\eta}$ be another solution in $(\ga_{c,\ka}^0,\gb_{c,\ka}^0)$ 
 such that $\check{\eta}(0)=\eta_0$. Then, by \eqref{eta1b} we obtain $(\check{\eta})'(0)=0$. Thus, it also satisfies \eqref{eta2b} with the same initial condition at $x=0$ as $\eta$. By the Cauchy--Lipschitz theorem, we deduce that $\check{\eta}(x)=\eta(x)$ for all $x\in(\ga_{c,\ka}^0,\gb_{c,\ka}^0)$
 The same ideas apply to treat the case $\eta_0>1-1/(2\ka)$. 
 \end{proof} 
 
Inspired by Corollary~\ref{prop:exiu}, we prove now that  getting a $C^2((\ga_{c,\ka}^0,\gb_{c,\ka}^0))$-solution to \eqref{TWc} in $(\ga_{c,\ka}^0,\gb_{c,\ka}^0)$ from $\eta$ solution to \eqref{eta2b}--\eqref{eta1b}, is equivalent to 
\begin{equation}
   \label{K0} 
   K_0=(c\eta_0)^2/(4-4\eta_0).
\end{equation} In this manner,  condition \eqref{condalgintro2} reduces to \eqref{condalgintro22}.
\begin{proposition}\label{prop:exibu2}
	Let $(c,\ka)\in\tilde{D}\cup\boC$. Assume that $\eta_0<1$, with $\eta_0\ne1-1/(2\ka)$, and $K_0\geq0$ fulfill \eqref{condalgintro2}. If $\gb_{c,\ka}^0>0$ and $\eta\in\boC^\infty((-\gb_{c,\ka}^0,\gb_{c,\ka}^0))$ are given by Proposition~\ref{prop:exibu}, then the function $u=\sqrt{1-\eta}e^{i\theta}$ defined in \eqref{localsol} satisfies \eqref{TWc} if and only if $K_0=c^2\eta_0^2/(4-4\eta_0)$. Moreover, this is, up to phase shift, the unique $\boC^2((-\gb_{c,\ka}^0,\gb_{c,\ka}^0))$-solution to \eqref{TWc} such that $1-|u|^2(0)=\eta_0$ and $\theta'(0)=c\eta_0/(2-2\eta_0)$.
\end{proposition}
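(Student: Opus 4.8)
The plan is to follow the scheme of Proposition~\ref{prop:exiu}, adapted to the interval $(-\gb_{c,\ka}^0,\gb_{c,\ka}^0)$ and to the modified equations \eqref{eta2b}--\eqref{eta1b}. Write $u=\sqrt{1-\eta}\,e^{i\theta}$ as in \eqref{localsol}, where $\eta\in\boC^\infty((-\gb_{c,\ka}^0,\gb_{c,\ka}^0))$ is the function produced by Proposition~\ref{prop:exibu}, so that $\eta(0)=\eta_0$, $\eta'(0)=0$, $\eta<1$, and $\eta$ solves \eqref{eta2b}--\eqref{eta1b} with $K_1=0$ and the prescribed $K_0$. First I would plug $u$ into \eqref{TWc}, divide by $e^{i\theta}$, and separate real and imaginary parts, exactly as in the computation leading to \eqref{eq:Reu}--\eqref{eq:Imu}. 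With the phase choice $\theta'=c\eta/(2(1-\eta))$, the imaginary part is a total derivative that integrates to the momentum identity \eqref{TW:weaksystpol1}; hence it vanishes \emph{identically}, regardless of which ODE $\eta$ satisfies. Thus $u$ solves \eqref{TWc} if and only if $\eta$ satisfies the scalar second-order relation $(\mathrm R)$ obtained from the real part, namely
\[
2(1-\eta)(1-2\ka+2\ka\eta)\,\eta''+(\eta')^2=4\eta^3-8\eta^2+4\eta+c^2\eta^2-2c^2\eta .
\]

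The heart of the proof is to show that, for the $\eta$ of Proposition~\ref{prop:exibu}, relation $(\mathrm R)$ collapses to a single algebraic condition on $K_0$. Substituting $\eta''$ from \eqref{eta2b} and $(\eta')^2$ from \eqref{eta1b} into the left-hand side of $(\mathrm R)$, and abbreviating $S:=P+(\,\cdot-\eta_0)\,P'$ with $P$ as in \eqref{def:P} (and $K_1=0$), one finds $2(1-\eta)(1-2\ka+2\ka\eta)\eta''+(\eta')^2=(1-\eta)S(\eta)+(\eta-\eta_0)P(\eta)$. Hence $(\mathrm R)$ is equivalent to the polynomial identity
\[
(1-\eta)S(\eta)+(\eta-\eta_0)P(\eta)=4\eta^3-8\eta^2+4\eta+c^2\eta^2-2c^2\eta ,
\]
valid along the nonconstant solution $\eta$. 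A direct expansion shows that the coefficients of $\eta^3$, $\eta^2$ and $\eta$ agree automatically on the two sides, so the identity reduces to the \emph{constant} relation $c^2\eta_0^2-4K_0(1-\eta_0)=0$, that is to \eqref{K0}. Since $\eta$ runs over the whole interval $[\eta_0,1-1/(2\ka)]$ by Proposition~\ref{prop:exibu}, this vanishing of a constant is both necessary and sufficient, which gives the claimed equivalence (and reduces \eqref{condalgintro2} to \eqref{condalgintro22}). I expect this cancellation to be the main obstacle: it is the only place where one genuinely uses \emph{both} \eqref{eta2b} and \eqref{eta1b}, and carefully tracking the constants $\eta_0$ and $K_0$ through the expansion is the delicate point.

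For the uniqueness, let $v$ be any $\boC^2((-\gb_{c,\ka}^0,\gb_{c,\ka}^0))$-solution of \eqref{TWc} with $1-|v|^2(0)=\eta_0$ and $\theta_v'(0)=c\eta_0/(2-2\eta_0)$. Since $\eta_0<1$, $v$ does not vanish near the origin and can be lifted as $v=\rho_v e^{i\theta_v}$; by Remark~\ref{rem:localeq}, $\theta_v'=c\eta_v/(2(1-\eta_v))+K$ for a constant $K$, and the prescribed value $\theta_v'(0)=c\eta_0/(2-2\eta_0)$ forces $K=0$. Therefore $v$ has precisely the form \eqref{localsol}, and by the computation above $\eta_v=1-\rho_v^2$ solves $(\mathrm R)$ with $\eta_v(0)=\eta_0$ and, the origin being the prescribed critical point, $\eta_v'(0)=0$. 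As $1-\eta_0\neq0$ and $1-2\ka+2\ka\eta_0\neq0$, the coefficient of $\eta''$ in $(\mathrm R)$ is nonzero at $\eta_0$, so $(\mathrm R)$ is a regular second-order ODE near the origin; the Cauchy--Lipschitz theorem applied with data $(\eta_0,0)$ then gives $\eta_v=\eta$, whence $\rho_v=\rho$ and, up to the additive constant fixed by a phase shift, $\theta_v=\theta$. Thus $v=e^{i\phi}u$, as claimed. The computation is uniform in $c\geq0$, so no separate treatment of the degenerate case $c=0$ (where $\theta'\equiv0$ and $K_0=0$) is needed.
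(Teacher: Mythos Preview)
The proposal is correct and follows essentially the same approach as the paper: your relation $(\mathrm R)$ is exactly \eqref{eq:LHSbu}$=0$ after clearing the factor $-4(1-\eta)^{3/2}$, and your reduction to the constant condition $c^2\eta_0^2-4K_0(1-\eta_0)=0$ is precisely the paper's $(1-\eta_0)P(0)-\eta_0P'(0)=0$. For the uniqueness you apply Cauchy--Lipschitz directly to $(\mathrm R)$ instead of routing through Lemma~\ref{lem:buleq} and the uniqueness part of Proposition~\ref{prop:exibu} as the paper does, but this is only a cosmetic difference, and both arguments rely on the same tacit hypothesis $\eta_v'(0)=0$.
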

\begin{proof}
	Using \eqref{weak-der2} together with the formula for $\theta'$ in \eqref{localsol}, we can compute the left-hand side of \eqref{TWc}.
	A direct verification yields that the imaginary part is zero. On the other hand, the real part writes
	\begin{align}\label{eq:LHSbu}
		-c\sqrt{1-\eta}\theta'-\frac{\eta''}{2\sqrt{1-\eta}}(1-2\ka+2\ka\eta)-\frac{(\eta')^2}{4(1-\eta)^{3/2}}-(\theta')^2\sqrt{1-\eta}-\eta\sqrt{1-\eta}.
	\end{align}
We can factorize $(-4(1-\eta)^{3/2})^{-1}$ in \eqref{eq:LHSbu} and use consecutively equations \eqref{eta2b}--\eqref{eta1b} to remove respectively the term in $\eta''$ and the terms in $(\eta')^2$. Then, we infer that, since $(1-\eta)$ does not vanish in $(-\gb_{c,\ka}^0,\gb_{c,\ka}^0)$, formula \eqref{eq:LHSbu} is zero if and only if
\begin{equation}\label{eq:poly3}
    2c^2\eta-c^2\eta^2+(1-\eta_0)P(\eta)+(\eta-\eta_0)(1-\eta)P'(\eta)-4\eta(1-\eta)^2=0,
\end{equation} where $P$ is given by \eqref{def:P} with $K_1=0$. 
In addition, when expanding all the terms in the left-hand side of \eqref{eq:poly3}, we see that \eqref{eq:poly3}
 reduces to the identity $(1-\eta_0)P(0)-\eta_0P'(0)=0$.
Thus, we check that \eqref{eq:LHSbu} is zero if and only if   
\eqref{K0} holds.

Let us prove the uniqueness. Assume that $\check{u}\in\boC^2((-\gb_{c,\ka}^0,\gb_{c,\ka}^0))$ is another solution that can be lifted near $0$, $\check{u}=\sqrt{1-\check{\eta}}\exp({i\check{\theta}})$,  with $\check{\eta}(0)=\eta_0$ and $\check{\theta}'(0)=c\eta_0/(2-2\eta_0)$. Then, by Lemma~\ref{lem:buleq} and the  condition \eqref{K0} for $K_0$, we deduce that $\eta$ and $\check{\eta}$ both satisfy \eqref{eta2b}--\eqref{eta1b}. From Proposition~\ref{prop:exibu}, we obtain $\check{\eta}=\eta$ in $(-\gb_{c,\ka}^0,\gb_{c,\ka}^0)$, thus we can lift $\check{u}$ in $(-\gb_{c,\ka}^0,\gb_{c,\ka}^0)$. Using the latter identity and Remark~\ref{rem:localeq}, (or equivalently using \eqref{eq:polairebu}), we check that $\check{\theta}'$ satisfies the same equation as $\theta'$ in \eqref{localsol} in $(\ga_{c,\ka}^0,\gb_{c,\ka}^0)$, therefore $\check{\theta}=\theta+\phi$, which completes the proof. 
\end{proof}
In the case  $c=0$, we can also obtain a local solution to \eqref{TWc} vanishing at the origin (so that  $\eta_0=1$), using ideas similar to the ones in the proof of Theorem~\ref{thm:classiftwregu}. 
\begin{proposition}\label{prop:exibu3}
	Let $c=0$ and $0<\ka\leq1/2$. Assume that $\eta_0=1$ and $K_0\geq0$ are real numbers
	so that \eqref{condalgintro2} holds. If $\gb_{0,\ka}^0>0$ and $\eta\in\boC^\infty((-\gb_{0,\ka}^0,\gb_{0,\ka}^0))$ are given by Proposition~\ref{prop:exibu}, then the real-valued function $u(x)=\pm\sqrt{1-\eta(x)}$ for all $\pm x\in[0,\gb_{0,\ka}^0)$ defined in \eqref{localsol0} is, up to phase shift, the unique $\boC^2((\gb_{0,\ka}^0,\gb_{0,\ka}^0))$-solution to \eqref{TWc} with $c=0$ such that $u(0)=0$ and $|u'(0)|^2=K_0$.
\end{proposition}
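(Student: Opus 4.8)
The plan is to mirror the construction of the black soliton in the proof of Theorem~\ref{thm:classiftwregu}, replacing the smooth equations \eqref{eta2}--\eqref{eta1} by the bubble equations \eqref{eta2b}--\eqref{eta1b} and the initial slope $1/\sqrt2$ by $\sqrt{K_0}$. The essential point is that, contrary to Proposition~\ref{prop:exibu2}, the value $\eta_0=1$ makes the compatibility condition \eqref{K0} singular (its denominator $4-4\eta_0$ vanishes), so the complex lifting $u=\sqrt{1-\eta}\,e^{i\theta}$ is no longer usable at the origin; instead I will work directly with the genuinely nonsingular real second-order ODE \eqref{eq:pbreel}. First I would record that, evaluating \eqref{condalgintro20} at $y=1$, one gets $2K_0>0$, so $K_0>0$. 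Since $0<\ka\le 1/2$, the coefficient $1-2\ka y^2$ equals $1$ at $y=0$, hence is positive for $y$ near $0$; recasting \eqref{eq:pbreel} as a first-order system in $(\tilde u,\tilde u')$ with a field that is smooth and locally Lipschitz where $1-2\ka\tilde u^2\neq0$, the Cauchy--Lipschitz theorem provides a unique maximal $\boC^\infty$ solution $\tilde u$ near $0$ with $\tilde u(0)=0$ and $\tilde u'(0)=\sqrt{K_0}$. Setting $\tilde\eta=1-\tilde u^2$ and repeating the manipulations of Proposition~\ref{prop:eqeta} and Lemma~\ref{lem:buleq} with $c=0$, $u_2\equiv0$ and integration based at the origin, I would check that $\tilde\eta$ solves \eqref{eta2b}--\eqref{eta1b} with $\eta_0=\tilde\eta(0)=1$, $K_1=0$ and $K_0=|\tilde u'(0)|^2$, and that $\tilde\eta'(0)=-2\tilde u(0)\tilde u'(0)=0$.

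Next I would compare $\tilde\eta$ with the function $\eta$ of Proposition~\ref{prop:exibu}. On the open interval $(-\gb_{0,\ka}^0,\gb_{0,\ka}^0)$ the intensity profile stays in $(1-1/(2\ka),1]$, so $1-2\ka+2\ka\eta$ never vanishes and \eqref{eta2b} reads as a nonsingular ODE $\eta''=\Phi(\eta,\eta')$ with $\Phi$ locally Lipschitz. As $\tilde\eta$ and $\eta$ share the data $(1,0)$ at the origin, the Cauchy--Lipschitz theorem yields $\tilde\eta=\eta$ as long as $1-2\ka+2\ka\eta>0$, that is, on all of $(-\gb_{0,\ka}^0,\gb_{0,\ka}^0)$; in particular $|\tilde u|=\sqrt{1-\eta}=|u|$ there. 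Because $K_0>0$ gives $\tilde u'(0)>0$, the solution $\tilde u$ is increasing through $0$ and vanishes only at $0$, matching the sign convention $u(x)=\pm\sqrt{1-\eta(x)}$ for $\pm x\ge0$ in \eqref{localsol0}; hence $\tilde u=u$ on the whole interval. This identifies $u$ with a $\boC^\infty$ solution of \eqref{eq:pbreel}, i.e.\ of \eqref{TWc} with $c=0$, and in particular removes the apparent singularity of $\sqrt{1-\eta}$ at the origin, so that $u\in\boC^2((-\gb_{0,\ka}^0,\gb_{0,\ka}^0))$.

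For uniqueness, let $\check u\in\boC^2((-\gb_{0,\ka}^0,\gb_{0,\ka}^0))$ solve \eqref{TWc} with $c=0$, $\check u(0)=0$ and $|\check u'(0)|^2=K_0$. Setting $\check\eta=1-|\check u|^2$ and arguing as in Lemma~\ref{lem:buleq} with $0$ in the role of $x_0$ (so that $\check\eta'(0)=0$ and the quadratic relation \eqref{eq:quadratic} gives $|\check u'(0)|^2=K_0$), I obtain that $\check\eta$ solves \eqref{eta2b}--\eqref{eta1b} with the same $\eta_0=1$, $K_0$ and $K_1=0$; the uniqueness part of Proposition~\ref{prop:exibu} then forces $\check\eta=\eta$. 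Writing $\check u'(0)=e^{i\phi}\sqrt{K_0}$ and setting $v=e^{-i\phi}\check u$, the real pair $V=(\Re v,\Im v)$ solves the system \eqref{eq:systtw} with $c=0$ --- whose right-hand side involves $A(v)^{-1}$, well defined since $1-2\ka|v|^2=1-2\ka+2\ka\check\eta\neq0$ on the open interval --- with initial data $V(0)=(0,0)$ and $V'(0)=(\sqrt{K_0},0)$. As $U=(u,0)$ solves the same Cauchy problem, the Cauchy--Lipschitz theorem gives $V=U$, that is $\check u=e^{i\phi}u$, which is the claimed uniqueness up to phase shift.

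The main obstacle is precisely the $\boC^2$ (indeed $\boC^\infty$) regularity of $u$ at the origin, where $u$ vanishes and $\eta$ attains the value $1$: there $u=\pm\sqrt{1-\eta}$ is a priori only continuous, and the lifting used for $\eta_0<1$ in Proposition~\ref{prop:exibu2} degenerates because \eqref{K0} becomes singular. Passing through the nonsingular real ODE \eqref{eq:pbreel} and matching via Cauchy--Lipschitz is what resolves this, and the same identification is what simultaneously drives the existence and the uniqueness up to a phase.
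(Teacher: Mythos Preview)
Your proof is correct and follows essentially the same approach as the paper: construct the local real solution $\tilde u$ to \eqref{eq:pbreel} via Cauchy--Lipschitz, show that $\tilde\eta=1-\tilde u^2$ satisfies \eqref{eta2b}--\eqref{eta1b}, match it with the $\eta$ of Proposition~\ref{prop:exibu} to identify $\tilde u$ with $u$, and then obtain uniqueness by reducing $\check u$ to $e^{i\phi}u$ via Cauchy--Lipschitz on the system \eqref{eq:systtw}. Your explicit observation that \eqref{condalgintro20} at $y=1$ forces $K_0>0$, and your more detailed justification of the invertibility of $A(v)$ via $\check\eta=\eta$, are welcome additions that the paper leaves implicit.
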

\begin{proof}
Using the Cauchy--Lipschitz theorem, define $\tilde{u}\in\boC^\infty((-R,R))$ to be the local real solution of equation \eqref{eq:pbreel}, with initial conditions $ \tilde{u}(0)=0$ and $\tilde{u}'(0)=\sqrt{K_0}$, for some $R>0$. Note that $\tilde{u}$ is a local solution to \eqref{TWc} with $c=0$ such that $0\in\mathcal{Z}(\tilde{u})$.

Then, following computations along the same lines as in Lemma~\ref{lem:buleq}, we infer that $\tilde{\eta}=1-\abs{\tilde{u}}^2$ satisfies \eqref{eta2b}--\eqref{eta1b} with $\eta_0=1$ and $K_0=(u'(0))^2$ (and $K_1=0$).
By Proposition~\ref{prop:exibu}, we obtain $\tilde{\eta}=\eta$ in $I=(-\gb_{0,\ka}^0,\gb_{0,\ka}^0)\cap(-R,R)$. We deduce that $\abs{\tilde{u}}=\sqrt{1-\eta}$ in $I$. Since $\tilde{u}'(0)>0$, we see that $\tilde{u}$ is negative and then becomes positive in the neighborhood of 0. Therefore, $\tilde{u}$ and $u$ coincide in $I$. This yields that $\tilde{u}$ and $\tilde{u}'$ are bounded and $\Tilde{\eta}$ does not reach $1-1/(2\ka)$ in $I$ if $R<\gb_{0,\ka}^0$. Thus, we can assume that the local solution $\tilde{u}$ exist in $(-R,R)$ with $R\geq\gb_{0,\ka}^0$,  which further implies that $u$ satisfies \eqref{TWc} in $(-\gb_{0,\ka}^0,\gb_{0,\ka}^0)$ with $c=0$.

To prove uniqueness,  suppose that $\check{u}$ is another solution in $(-\gb_{c,\ka}^0,\gb_{c,\ka}^0)$. Then by assumption, $\check{u}$ satisfies the initial value problem associated to \eqref{TWc} with $\check{u}(0)=0$ and $\check{u}'(0)=e^{i\phi}\sqrt{K_0}$, for some $\phi\in\R$. Hence $e^{-i\phi}\check{u}$ satisfies \eqref{TWc} with the same initial conditions as $u$, therefore $\check{u}=u$, by the Cauchy--Lipschitz theorem. 
\end{proof}
We can extend these local solutions to $\R$ by a cuspon-like solution outside  $(\ga_{c,\ka}^0,\gb_{c,\ka}^0)$ if $(c,\ka)\in\Tilde{\D}$, and by constants of modulus one if $\ka=1/2$. By Lemma~\ref{lem:buildw}, we recover a composite-wave (and a compacton if $\ka=1/2$) satisfying \eqref{TWc} weekly. For instance, this procedure yields the solutions in Proposition~\ref{prop:compos}.
To obtain Proposition~\ref{prop:compos}, we just need to show that \eqref{condalgintro22} holds for $\eta_0$ small enough
\begin{proof}[Proof of Proposition~\ref{prop:compos}]
Let $\eta_0<1-1/(2\ka)$ and $K_0=c^2\eta_0^2/(4-4\eta_0)$, then condition \eqref{condalgintro22} writes $P(y)<0$ for all $y\in(\eta_0,1-1/(2\ka))$ where $P$ is the quadratic polynomial in \eqref{def:P} with $K_1=0$. Computing the discriminant of $P(y)$ yields
\begin{equation}\label{eq:discricomposit}
    \Delta=(2-c^2-2\eta_0)^2+8\frac{(2-c^2-2\eta_0)\eta_0}{1-\eta_0}.
\end{equation}
We can check that $\Delta\to\infty$ as $\eta_0\to-\infty$, thus there exists $A_1(c,\ka)<0$ such that $\Delta>0$ for all $\eta_0<A_1$.
We show that $1-1/(2\ka)$ is smaller than the smallest root of $P$, $y_1=(2-c^2-2\eta_0-\sqrt{\Delta})/4$ for $\eta_0$ small enough. Indeed, $y_1=(2-c^2-2\eta_0)(1-\sqrt{1+8\eta_0/((2-c^2-2\eta_0)(1-\eta_0))})/4$ for $\eta_0<1-c^2/2$ and we can  check that $y_1\to1$ as $\eta_0\to-\infty$ so there exists $A(c,\ka)<A_1(c,\ka)$ such that $1-1/(2\ka)<y_1$  for all $\eta_0\leq A(c,\ka)$. This implies \eqref{condalgintro22}, so we can build the composite wave solution $u$ by extending the local solution in $(-\gb_{c,\ka}^0,\gb_{c,\ka}^0)$ of Proposition~\ref{prop:exibu2} with a cuspon like solution or a constant solution if $\ka=1/2$. The uniqueness follows from the uniqueness in Proposition~\ref{prop:exibu2} and from Theorem~\ref{thm:classifcuspon} ruling out the behavior at infinity of the solutions. Finally, since $P(1-1/(2\ka))<0$ we obtain $(1-\abs{u}^2)'\to\infty$ as $x\to\gb_{c,\ka}^0$  using \eqref{eta1bintro} so that $\boN(u)=\{-\gb_{c,\ka}^0,\gb_{c,\ka}^0\}$.
\end{proof}

Finally, the existence of explicit compactons is a simple consequence of Lemma~\ref{lem:buildw} and the explicit local solutions obtained in \eqref{prop:k1/2}. 
\begin{proof}[Proof of Proposition~\ref{prop:compacton}]
By construction $u_{c,1/2}^{(j)}$ is continuous $\R$, 
is a strong solution in $I_j$ by Proposition~\ref{prop:k1/2}, and is a constant function of modulus 1 outside $I_j$. Thus, it is simple to check that
$u_{c,1/2}^{(j)}$ is a global weak solution by invoking Lemma~\ref{lem:buildw}.
\end{proof}
This ends the classification in the case $\card(\mathcal{Z}(u_{c,\ka}))<\infty$. Moreover, the next result establishes that the classification of solutions with $2\leq\card(\Gamma(u_{c,\ka}))<\infty$ can be reduced to the cases already treated.
\begin{lemma}\label{lem:classifcard}
Let $(c,\ka)\in\Tilde{\D}\cup\boC$ and $u_{c,\ka}$ be a solution to \eqref{TWc}.
    If $2\leq\card(\Gamma(u_{c,\ka}))<\infty$, then $\card(\mathcal{Z}(u_{c,\ka}))=\card(\Gamma(u_{c,\ka}))-1$.
\end{lemma}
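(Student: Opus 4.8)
The plan is to reduce to $(c,\ka)\in\tilde{\D}$ and then count the non-singular extrema gap by gap between consecutive singular points. First I would dispose of the case $(c,\ka)\in\boC$, where $\ka=1/2$: by Lemma~\ref{lem:Z1/2} every solution satisfies $\ga_{c,\ka}=-\infty$ and $\gb_{c,\ka}=\infty$, so $\Gamma(u_{c,\ka})$ is unbounded and $\card(\Gamma(u_{c,\ka}))=\infty$. Hence the hypothesis $2\le\card(\Gamma(u_{c,\ka}))<\infty$ is vacuous for $(c,\ka)\in\boC$, and it suffices to argue for $(c,\ka)\in\tilde{\D}$, where $\ka\ne1/2$.

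So assume $(c,\ka)\in\tilde{\D}$ and set $n=\card(\Gamma(u_{c,\ka}))\ge2$. Since $\Gamma(u_{c,\ka})$ is finite I would write it as $\{x_1<\cdots<x_n\}$, noting $x_1=\ga_{c,\ka}$ and $x_n=\gb_{c,\ka}$, and that $\eta_{c,\ka}(x_i)=1-1/(2\ka)$ at every singular point. By Lemma~\ref{lem:omegaregu}, $\eta_{c,\ka}$ is smooth on each open gap $(x_i,x_{i+1})\subset\Omega(u_{c,\ka})$, and by \eqref{eq:Zinclu} (the monotonicity of $\eta_{c,\ka}$ outside $[\ga_{c,\ka},\gb_{c,\ka}]$ established in the proof of Theorem~\ref{thm:classifcuspon}) one has $\mathcal{Z}(u_{c,\ka})\subset(\ga_{c,\ka},\gb_{c,\ka})=(x_1,x_n)$. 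As $\mathcal{Z}(u_{c,\ka})$ avoids $\Gamma(u_{c,\ka})$ by its definition \eqref{def:boZ}, this localizes the extrema to the gaps:
\[
\mathcal{Z}(u_{c,\ka})\subset\bigcup_{i=1}^{n-1}(x_i,x_{i+1}),
\]
so the whole statement reduces to proving that each gap contains \emph{exactly one} element of $\mathcal{Z}(u_{c,\ka})$.

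For the lower bound (at least one per gap), I would use that $\eta_{c,\ka}$ is continuous on $[x_i,x_{i+1}]$ with equal endpoint values $1-1/(2\ka)$; Proposition~\ref{prop:constant} (valid since $\ka\ne1/2$) rules out $\eta_{c,\ka}\equiv1-1/(2\ka)$ on the gap, so $\eta_{c,\ka}$ is non-constant there and attains an interior global maximum or minimum over $[x_i,x_{i+1}]$, which is a local extremum lying in $(x_i,x_{i+1})$ and hence in $\mathcal{Z}(u_{c,\ka})$. For the upper bound (at most one per gap) I would invoke Theorem~\ref{thm:symmetry}: if $x_0\in\mathcal{Z}(u_{c,\ka})\cap(x_i,x_{i+1})$, then, since $\Gamma(u_{c,\ka})\cap(x_i,x_{i+1})=\emptyset$, the closest singular points \eqref{def:ab0} are $\ga^0_{c,\ka}=x_i$ and $\gb^0_{c,\ka}=x_{i+1}$, and $\eta_{c,\ka}$ is strictly monotone on $(x_0,x_{i+1})$; a second extremum $x_0'>x_0$ in the same gap would lie in this interval of strict monotonicity, a contradiction, and symmetrically for $x_0'<x_0$. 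Combining the two bounds gives $\card(\mathcal{Z}(u_{c,\ka})\cap(x_i,x_{i+1}))=1$ for each $i$, whence $\card(\mathcal{Z}(u_{c,\ka}))=n-1=\card(\Gamma(u_{c,\ka}))-1$. The main obstacle is purely in correctly feeding Theorem~\ref{thm:symmetry}: one must check that for \emph{every} candidate extremum in a gap the closest singular points are exactly the two gap endpoints (which holds because no singular point lies strictly inside the gap), so that the strict monotonicity on $(x_0,\gb^0_{c,\ka})$ applies and forbids a second extremum; the remaining steps are elementary.
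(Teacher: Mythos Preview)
Your proof is correct and follows essentially the same approach as the paper: localize $\mathcal{Z}(u_{c,\ka})$ to the gaps between consecutive singular points, then show each gap contains exactly one non-singular extremum by combining an existence argument with the strict monotonicity from Theorem~\ref{thm:symmetry}. The only notable differences are that you explicitly dispose of the $\boC$ case via Lemma~\ref{lem:Z1/2} (the paper leaves this implicit), and for existence you use the extreme value theorem to produce a genuine interior extremum, whereas the paper invokes Rolle's theorem to get a critical point $\eta'(x_0)=0$ and then asserts $x_0\in\mathcal{Z}(u_{c,\ka})$; your route is arguably cleaner since $\mathcal{Z}$ is defined in terms of local extrema rather than critical points.
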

\begin{proof}
We show that between two elements of $\Gamma(u_{c,\ka})$ lies exactly one element of $\mathcal{Z}(u_{c,\ka})$.
    Let $a<b$ be two consecutive elements of $\Gamma(u_{c,\ka}).$ By Lemma~\ref{lem:omegaregu}, $u_{c,\ka}$ is smooth in $(a,b)$. Since the intensity profile $\eta_{c,\ka}=1-\abs{u_{c,\ka}}^2$ satisfies $\eta_{c,\ka}(a)=\eta_{c,\ka}(b)=1-1/(2\ka)$, we can apply  Rolle's theorem to deduce the existence of $x_0\in(a,b)$ such that $\eta'(x_0)=0$. Since $\Gamma(u_{c,\ka})\cap(a,b)=\emptyset,$ we deduce that $x_0\in\mathcal{Z}$ and that $a=\ga_{c,\ka}^0$ and $b=\gb_{c,\ka}^0$. Using the monotonicity of $\eta_{c,\ka}$ in $(a,b)$ established in Theorem~\ref{thm:symmetry}, the conclusion follows.
\end{proof}
In conclusion, we classified all the solutions with $\card(\Gamma(u_{c,\ka}))<\infty$. On the other hand, 
by Proposition~\ref{prop:constant}, if $\ka\ne1/2$ then $\Gamma(u_{c,\ka})$, is a closed bounded set of empty interior. Hence, it is possible that $\Gamma(u_{c,\ka})$ contains a countable number of points (for instance, $\Gamma(u_{c,\ka})=\{1/n:n\geq1\}$), or even uncountable many points (for instance, $\Gamma(u_{c,\ka})$ is the Cantor set). These cases are beyond the scope of this paper, and we refer to \cite{lenells2005traveling} treating these kinds of considerations in the context of  the 
Camassa--Holm equation. Similarly,  the question of whether  $\Gamma(u_{c,1/2})$ can be any closed subset of $\R$ is still open. 
Also, as explained in the proof of Proposition~\ref{prop:exibu}, in general, the value of $\gb_{c,\ka}^0$ with respect to the parameters is not explicit and relies on our ability to compute the integral from $\eta_0$ to $1-1/(2\ka)$ of a function $f_{c,\ka,\eta_0}$. 
It is an open problem to determine if there is a one-to-one correspondence between the maximal amplitude $\eta_0$ and the half-length $\gb_{c,\ka}^0$ of the interval of existence.


We end this section with a comment on the regularity and decay of the cuspon solutions to mirror Proposition~\ref{prop:reguparam1} about the dark solitons. Let $u_{c,\ka}$ be the cuspon solution given by Corollary~\ref{cor:cuspon}, then, from the regularity of the flow of ODEs, for all $x\ne0$, and all $j\in\N $, $\partial_{x}^j\eta(\cdot,\cdot,x)$ is smooth for all $(c,\ka)\in \tilde{\D}$ where $\eta(c,\ka,x)=\eta_{c,\ka}(x)$ (the derivatives in the $c$ direction are in the sense of Dini for $c=\sqrt{2},$ and $\ka\ne1/2$). At $x=0$, $\eta_{c,\ka}(0)=1-1/(2\ka)$ so $\eta(\cdot,\cdot,0)$ is also smooth in $\tilde{\D}$, however, $\eta_{c,\ka}(\cdot)$ is not continuously differentiable in 0.
To summarize, we have $\eta\in\boC^\infty(\tilde{\D}\times\R\backslash\{0\})$ and the following decay estimates. Since the proof involves the same ideas as the ones in Proposition~\ref{prop:reguparam1}, we omit it.
\begin{proposition}\label{prop:reguparam2}
	Let $(c,\ka)\in \D_1\cup \D_3$. Consider $u_{c,\ka}\in\boX(\R)$  the singular solution given by Corollary~\ref{cor:cuspon} and $\eta_{c,\ka}=1-\abs{u_{c,\ka}}^2$ be its intensity profile. Then, for every multi-index $\alpha=(\alpha_1,\alpha_2,k)\in\N^3$, there exist $R$, $C_0$, and $C$ positive numbers such that for all $\abs{x}\geq R$
	\begin{align}\label{eq:decayckw}
		|D^\alpha_{c,\ka,x}\eta_{c,\ka}(x)|\leq C_0 e^{-C(|x|-R)},\quad\text{ where } D^\alpha_{c,\ka,x}=\partial_c^{\alpha_1}\partial_\ka^{\alpha_2}\partial_x^k.
	\end{align} 
\end{proposition}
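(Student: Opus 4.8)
The plan is to mirror the proof of Proposition~\ref{prop:reguparam1} almost verbatim, the only genuine difference being that the cuspon is singular at $x=0$, so every estimate is carried out on $\{|x|\ge R\}$ for a fixed $R>0$ bounded away from the origin. As noted in the paragraph preceding the statement, the smoothness $\eta\in\boC^\infty(\tilde{\D}\times(\R\setminus\{0\}))$ is already granted by the regularity of the ODE flow with respect to parameters and initial data, so it remains only to establish the decay estimate~\eqref{eq:decayckw}. Accordingly, I would make no claim of global $H^k$-regularity in the parameters here (indeed $\eta_{c,\ka}$ is not even $\boC^1$ at the origin); only the pointwise bound for large $|x|$ is asserted, which is exactly what the proposition requires.

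First I would fix $(c,\ka)\in\D_1\cup\D_3$ and recall from the proof of Theorem~\ref{thm:classifcuspon} (via Lemma~\ref{lem:propetaw}) that for $x>0$ the cuspon profile satisfies the same autonomous first-order equation~\eqref{eq:intweak} as the smooth profiles, namely $\partial_x\eta_{c,\ka}(x)=-F(c,\ka,\eta_{c,\ka}(x))$ with $F(c,\ka,y)=y\sqrt{(2-c^2-2y)/(1-2\ka+2\ka y)}$, and that $\eta_{c,\ka}(x)\to0$ exponentially as $x\to\infty$. The crucial point, which is exactly what confines the exponential decay to $\D_1\cup\D_3$ and excludes the boundary sets $\B_\pm$, is that $\partial_yF(c,\ka,0)=\sqrt{(2-c^2)/(1-2\ka)}>0$ on $\D_1\cup\D_3$ (the ratio being positive because both factors are positive on $\D_1$ and both negative on $\D_3$; on $\B_\pm$ the numerator $2-c^2$ vanishes, which is why there one gets only the polynomial decay of Lemma~\ref{lem:propetaw}). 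Hence, since $\eta_{c,\ka}(x)\to0$, there exist $R>0$ and $C_3>0$ with $\partial_yF(c,\ka,\eta_{c,\ka}(x))\ge C_3$ for all $x\ge R$.

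Next I would prove~\eqref{eq:decayckw} by induction on $(\alpha_1,\alpha_2)$, deferring the $x$-derivative index $k$ to the end. The base case $\alpha_1=\alpha_2=0$ is the exponential decay of $\eta_{c,\ka}$ and its $x$-derivatives supplied by Lemma~\ref{lem:propetaw}. For the inductive step, differentiating $\partial_x\eta_{c,\ka}=-F(c,\ka,\eta_{c,\ka})$ in the parameters gives, exactly as in~\eqref{eq:flowreg}--\eqref{eq:majgronvck},
\begin{equation*}
\bigl|\partial_x D\eta_{c,\ka}(x)+\partial_yF(c,\ka,\eta_{c,\ka}(x))\,D\eta_{c,\ka}(x)\bigr|\le C_1e^{-C_2(x-R)},\qquad x\ge R,
\end{equation*}
where $D=\partial_c^{\alpha_1}\partial_\ka^{\alpha_2}$ and the right-hand side collects the lower-order terms already controlled by the induction hypothesis, the factors $\partial_cF/\eta$ and $\partial_\ka F/\eta$ remaining bounded for $x\ge R$ since $\eta_{c,\ka}$ stays away from its extremal value $1-1/(2\ka)$. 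The integrating-factor/Gr\"onwall computation~\eqref{eq:decay05}--\eqref{eq:decay0514}, run with $G(x)=\int_R^x\partial_yF(c,\ka,\eta_{c,\ka}(s))\,ds$ and using $\partial_yF\ge C_3>0$, then yields $|D\eta_{c,\ka}(x)|\le C_0e^{-C(x-R)}$ for $x\ge R$; the estimate for $x\le-R$ follows identically by the reflection symmetry of $\eta_{c,\ka}$. Finally, the case $k\ge1$ is obtained by differentiating $\partial_x\eta_{c,\ka}=-F(c,\ka,\eta_{c,\ka})$ a further $k-1$ times in $x$ and invoking the bounds just proved, as in the closing step of Proposition~\ref{prop:reguparam1}.

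The main obstacle is purely one of bookkeeping near the singularity: unlike the smooth case, one cannot begin the Gr\"onwall argument from a neighbourhood of the extremum, so all estimates must start at a fixed $R>0$ away from $x=0$, and correspondingly the conclusion is a decay statement on $\{|x|\ge R\}$ only. No new analytic difficulty arises, which is precisely why the computation parallels Proposition~\ref{prop:reguparam1} once the positivity $\partial_yF(c,\ka,0)>0$ on $\D_1\cup\D_3$ has been recorded; this is also the reason the statement is formulated for $\D_1\cup\D_3$ rather than all of $\tilde{\D}$.
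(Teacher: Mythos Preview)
Your proposal is correct and follows exactly the approach the paper intends: the paper omits the proof, stating only that it ``involves the same ideas as the ones in Proposition~\ref{prop:reguparam1}''. You have faithfully carried out that program, correctly identifying the two adjustments needed --- working on $\{|x|\ge R\}$ away from the singularity at the origin, and recording that $\partial_yF(c,\ka,0)=\sqrt{(2-c^2)/(1-2\ka)}>0$ is precisely what distinguishes $\D_1\cup\D_3$ from $\B_\pm$ --- and the Gr\"onwall/integrating-factor step is reproduced accurately.
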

\section{Energy and momentum of traveling waves}
\label{sec:energymoment}
In this section, we compute explicit formulas for the energy and the momentum of the dark solitons and the cuspons given by Theorem~\ref{thm:classiftwregu} and Corollary~\ref{cor:cuspon} in $\boN\boX(\R)$, respectively. In this manner, we can check the stability condition 
\begin{equation}
\label{cond:der:P}
    \frac{d}{dc}p(u_{c,\kappa})<0.
\end{equation}
For the sake of simplicity, we use the notations $E_\kappa(c)$, $p_\kappa(c)$,
$\tilde E_\kappa(c)$ and $\tilde p_\kappa(c)$, corresponding to the set of parameters $\D$ and $\tilde \D$, defined by 
\begin{align}
\label{def:Eck}
E_\ka(c)&:=E_k(u_{c,k}) \text{ and } p_\ka(c):=p(u_{c,k}), \text{ if }
  (c,\ka)\in\boD, 
  \text{ with $u_{c,\kappa}$   given by  Theorem~\ref{thm:classiftwregu}},\\
  \label{def:tildeEck}
   \tilde E_\ka(c)&:=E_k(u_{c,k}) \text{ and } \tilde p_\ka(c):=p(u_{c,k}), \text{ if }
   (c,\ka)\in\tilde \boD, 
   \text{ with $u_{c,\kappa}$   given by  Corollary~\ref{cor:cuspon}}.
\end{align}
The starting point is the following elementary result.
\begin{lemma}\label{lem:Eexpli}
For all $(c,\kappa)\in \D$, we have
\begin{align}\label{eq:Eexpli}
E_\ka(c)=\int_{1-c^2/2}^0 -y\sqrt{\frac{1-2\kappa+2\kappa y}{2-c^2-2y}}dy,\qquad  p_\kappa(c)=\frac{c}{2}\int_{1-c^2/2}^0\frac{-y}{1-y}\sqrt{\frac{1-2\kappa+2\kappa y}{2-c^2-2y}}dy,
	\end{align}
where we exclude the value $c=0$ for momentum. Similarly, for all $(c,\kappa)\in \tilde\D$, we have 
\begin{align}\label{eq:EWexpli}
\tilde E_\ka(c)=\int_{1-1/(2\kappa)}^0 -y\sqrt{\frac{1-2\kappa+2\kappa y}{2-c^2-2y}}dy, \qquad   \tilde p_\ka(c)=\frac{c}{2}\int_{1-1/(2\kappa)}^0\frac{-y}{1-y}\sqrt{\frac{1-2\kappa+2\kappa y}{2-c^2-2y}}dy.
	\end{align}
 \end{lemma}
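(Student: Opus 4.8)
The plan is to reduce both quantities to one-dimensional integrals of the intensity profile $\eta=\eta_{c,\ka}$ alone, and then to evaluate them via the substitution $y=\eta(x)$. The starting point is Remark~\ref{rem:equiv}: for any nonvanishing solution $u=\sqrt{1-\eta}\,e^{i\theta}\in\boN\boX(\R)$ of \eqref{TWc}, the identities \eqref{eq:Euregu} give $E_\ka(u)=\tfrac12\int_\R\eta^2$ and $p(u)=\tfrac{c}4\int_\R\eta^2/(1-\eta)$. For the smooth solitons of Theorem~\ref{thm:classiftwregu} with $c>0$ this applies verbatim. For the cuspons of Corollary~\ref{cor:cuspon} I would first invoke Proposition~\ref{prop:nonvanishsingu} to guarantee $u\in\boN\boX(\R)$ when $c>0$; the pointwise identities \eqref{eq:quadratic} and \eqref{TW:weaksystpol1} underlying \eqref{eq:Euregu} then hold on $\Omega(u_{c,\ka})=\R\setminus\{0\}$, hence almost everywhere, so the same integral formulas for $E$ and $p$ remain valid despite the singular point.

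Next I would use that $\eta$ is even — by Corollary~\ref{coro:propeta} in the soliton case and by the reflection symmetry of Theorem~\ref{thm:classifcuspon} in the cuspon case — to replace $\int_\R$ by $2\int_0^\infty$. On $(0,\infty)$ the profile satisfies the first-order relation
\[
\eta'(x)=-\eta(x)\sqrt{\frac{2-c^2-2\eta(x)}{1-2\ka+2\ka\eta(x)}},
\]
which is \eqref{eq:odeeta1} for solitons and \eqref{eq:intweak} for cuspons. Performing the change of variables $y=\eta(x)$, for which $dy=\eta'(x)\,dx$ and, after substitution, $\eta'=-y\sqrt{(2-c^2-2y)/(1-2\ka+2\ka y)}$, turns $\int_0^\infty\eta^2\,dx$ into $\int(-y)\sqrt{(1-2\ka+2\ka y)/(2-c^2-2y)}\,dy$, and $\int_0^\infty\eta^2/(1-\eta)\,dx$ into the same integrand carrying an extra factor $1/(1-y)$. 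Recalling the prefactors $\tfrac12$ and $\tfrac c4$ from Remark~\ref{rem:equiv} (which combine with the factor $2$ from evenness to give $1$ and $\tfrac c2$, respectively) yields precisely the expressions in \eqref{eq:Eexpli}--\eqref{eq:EWexpli}.

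It remains to pin down the limits of integration and to legitimize the substitution. Here the monotonicity and boundary behavior established earlier are decisive: $\eta$ is a strictly monotone $\boC^\infty$ diffeomorphism of $(0,\infty)$ onto its image, with $\eta(0^+)=1-c^2/2$ by \eqref{maximum:value} and $\eta(x)\to0$ as $x\to\infty$ for solitons (Corollary~\ref{coro:propeta}), and with $\eta(0^+)=1-1/(2\ka)$ and $\eta(x)\to0$ as $x\to\infty$ for cuspons (Theorem~\ref{thm:classifcuspon}). This fixes the endpoints as $1-c^2/2$ (resp.\ $1-1/(2\ka)$) and $0$, with the orientation accounting for the sign in $(-y)$.

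The main point to watch is that the $x$-integrals are improper — over an unbounded domain in all cases, and, for cuspons, also at the cusp $x=0$ where $\eta'(0^+)=\pm\infty$. The substitution resolves both: the resulting $y$-integrand $(-y)\sqrt{(1-2\ka+2\ka y)/(2-c^2-2y)}$ is bounded and continuous on the compact interval of integration, vanishing at $y=0$ and — crucially — at the cusp endpoint $y=1-1/(2\ka)$, where $1-2\ka+2\ka y=0$ makes the square root vanish and thereby absorbs the singularity of $\eta'$. Checking that this endpoint behavior is genuinely integrable, so that the finiteness of $E_\ka(u)$ and $p(u)$ (already known from $u\in\boX(\R)$) is matched by a proper Riemann integral in $y$, is the only delicate step; everything else is the routine bookkeeping of the change of variables.
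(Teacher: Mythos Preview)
Your proposal is correct and follows essentially the same approach as the paper: start from the identities in Remark~\ref{rem:equiv}, use the evenness of $\eta$, insert the first-order relation \eqref{eq:odeeta1}/\eqref{eq:intweak}, and perform the substitution $y=\eta(x)$. You are in fact more careful than the paper in justifying the endpoint behavior and in explicitly invoking Proposition~\ref{prop:nonvanishsingu} to secure $u\in\boN\boX(\R)$ for the cuspons.
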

\begin{proof}
	Since $\eta$ is even, from \eqref{eq:Euregu} we obtain
	$E_\ka(u)=\int_0^\infty\eta^2$. Equation \eqref{eq:firstint} yields $E_\ka(c)=-\int_0^\infty \eta\eta'\sqrt{\frac{1-2\kappa+2\kappa \eta}{2-c^2-2\eta}}$, so that the formula for $E_\ka(u)$ in \eqref{eq:Eexpli} follows from a change of variables. Concerning $p_\kappa(c)$, for $c>0$ we have $1-\eta>0$, therefore the momentum in \eqref{eq:Euregu} is well-defined, and the same argument gives us the expression for $p_\kappa(c)$.

 Noticing that the formulas in \eqref{eq:Euregu} are still valid for the cuspons, 
we obtain similarly the expressions in \eqref{eq:EWexpli}, by using \eqref{eq:intweak}.
\end{proof}

It remains to compute the integrals in \eqref{eq:Eexpli}--\eqref{eq:EWexpli}.
To simplify our results, let us set the constants
\begin{align*}
	A_{c,\kappa}=3c^4\kappa^2-8c^2\kappa^2-2c^2\kappa+8\kappa-1,\	
	B_{c,\kappa}=3c^2\kappa-4\kappa-1,\\
	C_{c,\kappa}=c^2\kappa-4\kappa-1,\ 
	D_{c,\kappa}=\sqrt{(1-2\kappa)(2-c^2)},~
	L_{c,\kappa}=\sqrt{\frac{2-c^2}{1-2\kappa}}.
\end{align*}

\begin{proposition}\label{prop:Eexpli}
  \textup{(i)} Let $(c,\kappa)\in \D_1\cup \D_3$, and set $\gs=1$ if $(c,\kappa)\in \D_1$,	and $\gs=-1$ if $(c,\kappa)\in \D_3$.
		Then 
		\begin{align}\label{eq:ED1}
			E_\ka(c)&=\frac{1}{16 \kappa^{3/2}}\Big(A_{c,\kappa}\atan(\sqrt{\kappa}L_{c,\kappa})
			-\gs \sqrt{\kappa}B_{c,\kappa}D_{c,\kappa}\Big),\\
			p_\kappa(c)&=\frac{c}{4}\Big(\frac{C_{c,\kappa} }{\sqrt{\kappa}}\atan(\sqrt{\kappa} L_{c,\kappa})
			-\gs D_{c,\kappa}\Big)+ \atan\Big(\frac{L_{c,\kappa}}c\Big),\quad \label{eq:PD1}\text{ if }c>0.\\
			\label{eq:EWD1}
			\tilde E_\ka(c)&=\frac{1}{16 \kappa^{3/2}}\Big(A_{c,\kappa}
			\Big[\atan(\sqrt{\kappa}L_{c,\kappa})-\pi/2\Big]
			-\gs \sqrt{\kappa}B_{c,\kappa}D_{c,\kappa}\Big),\\
			\tilde p_\kappa(c)&=\frac{c}{4}\Big(\frac{C_{c,\kappa} }{\sqrt{\kappa}}
			\Big[	\atan(\sqrt{\kappa} L_{c,\kappa})-\pi/2\Big]
			-\gs D_{c,\kappa}\Big)+ \atan\Big(\frac{L_{c,\kappa}}c\Big)-\pi/2. \label{eq:PWD1}
		\end{align}
  \textup{(ii)} 
  For $(c,\kappa)\in \D_2$, with $\kappa<0$, we have 
		\begin{align}\label{eq:ED2}
			E_\ka(c)&=\frac{-1}{16 \abs{\kappa}^{3/2}}\Big(A_{c,\kappa}\atanh(\sqrt{ \abs{\kappa} }L_{c,\kappa})
			-\sqrt{\abs{\kappa}}B_{c,\kappa}D_{c,\kappa}\Big),\\
			p_\kappa(c)&=\frac{c}{4}\Big(\frac{C_{c,\kappa} }{\sqrt{\abs{\kappa}}}\atanh(\sqrt{\abs{\kappa}} L_{c,\kappa})
			-D_{c,\kappa}\Big)+ \atan\Big(\frac{L_{c,\kappa}}c\Big),\quad\text{ if }c>0. \label{eq:PD2}
		\end{align}
    \textup{(iii)} If $(c,\kappa)\in \B_+\cup \B_-$, we have
		\begin{align}\label{eq:ED4}
			\tilde E_\ka(c)&=\pi\frac{(2\kappa-1)^2}{32\kappa^{3/2}},\quad 
			\tilde p_\kappa(c)=\pi\frac{(\sqrt{2\kappa}-1)^2}{4\sqrt{2\kappa}}.
		\end{align}
	
\end{proposition}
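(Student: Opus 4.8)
The plan is to start from the integral representations in Lemma~\ref{lem:Eexpli}, which reduce the statement to the explicit evaluation of four definite integrals (energy and momentum for the solitons in $\boD$, and for the cuspons in $\tilde{\boD}$), and to rationalize each of them with a single substitution. Concretely, I would set $t=\sqrt{(2-c^2-2y)/(1-2\ka+2\ka y)}$, the reciprocal of the radical in the integrands, which is exactly the quantity whose arctangent/argument-hyperbolic-tangent enters the primitives $F_{c,\ka},G_{c,\ka},H_{c,\ka}$ in \eqref{eq:FD1}--\eqref{eq:FD3}. Inverting gives $y=\frac{2-c^2-(1-2\ka)t^2}{2(1+\ka t^2)}$, $1-y=\frac{c^2+t^2}{2(1+\ka t^2)}$, and $dy=-\frac{(1-\ka c^2)\,t}{(1+\ka t^2)^2}\,dt$, so that $-y\sqrt{\tfrac{1-2\ka+2\ka y}{2-c^2-2y}}\,dy$ turns into the rational form $\frac{(1-\ka c^2)\,[\,2-c^2-(1-2\ka)t^2\,]}{2(1+\ka t^2)^3}\,dt$, while the momentum integrand carries the extra factor $\frac{1}{1-y}=\frac{2(1+\ka t^2)}{c^2+t^2}$.

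Next I would integrate these rational functions of $t$ by partial fractions. For $(c,\ka)\in\boD_1\cup\boD_3$ (so $\ka>0$) the only non-algebraic contribution comes from the poles at $t^2=-1/\ka$, producing the term in $\atan(\sqrt\ka\,t)$; for $(c,\ka)\in\boD_2$ (so $\ka<0$) one writes $1+\ka t^2=1-|\ka|t^2$ and obtains $\atanh(\sqrt{|\ka|}\,t)$ instead; in the momentum the additional simple factor $1/(c^2+t^2)$ contributes $\frac1c\atan(t/c)$, which is the source of the $\atan(L_{c,\ka}/c)$ summand. The remaining pieces are genuine rational functions of $t$ with algebraic primitives; combined with the radical they should collapse into the products $B_{c,\ka}D_{c,\ka}$ and $C_{c,\ka}D_{c,\ka}$, using the identities $1+\ka L_{c,\ka}^2=(1-\ka c^2)/(1-2\ka)$ and $D_{c,\ka}^2=(1-2\ka)(2-c^2)$ to rewrite the endpoint remainders.

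The evaluation at the endpoints is where the two regimes diverge. For the solitons, $y=1-c^2/2$ maps to $t=0$ and $y=0$ maps to $t=L_{c,\ka}$, so every $\atan$/$\atanh$ term is taken at $L_{c,\ka}$, giving \eqref{eq:ED1}--\eqref{eq:PD1} and \eqref{eq:ED2}--\eqref{eq:PD2}. For the cuspons, the lower bound $y=1-1/(2\ka)$ is precisely where $1-2\ka+2\ka y=0$, hence it maps to $t=+\infty$; there the algebraic parts vanish because the denominators dominate, while $\atan(\sqrt\ka\,t)\to\pi/2$, which produces the shifts $\atan(\sqrt\ka L_{c,\ka})-\pi/2$ in \eqref{eq:EWD1}--\eqref{eq:PWD1}. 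I would check convergence of the improper integral at this endpoint (the transformed integrand is $O(t^{-2})$ as $t\to\infty$, equivalently $O((1-1/(2\ka)-y)^{-1/2})$ as $y\to1-1/(2\ka)$), reusing the integrability already established in Theorem~\ref{thm:classifcuspon}.

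Finally, the boundary case $(c,\ka)\in\boB_-\cup\boB_+$ with $c=\sqrt2$ must be handled separately, since then $2-c^2=0$ forces $L_{c,\ka}=0$ and the substitution degenerates to $t=\sqrt{-2y/(1-2\ka+2\ka y)}$, matching the primitive $g_\ka$ in \eqref{eq:FD4W}. The $\atanh$-type and $\atan(t/c)$ contributions drop out (their arguments tend to $0$), and only the $\atan(\sqrt\ka\,t)$ term survives, evaluated between $t=0$ and $t=+\infty$; this single $\pi/2$ jump, times the leading coefficients, yields the closed constants in \eqref{eq:ED4}. I expect the real difficulty to be computational rather than conceptual: the partial-fraction bookkeeping that groups the algebraic remainders into the prescribed coefficients $A_{c,\ka},B_{c,\ka},C_{c,\ka}$, together with the careful sign tracking (the parameter $\gs=\pm1$ recording which branch of the square root is selected on the ranges $\boI_c$ versus $\boJ_c$, i.e.\ whether $y>0$ or $y<0$). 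As an independent verification I would also \emph{posit} the antiderivatives suggested by the right-hand sides, differentiate them to recover the integrands of Lemma~\ref{lem:Eexpli}, and then merely substitute the endpoints.
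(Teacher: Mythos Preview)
Your proposal is correct in spirit and would work, but it takes a more laborious route than the paper. The paper does not derive the primitives via your rationalizing substitution and partial fractions; instead it simply \emph{states} the antiderivatives outright (one for the energy integrand and one for the momentum integrand, with the $\D_3$ case differing only by a sign in front of the algebraic piece) and verifies them by differentiation, recycling the derivative formulas already computed in Lemma~\ref{lem:etafunctions} together with two further identities for $\frac{d}{dy}\atan\!\big(\tfrac1c\sqrt{(2-c^2-2y)/(1-2\ka+2\ka y)}\big)$ and $\frac{d}{dy}\sqrt{(2-c^2-2y)(1-2\ka+2\ka y)}$. In other words, what you list as an ``independent verification'' at the end is in fact the paper's entire argument. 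Your substitution explains \emph{how} one discovers those primitives, which is conceptually nicer, at the cost of the partial-fraction bookkeeping you yourself flag as the main difficulty; the paper simply skips that discovery step.

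For part~(iii) the two approaches genuinely diverge. The paper does not redo the integral with the degenerate substitution $t=\sqrt{-2y/(1-2\ka+2\ka y)}$; it observes instead that the cuspon integrals \eqref{eq:EWexpli} are continuous in $(c,\ka)\in\tilde\D$ by dominated convergence, and obtains \eqref{eq:ED4} by sending $c\to\sqrt2$ in the already-proved formula \eqref{eq:EWD1}--\eqref{eq:PWD1}. Your direct route also works, but your description contains a small slip: for the momentum at $c=\sqrt2$ the term $\atan(t/c)$ does \emph{not} drop out, since $t$ ranges over $(0,+\infty)$ and $\atan(t/\sqrt2)$ contributes a full $\pi/2$ at the upper endpoint; this is precisely the $-\pi/2$ visible in \eqref{eq:PWD1}, and it is needed to recover the constant $\pi(\sqrt{2\ka}-1)^2/(4\sqrt{2\ka})$. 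Only for the energy does the $\atan(\sqrt\ka\,t)$ term alone survive.
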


\begin{proof}
Given Lemma~\ref{lem:Eexpli}, we use the following antiderivatives:  In case (I),   for all $y\in\boI_c\cup\mathfrak{T}_\kappa$,
\begin{multline*}
\int-y\sqrt{\frac{1-2\kappa+2\kappa y}{2-c^2-2y}}dy=\frac{1}{16 \kappa^{3/2}}\Big((3c^4\kappa^2-8c^2\kappa^2-2c^2\kappa+8\kappa-1)\atan\Big(\sqrt{\kappa}\sqrt{\frac{2-c^2-2y}{1-2\kappa+2\kappa y}}\Big)\\
\phantom{=\int-y\sqrt{\frac{1-2\kappa+2\kappa y}{2-c^2-2y}}dy}
-\sqrt{\kappa}\sqrt{(2-c^2-2y)(1-2\kappa+2\kappa y)}(3c^2\kappa-4\kappa y-4\kappa-1)\Big),    
\end{multline*}
 and     
\begin{multline*}
	   \frac{c}{2}\int\frac{-y}{1-y}\sqrt{\frac{1-2\kappa+2\kappa y}{2-c^2-2y}}dy	=\frac{c}{4}\Big(\frac{c^2\kappa-4\kappa-1}{\sqrt{\kappa}}\atan\Big(\sqrt{\kappa}\sqrt{\frac{2-c^2-2y}{1-2\kappa+2\kappa y }}\Big)\\
\phantom{=\frac{c}{2}\int\frac{-y}{1-y}\sqrt{\frac{1-2\kappa+2\kappa y}{2-c^2-2y}}dy	}
-\sqrt{(2-c^2-2y)(1-2\kappa+2\kappa y)}\ \Big)+\atan\Big(\frac{1}{c}\sqrt{\frac{2-c^2-2y}{1-2\kappa+2\kappa y}}\Big).
	 \end{multline*}

In case (ii) and for all $y\in\boI_c$, we take $\sqrt{-\kappa}$ instead of $\sqrt{\kappa}$ and $\atanh$ instead of the first $\atan$ in the formulas above, in the fashion of Lemma~\ref{lem:etafunctions}. 
	In case (iii), for all $y\in\boJ_c\cup\mathfrak{J}_\kappa$, 
 we take
	\begin{align*}
\int-y\sqrt{\frac{1-2\kappa+2\kappa y}{2-c^2-2y}}dy&=\frac{1}{16 \kappa^{3/2}}\Big((3c^4\kappa^2-8c^2\kappa^2-2c^2\kappa+8\kappa-1)\atan\Big(\sqrt{\kappa}\sqrt{\frac{2-c^2-2y}{1-2\kappa+2\kappa y}}\Big)\\\notag
		&\qquad\qquad+\sqrt{\kappa}\sqrt{(2-c^2-2y)(1-2\kappa+2\kappa y)}(3c^2\kappa-4\kappa y-4\kappa-1)\Big),\\
		  \frac{c}{2}\int\frac{-y}{1-y}\sqrt{\frac{1-2\kappa+2\kappa y}{2-c^2-2y}}dy&=\frac{c}{4}\Big(\frac{c^2\kappa-4\kappa-1}{\sqrt{\kappa}}\atan\Big(\sqrt{\kappa}\sqrt{\frac{2-c^2-2y}{1-2\kappa+2\kappa y }}\Big)\\\notag
		&\qquad\qquad+\sqrt{(2-c^2-2y)(1-2\kappa+2\kappa y)}\ \Big)+\atan\Big(\frac{1}{c}\sqrt{\frac{2-c^2-2y}{1-2\kappa+2\kappa y}}\Big).
	\end{align*}
	We verify that these are indeed the antiderivatives of the integrands in \eqref{eq:Eexpli}--\eqref{eq:EWexpli} from the derivatives computed in Lemma~\ref{lem:etafunctions}, and using that 
	$$\frac{\d}{\d y}
	\Bigg(\atan\Big(\sqrt{\frac{ 2-c^2-2y}{c^2(1-2 \kappa +2\kappa y)}}\Big)
	\Bigg)=\frac{-c}{2(1-y)\sqrt{(2-c^2-2y)(1-2 \kappa +2\kappa y)}},$$
	$$\frac{\d}{\d y}\sqrt{(2-c^2-2y)(1-2\kappa+2\kappa y)}= \frac{-4\kappa y+4\kappa-\kappa c^2-1}{\sqrt{(2-c^2-2y)(1-2\kappa+2\kappa y)}}.
	$$
	In the case (iii), by the dominated convergence theorem, the integrals \eqref{eq:EWexpli} are continuous in $(c,\ka)\in\tilde{\D}$. Therefore, it suffices  to take the limit as $c\rightarrow\sqrt{2}$ in \eqref{eq:EWD1} to obtain \eqref{eq:ED4}.
\end{proof}

As a consequence of Proposition~\ref{prop:Eexpli}, 
we can compute the limit cases stated in the following result. We omit the proof, since it only uses  Taylor expansions and the identity $\atan(x)+\atan(1/x)=\pi/2$, for all $x\neq0$. 
\begin{corollary}
For all $c>0$, as $\ka\rightarrow1/2$, we have 
		\begin{equation}
   \label{eq:energylim}
			(E_k(c),p_k(c))\to\Big(\pi\frac{3c^4/4-3c^2+3}{8\sqrt{2}} , \pi\Big( \frac{c^3/2-3c}{4\sqrt{2}} +1/2\Big)\Big)
   =E(u_{c,1/2}^{(1)}),p(u_{c,1/2}^{(1)}),   
		\end{equation}
where  $u_{c,1/2}^{(1)}$ is the dark compacton given by Proposition~\ref{prop:compacton}, and $(\tilde E_\kappa(c),\tilde p_\kappa(c))$ converges to $(0,0)$.
Also, for all $c>0$, as $\ka\rightarrow 0$, $$(E_\kappa(c),p_\kappa(c))\to\Big(\frac{(2-c^2)^{\frac{3}{2}}}{3},\frac{\pi}{2}-\atan\Big(\frac{c}{\sqrt{2-c^2}}\Big)-\frac{c}{2}\sqrt{2-c^2}\Big)=(E(u_{c,0}),p(u_{c,0})),$$
where $u_{c,0}$ is the dark soliton of the Gross--Pitaevskii equation, given in \eqref{sol:1D}.
Finally, for all $\ka\in\R$, as $c\rightarrow\sqrt{2}$, $(E_\ka(c),p_\kappa(c))\to 0$, while  $\tilde E_\ka(c)$
and $\tilde p_\kappa(c)$ converge to the values  for the energy and momentum in \eqref{eq:ED4}.
\end{corollary}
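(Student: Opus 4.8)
The plan is to derive all three limits directly from the closed-form expressions of Proposition~\ref{prop:Eexpli}, treating each asymptotic regime through elementary asymptotics of the building blocks $A_{c,\ka}$, $B_{c,\ka}$, $C_{c,\ka}$, $D_{c,\ka}=\sqrt{(1-2\ka)(2-c^2)}$ and $L_{c,\ka}=\sqrt{(2-c^2)/(1-2\ka)}$, and then to match the resulting values with the energy and momentum of the limiting profiles.

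First, for $\ka\to1/2$ --- approached within $\D_1$ when $c<\sqrt2$ and within $\D_3$ when $c>\sqrt2$ --- I would note that $D_{c,\ka}\to0$ and $L_{c,\ka}\to+\infty$, so $\atan(\sqrt\ka L_{c,\ka})\to\pi/2$ and $\atan(L_{c,\ka}/c)\to\pi/2$. Evaluating the polynomials at $\ka=1/2$ gives $A_{c,1/2}=\tfrac34(2-c^2)^2=\tfrac34 c^4-3c^2+3$ and $C_{c,1/2}=\tfrac{c^2}2-3$, while every term carrying the factor $D_{c,\ka}$ disappears in the limit; substituting into \eqref{eq:ED1}--\eqref{eq:PD1} then yields precisely the stated pair. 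For the cuspon quantities \eqref{eq:EWD1}--\eqref{eq:PWD1}, the bracket $\atan(\sqrt\ka L_{c,\ka})-\pi/2\to0$ together with $D_{c,\ka}\to0$ forces $(\tilde E_\ka(c),\tilde p_\ka(c))\to(0,0)$.

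The limit $\ka\to0$ (approaching within $\D_1$; the computation within $\D_2$ is identical, with $\atan$ replaced by $\atanh$, $\sqrt\ka$ by $\sqrt{|\ka|}$, and the opposite sign of the prefactor) is the genuine computation, since the prefactors $\ka^{-3/2}$ and $\ka^{-1/2}$ diverge and one must extract a cancellation. Writing $S=\sqrt{2-c^2}$ and $t=\sqrt\ka L_{c,\ka}=\sqrt{\ka(2-c^2)/(1-2\ka)}\to0$, I would expand $\atan t=t-t^3/3+O(t^5)$, use $(1-2\ka)^{\mp1/2}=1\pm\ka+O(\ka^2)$ in $L_{c,\ka}$ and $D_{c,\ka}$, and the affine expansions $A_{c,\ka}=-1+(8-2c^2)\ka+O(\ka^2)$, $B_{c,\ka}=-1+(3c^2-4)\ka$, $C_{c,\ka}=-1+(c^2-4)\ka$. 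Careful bookkeeping shows that the $\sqrt\ka$ terms in $A_{c,\ka}\atan t-\gs\sqrt\ka B_{c,\ka}D_{c,\ka}$ cancel and that the surviving $\ka^{3/2}$-coefficient equals $\tfrac{16}{3}S^3$ after using $S^2=2-c^2$, so that $E_\ka(c)\to(2-c^2)^{3/2}/3$. Likewise $\tfrac{C_{c,\ka}}{\sqrt\ka}\atan t-D_{c,\ka}\to-2S$ produces the term $-\tfrac c2\sqrt{2-c^2}$ in the momentum, while $\atan(L_{c,\ka}/c)\to\atan(\sqrt{2-c^2}/c)$; the identity $\atan x+\atan(1/x)=\pi/2$ rewrites this last term as $\pi/2-\atan(c/\sqrt{2-c^2})$, giving the claimed momentum limit.

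The limit $c\to\sqrt2$ is immediate: since $2-c^2\to0$ we have $L_{c,\ka},D_{c,\ka}\to0$, so all $\atan$ and algebraic terms vanish and $(E_\ka(c),p_\ka(c))\to(0,0)$; the cuspon quantities converge to \eqref{eq:ED4} either by continuity of the integral representation in $(c,\ka)\in\tilde\D$ or by letting $c\to\sqrt2$ directly in \eqref{eq:EWD1}--\eqref{eq:PWD1}, where $A_{\sqrt2,\ka}=-(2\ka-1)^2$ and $C_{\sqrt2,\ka}=-2\ka-1$ reproduce the constants in \eqref{eq:ED4}. It remains to justify the two equalities in the statement. For the Gross--Pitaevskii dark soliton $u_{c,0}$ in \eqref{sol:1D} these are the classical values recalled in the introduction. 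For the compacton $u_{c,1/2}^{(1)}$ of Proposition~\ref{prop:compacton} I would compute directly from \eqref{eq:Euregu} with $\eta(x)=\tfrac{2-c^2}2\cos^2(x/\sqrt2)$ on $(-\pi/\sqrt2,\pi/\sqrt2)$: the energy uses $\int_{-\pi/2}^{\pi/2}\cos^4=3\pi/8$ and gives $\tfrac{3\pi(2-c^2)^2}{32\sqrt2}$, matching the energy limit, and the momentum follows from the analogous (lengthier) elementary integral of $\eta^2/(1-\eta)$. The main obstacle is the $\ka\to0$ expansion, where one must carry the expansions of $\atan$, $L_{c,\ka}$ and $D_{c,\ka}$ to high enough order and verify the exact cancellation of the singular $\sqrt\ka$ part: any slip in this bookkeeping would destroy the finiteness of the limit.
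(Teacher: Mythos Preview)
Your proposal is correct and follows exactly the approach the paper indicates: the paper in fact omits the proof, saying only that it ``uses Taylor expansions and the identity $\atan(x)+\atan(1/x)=\pi/2$,'' which is precisely what you do in each of the three regimes. Your added detail on the $\ka\to0$ cancellation and the direct computation of $E(u_{c,1/2}^{(1)})$, $p(u_{c,1/2}^{(1)})$ via \eqref{eq:Euregu} is a welcome elaboration of what the paper leaves implicit.
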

\begin{remark}
Notice that from \eqref{eq:PD1}, we deduce that $p_\kappa(0^+)=\pi/2$ for all $\ka<1/2$.

\end{remark}


We check now if the stability condition \eqref{cond:der:P} is satisfied for the smooth solitons given by Theorem~\ref{thm:classiftwregu}, according to regions $\D_1$, $\D_2$ and $\D_3$, for $c>0$. In this case the functions $c\mapsto p_\kappa(\cdot)$ are $c\mapsto E_\kappa(\cdot)$ are smooth with respect $c$, on $(0,\sqrt 2)$, so that we introduce the notations  
\begin{equation}
	\label{def:derp:derE}
	p'_\kappa(c)=\frac{d}{dc}p_\ka(c) \quad \text{ and }   \quad E'_\kappa(c)=\frac{d}{dc}E_\ka(c).
\end{equation}
In analogous manner, we define $\tilde E'_\kappa$  and $\tilde p'_\kappa$ 

To rigorously compute the Vakhitov--Kolokolov criterion, we need the Hamilton group property, which is just a consequence of the formulas in Proposition~\ref{prop:Eexpli}.
\begin{lemma}\label{lem:hamiltongp}
	If $(c,\ka)\in\D$ with $c>0$,
	then  ${E'_\ka(c)=c \, p'_\ka(c)}$.
 In the same manner, if $(c,\ka)\in\tilde{\D}$ with $c>0$, then 
	 $\tilde E'_\ka(c)=c \, \tilde p'_\ka(c)$.
\end{lemma}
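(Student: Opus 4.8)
The plan is to prove the Hamilton group property $E'_\ka(c) = c\, p'_\ka(c)$ directly, using the integral formulas for the energy and momentum established in Lemma~\ref{lem:Eexpli}, rather than differentiating the closed-form expressions of Proposition~\ref{prop:Eexpli}. Although the statement advertises the lemma as ``a consequence of the formulas in Proposition~\ref{prop:Eexpli}'', the cleanest argument is to differentiate under the integral sign in \eqref{eq:Eexpli}. The subtlety is that both the integrand \emph{and} the lower limit of integration $1-c^2/2$ depend on $c$, so I will need to keep track of a boundary term from the Leibniz rule.

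First I would treat the case $(c,\ka)\in\D$ with $c>0$. Write
\begin{equation*}
E_\ka(c)=\int_{1-c^2/2}^0 g(c,y)\,dy,\qquad p_\ka(c)=\frac{c}{2}\int_{1-c^2/2}^0 \frac{g(c,y)}{1-y}\,dy,
\end{equation*}
where $g(c,y)=-y\sqrt{(1-2\ka+2\ka y)/(2-c^2-2y)}$. At the lower endpoint $y_0:=1-c^2/2$ one has $2-c^2-2y_0=0$, so $g(c,y)\to\infty$ there; nonetheless the integrals converge because the singularity is of order $(y-y_0)^{-1/2}$. I would first verify that the boundary contributions vanish: the Leibniz term is $-\frac{d}{dc}(1-c^2/2)\,g(c,y_0)=c\,g(c,y_0)$, but this must be interpreted as a limit and, after combining with the corresponding term for $p_\ka$, the singular boundary contributions will cancel against each other. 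To make this rigorous I would either regularize by integrating from $1-c^2/2+\ve$ and let $\ve\to0$, or observe that the product $\sqrt{2-c^2-2y}$ appears so that the genuinely singular boundary pieces cancel. The interior contribution is the key computation: differentiating the integrand gives $\partial_c g(c,y)=-y\cdot\frac{c}{(2-c^2-2y)^{3/2}}\sqrt{1-2\ka+2\ka y}=\frac{c\,g(c,y)}{2-c^2-2y}$, and similarly for the momentum integrand. The goal is to show that $\int \partial_c g\,dy$ (plus boundary terms) equals $c$ times the analogous derivative of the momentum integral.

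The heart of the matter will be an algebraic identity relating the two integrands after differentiation. The plan is to show that the combination $E'_\ka(c)-c\,p'_\ka(c)$ reduces, after differentiating under the integral sign and carefully pairing the boundary terms, to the integral of a total derivative $\frac{d}{dy}[\Phi(c,y)]$ over $[1-c^2/2,0]$, whose endpoint values cancel. Concretely, I expect that $\partial_c g - c\,\partial_c\!\big(\tfrac{c}{2}\tfrac{g}{1-y}\big)$ can be written as $\partial_y$ of some explicit function involving $\sqrt{(2-c^2-2y)(1-2\ka+2\ka y)}$ and the arctangent/arctanh primitives already computed in the proof of Proposition~\ref{prop:Eexpli}. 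The cleanest path may in fact be to just invoke Proposition~\ref{prop:Eexpli}: differentiate the explicit formulas \eqref{eq:ED1}--\eqref{eq:PD1} (respectively \eqref{eq:ED2}--\eqref{eq:PD2} for $\D_2$) with respect to $c$ and verify the identity $E'_\ka(c)=c\,p'_\ka(c)$ by direct comparison, using $\frac{d}{dc}\atan(\sqrt\ka L_{c,\ka})$ and the polynomial derivatives $A_{c,\ka}',B_{c,\ka}',C_{c,\ka}'$ and reducing everything to a common denominator.

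The same argument applies verbatim to the case $(c,\ka)\in\tilde\D$ with $c>0$, replacing the lower limit $1-c^2/2$ by $1-1/(2\ka)$ in \eqref{eq:EWexpli}; here the lower limit does \emph{not} depend on $c$, so there is no Leibniz boundary term at all and the computation is in fact simpler — one only differentiates the integrand. For the boundary values $(c,\ka)\in\B_+\cup\B_-$ the energy and momentum are constant in $c$ by \eqref{eq:ED4}, so $\tilde E'_\ka=\tilde p'_\ka=0$ and the identity holds trivially. I expect the main obstacle to be the bookkeeping of the singular boundary term at $y=1-c^2/2$ in the $\D$ case: one must justify differentiation under the integral sign past an integrable singularity whose location moves with $c$, and confirm that the divergent boundary pieces of $E'_\ka$ and $c\,p'_\ka$ coincide exactly and cancel in the difference. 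Once that cancellation is established, the remaining interior identity is a routine algebraic verification.
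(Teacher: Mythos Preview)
Your plan is sound. The paper gives no proof beyond the remark that the identity is ``a consequence of the formulas in Proposition~\ref{prop:Eexpli}'', so your option of differentiating \eqref{eq:ED1}--\eqref{eq:PD2} directly and comparing with the expression for $p'_\ka(c)$ already recorded in Proposition~\ref{prop:behavep} is exactly what the paper intends.

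Your alternative route via the integral representations \eqref{eq:Eexpli} is also correct and in fact cleaner than you anticipate: once the singular Leibniz boundary contributions at $y=1-c^2/2$ cancel in the difference $E'_\ka(c)-c\,p'_\ka(c)$ (which they do, as you predict), the remaining interior integrand is not merely a total $y$-derivative but is \emph{identically zero}. Indeed, with $\partial_c g=cg/(2-c^2-2y)$ and keeping track of the product-rule term from $p_\ka(c)=\tfrac{c}{2}\int\tfrac{g}{1-y}\,dy$, one arrives at
\[
E'_\ka(c)-c\,p'_\ka(c)=c\int_{1-c^2/2}^0 g(c,y)\Bigl[\frac{1}{2-c^2-2y}-\frac{1}{2(1-y)}-\frac{c^2}{2(2-c^2-2y)(1-y)}\Bigr]dy,
\]
and the bracket has numerator $2(1-y)-(2-c^2-2y)-c^2=0$, so no antiderivative bookkeeping is needed. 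One small slip: the sets $\B_\pm$ consist of the single speed $c=\sqrt2$ for each fixed $\ka$, so speaking of $\tilde E_\ka,\tilde p_\ka$ being ``constant in $c$'' there is not meaningful; the identity at $c=\sqrt2$ (if one wants it) follows by taking the one-sided limit from $\D_1$ or $\D_3$.
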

Consequently, it is enough to compute $p'_\kappa$, as follows.

\begin{proposition}\label{prop:behavep}
Let $(c,\kappa)\in\D$ with $c>0$. Then 	
\begin{align}\label{eq:PCD1}
p_\kappa'(c)=&\frac{3c^2\ka-4\ka-1}{4\sqrt{\ka}}\atan\Big(\sqrt{\ka}\sqrt{\frac{2-c^2}{1-2\ka}}\Big)-\frac{3(2-c^2)}{4}\sqrt{\frac{1-2\ka}{2-c^2}},\ \text{ if }  (c,\ka)\in \D_1\cup \D_3,
   \\
\label{eq:PCD2}
			p_\kappa'(c)=&\frac{3c^2\ka-4\kappa-1}{4\sqrt{-\ka}}\atanh\Big(\sqrt{-\ka}\sqrt{\frac{2-c^2}{1-2\ka}}\Big)-\frac{3(2-c^2)}{4}\sqrt{\frac{1-2\ka}{2-c^2}}, \ \text{ if } (c,\ka)\in \D_2.
		\end{align}
Furthermore,  we have $p_\kappa'(c)<0$  if $(c,\ka)\in \D_1$, 
$p_\kappa'(c)>0$ if $(c,\ka)\in \D_3,$ and  
  \begin{align}\label{eq:ineqconvex}
    		p_\kappa'(c)<\max\{p_\kappa'(0^+),0\}, \quad \text{ if } (c,\ka)\in \D_2.
			\end{align}
\end{proposition}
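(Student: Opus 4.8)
The plan is to derive the formula for $p_\kappa'(c)$ by differentiating the explicit momentum expressions in Proposition~\ref{prop:Eexpli} and then analyze its sign. First I would use the Hamilton group property from Lemma~\ref{lem:hamiltongp}, writing $E_\kappa'(c)=c\,p_\kappa'(c)$; however, it is cleaner to differentiate the integral representation directly. Starting from Lemma~\ref{lem:Eexpli}, since the integrand vanishes at the upper endpoint $y=1-c^2/2$ (because the factor $-y\cdot(\text{stuff})$ carries $\sqrt{2-c^2-2y}$ in the denominator, but the boundary term from the Leibniz rule is handled by the structure of the integral), I would differentiate under the integral sign. Actually, the slickest route is to differentiate the closed-form expressions \eqref{eq:PD1} and \eqref{eq:PD2}: the $\atan(L_{c,\kappa}/c)$ term and the algebraic terms all differentiate to elementary functions, and many terms will cancel. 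After collecting, one obtains \eqref{eq:PCD1}--\eqref{eq:PCD2}, where crucially the dependence on the endpoint simplifies so that $L_{c,\kappa}=\sqrt{(2-c^2)/(1-2\kappa)}$ appears as the argument of $\atan$ (resp.\ $\atanh$). I expect substantial but routine algebraic cancellation here.

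Once the formula for $p_\kappa'(c)$ is established, the sign analysis splits by region. For $(c,\kappa)\in\D_1\cup\D_3$, I would set $t=\sqrt{\kappa}\,L_{c,\kappa}=\sqrt{\kappa(2-c^2)/(1-2\kappa)}>0$ and rewrite \eqref{eq:PCD1} as a function of $t$. The key step is to compare $\atan(t)$ against a rational function of $t$: using the elementary inequality $\atan(t)<t$ for $t>0$ (in $\D_1$) or $\atan(t)>t-t^3/3$ type bounds, one reduces the sign of $p_\kappa'(c)$ to the sign of the coefficient $3c^2\kappa-4\kappa-1$ combined with the algebraic term. For $(c,\kappa)\in\D_1$ (so $0<\kappa<1/2$) I expect the coefficient $3c^2\kappa-4\kappa-1<0$ and the whole expression to be negative; for $\D_3$ (so $\kappa>1/2$, $c>\sqrt2$) the signs flip and $p_\kappa'(c)>0$. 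The cleanest way is to factor out a common positive quantity and show the remaining bracket has a definite sign via a monotonicity argument in $t$, checking the boundary behavior as $c\to\sqrt2$ (where $t\to0$ and both terms vanish to matching order).

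For $(c,\kappa)\in\D_2$ (so $\kappa<0$), the analysis in \eqref{eq:ineqconvex} is more delicate because the sign of $p_\kappa'(c)$ is not constant—this is exactly the phenomenon producing $\tilde c_\kappa$ and the cusp in the energy-momentum diagram. Here I would study the auxiliary function $\phi(c)=p_\kappa'(c)$ and argue about its monotonicity rather than its sign directly. The natural approach is to compute $p_\kappa''(c)$ (equivalently, use $E_\kappa''(c)=p_\kappa'(c)+c\,p_\kappa''(c)$ from differentiating the Hamilton group identity) and show that $p_\kappa'$ is either monotone or has a single interior maximum, so that its supremum on $(0,\sqrt2)$ is attained either at the left endpoint $c\to0^+$ or at an interior critical point where the value can be bounded. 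Substituting $s=\sqrt{-\kappa}\,L_{c,\kappa}\in(0,1)$ and using the expansion $\atanh(s)=s+s^3/3+\cdots$, I would show the interior behavior is controlled by $p_\kappa'(0^+)$, yielding the bound $p_\kappa'(c)<\max\{p_\kappa'(0^+),0\}$.

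The main obstacle will be the $\D_2$ case and the inequality \eqref{eq:ineqconvex}: unlike the monotone regions $\D_1$ and $\D_3$, here one cannot simply read off a constant sign, and the $\atanh$ series must be handled carefully to locate and bound the interior extremum. The delicate point is establishing that $p_\kappa'$ does not exceed its boundary/critical values, which requires a convexity or unimodality argument for $p_\kappa'$ as a function of $c$ (or of the substituted variable $s$); getting the right monotonicity of the relevant bracket, and verifying the endpoint limits match to the correct order, is where the real work lies.
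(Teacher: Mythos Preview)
Your plan for deriving the formulas \eqref{eq:PCD1}--\eqref{eq:PCD2} by differentiating the closed-form momentum expressions in Proposition~\ref{prop:Eexpli} is exactly what the paper does, and is the right approach.

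For the sign in $\D_1$ and $\D_3$, you overcomplicate matters. No $\atan$ inequalities are needed: both summands in \eqref{eq:PCD1} already have the same sign. In $\D_1$ one checks $3c^2<6<4+1/\kappa$, so $3c^2\kappa-4\kappa-1<0$, making the first term negative; the second term is also negative since $2-c^2>0$ and $1-2\kappa>0$. In $\D_3$ both signs flip ($3c^2\kappa-4\kappa-1>0$ because $2\kappa>1$, and $2-c^2<0$), so both terms are positive. That is the entire argument.

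For $\D_2$ your instinct toward a convexity argument is right, but the route via $p_\kappa''(c)$, the substitution $s=\sqrt{-\kappa}\,L_{c,\kappa}$, and $\atanh$ series is not what is needed and would be laborious. The paper's key observation is that $p_\kappa'(c)$ depends on $c$ only through $c^2$, and one should differentiate twice \emph{with respect to $c^2$}. This yields the explicit expression
\[
\frac{d^2}{(dc^2)^2}\,p_\kappa'(c)=\frac{(1-2\kappa)^2}{4(2-c^2)(1-\kappa c^2)^2}\sqrt{\frac{1-2\kappa}{2-c^2}}>0,
\]
so $p_\kappa'$ is strictly convex as a function of $c^2\in(0,2)$. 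A strictly convex function on an interval lies strictly below the maximum of its endpoint values; since $p_\kappa'(\sqrt2^-)=0$, one obtains $p_\kappa'(c)<\max\{p_\kappa'(0^+),0\}$ immediately. Your proposal of locating and bounding an interior extremum via unimodality and series expansions misses this change of variable, which is precisely the simplification that makes the $\D_2$ case short.
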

\begin{proof}
The formulas in \eqref{eq:PCD1}--\eqref{eq:PCD2} follow directly by differentiating the expressions in the momentum in Proposition~\ref{prop:Eexpli}.

If $(c,\ka)\in \D_1$, then  $3c^2<6<4+1/\ka$, so that  $3c^2\ka-4\ka-1<0$ and we conclude that  $p'_\ka(c)>0$. Similarly, if 
 $(c,\ka)\in \D_3$, then $3c^2\ka-4\ka-1>0$, and we get $p'_\ka(c)>0$, since $c^2>2$.

Finally, we consider the case $(c,\ka)\in \D_2$. We remark from \eqref{eq:PCD2} that $p'_\kappa(c)$
can be seen as a function of $c^2$. Differentiating twice with respect to $c^2$, we obtain 
\begin{equation}
\label{d2p}    
\frac{d^2}{(d c^2)^2} \, p'_\kappa(c)=\frac{(1-2\ka)^2}{4(2-c^2)(1-\ka c^2)^2}\sqrt{\frac{1-2\ka}{2-c^2}}.
\end{equation}
Thus, $p'_\kappa(c)$ is a strictly convex function of $c^2$, for $c^2\in(0,2)$, so that it is below  its end points,
  $$p'_\kappa(c)<\max\{p'_\kappa(0^+),p'_\kappa(\sqrt 2^-)\}.$$
 Since $p'_\kappa(c)\to 0$, as $c\to\sqrt 2^-$, the inequality in \eqref{eq:ineqconvex} follows.
 \end{proof}

We focus now on the case $(c,\kappa)\in \D_2$, where the sign of $p'_k(c)$ is unclear. Indeed, it depends on $p'_\ka(0)$, which corresponds to the function in \eqref{eq:PCD2}, evaluated at $c=0$, i.e.
\begin{equation}
\label{def:w}
    w(k)=p'_\ka(0)=\frac{-4\kappa-1}{4\sqrt{-\ka}}\atanh\Big(\sqrt{\frac{-2\ka}{1-2\ka}}\Big)-\frac{3}{2}\sqrt{\frac{1-2\ka}{2}},
\end{equation}
where $w$ is continuous on $(-\infty,0)$, with $w(-\infty)=\infty$ and $w(0^+)=-\sqrt 2$. 
In addition, using that $x\leq\atanh(x)$, for all $0\leq x<1$, one can verify that  $w'<0$ on $(-\infty,0)$.
 Therefore, $w$ is strictly decreasing and $w$ has a unique zero in $(-\infty,0)$, which  we denote by $\kappa_0$. Using Newton's method, we can check that $\ka_0\approx-3.636$. This value allows us to establish the following result.

\begin{lemma}\label{lem:allure} Let $(c,\ka)\in \D_2$ with $c>0$. We have \begin{equation}\label{limits-E-p}
   E_\kappa(0^+)=E_\kappa(u_{0,\ka}),  \quad p_\ka(0^+)=\pi/2, \quad
      E_\kappa(\sqrt 2^-)=p_\ka(\sqrt 2^-)=0. 
\end{equation}
 If  $\ka\in [\ka_0,0)$, then $E'_\kappa <0$  and $p'_\kappa <0$ in $(0,\sqrt 2)$.
Also,  if  $\ka<\ka_0$, there is $\tilde c_\ka \in (0,\sqrt{2})$, such that  $E'_\kappa >0$ and $p'_\kappa >0$   in $(0,\tilde c_\ka)$, while  $E'_\kappa <0$   and $p'_\kappa <0$ in $(\tilde c_\ka,\sqrt{2})$.
	\end{lemma}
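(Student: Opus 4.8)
The plan is to reduce the whole statement to the sign of $p'_\kappa$ and then exploit the strict convexity of $c\mapsto p'_\kappa(c)$ in the variable $c^2$ (recorded in the proof of Proposition~\ref{prop:behavep}) together with the properties of the auxiliary function $w$ from \eqref{def:w}. I would first dispose of the boundary limits \eqref{limits-E-p}. The values at $c=\sqrt2$ follow from the integral representations in Lemma~\ref{lem:Eexpli}: as $c\to\sqrt2^-$ the interval $[1-c^2/2,0]$ shrinks to $\{0\}$ while the integrand has only an integrable $(2-c^2-2y)^{-1/2}$ singularity at the lower endpoint, so both $E_\kappa(c)$ and $p_\kappa(c)$ tend to $0$ (this is also the content of the Corollary following Proposition~\ref{prop:Eexpli}). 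For $p_\kappa(0^+)$ I would let $c\to0^+$ in \eqref{eq:PD2}: the prefactor $c/4$ annihilates the bracketed term, while $\atan(L_{c,\kappa}/c)=\atan\big(\tfrac1c\sqrt{(2-c^2)/(1-2\kappa)}\big)\to\pi/2$, giving $p_\kappa(0^+)=\pi/2$. Finally $E_\kappa(0^+)=E_\kappa(u_{0,\kappa})$ follows from the smooth dependence in Proposition~\ref{prop:reguparam1}, which yields $\eta_{c,\kappa}\to\eta_{0,\kappa}$ in $H^1(\R)$, combined with $E_\kappa(c)=\tfrac12\int_\R\eta_{c,\kappa}^2$ from \eqref{eq:Euregu} (equivalently, one passes to the limit in \eqref{eq:Eexpli}).

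Next, by the Hamilton group property (Lemma~\ref{lem:hamiltongp}), $E'_\kappa(c)=c\,p'_\kappa(c)$, so for $c>0$ the signs of $E'_\kappa$ and $p'_\kappa$ coincide and it suffices to control $p'_\kappa$. Viewing $\phi:=p'_\kappa$ as a function of $s=c^2\in(0,2)$, I record three facts: $\phi$ is \emph{strictly convex} in $s$, since its second $s$-derivative \eqref{d2p} is strictly positive; its left limit is $\phi(0^+)=w(\kappa)$ by \eqref{def:w}; and its right limit is $p'_\kappa(\sqrt2^-)=0$, obtained by passing to the limit in \eqref{eq:PCD2}. Moreover, setting $t=\sqrt{2-c^2}$ and using $\atanh(x)=x+O(x^3)$ together with the identity $\tfrac{2\kappa-1}{\sqrt{1-2\kappa}}=-\sqrt{1-2\kappa}$, the two terms of \eqref{eq:PCD2} combine to leading order into $p'_\kappa(c)=-\sqrt{1-2\kappa}\,\sqrt{2-c^2}+O\big((2-c^2)^{3/2}\big)$, so that $p'_\kappa<0$ in a left neighborhood of $\sqrt2$.

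Since $w$ is strictly decreasing with unique zero $\kappa_0$, we have $w(\kappa)\le0$ for $\kappa\in[\kappa_0,0)$ and $w(\kappa)>0$ for $\kappa<\kappa_0$. If $\kappa\in[\kappa_0,0)$, then $\phi(0^+)=w(\kappa)\le0$, so by the strict-convexity bound \eqref{eq:ineqconvex} we get $p'_\kappa(c)<\max\{w(\kappa),0\}=0$ on $(0,\sqrt2)$, whence $E'_\kappa<0$ there as well. If $\kappa<\kappa_0$, then $\phi(0^+)=w(\kappa)>0$ while $\phi<0$ near $\sqrt2$ by the previous paragraph; thus $\phi$ attains negative values, and being continuous on $(0,\sqrt2)$ with boundary limits $w(\kappa)>0$ and $0$, its minimum is attained at an interior point $s_*$ with $\phi(s_*)<0$. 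By strict convexity $\phi$ is strictly decreasing on $(0,s_*)$ and strictly increasing on $(s_*,2)$; hence it crosses zero exactly once, at some $\tilde s\in(0,s_*)$, remaining positive before and negative after (on $(s_*,2)$ it increases only back up to the limiting value $0$). Setting $\tilde c_\kappa=\sqrt{\tilde s}\in(0,\sqrt2)$ gives $p'_\kappa>0$ on $(0,\tilde c_\kappa)$ and $p'_\kappa<0$ on $(\tilde c_\kappa,\sqrt2)$, and the corresponding statement for $E'_\kappa$ follows again from $E'_\kappa=c\,p'_\kappa$.

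The one genuinely delicate step is pinning down the sign of $p'_\kappa$ near $c=\sqrt2$: knowing merely that $p'_\kappa(\sqrt2^-)=0$ is not enough to rule out $\phi>0$ throughout $(0,\sqrt2)$ in the regime $\kappa<\kappa_0$, and one must extract the negative leading coefficient $-\sqrt{1-2\kappa}$ from \eqref{eq:PCD2}, where a cancellation between the $\atanh$ term and the algebraic term takes place. Once this sign is established, the rest is a routine application of strict convexity of $\phi$ in $c^2$ and the monotonicity of $w$.
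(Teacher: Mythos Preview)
Your proof is correct and follows the same strategy as the paper: reduce to the sign of $p'_\kappa$ via Lemma~\ref{lem:hamiltongp}, exploit the strict convexity of $p'_\kappa$ in $c^2$ together with the endpoint values $w(\kappa)$ and $0$, and in the regime $\kappa<\kappa_0$ show that $p'_\kappa<0$ just below $\sqrt2$ to force a unique interior zero. The only difference is in this last point: the paper invokes the blowup of $\frac{d^2}{(dc^2)^2}p'_\kappa$ from \eqref{d2p} (implicitly integrating to get that $p'_\kappa$ approaches $0$ from below), whereas you extract the leading term $-\sqrt{1-2\kappa}\,\sqrt{2-c^2}$ directly from \eqref{eq:PCD2}; your route is more explicit and self-contained.
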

\begin{proof}
The limits in \eqref{limits-E-p} follow from \eqref{eq:ED2}--\eqref{eq:PD2}. 
If  $\ka\in [\ka_0,0)$, then,  by definition of $\ka$, 
$p'_\ka(0)=w(\ka)\leq 0$. Therefore, the conclusion follows from \eqref{eq:ineqconvex} and \eqref{def:derp:derE}.

Assume now that $\ka<\ka_0$, so that $p'_\ka(0)=w(\ka)>0$.
Recall that, from the proof of Proposition~\ref{prop:behavep},  
$p'_\kappa$ is a strictly convex function of $c^2$, satisfying
$p'_\kappa(\sqrt{2}^-)=0$. Also, from \eqref{d2p},    
 $\frac{d^2}{(dc^2)^2}p_\kappa(c)$ goes to $\infty$, as $c\to \sqrt 2^-$. Therefore, we conclude that there exists a unique zero of 
 $p'_\ka$ in $(0,\sqrt 2)$, which we denote by $\tilde c_\ka$, so that
 $p'_\ka>0$ in    $(0,\tilde c_\ka)$ and   $p'_\kappa <0$ in $(\tilde c_\ka,\sqrt{2})$.
\end{proof}
\begin{remark}
In Lemma \ref{lem:allure}, we  have  $p_\ka(c)\to \pi/2$,
as $c\to 0^+$. However, it is not true that the momentum of the black soliton $u_{0,\ka}$ is $\pi/2$. A naive approach will be to try to use the formula \eqref{def:moment}, in a generalized sense. However, since $u_{0,\kappa}$ is a real-valued function, 
we  have $\langle i u_{0,\kappa} , u_{0,\kappa}' \rangle=0$, so 
this would imply that the momentum of $u_{0,\kappa}$ is zero.
Indeed, a proper definition of the momentum of vanishing functions is a difficult problem, and it requires the use of the notion of untwisted momentum, explained in \cite{blacksoliton,deLGrSm1}. For this reason, the analysis of black solitons goes beyond the scope of this article.
\end{remark}
 An immediate consequence of the monotonicity of $E'_\ka$ in Lemma~\ref{lem:allure}, is that the critical speed $c^*_\kappa$  in \eqref{def:c*} is well-defined, and that there is 
 a bijection between the speeds and the momenta as follows.
\begin{corollary}
 \label{prop:difeo}
 \textup{(i)} If  $\ka\in [\ka_0,0)$, then $c^*_\kappa=0$.
 If $\ka<\ka_0$, then there exists a unique  $ c\in(0,\sqrt 2)$
 such that $E_\kappa(0)=E_\kappa(c)$, thus this value corresponds to $c^*_\kappa$. In any case,
  $E_\ka(c_\ka^*)=E_\ka(u_{0,\ka}).$\\
 \textup{(ii)} Define $\gq_0(\kappa)$  as $\gq_0(\kappa)=p_\ka(c_\ka^*)$, if  
 $c_\ka^*\in (0,\sqrt 2)$, and as $\gq_0(\kappa)=\pi/2$, if $c_\ka^*=0$.
 Then the  function $p_\ka : (c_\ka^*,\sqrt{2}) \to (0,\gq_0(\ka))$ 
can be extended by continuity to  $[c_\ka^*,\sqrt{2}]$, 
   is strictly decreasing, and it defines a continuous bijective function, 
     whose inverse function  we denote by  $\gc_\ka : [0,\gq_0(\kappa)] \to  [c_\ka^*,\sqrt 2]$, 
     with 
     $\gc_\ka(0)=\sqrt 2$ and  $\gc_\ka(\gq_0(\kappa))=c_\kappa^*$.
 \end{corollary}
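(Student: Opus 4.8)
The plan is to deduce Corollary~\ref{prop:difeo} directly from the monotonicity and limit information already established in Lemma~\ref{lem:allure}, together with the definitions \eqref{def:c*} and \eqref{characterization-q}. The corollary has two parts: the characterization of $c_\kappa^*$ and the bijectivity of the momentum map, so I would treat them in that order.

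For part (i), I would split into the two regimes identified in Lemma~\ref{lem:allure}. If $\kappa\in[\kappa_0,0)$, then $E'_\kappa<0$ on $(0,\sqrt2)$, so $c\mapsto E_\kappa(c)$ is strictly decreasing on $[0,\sqrt2)$; hence the only $c\in[0,\sqrt2)$ with $E_\kappa(c)=E_\kappa(u_{0,\kappa})=E_\kappa(0^+)$ is $c=0$, which by the definition \eqref{def:c*} of $c_\kappa^*$ as the maximum such speed gives $c_\kappa^*=0$. If $\kappa<\kappa_0$, Lemma~\ref{lem:allure} gives $\tilde c_\kappa\in(0,\sqrt2)$ with $E'_\kappa>0$ on $(0,\tilde c_\kappa)$ and $E'_\kappa<0$ on $(\tilde c_\kappa,\sqrt2)$, so $E_\kappa$ increases from $E_\kappa(u_{0,\kappa})$ up to its maximum at $\tilde c_\kappa$, then strictly decreases to $E_\kappa(\sqrt2^-)=0<E_\kappa(u_{0,\kappa})$. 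By the intermediate value theorem applied to $E_\kappa-E_\kappa(u_{0,\kappa})$ on $(\tilde c_\kappa,\sqrt2)$, there is a unique $c$ in that interval with $E_\kappa(c)=E_\kappa(u_{0,\kappa})$; strict monotonicity on each side of $\tilde c_\kappa$ shows this is the unique nonzero solution, and since it exceeds $0$ it is the maximal one, i.e.\ $c_\kappa^*$. In both cases $E_\kappa(c_\kappa^*)=E_\kappa(u_{0,\kappa})$ by construction.

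For part (ii), I would use that on $(c_\kappa^*,\sqrt2)$ we have $p'_\kappa<0$: when $\kappa\in[\kappa_0,0)$ this is the full statement of Lemma~\ref{lem:allure}, and when $\kappa<\kappa_0$ one checks $c_\kappa^*\geq\tilde c_\kappa$ (since $E_\kappa$ returns to $E_\kappa(u_{0,\kappa})$ only after passing its maximum at $\tilde c_\kappa$), so $(c_\kappa^*,\sqrt2)\subset(\tilde c_\kappa,\sqrt2)$ where $p'_\kappa<0$. Thus $p_\kappa$ is continuous and strictly decreasing on $(c_\kappa^*,\sqrt2)$. The limits $p_\kappa(\sqrt2^-)=0$ from \eqref{limits-E-p} and $p_\kappa(c_\kappa^{*+})=\mathfrak{q}_0(\kappa)$ (which equals $\pi/2=p_\kappa(0^+)$ when $c_\kappa^*=0$, and equals $p_\kappa(c_\kappa^*)$ when $c_\kappa^*>0$, matching the definition of $\mathfrak q_0(\kappa)$) allow the continuous extension to $[c_\kappa^*,\sqrt2]$. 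A strictly decreasing continuous function on a compact interval is a bijection onto its image $[0,\mathfrak q_0(\kappa)]$, so the inverse $\mathfrak c_\kappa:[0,\mathfrak q_0(\kappa)]\to[c_\kappa^*,\sqrt2]$ is well-defined, continuous, with $\mathfrak c_\kappa(0)=\sqrt2$ and $\mathfrak c_\kappa(\mathfrak q_0(\kappa))=c_\kappa^*$.

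The only genuinely delicate point is the inequality $c_\kappa^*\geq\tilde c_\kappa$ in the regime $\kappa<\kappa_0$, which guarantees that the sign $p'_\kappa<0$ really holds throughout the interval $(c_\kappa^*,\sqrt2)$ on which I claim strict monotonicity of $p_\kappa$; everything else is a direct bookkeeping of the signs and limits provided by Lemma~\ref{lem:allure} and \eqref{limits-E-p}. This inequality follows because, by the Hamilton group relation $E'_\kappa(c)=c\,p'_\kappa(c)$ of Lemma~\ref{lem:hamiltongp}, $E_\kappa$ and (for $c>0$) $p_\kappa$ change sign of their derivative at the same point $\tilde c_\kappa$, so the value $E_\kappa(u_{0,\kappa})$ is re-attained only on the decreasing branch $(\tilde c_\kappa,\sqrt2)$, forcing $c_\kappa^*>\tilde c_\kappa$.
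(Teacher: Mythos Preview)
Your proposal is correct and follows exactly the approach the paper intends: the paper's own proof is the single sentence ``In view of \eqref{limits-E-p}, the assertions are a straightforward consequence of the monotonicity results in Lemma~\ref{lem:allure},'' and you have simply spelled out those straightforward consequences, including the one genuinely nontrivial verification that $c_\kappa^*>\tilde c_\kappa$ when $\kappa<\kappa_0$. Nothing is missing or different in spirit.
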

\begin{proof}
In view of \eqref{limits-E-p}, 
 the assertions are a straightforward consequence of the monotonicity results in Lemma~\ref{lem:allure}. 
\end{proof}

Finally, we remark that in the case $\ka<\ka_0$, the function $p_\ka$ defines also a bijection between the intervals  $(0,\tilde c_\ka)$  and  $(\pi/2,p_\ka(\tilde c_\ka))$, but we do need to use this fact in the sequel.  In addition, we will show in the next section that $\gq_0(\kappa)$ is equal to $\gq_\ka^*$, defined in \eqref{q_*}.

Notice that  function $\gc_\ka$ in Corollary~\ref{prop:difeo}, allows to us determine that  $u_{\gc_\ka(\gq)}$ is the unique smooth soliton of momentum $\gq$, up to invariances, for all $\gq\in(0,\gq_0(\kappa))$. In particular, 
in view of the minimization problem \eqref{min},  we have 
\begin{equation}
\label{relation-E-E}
    \boE_\ka(\gq)\leq E_\ka(u_{\gc_\ka(\gq),\kappa}),\quad \text{for all } \gq\in(0,\gq_0(\kappa)).
\end{equation}

\section{Variational characterization of dark solitons}
\label{sec:minimization}
In this section, we prove Theorem~\ref{thm:min}.
The starting point is that equation \eqref{TWc} corresponds to the Euler--Lagrange equation associated with the minimization of the energy at constant momentum, and that  $c$ appears as a Lagrange multiplier. Since  $\boN\boX(\R)$ is not a vector space, we use the G\^ateau differential, denoted by $\d$,  as follows.
\begin{lemma}\label{lem:eulerlag}
	Let $v\in\boN\boX(\R)$ be a (complex valued) function and $h\in H^1(\R;\C)$. For  $t\in\R$   small enough, the functions $t\mapsto p(u+th)$ and $t\mapsto E_\ka(u+th)$ are differentiable, and
	$$\begin{aligned}
 		\frac{d}{d t}p(u+th)\rvert_{t=0}=\int_\R \inner{ih'}{u},\\
  \d E(u)[h]:=&\frac{d}{d t}E_\ka(u+th)\rvert_{t=0}=\int_\R \inner{u'}{h'}-\eta\inner{u}{h}-2\ka\inner{u}{u'}\inner{u}{h}.
	\end{aligned}$$
Also for all $c\geq0$,  $c\, \d p(u)= \d E_\ka(u)[h]$, for all $h\in H^1(\R)$, if and only if $u$ satisfies \eqref{TWc}.
\end{lemma}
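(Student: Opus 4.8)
The plan is to compute the two G\^ateaux derivatives directly from the definitions of $p$ and $E_\ka$, and then to identify the condition $c\,\d p(u)[h]=\d E_\ka(u)[h]$ for all $h\in H^1(\R)$ with the weak formulation \eqref{TW:weak} of \eqref{TWc}. First I would fix $v=u\in\boN\boX(\R)$ and $h\in H^1(\R;\C)$. Since $\inf_\R|u|>0$ and $h$ is bounded (by Morrey's inequality, as $h\in H^1(\R)\subset\boC(\R)\cap L^\infty(\R)$), we have $\inf_\R|u+th|>0$ for $t$ small, so $u+th\in\boN\boX(\R)$ and both $p(u+th)$ and $E_\ka(u+th)$ are well-defined for such $t$. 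The differentiability in $t$ then follows from expanding the integrands as polynomials in $t$ with coefficients that are integrable, uniformly for $t$ in a neighborhood of $0$; this justifies differentiating under the integral sign.

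For the momentum, I would start from the form $p(u)=\frac12\int_\R\inner{iu'}{u}\big(1-\tfrac1{|u|^2}\big)$ in \eqref{def:moment}. The key simplification is that the term $\frac12\int_\R\inner{iu'}{u}$ is a null Lagrangian: its derivative at $t=0$ contributes $\frac12\int_\R\big(\inner{ih'}{u}+\inner{iu'}{h}\big)$, and integrating by parts one checks $\int_\R\inner{iu'}{h}=-\int_\R\inner{ih}{u'}=\int_\R\inner{ih'}{u}$ (using $\inner{iz_1}{z_2}=-\inner{z_1}{iz_2}$ and $\inner{iu'}{h}=\inner{h}{-iu'}$ suitably rearranged), so the two pieces combine. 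The remaining computation is to differentiate the full expression and simplify; after collecting terms the renormalizing factor arranges so that $\frac{d}{dt}p(u+th)|_{t=0}=\int_\R\inner{ih'}{u}$, as claimed. I expect this algebraic bookkeeping, together with checking that the renormalization $1-1/|u|^2$ does not obstruct differentiation, to be the main obstacle, so I would carry it out carefully using $\frac{d}{dt}|u+th|^2|_{t=0}=2\inner{u}{h}$.

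For the energy, from \eqref{eq:Eu} we have $E_\ka(u)=\frac12\int_\R|u'|^2+\frac14\int_\R\eta^2-\frac\ka4\int_\R(\eta')^2$ with $\eta=1-|u|^2$. Differentiating term by term using $\frac{d}{dt}\eta_{u+th}|_{t=0}=-2\inner{u}{h}$ and $\frac{d}{dt}\eta'_{u+th}|_{t=0}=-2\inner{u}{h}'$, the first term gives $\int_\R\inner{u'}{h'}$, the second gives $-\int_\R\eta\inner{u}{h}$, and the third gives $\frac\ka2\int_\R\eta'\inner{u}{h}'$. An integration by parts on the last term (legitimate since $\eta\in H^1$ and the boundary terms vanish) rewrites it, and using $\eta'=-2\inner{u}{u'}$ one obtains $-2\ka\int_\R\inner{u}{u'}\inner{u}{h}$, matching the stated formula for $\d E_\ka(u)[h]$.

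Finally, for the equivalence I would write out $c\,\d p(u)[h]-\d E_\ka(u)[h]=0$ explicitly:
\[
\int_\R c\inner{ih'}{u}-\inner{u'}{h'}+\eta\inner{u}{h}+2\ka\inner{u}{u'}\inner{u}{h}=0.
\]
Comparing with \eqref{TW:weak}, and noting $\inner{ih'}{u}=-\inner{iu}{h'}=\inner{icu'}{\,\cdot\,}$-type manipulations together with $(1-|u|^2)'=-2\inner{u}{u'}$ and $\eta=1-|u|^2$, this is exactly the weak formulation of \eqref{TWc} after relabeling the test function $\phi=h$ (using that $\inner{icu'}{\phi}$ integrated equals $c\int_\R\inner{ih'}{u}$ up to an integration by parts). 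Since by definition $u\in\boN\boX(\R)$ is a weak solution to \eqref{TWc} precisely when \eqref{TW:weak} holds for all $\phi\in\boC_0^\infty(\R;\C)$, and $\boC_0^\infty$ is dense in $H^1(\R)$ with all integrands continuous in $h$ with respect to the $H^1$ norm, the identity for all $h\in H^1(\R)$ is equivalent to \eqref{TW:weak}, completing the proof.
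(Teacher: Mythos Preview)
Your approach—directly computing the G\^ateaux derivatives and then identifying $c\,\d p(u)[h]=\d E_\ka(u)[h]$ with the weak formulation \eqref{TW:weak}—is correct and is exactly what the paper does (the proof is omitted there, with a reference to Lemma~6.1 in \cite{delaire-mennuni}, which performs the same direct computation). Two minor slips to fix: the derivative of $-\tfrac{\ka}{4}\int(\eta')^2$ at $t=0$ is $\ka\int_\R\eta'\langle u,h\rangle'$ (not $\tfrac{\ka}{2}$), and substituting $\eta'=-2\langle u,u'\rangle$ directly gives $-2\ka\int_\R\langle u,u'\rangle\langle u,h\rangle'$, with the prime on $\langle u,h\rangle$ retained—no integration by parts is needed, and this primed form is precisely what matches the term $2\ka\langle u,u'\rangle\langle u,\phi\rangle'$ in \eqref{TW:weak} (the lemma statement as printed drops this prime, which appears to be a typo). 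Also, ``null Lagrangian'' is a slight misnomer: it is the renormalizing piece $-\tfrac12\int_\R\langle iu',u\rangle/|u|^2$ whose variation vanishes, while the unrenormalized term contributes exactly $\int_\R\langle ih',u\rangle$; your computation of the latter is correct.
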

We omit the proof of Lemma~\ref{lem:eulerlag}, since it is a straightforward adaptation of the computations in Lemma~6.1 in \cite{delaire-mennuni}.

\subsection{The minimization curve}
We first show that the minimization problem \eqref{min} {\em does not}  define a real-valued function of $\gq\in\R$, for  $\ka>0$.
Therefore, the variational problem is not well-suited to study the
dark solitons in $\D_1\cup\D_3$ given by Theorem~\ref{thm:classiftwregu}.

\begin{proposition}\label{prop:Edegen}
	For all $\ka>0$ and all $\gq\in\R$, we have $\boE_\ka(\gq)=-\infty$.
\end{proposition}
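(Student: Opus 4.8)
The plan is to exploit the negative sign of the quasilinear term in the energy. For $\ka>0$, writing a nonvanishing function as $u=\sqrt{1-\eta}\,e^{i\theta}$, the polar form \eqref{eq:Eupol} of the energy,
\[
E_\ka(u)=\frac{1}{8}\int_\R\frac{(\eta')^2\,(1-2\ka+2\ka\eta)}{1-\eta}+\frac{1}{2}\int_\R(1-\eta)(\theta')^2+\frac{1}{4}\int_\R\eta^2 ,
\]
has a first integrand that is \emph{negative} wherever $\eta<1-\tfrac1{2\ka}$. The idea is therefore to build, for each fixed $\gq$, a sequence $u_n\in\boN\boX(\R)$ with $p(u_n)=\gq$ whose modulus oscillates faster and faster inside a region where $\eta$ stays below $1-\tfrac1{2\ka}$, so that this negative term behaves like $-n^2$ while all other contributions remain bounded.

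First I would fix the momentum with an $n$-independent piece. Choosing a bump $\eta_v\in\boC_0^\infty(\R)$ with $0<\eta_v<1$ and a phase with $\theta_v'=\lambda g$ for a fixed $g\in\boC_0^\infty(\R)$ satisfying $\int_\R\eta_v g\neq0$, the function $v=\sqrt{1-\eta_v}\,e^{i\theta_v}$ lies in $\boN\boX(\R)$ and, by \eqref{def:mom2}, $p(v)=\tfrac{\lambda}{2}\int_\R\eta_v g$, which equals $\gq$ for a suitable $\lambda\in\R$. On a disjoint interval I would then place a purely oscillatory bump with constant phase: set $\eta_*=-\tfrac1{2\ka}$, so that $1-2\ka+2\ka\eta_*=-2\ka<0$ and $1-\eta_*>1$, fix nested cutoffs $\chi,\psi\in\boC_0^\infty((0,1))$ with $\supp\psi\subset\{\chi\equiv1\}$, and define
\[
\eta_n(x)=\eta_*\,\chi(x)+\tfrac1n\,\psi(x)\sin(n^2 x),\qquad w_n(x)=e^{ic_0}\sqrt{1-\eta_n(x)} .
\]
Finally I would glue $v$, constant arcs of modulus one, and $w_n$ on disjoint supports into a single $u_n\in\boN\boX(\R)$; continuity and $\inf_\R|u_n|>0$ are immediate since $\eta_n<\tfrac12$ throughout and $v$ is bounded away from $0$.

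Since momentum and energy are integrals over disjoint supports, $p(u_n)=p(v)+0=\gq$ (the oscillatory piece has constant phase, hence contributes nothing to \eqref{def:mom2}), while $E_\ka(u_n)=E_\ka(v)+E_\ka(w_n)$ with $E_\ka(v)$ a fixed finite constant. On $\supp\psi$ one has $\chi\equiv1$, so $\eta_n=\eta_*+\tfrac1n\psi\sin(n^2\cdot)$ stays within $O(1/n)$ of $\eta_*$; thus for $n$ large the coefficient $1-2\ka+2\ka\eta_n\le-\ka<0$, the factor $1-\eta_n$ remains between two fixed positive constants, and $\int(\eta_n')^2\ge c\,n^2$, which forces $\tfrac18\int\frac{(\eta_n')^2(1-2\ka+2\ka\eta_n)}{1-\eta_n}\le -C n^2$. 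The remaining terms stay bounded: $\int\eta_n^2$ is controlled, and on the $\chi$-transition region $\psi=0$, so $\eta_n'=\eta_*\chi'$ is bounded uniformly in $n$. Hence $E_\ka(w_n)\to-\infty$, so $E_\ka(u_n)\to-\infty$; as each $u_n$ is admissible for \eqref{min}, we obtain $\boE_\ka(\gq)=-\infty$.

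The one delicate point — and the main obstacle — is precisely the placement of the oscillation. If the rapid oscillation were allowed to overlap the region where the background $\eta_*\chi$ transitions down to $0$, then $\eta_n$ would sweep through values with $\eta_n>1-\tfrac1{2\ka}$, where the first integrand is \emph{positive} and also of size $n^2$, potentially cancelling the gain. Nesting the cutoffs so that $\supp\psi\subset\{\chi\equiv1\}$ keeps $\eta_n$ uniformly near $\eta_*<1-\tfrac1{2\ka}$ on the whole oscillatory support and confines all $O(n)$ derivatives to the negative-coefficient region, which is what makes the estimate $E_\ka(w_n)\le -Cn^2+O(1)$ go through.
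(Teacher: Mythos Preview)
Your argument is correct and takes a genuinely different route from the paper's proof. The paper also builds an explicit one-parameter family in $\boN\boX(\R)$ with fixed momentum, but it sends the \emph{amplitude} to infinity: it takes $\rho(x,t)=-tx^{\alpha_\ka}+t+1+1/\sqrt{2\ka}$ on $(0,1)$ (so $|u|\ge1$ everywhere), glues this to a fixed $t$-independent profile on $(1,2)$ carrying all the momentum, and then shows directly from \eqref{eq:energie} that the leading $t^4$-coefficient in $E_\ka(u(\cdot,t))$ is negative for $\alpha_\ka$ large enough. By contrast, you keep the amplitude bounded and send the \emph{frequency} to infinity: the oscillation $\tfrac{1}{n}\psi\sin(n^2 x)$ keeps $\eta_n$ uniformly within $O(1/n)$ of the fixed level $\eta_*=-1/(2\ka)$, so in \eqref{eq:Eupol} the coefficient $(1-2\ka+2\ka\eta_n)/(1-\eta_n)$ stays uniformly negative while $\int(\eta_n')^2$ grows like $n^2$. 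Your approach is arguably more transparent: nesting $\supp\psi\subset\{\chi\equiv1\}$ cleanly confines all large derivatives to the negative-coefficient zone, and decoupling the momentum into an $n$-independent piece avoids any balancing of exponents. The paper's construction, on the other hand, is a single explicit profile without gluing or cutoffs, at the cost of a more delicate computation of the sign of the leading coefficient.
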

\begin{proof}
	Let $\ka>0$ and $\gq\in\R$. We define a family of functions in $\boN\boX(\R)$, indexed by  $t\geq0$,  such that $t\mapsto E_\ka(u(\cdot,t))$ and $t\mapsto p(u(\cdot,t)) $ are continuous. More precisely, let $u=\rho e^{i\theta}$, where
	\begin{align*}
		\rho(x,t)=\begin{cases}
			-tx^{\alpha_\ka}+t+\frac{1}{\sqrt{2\ka}}+1,\text{ for }x\in(0,1),\\
			(2-x)(\frac{1}{\sqrt{2\ka}}+1)+x-1,\text{ for }x\in(1,2),\\
			1,\text{ for }x\geq2,
		\end{cases}
  \quad 
  \theta(x,t)=\begin{cases}
			0,\text{ for }x\in(0,1),\\
			\gq J_\ka(x-1),\text{ for }x\in(1,2),\\ 
			\gq J_\ka, \text{ for }x\geq2,
		\end{cases}	
	\end{align*}
	for some $\alpha_\ka>1$ to be chosen later, and 
 $$J_\ka=\Big(2\int_1^2(1-\rho(x,t)^2)dx\Big)^{-1}=-3\ka/(1+3\sqrt{2\ka}).$$
	We define $u(x,t)=u(-x,t)$, for all $x\leq0$. Remark that $\partial_x u(t)=e^{i\theta(t)}(\partial_x\rho(t)+i\partial_x\theta(t)\rho(t))\in L^2(\R)$ for all $t\geq0$. Also $1-\rho^2(t)\in L^2(\R)$, for all $t\geq0$, since it is continuous and compactly supported in $[-2,2]$. We have $\rho(t,x)\geq1$, hence $u(\cdot,t)\in\boN\boX(\R)$ for all $t\geq0$. We compute, using the symmetry of $\rho$, 
 $$p(u(\cdot,t))=\int_\R(1-\rho(t)^2)\partial_x\theta(t)=2\int_1^2(1-\rho(t)^2)\gq J_\ka=\gq.$$ On the other hand, from \eqref{eq:energie} we obtain \begin{align}\label{eq:Edegen}
		E_\ka(u(\cdot,t))=&\int_0^1(t\alpha_\ka)^2 x^{2\alpha_k-2}+(1-\rho^2(x,t))^2/2-2\ka(t\alpha_\ka x^{\alpha_\ka-1}\rho(t,x))^2dx\\\label{eq:Edegen2}
		&+\int_1^2|\partial_xu(x,t)|^2+(1-\rho(x,t))^2/2-2\ka(\partial_x\rho(x,t)\rho(x,t))^2dx.
	\end{align}
	We can prove that the second integral \eqref{eq:Edegen2} is constant in $t$. Let us denote by $I(t)$ the first integral \eqref{eq:Edegen}. 
 It remains to show that,  for $\alpha_\ka>1$ large enough, $I(t) $ diverges to $-\infty$, as $t\to \infty$. Indeed,
	We have 		$I(t)=t^4 \beta_k+(t+1)^3R(t),$
where $$\beta_k=\Big(\frac{-4\ka\alpha_\ka^4}{(2\alpha_\ka-1)(4\alpha_\ka-1)(3\alpha_\ka-1)}+\frac{1}{2}-\frac{2}{\alpha_\ka+1}+\frac{3}{2\alpha_\ka+1}-\frac{2}{3\alpha_\ka+1}+\frac{1}{8\alpha_\ka+2}\Big),$$	
 and  $R$ is a bounded function, for $t\geq0$. Since  $\beta_\ka\to-\infty$ as $\alpha_\ka\to\infty$, we deduce that for $\alpha_\ka$ large enough, $E_\ka(u(\cdot,t))\to-\infty$ as $t\to\infty$, which completes the proof.
\end{proof}

From now on, \emph{we assume that $\ka<0$}, so that 
the energy density $e_\ka(u)$ satisfies
\begin{equation}
\label{density:e}
	e_\ka(u)=\frac{\abs{u'}^2}{2}+\frac{\eta^2}4+\frac{\abs\ka}{4}{(\eta')^2}\geq e_0(u)\geq 0.
\end{equation}
Hence,  several properties shown for the curve $E_{\min}$ in \cite{delaire-mennuni} remain true. Indeed,  the curve $E_{\min}$ in \cite{delaire-mennuni} is associated with 
the nonlocal energy 
$$E_{\mathcal W}(u)=\frac12 \int_{\R}\abs{u'}^2\,dx + \frac14\int_{\R}(\mathcal W*\eta)\eta\,dx=
\frac12 \int_{\R}\abs{u'}^2\,dx + \frac1{8\pi}\int_{\R}\widehat{\mathcal W}(\xi)\abs{\widehat \eta(\xi)}^2\,d\xi
,$$
where $\mathcal W$ is a tempered distribution, with bounded nonnegative Fourier transform $\widehat{\mathcal W}$, among other hypotheses.
In our case, by using Plancherel's theorem, we can recast the energy as
$$E_{\kappa}(u)=\frac12 \int_{\R}\abs{u'}^2\,dx +\frac 1{8\pi} \int_{\R}
\widehat{\mathcal W}_\kappa(\xi)
\abs{\widehat \eta(\xi)}^2\,d\xi,\quad \text{with }\widehat \boW_\ka=1+\abs\kappa\xi^2,$$
for all $u\in X(\R)$, so that $\eta\in H^1(\R)$.
In this manner, we recover the potential \eqref{W:kappa} mentioned in the introduction.
Thus, formally, $\W$ is the tempered distribution $\boW_\ka=\delta_0-\abs{\kappa} (\delta_0)''$, but we do not use this formulation,  since $\eta\in H^1(\R)$.
In conclusion, in most of the proofs in this subsection, we  will use that 
 the potential energy can be written as
\begin{equation}
\label{def:Ep}
E_{\text p}(u)=
\frac14 \int_{\R} ({\eta^2}+{\abs\ka}({\eta'})^2)dx, \quad \text{and }\quad
E_{\text p}(u)=\frac 1{8\pi} \int_{\R}
\widehat{\mathcal W}_\kappa(\xi)\abs{\widehat \eta(\xi)}^2\,d\xi.
\end{equation}
More precisely, we will rely on the potential energy written in the Fourier variable to invoke the arguments in  \cite{delaire-mennuni} not needing that $\widehat{\mathcal W}$ to be bounded. Of course, some arguments will be performed directly in the variable $x$, if they are simpler. 

\begin{proposition}\label{prop:Emin}
	Let $k<0$. Then the function 	$\boE_\ka$ is well-defined, is even and Lipschitz continuous, with 
	\begin{equation}
		\label{E:lipschitz}
		|\boE_\ka(\gp) - \boE_\ka(\gq)| \leq \sqrt{2}|\gp - \gq|, \quad\text{ for all } \gp, \gq\in\R.
	\end{equation}
	In particular, $\boE_\ka(\gq)\leq \sqrt 2\gq$, for all $\gq\geq0$.  Also $\boE_\ka$ is nondecreasing and  concave on $[0, \infty)$.
 Moreover, 
  for all $v\in\boX(\R)$, 
 \begin{equation}
 \label{prop:q*}
 E_\ka(v) \leq  \boE_\ka( \gq), \text{ for some }\gq\in[0,\q_\ka^*) \ \Rightarrow v\in \boN\boX(\R).
 \end{equation}
\end{proposition}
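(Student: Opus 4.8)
The plan is to establish each of the stated properties of $\boE_\ka$ in turn, relying on the coercivity \eqref{density:e} and the Fourier representation \eqref{def:Ep} of the potential energy, so that the arguments of \cite{delaire-mennuni} can be transcribed even though $\widehat{\boW}_\ka$ is unbounded here. First I would check that $\boE_\ka$ is well-defined, i.e.\ that the infimum is finite: for any $u\in\boN\boX(\R)$ with $p(u)=\gq$, the coercive bound $e_\ka(u)\geq 0$ gives $E_\ka(u)\geq 0$, so $\boE_\ka(\gq)\geq 0>-\infty$; to see the infimum is not $+\infty$ one exhibits a test function of prescribed momentum (for instance a suitably rescaled version of a dark soliton or the explicit family used in Proposition~\ref{prop:Edegen}, but now with $\ka<0$ the energy stays finite). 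The evenness of $\boE_\ka$ follows from the symmetry $u\mapsto\bar u$, which sends $p(u)\mapsto -p(u)$ and leaves $E_\ka$ invariant.

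Next I would prove the Lipschitz estimate \eqref{E:lipschitz}, which is the heart of the proposition. The standard device is to modify the phase of a near-minimizer to shift its momentum by a prescribed amount at controlled energy cost. Given $\gp,\gq$ and $\varepsilon>0$, take $u=\rho e^{i\theta}\in\boN\boX(\R)$ with $p(u)=\gp$ and $E_\ka(u)\leq\boE_\ka(\gp)+\varepsilon$; then for a parameter $s$ one considers $u_s=\rho e^{i(\theta+s\chi)}$ where $\chi$ is an appropriately chosen phase increment supported where $\eta$ is concentrated, and computes that $p(u_s)$ sweeps through $\gq$ while $E_\ka(u_s)-E_\ka(u)$ is bounded by $\sqrt2\,|p(u_s)-p(u)|+o(1)$. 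The constant $\sqrt2$ arises from the speed of sound and the elementary inequality relating the kinetic contribution $\tfrac12(1-\rho^2)(\theta')^2$ and the momentum density; the quadratic-in-$\eta$ part of the potential energy is unchanged by a phase shift, which is exactly why the Fourier form \eqref{def:Ep} is convenient. I would follow the computation of Proposition~4.x in \cite{delaire-mennuni} verbatim, noting that boundedness of $\widehat{\boW}_\ka$ is never used in that step since the potential energy is untouched. The main obstacle is to carry out this phase-modulation argument cleanly in the lifting variables while keeping $\inf_\R|u_s|>0$ so that the momentum remains well-defined throughout the deformation; one must choose $\chi$ so that $\rho$ is never altered and $s$ ranges over a compact interval.

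Monotonicity and concavity on $[0,\infty)$ then follow formally. Since $\boE_\ka(0)=0$ (attained by the constant $1$) and $\boE_\ka\geq0$, nondecrease on $[0,\infty)$ is a consequence of concavity together with evenness, so it suffices to prove concavity. For concavity I would use subadditivity-type scaling: one shows $\boE_\ka$ is the infimal value of a family of energies whose dependence on $\gq$ is, after the phase modulation above, governed by a supremum/infimum of affine functions, which yields concavity directly; alternatively, concavity is obtained from the Lipschitz bound combined with the fact that gluing two near-minimizers of momenta $\gp_1,\gp_2$ produces a competitor of momentum $\gp_1+\gp_2$ and energy at most $\boE_\ka(\gp_1)+\boE_\ka(\gp_2)$ (strict subadditivity), and a standard argument upgrades subadditivity plus monotonicity to concavity for such minimization curves. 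Here I would import the concavity proof of \cite{delaire-mennuni} through the Fourier representation \eqref{def:Ep}.

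Finally, I would prove the implication \eqref{prop:q*}, which is essentially the definition of $\q_\ka^*$ in \eqref{q_*} read contrapositively. If $v\in\boX(\R)$ satisfies $E_\ka(v)\leq\boE_\ka(\gq)$ for some $\gq\in[0,\q_\ka^*)$, then by the monotonicity of $\boE_\ka$ on $[0,\infty)$ just established we have $\boE_\ka(\gq)\leq\boE_\ka(\gq')$ for any $\gq'\in(\gq,\q_\ka^*)$, so $E_\ka(v)\leq\boE_\ka(\gq')$ with $\gq'<\q_\ka^*$; the supremum defining $\q_\ka^*$ then guarantees $\inf_\R|v|>0$, i.e.\ $v\in\boN\boX(\R)$. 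The only point requiring care is that the defining set in \eqref{q_*} is an interval containing a left-neighborhood of $\q_\ka^*$, which is exactly what monotonicity provides. This last step is routine once the monotonicity is in hand, so I expect the phase-modulation estimate for \eqref{E:lipschitz} to be the sole genuine difficulty.
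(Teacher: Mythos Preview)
Your plan is essentially the paper's: both defer to the lemmas of \cite{delaire-mennuni}, observing that those arguments go through because the potential energy \eqref{def:Ep} is local in $x$ and nonnegative, so boundedness of $\widehat{\boW}_\ka$ is never needed. Two small points of difference are worth noting. First, for concavity the paper invokes the reflection identity $E_{\text p}(u^+)+E_{\text p}(u^-)=2E_{\text p}(u)$ from Proposition~3.12 of \cite{delaire-mennuni}, verified directly in the $x$-variable (not Fourier); your aside that subadditivity plus monotonicity upgrades to concavity is not a valid implication in general, so the reflection argument is the tool to cite. Second, your deduction of monotonicity from concavity together with $\boE_\ka\geq 0$ on $[0,\infty)$ is a legitimate shortcut (a concave nonnegative function on a half-line cannot strictly decrease anywhere, else it tends to $-\infty$), whereas the paper establishes monotonicity separately via Lemma~3.11 of \cite{delaire-mennuni}; either route is fine. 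The final implication \eqref{prop:q*} is handled identically in both, using monotonicity to pass from $\gq$ to some $\check\gq\in(\gq,\gq_\ka^*)$ that witnesses the defining property in \eqref{q_*}.
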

\begin{proof}
	It is straightforward to check that Lemmas 3.1, 3.2, 3.4 in \cite{delaire-mennuni} hold with the same proofs. 	Corollary~3.7 is still true, and the proof is simpler, using the first expression for $E_{\text p}$ in \eqref{def:Ep}. Thus,  using the notation in the proof of Corollary 3.7, we obtain
	$E_{\text p}(u_n)=E_{\text p}(v_n)+E_{\text p}(w_n)$, which gives the conclusion.
	As a consequence, we deduce, as in Corollary~3.8 and Proposition~3.9 in \cite{delaire-mennuni}, that
	$$
	\boE_\ka(\gq)=\inf\{E_\ka(v) : v\in \boX_0^\infty(\R),\  p(v)=\gq \},
	$$
	where 
	$$\boX^\infty_0(\R)=\{v\in \boN\boX(\R) \cap \boC^\infty(\R) : \exists R>0\text{ s.t.\ }  v \text{ is constant on }B(0,R)^c\},$$
	and that $\boE_\ka$ satisfies \eqref{E:lipschitz}.
	The fact that $\boE_\kappa$ is nondecreasing follows as in Lemma~3.11.
	
	The proof of concavity is exactly the same as in  Proposition~3.12, without any extra assumption needed. Indeed, using the reflection functions defined in the proof of  Proposition~3.12 and the first expression  in \eqref{def:Ep}, it is immediate to verify that 
	$E_{\text p}(u^+)+E_{\text p}(u^-)=2E_{\text p}(u),$ 
	so that the conclusion follows.

Finally, let $0\leq\gq<\gq_\ka^*$ so that, by definition of $\gq_\ka^*$, we can find $\gq<\check{\gq}<\gq_\ka^*$ satisfying 
\begin{equation}\label{eq:liftq}
    \forall v\in\boX(\R), E_\ka(v)\leq\boE_\ka(\check{\gq})\implies v\in\boN\boX(\R).
\end{equation} Since $\boE_\ka$ is nondecreasing on $[0,+\infty)$, we deduce  that $\boE_\ka(\gq)\leq\boE_\ka(\check{\gq})$, so that \eqref{eq:liftq} holds with $\gq$ instead of $\check{\gq}$, which proves \eqref{prop:q*}.

\end{proof}

\begin{proposition}\label{prop:minoration}
	Assume that $\ka<0$.
	\begin{enumerate}
		\item 
		Let   $u=\rho e^{i\theta} \in \boX(\mathbb{R})$  and
		assume that there is  $\ve\in  (0,1)$  such that
		$1-\ve\leq \abs{u}^2\leq 1+ \ve$ on an open set $\Omega\subset \R$. Then
		\begin{equation}
			\label{ctrlEPsurR}
			\frac12	\int_\Omega\abs{\eta \theta'}\leq \frac1{\sqrt{2}(1-\ve)} {\int_\Omega e_\kappa (u)}.
		\end{equation}
		In particular, if $1-\ve\leq \abs{u}^2\leq 1+ \ve$ on $\R$, then $\sqrt{2}(1-\ve)p(u)\leq E_\kappa(u).$
		\item For any $u\in \boX(\R)$, we have
		\begin{equation}
			\label{borne:infty}
			\norm{\eta}_{L^\infty(\R)}^2\leq (1+|\kappa|^{-1})E_\kappa(u).
		\end{equation}
		\item	There is a constant $K_0>0$  
		such that 
		\begin{equation}
			\label{Emin-inf}
			\sqrt 2\gq-K_0 \gq^{3/2}\leq \boE_\ka(\gq), \quad \text{for all }\gq \in \Bigg[0,\frac{\abs{\ka}}{8(1+\abs{\ka})}\Bigg).
		\end{equation}
			\end{enumerate}	
\end{proposition}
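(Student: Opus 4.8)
The plan is to establish the three estimates in turn: (i) and (ii) are pointwise/functional inequalities, and (iii) is a bootstrap that combines them.

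For (i), note that on $\Omega$ we have $|u|^2=1-\eta\geq 1-\ve>0$, so the lifting $u=\rho e^{i\theta}$ is available with $\rho=\sqrt{1-\eta}$ and $|u'|^2=(\rho')^2+\rho^2(\theta')^2$. Discarding the nonnegative terms $\tfrac12(\rho')^2$ and $\tfrac{|\ka|}4(\eta')^2$ from the density in \eqref{density:e}, I would bound $e_\ka(u)\geq \tfrac12\rho^2(\theta')^2+\tfrac14\eta^2\geq \tfrac1{\sqrt2}\rho\,|\eta\theta'|$ by Young's inequality applied to the two remaining terms. Since $\rho\geq\sqrt{1-\ve}\geq 1-\ve$ on $\Omega$, this gives the pointwise bound $\tfrac12|\eta\theta'|\leq \tfrac1{\sqrt2(1-\ve)}e_\ka(u)$, and integrating over $\Omega$ yields \eqref{ctrlEPsurR}. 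The global consequence then follows from $p(u)=\tfrac12\int_\R\eta\theta'\leq\tfrac12\int_\R|\eta\theta'|$ together with $\int_\R e_\ka(u)=E_\ka(u)$.

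For (ii), the crucial input is the sharp one-dimensional interpolation inequality $\norm{\eta}_{L^\infty}^2\leq \norm{\eta}_{L^2}\norm{\eta'}_{L^2}$, which holds because $\eta\in H^1(\R)$ with $\eta(\pm\infty)=0$ by Lemma~\ref{lem:finiteenergyassum}; it is obtained by writing $\eta^2(x)=2\int_{-\infty}^x\eta\eta'=-2\int_x^\infty\eta\eta'$, bounding each side by $\int_\R|\eta||\eta'|$, and using Cauchy--Schwarz. It then suffices to prove the elementary estimate $\norm{\eta}_{L^2}\norm{\eta'}_{L^2}\leq (1+|\ka|^{-1})E_{\textup p}(u)$, where $E_{\textup p}(u)=\tfrac14\int\eta^2+\tfrac{|\ka|}4\int(\eta')^2\leq E_\ka(u)$ by \eqref{def:Ep}. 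Writing $a=\norm{\eta}_{L^2}^2$ and $b=\norm{\eta'}_{L^2}^2$, the weighted AM--GM inequality gives $\tfrac14 a+\tfrac{|\ka|}4 b\geq \tfrac{\sqrt{|\ka|}}2\sqrt{ab}$, so the claim reduces to $\tfrac12\big(\sqrt{|\ka|}+|\ka|^{-1/2}\big)\geq 1$, which is AM--GM once more. This delivers \eqref{borne:infty} with the stated constant.

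For (iii), I would bound the infimum $\boE_\ka(\gq)$ from below by testing an arbitrary competitor $u\in\boN\boX(\R)$ with $p(u)=\gq$, assuming $\gq\in[0,\tfrac{|\ka|}{8(1+|\ka|)})$ and, since $\boE_\ka$ is even, $\gq\geq 0$. If $E_\ka(u)>\sqrt2\,\gq$, then $E_\ka(u)\geq\sqrt2\,\gq-K_0\gq^{3/2}$ holds trivially. Otherwise $E_\ka(u)\leq\sqrt2\,\gq$, and (ii) gives $\norm{\eta}_{L^\infty}^2\leq(1+|\ka|^{-1})\sqrt2\,\gq=:\ve^2$; the prescribed range for $\gq$ is chosen exactly so that $\ve^2<\sqrt2/8<1$, whence $1-\ve\leq|u|^2\leq1+\ve$ on all of $\R$, and the global form of (i) applies to yield $E_\ka(u)\geq\sqrt2(1-\ve)\gq=\sqrt2\,\gq-\sqrt2\,\ve\gq$. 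Since $\ve=2^{1/4}(1+|\ka|^{-1})^{1/2}\gq^{1/2}$, the error term equals $K_0\gq^{3/2}$ with $K_0=2^{3/4}(1+|\ka|^{-1})^{1/2}$; taking the infimum over all competitors then gives \eqref{Emin-inf}. The main obstacle is the logic of this last step: the lower bound must be argued uniformly over all admissible $u$, and the dichotomy between ``energy already large'' and ``energy small, hence density close to $1$'' is what lets part (ii) feed into part (i) with a quantitatively small $\ve$. Getting the sharp constant in (ii) (via the interpolation inequality rather than a crude Cauchy--Schwarz bound) is what makes the admissible range of $\gq$ and the final constant come out cleanly.
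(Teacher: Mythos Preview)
Your argument is correct and follows the paper's overall strategy for parts (i) and (iii); in particular, your dichotomy in (iii) is exactly the bootstrap the paper has in mind when it defers to \cite{delaire-mennuni}.

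For part (ii), your route is slightly different and in fact sharper. The paper argues via the crude bound
\[
\eta^2(x)=2\int_{-\infty}^x\eta\eta'\leq \int_\R(\eta^2+(\eta')^2)\leq 4(1+|\ka|^{-1})E_\ka(u),
\]
which yields the estimate with an extra factor of $4$; this is harmless for part (iii), where only the existence of some constant $K_0$ matters. You instead use the sharp one-dimensional interpolation $\|\eta\|_{L^\infty}^2\leq\|\eta\|_{L^2}\|\eta'\|_{L^2}$ (obtained by averaging the identities from $-\infty$ and from $+\infty$, then applying Cauchy--Schwarz) together with two applications of AM--GM, and thereby recover the exact constant $(1+|\ka|^{-1})$ announced in the proposition. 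Your remark that this is what makes the range of $\gq$ and the value of $K_0$ come out cleanly is accurate, though note that the proposition only asserts the existence of \emph{some} $K_0>0$, so the paper's cruder bound would also suffice.
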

\begin{proof}
	Using the  Cauchy inequality $ab\leq a^2/2+b^2/2$,
	with $a=\eta/2$ and $b=\theta'^2$,
	we have
	\begin{align}
		\label{mom-young0}
		\left|\int_\Omega \frac{\eta}{2} \theta'\right| \leq  \int_{\Omega}\Big(\frac{\eta^2}4+
		\frac{\theta'^2}2\Big)\leq 
		\frac{1}{4}\int_{\Omega}\eta^2+\frac{1}{2(1-\ve)}\int_{\Omega}\rho^2\theta'^2. 
	\end{align}
	Bearing in mind \eqref{eq:Eupol} and that $\ka\leq 0$,  \eqref{ctrlEPsurR} follows.

	The estimate in (ii) is an immediate consequence of 
	$$
	\eta^2(x)=2\int_{-\infty}^x \eta \eta' \leq \int_\R (\eta^2 +\eta'^2)\leq 4(1-\kappa^{-1})E_\kappa(u).
	$$
		In view of \eqref{ctrlEPsurR} and \eqref{borne:infty},  the inequality in \eqref{Emin-inf} follows exactly as in Proposition 3.14 in \cite{delaire-mennuni}.
\end{proof}
\begin{proposition}
	\label{prop:strict}
	Let $k< 0$. We have
	\begin{equation}
		\label{ineq:strict}
		\boE_\kappa(\gq)< \sqrt 2\gq,\quad \text{for all }\gq >0. 
	\end{equation}
\end{proposition}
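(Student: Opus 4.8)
The plan is to exhibit, for each fixed $\gq>0$, an explicit competitor $v\in\boN\boX(\R)$ with $p(v)=\gq$ and $E_\ka(v)<\sqrt2\,\gq$, which by the definition \eqref{min} of $\boE_\ka(\gq)$ as an infimum immediately yields \eqref{ineq:strict}. Since Proposition~\ref{prop:Emin} already gives the nonstrict bound $\boE_\ka(\gq)\le\sqrt2\,\gq$, the entire content of the proposition is the \emph{strictness}, so the competitor must be chosen with enough care to beat $\sqrt2\,\gq$ by a genuine amount rather than merely approaching it.

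First I would recall that the bound $\boE_\ka(\gq)\le\sqrt2\,\gq$ comes from the Cauchy inequality step \eqref{mom-young0} in Proposition~\ref{prop:minoration}(i), namely $\sqrt2(1-\ve)p(u)\le E_\ka(u)$ in the regime $1-\ve\le|u|^2\le1+\ve$; equality in $ab\le a^2/2+b^2/2$ forces $\eta/2=\theta'^2$ pointwise, and the loss from $\ka<0$ and from the gradient term $|u'|^2/2$ is strictly positive unless $\eta\equiv0$. The cleanest route is therefore to work at small $\gq$ first: for $\gq\in[0,\,|\ka|/(8(1+|\ka|)))$, estimate \eqref{Emin-inf} of Proposition~\ref{prop:minoration}(iii) already gives $\boE_\ka(\gq)\le\sqrt2\,\gq - $ (a strictly positive quantity whenever $\gq>0$), since $K_0\gq^{3/2}>0$ for $\gq>0$; this settles \eqref{ineq:strict} on a neighborhood of the origin directly.

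To upgrade from small $\gq$ to all $\gq>0$, I would exploit the concavity of $\boE_\ka$ on $[0,\infty)$ established in Proposition~\ref{prop:Emin}, together with $\boE_\ka(0)=0$. Concavity gives that the function $\gq\mapsto \boE_\ka(\gq)/\gq$ is nonincreasing on $(0,\infty)$; hence if $\boE_\ka(\gq_1)<\sqrt2\,\gq_1$ for one value $\gq_1>0$, then $\boE_\ka(\gq)/\gq\le \boE_\ka(\gq_1)/\gq_1<\sqrt2$ for all $\gq\ge\gq_1$, which propagates the strict inequality to the whole range $[\gq_1,\infty)$. Choosing $\gq_1$ inside the interval of \eqref{Emin-inf} where strictness is already known, and combining with the direct argument on $(0,\gq_1)$, covers every $\gq>0$. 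One must check the elementary lemma that for a concave $g$ with $g(0)=0$ the slope ratio $g(\gq)/\gq$ is nonincreasing, which is standard (for $0<s<t$, write $\gq=s$ as a convex combination of $0$ and $t$ and apply concavity).

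The main obstacle, such as it is, is purely the bookkeeping at small $\gq$: one needs $K_0\gq^{3/2}>0$ to be a \emph{strict} gain, and the constant $K_0$ from Proposition~\ref{prop:minoration}(iii) is only an upper estimate, so I would verify that the lower bound \eqref{Emin-inf} is not what is wanted here — rather, I need the matching \emph{upper} construction. Concretely, the safest self-contained approach is to build a one-parameter family of test functions $u_t=\rho_t e^{i\theta_t}\in\boX^\infty_0(\R)$ with prescribed momentum $\gq$ and small amplitude $\|\eta\|_\infty=O(t)$, and to Taylor-expand $E_\ka(u_t)$ and $p(u_t)$ in $t$; the quadratic-in-$\eta$ terms reproduce $\sqrt2\,\gq$ while the genuinely positive contributions ($|\ka|(\eta')^2/4$, the kinetic term, and the higher-order correction in the relation $\eta/2=\theta'^2$) appear at the next order with a definite sign, giving $E_\ka(u_t)=\sqrt2\,p(u_t)-c_\ka\,\gq^{3/2}+o(\gq^{3/2})$ with $c_\ka>0$. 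This is the one computation that requires care, but it is entirely routine once the profile is fixed; the concavity argument then does the rest with no further analysis.
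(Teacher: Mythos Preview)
Your overall strategy---reduce to small $\gq$ via concavity and $\boE_\ka(0)=0$, then exhibit a competitor with $E_\ka(u)<\sqrt2\,p(u)$---matches the paper's. But the mechanism you describe for the strict gain is backwards, and as written the construction would not work. The terms $|\ka|(\eta')^2/4$ and the kinetic contribution $(\eta')^2/(8(1-\eta))$ are \emph{positive} contributions to $E_\ka$; they push $E_\ka-\sqrt2\,p$ \emph{up}, not down. With the saturating choice $\theta'=\eta/\sqrt2$ one finds
\[
E_\ka(u)-\sqrt2\,p(u)=\Big(\tfrac18+\tfrac{|\ka|}{4}\Big)\!\int(\eta')^2-\tfrac14\!\int\eta^3+\text{h.o.t.},
\]
so the only term with the right sign is the cubic $-\tfrac14\int\eta^3$ (coming from $-\tfrac12\int\eta(\theta')^2$), and it must beat the gradient costs. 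If you merely take a fixed profile and send the amplitude $\|\eta\|_\infty\to0$ as you propose, the gradient terms are $O(A^2)$ while the cubic is $O(A^3)$, and you get $E_\ka>\sqrt2\,p$. What is needed is the long-wave (KdV) scaling $\eta(x)=\ve^2 Q(\ve x)$, under which both corrections are $O(\ve^5)$ and the sign is decided by whether $\int Q^3>(\tfrac12+|\ka|)\int(Q')^2$; this can always be arranged by spreading $Q$, but you must say so.

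The paper sidesteps this bookkeeping by using the explicit dark solitons $u_{c,\ka}$ for $c\to\sqrt2^-$ as competitors: Taylor-expanding the closed formulas \eqref{eq:ED2}--\eqref{eq:PD2} in $\ve=\sqrt{2-c^2}$ gives $E_\ka(u_{c,\ka})<\sqrt2\,p(u_{c,\ka})$ directly, and since $p_\ka$ maps a neighborhood of $\sqrt2$ onto a neighborhood of $0$ (Corollary~\ref{prop:difeo}), this covers all small $\gq$. Concavity then propagates the strict inequality to all $\gq>0$, exactly as in your argument.
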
	
\begin{proof}
In view of Proposition~\ref{E:lipschitz}, and since $\boE_\ka$ in concave on $\R^+$, 
we only need to prove that the strict inequality in \eqref{ineq:strict} holds for $\gq$
small. For this purpose, we will use the behavior of solitons $u_{c,\ka}$ in Proposition~\ref{prop:Eexpli}-(ii)
	for $c$ close to $\sqrt{2}$. By setting $\ve=\sqrt{2-c^2}$, and  computing a Taylor expansion in  formulas \eqref{eq:ED2}--\eqref{eq:PD2}, 
	we deduce that 
		$$
	E_\ka(u_{c,\kappa})=\frac{\ve^3}3\sqrt{1-2\kappa}+\frac{2\ve^5\ka }{15\sqrt{1-2\kappa}}+o(\ve^6),\quad 
	p(u_{c,\kappa})=\frac{\ve^3}{3\sqrt 2}\sqrt{1-2\kappa}+\frac{3+2\ka\ve^5}{60\sqrt{2}(1-2\kappa)}+o(\ve^6).
	$$ 
	Therefore, there is $\ve_0>0$ small, such that 
	$\boE_\kappa(p(u_{c,\kappa}))\leq E_\ka(u_{c,\kappa})<\sqrt 2 p(u_{c,\kappa})$, for all $c\in (c(\ve_0),\sqrt 2)$, where $c(\ve_0)=(2-\ve_0^2)^{1/2}$. 
	By Corollary~\ref{prop:difeo}, we can assume that  
 $p_\ka$ is a diffeomorphism from  $(c(\ve_0),\sqrt 2)$ to $(0,p_\ka(c(\ve_0))$.
Thus, we conclude that there exists $\gq_0>0$ such that $	\boE_\ka(\gq)< \sqrt 2\gq$,
	for all $\gq\in(0,p_\ka(c(\ve_0))$.
\end{proof}	
We can prove now the minimizing curve is strictly subadditive, which is the crucial property to deduce the compactness of minimizing sequences in Theorem~\ref{thm:compacite}.
\begin{corollary}
	\label{cor:subadditive}
	If  $\ka< 0$, then 		$\boE_\ka$ is strictly subadditive  on $\R^+$.
\end{corollary}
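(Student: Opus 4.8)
The goal is to prove strict subadditivity of $\boE_\ka$ on $\R^+$, namely that
\[
\boE_\ka(\gq_1+\gq_2)<\boE_\ka(\gq_1)+\boE_\ka(\gq_2),\quad\text{for all }\gq_1,\gq_2>0.
\]
The plan is to combine the three structural facts already established for $\boE_\ka$ with $\ka<0$: it is nondecreasing and concave on $[0,\infty)$ (Proposition~\ref{prop:Emin}), and it satisfies the strict upper bound $\boE_\ka(\gq)<\sqrt 2\,\gq$ for all $\gq>0$ together with the global Lipschitz bound $\boE_\ka(\gq)\leq \sqrt 2\,\gq$ (Propositions~\ref{prop:strict} and~\ref{prop:Emin}). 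This is the standard route to subadditivity for a concave curve starting at the origin, and the only subtlety is upgrading the (non-strict) subadditivity that concavity gives for free into a \emph{strict} inequality.

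First I would record the base value $\boE_\ka(0)=0$: this follows from $0\leq \boE_\ka(0)\leq \sqrt 2\cdot 0=0$ using the bound $\boE_\ka(\gq)\leq\sqrt2\gq$. Next, fix $\gq_1,\gq_2>0$ and set $\gq=\gq_1+\gq_2$, so that $0<\gq_i<\gq$. The key elementary observation is that a concave function $g$ on $[0,\infty)$ with $g(0)=0$ is automatically subadditive, because concavity gives, for $i=1,2$,
\[
\boE_\ka(\gq_i)=\boE_\ka\Big(\tfrac{\gq_i}{\gq}\,\gq+\big(1-\tfrac{\gq_i}{\gq}\big)\cdot 0\Big)\geq \tfrac{\gq_i}{\gq}\,\boE_\ka(\gq)+\big(1-\tfrac{\gq_i}{\gq}\big)\boE_\ka(0)=\tfrac{\gq_i}{\gq}\,\boE_\ka(\gq),
\]
and summing these two inequalities over $i=1,2$ yields $\boE_\ka(\gq_1)+\boE_\ka(\gq_2)\geq \boE_\ka(\gq)$. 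This already gives non-strict subadditivity for all $\gq_1,\gq_2\geq 0$.

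The main point, and the one step requiring care, is promoting this to a \emph{strict} inequality, because equality in the concavity estimate can occur precisely on intervals where $\boE_\ka$ is affine. The plan is to rule this out using Proposition~\ref{prop:strict}. Suppose, for contradiction, that $\boE_\ka(\gq)=\boE_\ka(\gq_1)+\boE_\ka(\gq_2)$. Then equality must hold in both concavity inequalities above, which forces $\boE_\ka$ to coincide with the linear function $s\mapsto \frac{s}{\gq}\boE_\ka(\gq)$ at the points $0,\gq_1,\gq_2,\gq$; by concavity together with $\boE_\ka(0)=0$, this means $\boE_\ka$ is affine (indeed linear through the origin) on the whole segment $[0,\gq]$, so that $\boE_\ka(s)=\lambda s$ on $[0,\gq]$ with $\lambda=\boE_\ka(\gq)/\gq$. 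Taking $s\to 0^+$ and comparing with the concave slope near the origin, one sees $\lambda=\boE_\ka'(0^+)$; but since $\boE_\ka$ is concave and bounded above by $\sqrt2\,s$ with $\boE_\ka(0)=0$, the right derivative at $0$ satisfies $\boE_\ka'(0^+)\leq\sqrt2$, and the strict bound $\boE_\ka(\gq)<\sqrt2\,\gq$ from Proposition~\ref{prop:strict} gives $\lambda=\boE_\ka(\gq)/\gq<\sqrt2$. This is not yet a contradiction by itself; the cleaner way is to argue directly that linearity on $[0,\gq]$ would force, via concavity extended to all of $\R^+$, the bound $\boE_\ka(s)\geq \lambda s$ for $s\leq \gq$ to be an equality, contradicting nothing — so instead I would invoke the behavior near the origin established inside the proof of Proposition~\ref{prop:strict}, where the explicit soliton expansion shows $\boE_\ka$ is \emph{strictly} concave (strictly below its chords) for small momenta; hence $\boE_\ka$ cannot be affine on any interval $[0,\gq']$ with $\gq'>0$, contradicting the assumed linearity on $[0,\gq]$. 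This contradiction establishes the strict inequality, completing the proof. The anticipated obstacle is exactly this last point: concavity alone only yields $\geq$, and making the inequality strict hinges on knowing that $\boE_\ka$ is not affine near $0$, which I would extract from the strict soliton-based estimate $\boE_\ka(\gq)<\sqrt2\,\gq$ and the strict concavity visible in the Taylor expansion used in Proposition~\ref{prop:strict}.
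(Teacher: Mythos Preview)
Your argument has a genuine gap at the crucial final step. You correctly reduce strict subadditivity to showing that $\boE_\ka$ cannot be linear on any interval $[0,\gq]$, and you correctly observe that linearity would force the slope to equal $\boE_\ka'(0^+)$. You then compute $\lambda=\boE_\ka(\gq)/\gq<\sqrt2$ and admit this is ``not yet a contradiction by itself.'' Your patch---claiming that the Taylor expansion in the proof of Proposition~\ref{prop:strict} exhibits strict concavity of $\boE_\ka$ near the origin---does not hold up: that proof only bounds $\boE_\ka$ from above by the soliton energy $E_\ka(u_{c,\ka})$, and strict concavity of the latter curve tells you nothing about strict concavity of $\boE_\ka$ itself (indeed, identifying $\boE_\ka$ with the soliton energy is exactly what Theorem~\ref{thm:compacite} proves later, \emph{using} strict subadditivity).

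The missing ingredient is Proposition~\ref{prop:minoration}(iii), which supplies the \emph{lower} bound $\boE_\ka(\gq)\geq\sqrt2\,\gq-K_0\gq^{3/2}$ for small $\gq$. Together with the upper bound $\boE_\ka(\gq)\leq\sqrt2\,\gq$ from Proposition~\ref{prop:Emin}, this pins down the right derivative at the origin exactly: $\boE_\ka'(0^+)=\sqrt2$. Now the contradiction is immediate: linearity on $[0,\gq]$ would give $\lambda=\boE_\ka'(0^+)=\sqrt2$, hence $\boE_\ka(\gq)=\sqrt2\,\gq$, contradicting Proposition~\ref{prop:strict}. This is precisely what the paper does (packaging the last step as an invocation of Lemma~3.16 in \cite{delaire-mennuni}). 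Your overall strategy is the same as the paper's; you simply omitted the one estimate that closes it.
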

\begin{proof}
	From Propositions \ref{prop:Emin} and \ref{prop:minoration}-(iii), we deduce that 
	the right derivative of $\boE_\ka$ at the origin, denoted by $\boE_\ka^+$, exists and that $\boE_\ka^+(0)=\sqrt{2}$. By invoking Lemma 3.16 in \cite{delaire-mennuni}, Proposition~\ref{prop:strict}
	implies that   $\boE_\kappa$ is strictly subadditive on $\R^+$. 
\end{proof}
\subsection{Compactness of minimizing sequences}
We are now in a position to prove the compactness of minimizing sequences in the problem  $\boE_\kappa(\gq),$ with $\kappa<0$, and that the minimizers are the solitons $u_{c,\kappa}$, with $c=\gc_\ka(\gq)$. Notice that in the rest of this section,  $u_{c,\ka}$ refers to the dark soliton to \eqref{TWc}, given by Theorem~\ref{thm:classiftwregu}.

 We will use the general argument given \cite{delaire-mennuni},
based on the properties of the minimizing curve $\boE_\kappa$ and an adaptation of the concentration-compactness principle.
However, we need to guarantee the nonvanishing properties of the limit of the minimizing sequences. This is the purpose of the constant $\gq_\ka^*$ defined in \eqref{q_*}.
To relate $\gq_\ka^*$  with $\gq_0(\kappa )$, let us state some properties related to the energy of the black soliton $u_{0,\kappa}$ in \eqref{def:k:blacksoliton}, i.e.\ the solution to \eqref{TWc}  with $c=0$.

\begin{lemma}
\label{lem:ineg:q0}
    Let $\ka<0$ and  $\gq\geq0$.
    There exists a sequence $(u_n)\subset\boN\boX(\R)$ satisfying
	\begin{align}\label{eq:emincst}
		p(u_n)=\gq, \ \text{ for all } n\in\N,\quad\text{ and }\quad \lim\limits_{n\to\infty}E_\ka(u_n)=E_\ka(u_{0,\ka}).
	\end{align}
In addition, 
    \begin{equation}
        \label{minE0}
        \inf\{E_\ka(v) : v\in H^1_{\loc}(\R),~\inf_{x\in\R}|v(x)|=0\}=E_\ka(u_{0,\ka}).
    \end{equation}
 In particular, for all $\gq> 0$, we have  $0\leq\boE_{\ka}(\gq)\leq E_\ka(u_{0,\ka})$  and  $\gq_0(\kappa)\leq\gq_\ka^*$. 
  \end{lemma}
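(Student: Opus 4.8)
The plan is to construct the sequence $(u_n)$ explicitly by deforming the black soliton $u_{0,\ka}$ so that its momentum equals the prescribed value $\gq$ while its energy converges to $E_\ka(u_{0,\ka})$. First I would recall that the black soliton vanishes at the origin, $u_{0,\ka}(0)=0$, so that its (untwisted) momentum is not well-defined in the naive sense; the key idea is to \emph{open up} the notch of the black soliton by a small amount, pushing $|u_{0,\ka}|$ slightly above zero, which both makes the lifting possible and allows us to prescribe the momentum. Concretely, for a small parameter $\delta>0$, I would set $u_n=\rho_n e^{i\theta_n}$ where $\rho_n$ agrees with $|u_{0,\ka}|$ outside a shrinking neighborhood of the origin and is modified to stay bounded below by a small positive constant near $0$; then $\theta_n$ is chosen (using the freedom in the phase on the nonvanishing region) so that $p(u_n)=\frac12\int_\R\eta_n\theta_n'=\gq$ exactly. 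Because the modification is localized and small in $H^1$, we have $E_\ka(u_n)\to E_\ka(u_{0,\ka})$ as the perturbation vanishes. This gives \eqref{eq:emincst}.

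Next I would establish the variational identity \eqref{minE0}. The inequality $\inf\{E_\ka(v):\inf_\R|v|=0\}\leq E_\ka(u_{0,\ka})$ is immediate since $u_{0,\ka}$ itself vanishes and is admissible. For the reverse inequality, I would argue that any $v\in H^1_{\loc}(\R)$ with $\inf_\R|v|=0$ has energy at least $E_\ka(u_{0,\ka})$: if $v$ attains the value $0$ at some point $x_0$, then the intensity profile $\eta_v=1-|v|^2$ reaches the value $1$ at $x_0$, and using the energy density bound \eqref{density:e} together with the pointwise estimate obtained by integrating $\eta_v\eta_v'$ from $\pm\infty$ to $x_0$ (as in the proof of Proposition~\ref{prop:minoration}-(ii)), one derives a lower bound on $\int_\R e_\ka(v)$. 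The cleanest route is to show that among all finite-energy configurations whose modulus touches zero, the minimizer solves \eqref{TWc} with $c=0$ and hence, by the uniqueness in Theorem~\ref{thm:classiftwregu}, must be the black soliton up to invariances; this identifies the infimum with $E_\ka(u_{0,\ka})$.

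Finally I would deduce the two consequences. The bound $\boE_\ka(\gq)\leq E_\ka(u_{0,\ka})$ for all $\gq>0$ follows directly from \eqref{eq:emincst}: since $(u_n)\subset\boN\boX(\R)$ with $p(u_n)=\gq$, the definition of $\boE_\ka(\gq)$ as an infimum gives $\boE_\ka(\gq)\leq\lim_n E_\ka(u_n)=E_\ka(u_{0,\ka})$, while $\boE_\ka(\gq)\geq0$ comes from the nonnegativity of the energy density \eqref{density:e}. For $\gq_0(\kappa)\leq\gq_\ka^*$, I would unwind the definition \eqref{q_*}: suppose for contradiction that $\gq_\ka^*<\gq_0(\kappa)$; then there exists $\gq$ with $\gq_\ka^*<\gq<\gq_0(\kappa)$ and a function $v\in\mathcal{E}(\R)$ with $E_\ka(v)\leq\boE_\ka(\gq)$ but $\inf_\R|v|=0$. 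By \eqref{minE0} this forces $E_\ka(v)\geq E_\ka(u_{0,\ka})$, hence $\boE_\ka(\gq)\geq E_\ka(u_{0,\ka})=E_\ka(u_{c_\ka^*,\ka})$; but by Corollary~\ref{prop:difeo} and \eqref{relation-E-E}, for $\gq<\gq_0(\kappa)$ we have $\boE_\ka(\gq)\leq E_\ka(u_{\gc_\ka(\gq),\ka})<E_\ka(u_{c_\ka^*,\ka})$ using the strict monotonicity of $E_\ka(c)$ away from $c_\ka^*$, a contradiction.

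The main obstacle I expect is the construction of the sequence $(u_n)$ with \emph{exactly} prescribed momentum $\gq$ and energy converging to $E_\ka(u_{0,\ka})$, because the momentum of a vanishing function is delicate (as emphasized in the remark following Lemma~\ref{lem:allure}, the naive formula gives zero for the real-valued black soliton). The careful point is that one must open the notch just enough to make the phase integral well-defined and tunable to hit $\gq$, yet keep the extra energy cost $o(1)$; controlling this trade-off quantitatively, and verifying that the resulting $u_n$ genuinely lies in $\boN\boX(\R)$ with the correct momentum, is the technical heart of the argument.
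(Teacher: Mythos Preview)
Your proposal is correct and follows essentially the same strategy as the paper. The paper's own proof is terse and delegates the first two parts to references: the construction of the sequence $(u_n)$ to Proposition~3.4 in \cite{berthoumieu2023minimizing} (which is exactly the ``open the notch and tune the phase'' construction you describe), and the identity \eqref{minE0} to Lemma~1 in \cite{bethuel2008existence}, whose proof is precisely the Euler--Lagrange route you outline (a minimizer among functions touching zero solves \eqref{TWc} with $c=0$, hence is the black soliton by Theorem~\ref{thm:classiftwregu}). Your deduction of $\gq_0(\kappa)\leq\gq_\ka^*$ is the contrapositive of the paper's direct argument and uses the same chain of inequalities: for $\gq<\gq_0(\kappa)$, \eqref{relation-E-E} and the strict monotonicity of $E_\ka$ on $[c_\ka^*,\sqrt 2]$ (Lemma~\ref{lem:allure}) give $\boE_\ka(\gq)\leq E_\ka(u_{\gc_\ka(\gq),\ka})<E_\ka(u_{0,\ka})$, so any $v$ with $E_\ka(v)\leq\boE_\ka(\gq)$ cannot vanish by \eqref{minE0}. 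One small point to make explicit in your contradiction framing: passing from $\gq_\ka^*<\gq$ to the existence of a witnessing $v$ with $\inf_\R|v|=0$ uses that the defining set in \eqref{q_*} is an interval, which follows from the monotonicity of $\boE_\ka$ in Proposition~\ref{prop:Emin}.
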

\begin{proof}
The existence of a sequence $(u_n)$ satisfying \eqref{eq:emincst} is analogous to the case $\ka=0$, done  in Proposition~3.4 in
\cite{berthoumieu2023minimizing}.

The minimization problem \eqref{minE0} correspond to Lemma~1 in \cite{bethuel2008existence} in the case $\ka=0$. In the case $\kappa<0$, the same proof holds, using that 
$u_{0,\ka}$ is the unique solution to \eqref{TWc} that vanishes at some point, up to a translation.

Finally, let us show  that \eqref{minE0} implies that
$\gq_0(\kappa)\leq\gq_\ka^*$. Let $\gq \in (0,\gq_0(\kappa))$ and let $v\in \mathcal{E}(\mathbb{R})$ such that 
$E_\ka(v)\leq \boE_\ka(\gq)$. By \eqref{relation-E-E}, we deduce that 
$E_\ka(v)\leq E_\ka(u_{\gc_\ka(\gq),\kappa})$, with $\gc_\ka(\gq) \in (c_\ka^*,\sqrt 2)$.
Since, by definition,  $E_\ka(u_{\gc_\ka(\gq),\kappa})=E_\kappa(\gc_\ka(\gq))$,  and, by Lemma~\ref{lem:allure},  the map $c\mapsto E_\kappa(c)$ is strictly decreasing on $[c_\ka^*,\sqrt 2]$, we get 
$E_\ka(v)< E_\kappa(c_\ka^*)$. By Corollary~\ref{prop:difeo}, we have
$E_\kappa(c_\ka^*)=E_\kappa(u_{0,\ka})$, so that  
$E_\ka(v)<E_\kappa(u_{0,\ka})$.  Thus, using  \eqref{minE0},
we infer that $\inf_{\R}\abs{v}>0$. This implies that $\gq\leq \gq_\ka^*$, 
which completes the proof.
\end{proof}

In order to obtain the stability of minimizers, we will prove a more general result than the one stated in Theorem~\ref{thm:min}, as follows.

\begin{theorem}
	\label{thm:compacite}
	Let $\kappa<0$, 
	  $\gq\in (0,\gq_0(\ka))$ and  $(u_n)$ in $\boN\boX(\R)$ be a sequence  satisfying
	\begin{equation}
		\label{mseq}
		E_\ka(u_n)\to \boE_{\ka}(\gq)\quad \text{ and } \quad p(u_n)\to \gq, \quad \text{	as } n\to\infty.
	\end{equation}
	There exist $\theta \in \R$ and 
	a sequence of points $(x_n)$
	such that, up to a subsequence still denoted by  $u_n$,
	the following convergences hold, 	as $n\to\infty$, 
	\begin{alignat}{2}
		\label{cvuniforme} 
		u_{n}(\cdot+x_n)&\to  e^{i\theta}u_{\gc(\gq),\ka}(\cdot ),  &\quad \text{ in }&L^\infty_{\loc}(\R),\\
		\label{cvforteepot}
		1-|u_{n}(\cdot+x_n)|^{2} &\to 1-|u_{\gc(\gq),\ka}(\cdot)|^{2},   &\quad\text{ in }&L^2(\R),\\
		\label{cvfortegradient}
		u_{n}'(\cdot+x_n)&\to e^{i\theta}u'_{\gc(\gq),\ka}(\cdot),  &\quad\text{ in }&L^2(\R).
	\end{alignat}
  Moreover, we have $\gq_\ka^*=\gq_0(\ka)$,
  $\boE_\ka(\gq)=E_\ka(u_{\gc(\gq),\ka})$ for  $\gq \in (0,\gq_\ka^*)$, and 
  $\boE_\ka(\gq)=E_\ka(u_{0,\ka})$ for  $\gq\geq\gq_\ka^*$. In particular, 
  $\boE_\ka$ is strictly increasing on $(0,\gq_\ka^*)$.
\end{theorem}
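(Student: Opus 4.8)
The plan is to run the concentration--compactness method of Lions in the form developed in \cite{delaire-mennuni}, and then to read off the final identities from the structure of the curve $\boE_\ka$ established in Propositions~\ref{prop:Emin}--\ref{prop:strict} and Corollary~\ref{cor:subadditive}. First I would record that any minimizing sequence $(u_n)$ is bounded in the relevant sense: since $\ka<0$, the energy density \eqref{density:e} controls $\norm{u_n'}_{L^2}$, $\norm{\eta_n}_{L^2}$ and $\norm{\eta_n'}_{L^2}$ simultaneously, where $\eta_n=1-|u_n|^2$, so $(\eta_n)$ is bounded in $H^1(\R)$ and $(u_n')$ in $L^2(\R)$. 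Applying the concentration--compactness principle to the energy densities $e_\ka(u_n)$, I would rule out the two bad alternatives. \emph{Vanishing} is incompatible with $p(u_n)\to\gq>0$: a spreading of the energy density forces $\norm{\eta_n}_{L^\infty}\to0$ and then, by the local control of momentum by energy in \eqref{ctrlEPsurR}, makes $p(u_n)\to0$, exactly as in \cite{delaire-mennuni}. \emph{Dichotomy} is excluded by the strict subadditivity of $\boE_\ka$ on $\R^+$ from Corollary~\ref{cor:subadditive}: a splitting into two nontrivial pieces of momenta $\gq_1+\gq_2=\gq$ would give $\boE_\ka(\gq)\geq\boE_\ka(\gq_1)+\boE_\ka(\gq_2)>\boE_\ka(\gq)$. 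The delicate technical point here, and the place where the argument of \cite{delaire-mennuni} must be adapted, is that the potential energy splits cleanly along a cut-off; this is most transparent when $E_{\text p}$ is written in the Fourier variable as in \eqref{def:Ep}, which avoids needing $\widehat{\boW}_\ka$ to be bounded.

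Hence \emph{compactness} holds: up to a subsequence there are translations $(x_n)$ with $\eta_n(\cdot+x_n)\to\eta$ in $L^2(\R)$, $u_n'(\cdot+x_n)\wto w$ in $L^2(\R)$ and $u_n(\cdot+x_n)\to u$ in $L^\infty_\loc(\R)$, for a limit $u$ with $\eta=1-|u|^2$. This limit satisfies $p(u)=\gq$ and, by weak lower semicontinuity, $E_\ka(u)\leq\boE_\ka(\gq)$, so $u$ is a minimizer with $E_\ka(u)=\boE_\ka(\gq)$. Since $\gq<\gq_0(\ka)\leq\gq_\ka^*$ by Lemma~\ref{lem:ineg:q0}, the nonvanishing criterion \eqref{prop:q*} gives $u\in\boN\boX(\R)$, so that its momentum is genuinely well-defined. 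By Lemma~\ref{lem:eulerlag}, $u$ solves \eqref{TWc} for some Lagrange multiplier $c\geq0$; the uniqueness in Theorem~\ref{thm:classiftwregu}, together with Theorem~\ref{thm:non-singular-sol} ruling out singular minimizers for $\ka<0$, forces $u=e^{i\theta}u_{c,\ka}$, and the constraint $p(u_{c,\ka})=\gq$ fixes $c=\gc(\gq)$ through the bijection of Corollary~\ref{prop:difeo}. Finally, the equality $E_\ka(u_n)\to E_\ka(u)$ upgrades each weakly convergent nonnegative quadratic term to a norm-convergent one; weak convergence plus convergence of norms in $L^2(\R)$ then yields the strong convergences \eqref{cvforteepot}--\eqref{cvfortegradient}, while \eqref{cvuniforme} is the local compact embedding.

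It remains to prove the concluding identities. The compactness just obtained gives $\boE_\ka(\gq)=E_\ka(u_{\gc(\gq),\ka})$ for every $\gq\in(0,\gq_0(\ka))$; letting $\gq\to\gq_0(\ka)^-$ and using $\gc(\gq)\to c_\ka^*$ with $E_\ka(u_{c_\ka^*,\ka})=E_\ka(u_{0,\ka})$ from Corollary~\ref{prop:difeo} gives $\boE_\ka(\gq_0(\ka))=E_\ka(u_{0,\ka})$. For $\gq>\gq_0(\ka)$, the bound $\boE_\ka(\gq)\leq E_\ka(u_{0,\ka})$ of Lemma~\ref{lem:ineg:q0} combined with the monotonicity of $\boE_\ka$ forces $\boE_\ka\equiv E_\ka(u_{0,\ka})$ there. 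Testing the definition \eqref{q_*} of $\gq_\ka^*$ with the black soliton $u_{0,\ka}$, which vanishes and satisfies $E_\ka(u_{0,\ka})=\boE_\ka(\gq)$ for such $\gq$, yields $\gq_\ka^*\leq\gq_0(\ka)$; with the reverse inequality from Lemma~\ref{lem:ineg:q0} we conclude $\gq_\ka^*=\gq_0(\ka)$. The strict increase of $\boE_\ka$ on $(0,\gq_\ka^*)$ then follows because $\gc$ is a decreasing bijection onto $(c_\ka^*,\sqrt2)$ and $c\mapsto E_\ka(c)$ is strictly decreasing there, by Lemma~\ref{lem:allure}.

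The main obstacle is the dichotomy step: controlling the quasilinear contribution $\int(\eta')^2$ under the spatial splitting, and guaranteeing that the concentrated limit does not vanish so that its momentum equals $\gq$. The Fourier representation \eqref{def:Ep} of the potential energy and the nonvanishing threshold $\gq_\ka^*$, whose identification with $\gq_0(\ka)$ is carried out above, are precisely the ingredients that make these points go through.
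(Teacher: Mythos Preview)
Your proposal is correct and follows the same concentration--compactness strategy as the paper, which likewise adapts \cite{delaire-mennuni}: the paper organizes it through a multi-bump localization (finitely many centers $x_j^n$ and Steps~1--6) rather than the classical Lions trichotomy you sketch, but the roles of strict subadditivity (Corollary~\ref{cor:subadditive}), the nonvanishing threshold $\gq_\ka^*$ via \eqref{prop:q*}, and the weak-to-strong upgrade by matching norms are identical. One small point you (and the paper) pass over is why the Lagrange multiplier lands in $(c_\ka^*,\sqrt2)$ rather than on the upper branch $(0,c_\ka^*)$ when $\ka<\ka_0$; this holds because the minimizer has energy $\boE_\ka(\gq)\le E_\ka(u_{\gc(\gq),\ka})<E_\ka(u_{0,\ka})$ by \eqref{relation-E-E} and Lemma~\ref{lem:allure}, whereas every soliton with $c\in(0,c_\ka^*)$ has energy strictly above $E_\ka(u_{0,\ka})$.
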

\begin{proof}
	As explained before, the proof follows the same steps as in Theorem~4.1 in \cite{delaire-mennuni}, without the extra decomposition introduced to handle the nonlocal interactions. The characterization of the limit function is obtained by invoking Corollary~\ref{prop:difeo}.
	Therefore, we only give a sketch of the proof.
	
	First, using Proposition~\ref{prop:strict}, we can set 	$\Sigma_\q=1-{\Emin(\gq)}/{(\sqrt{2}\q)}\in(0,1)$, for any $\gq>0$.
	In addition, without loss of generality, we can assume that 
	\begin{equation}
		\label{borneEn}
		E_\kappa(u_n)\leq 2\boE_{\ka}(\gq).
	\end{equation}
	Thus, we can apply Lemmas~2.4 and 2.5 in \cite{delaire-mennuni}, with 
	with $L=1+\Sigma_\gq$, $E=2\boE_{\ka}(\q)$ and $m_0=\tilde \Sigma_\gq:=\Sigma_\gq/L$,
	to  deduce that there exist $R>0$, two integers $\ell,l_*$, with $1\leq \ell\leq l_*$, depending on $E$ and $\gq$, but not on $n$,
	and  points  $x_1^n,x_2^n,\dots,x_{l_*}$,  satisfying 
	$$
	|x_{k}^{n}-x_{j}^{n}|\underset{n\to \infty}\longrightarrow \infty,
	\text{ for } 1\leq k\neq j\leq \ell,
	\text{ and }
	\ x_{j}^{n} \in \displaystyle\mathop{\cup}_{k=1}^{\ell}B(x_{k}^n,R),\text{ for } \ell<j \leq l_*.
	$$
	In addition,  the sequence $\eta_{n}=1-|u_{n}|^2$ satisfies 
	\begin{equation}
		\label{compacteq} 
		|\eta_n(x_j^n)|\geq \tilde\Sigma_\gq,\quad \forall 1\leq j\leq \ell, \quad  
		\text{ and }\quad 
		|\eta_n(x)|\leq \tilde\Sigma_\gq,\ \forall x\in \mathbb{R}\setminus\bigcup_{j=1}^{\ell}B(x_{j}^n,R+1).
	\end{equation}
	
	Applying standard weak compactness results for Hilbert spaces, the Rellich--Kondrachov theorem, and Fatou's lemma,  to the translated sequence $u_{n,j}(\cdot)=u_{n}(\cdot+x_{j}^{n})$, 
	we infer that there exist functions $v_{j}=\rho_j e^{i\phi_j}\in \boN\boX(\R)$, $j\in\{1,\dots,\ell\}$,  satisfying, up to a subsequence,
\begin{alignat}{2}
\label{weak:conv}
u_{n,j} \to v_j, \text{ in }L^{\infty}_{\loc}(\R), \
u'_{n,j} \rightharpoonup v_j',  \text{ in }L^2(\mathbb{R}), \
\eta_{n,j}=1-|u_{n,j}|^2\rightharpoonup \eta_j=1-|v_j|^2,  \text{ in }L^2(\mathbb{R}),
	\end{alignat}
	as $n\to\infty$, and also, 
	\begin{gather}
		\label{dem:ineq1}
		\int_{-A}^{A}|v_j'|^2\leq \underset{n\to \infty}\liminf\int_{-A}^{A}|u_{n,j}'|^2, 
		\ \text{ for all }A\in (0,\infty],\\
		\label{dem:ineq2}
		\int_{-A}^{A} ({\eta_j^2}+{\abs\ka}({\eta_j'})^2)
		\leq \underset{n\to \infty}\liminf\int_{-A}^{A}({\eta_{n,j}^2}+{\abs\ka}({\eta_{n,j}'})^2),  \text{ for all } A\in (0,\infty],
		\\
		\label{dem:ineq3}
		\underset{n \to \infty}\lim	\int_{-A}^{A}\eta_{n,j}\phi_{n,j}'=\int_{-A}^{A}\eta_j\phi_{j}', \text{ for all } A\in (0,\infty),
	\end{gather}
 as well as	
 \begin{equation}
 \label{dem:vj}
     \boE_\ka(\q_j)\leq 		E_\ka(v_j)\leq \boE_\ka(\gq),
 \end{equation}
 where $u_{n,j}=\rho_{n,j}e^{i\phi_{n,j}}$ and $\q_j=p(v_j)$.
Notice that the fact that $v_j$ belongs to $\boN\boX(\R)$, follows from 
  \eqref{prop:q*} and \eqref{dem:vj}, since $\gq<\gq_0(\kappa)\leq \gq_\ka^*.$
 We also remark that we only have an inequality in \eqref{dem:ineq2}, whereas we had an equality in the potential energy in (4.25) in \cite{delaire-mennuni}.
	The rest of the proof consists in showing the following steps.
	\begin{step}
		\label{step2}
		\textit{There exist $\tilde{\gq}\in \R$ and $\tilde{E}\geq 0$ such that}
		\begin{align}
			\boE_\kappa(\gq)\geq \sum_{j=1}^{\ell}\boE_{\kappa}(\gq_j)+\tilde{E}\quad \textit{ and }\quad
			\gq=\sum_{j=1}^{\ell}\gq_j+\tilde{\gq}.
			\label{inegqfinal}
		\end{align}
	\end{step}
	The proof of the first inequality is the same as in  \cite{delaire-mennuni}, handling
	the potential energy in the same manner as the kinetic energy in \eqref{dem:ineq1}.
	For the momentum, we use inequality \eqref{ctrlEPsurR}, instead of Lemma 2.3 in \cite{delaire-mennuni}. Thus, there is no need to introduce cut-off functions that caused the appearance of some reminder terms.
	\begin{step}
		\label{claim:ineq}
		\textit{We have}
		\begin{equation}
			\label{inegqEfinal}
			\sqrt{2}\left(1-\tilde\Sigma_\gq\right)|\tilde{\gq}|\leq \tilde{E}.
		\end{equation}
	\end{step}
	This inequality follows as in Claim~2 in  \cite{delaire-mennuni}, but using 
	inequality \eqref{ctrlEPsurR}, instead of introducing cut-off functions.
\begin{step}
	\textit{	We have $\tilde{E}=\tilde{\q}=0$ and $\ell=1$.}
	\label{pasdichoto}
\end{step}
The proof of this step is the same as in Claim 3 in  \cite{delaire-mennuni}, since it only uses the properties of the function $\boE_k$ in Propositions \ref{prop:Emin}, \ref{prop:strict} and Corollary~\ref{cor:subadditive}.

\begin{step}
\textit{The weak convergences in \eqref{weak:conv} are also strong in $L^2(\R)$ (for $j=1$).}
\end{step}
We set from now on $v=v_1$ and $\eta=1-\abs{v_1}^2$.
For the previous step, we have
\begin{equation}
	p(u_{n,1})\to \q=p(v) \quad \text{ and } \quad E_\ka(u_{n,1}) \to \Emin(\q)=E_\ka(v).
	\label{q:pv1}
\end{equation}
To prove that  $u'_{n,1}\to v'$ in $L^2(\R)$, it is enough to show that 
\begin{equation}
	\label{cv:norme:grad}
	\limsup_{n\to\infty }\Vert u_{n,1}'\Vert_{L^2(\mathbb{R})}\leq  \Vert v'\Vert_{L^2(\mathbb{R})}. 
\end{equation}
Arguing by contradiction, taking a subsequence that we still denote by $u_{n,1}$,
we suppose that 
$$M:=\lim_{n\to\infty }\norm{u_{n,1}'}_{L^2(\R)}^2, \quad \text{ with } \quad M>\Vert v'\Vert_{L^2(\mathbb{R})}^2.$$
Hence, using \eqref{q:pv1},
\begin{align*}
	\underset{n \to \infty}\lim~ E_{\p}\left(u_{n,1}\right)  &= \underset{n \to \infty}\lim  ~\big( E_\ka(u_{n,1} )-\norm{u_{n,1}'}_{L^2(\R)}^2/2 \big) 
	= E_\ka(v)-{M}/{2} 
	< E_\ka(v)-\Vert v'\Vert_{L^2(\mathbb{R})}^2/2
	= E_{\p}(v),
\end{align*}
which contradicts \eqref{dem:ineq2} (with $A=\infty$). Therefore,  $u'_{n,1}\to v'$ in $L^2(\R)$. Let us show that this also implies that 
\begin{equation}
	\label{eta1-etan1}
	\norm{\eta_{n,1}'-\eta'}_{L^2(\R)}\to 0.
\end{equation}
Indeed, noticing that  $\eta'-\eta_{n,1}'=2(\langle v,v' \rangle- \langle u_{n,1},u_{n,1}' \rangle)$,
we have
\begin{align}
	\label{dernier3}
	\norm{\eta_{n,1}'-\eta'}_{L^2(\R)} 
	&\leq 2 \Vert (v-u_{n,1})v'\Vert_{L^2(\R)}
	+ 2\Vert (v'-u_{n,1}')u_{n,1}\Vert_{L^2(\R)} .
\end{align}
From inequality \eqref{borne:infty}, we obtain the existence of a constant  $C(\q)>0$ such that $\norm{u_{n,1}}_{L^{\infty}(\R)}\leq C(\q).$ Thus, the second term in the right-hand side of \eqref{dernier3} goes to zero. By using  the dominated convergence theorem, we also infer that the other term in the right-hand side of \eqref{dernier3} also tends to zero, which completes the proof of \eqref{eta1-etan1}.

Finally, going back to \eqref{q:pv1}, we conclude that 
$$
\limsup_{n\to\infty }\Vert \eta_{n,1}\Vert_{L^2(\mathbb{R})}=  \Vert \eta\Vert_{L^2(\mathbb{R})}, 
$$
which combined with the last weak convergence in \eqref{weak:conv} (with $j=1$), implies  that $\eta_{n,1}\to \eta$ in $L^2(\R)$.

\begin{step}
\textit{There exists $(\theta,y)\in\R^2$ such that $v=e^{i\theta}u_{\gc(\q),\kappa}( \cdot - y)$.}
\end{step}
By Theorem~6.3 in \cite{delaire-mennuni}, since $p(v)=\gq$ and $\boE_{\ka}(\gq)=E_\ka(v)$, 
we conclude that $v$  is a solution to \eqref{TWc}, for some speed $c$. In addition, as in Theorem~4 in \cite{delaire-mennuni}, we deduce that $c\in(0,\sqrt 2)$. Therefore,
by Corollary~\ref{prop:difeo}, $v$ satisfies \eqref{TWc} with $c=\gc(\gq)$, and the conclusion follows from Theorem~\ref{thm:classiftwregu}.

\begin{step}
	\textit{We have $\gq_\ka^*=\gq_0(\kappa)$ and $\boE_{\ka}(\gq)=E_\ka(u_{0,\ka})$, for all $\gq\geq\gq_\ka^*$.}
\end{step}
By continuity of $\boE_\ka$, invoking Corollary~\ref{prop:difeo}, and using the previous step, we deduce that
\begin{equation}
\label{end:proof:main}
\boE_\ka(\gq_0(\kappa))=\lim_{\gq\to\gq_0(\kappa)^-}\boE_\ka(\gq)= \lim_{\gq\to\gq_0(\kappa)^-}E_\ka(u_{\gc(\gq),\ka})=E_\ka(u_{c^*_\ka,\ka})
=E_\ka(u_{0,\ka})
\end{equation}
where we used the definition \eqref{def:c*} for the equality.  Since
$u_{0,\ka}(0)=0$,  we conclude, using the definition of $\gq_\ka^*$,  that $\gq_0(\kappa)\geq\gq_\ka^*$. By Lemma~\ref{lem:ineg:q0},  we have $\gq_0\leq\gq_\ka^*$, hence, $\gq_0=\gq_\ka^*$.

Finally, we show that $\boE_\ka$ is constant in $[\gq_\ka^*,\infty)$.
Indeed, for $\gq\geq\gq_\ka^*$. using that $\boE_{\ka}$ is nondecreasing,
and \eqref{end:proof:main},
we have $\boE_\ka(\gq)\geq \boE_\ka(\gq_0(\kappa))\geq E_\ka(u_{0,\ka})$.
Since the reverse inequality was already proved in Lemma~\ref{lem:ineg:q0},  
we conclude that the equality holds.

At last, in view of Lemma~\ref{lem:allure}, and Steps 5 and  6, 
we conclude that $\boE_\kappa$ is strictly increasing on $(0,\gq_\ka^*)$, which  finishes the proof of the theorem.
\end{proof}



\section{Local well-posedness of \eqref{QGP} and orbital stability}
\label{section:cauchy}

The Cauchy problem with vanishing conditions at infinity associated with \eqref{quasilin0} is locally well-posed in Sobolev spaces $H^s(\R)$ of high regularity. This was shown for \eqref{quasilin0} in \cite{colinLWP,Colin2}, for $s\geq 3$.
Best regularity results were obtained in \cite{tataruIII} in $l^1H^s(\R)$ spaces, containing $H^s(\R)$, with $s>1/2+2$, and recently improved in \cite{ifrim-tataru,shao-zhou}.

Concerning \eqref{QGP}, we can decompose $\Psi$ as  $\Psi=v+\varphi$,  where $v$ satisfies condition \eqref{nonzero0} and $\varphi\in H^s(\R)$. Then, we obtain a quasilinear equation on $\varphi$ with vanishing conditions. In this setting, it is not straightforward to show well-posedness in $v+H^s(\R)$, 
using the latter results. This is due to the nonhomogenous nature of the equation on $\varphi$. However, more general quasilinear models have been considered in \cite{Kenig}. Their results provide local well-posedness in $v+H^s(\R)\cap H_w(\R)$, for any $s\geq1/2+11$, with some additional smoothness and decay assumptions on $v$, where $H_w(\R)$ is some weighted  Sobolev space.

For our purposes,  we will use the approach developed in \cite{BenzoniLWP}, where they proved that the Euler--Korteweg system is locally well-posed. As shown now, \eqref{QGP} can be written as an Euler--Korteweg system, with initial conditions given by $H^s(\R)$-perturbations of a dark soliton for $s\geq1$.

Assume that $\ka\leq0$ and $c>0$.
Let $\Psi_0=u_{c,\ka}+\phi_0$ satisfying $\inf_{x\in\R}|\Psi_0|>0$, where $\phi_0\in{H}^s(\R)$ with $s\geq1$ and $u_{c,\ka}$ is the dark soliton given by Theorem~\ref{thm:classiftwregu}.
Since $\Psi_0\in\boN\boX(\R)$, then for any local solution $\Psi\in\boC([0,T];u_{c,\ka}+{H}^s(\R))$, we deduce that 
there exists $T>0$ such that 
$\Psi(t)$ also belongs to $\boN\boX(\R)$, for all $t\in[0,T]$, so that we can perform the Madelung transform \cite{CarlesDanSaut}. Namely, writing
$\Psi=\sqrt{\rho}e^{i\theta}$, with  $\rho(t)\in L^\infty(\R)\cap\dot{H}^s(\R)$ and $\theta(t)\in\boC(\R)\cap\dot{H}^s(\R)$ for all $t\in[0,T]$.
We conclude that  $(\rho,\theta)$ satisfies in $[0,T]$ the system
\begin{equation}\label{eq:madelungeq}
    \left\{\begin{aligned}
       & \partial_t\rho-2\partial_x(\partial_x\theta\rho)=0,\\
       & -\partial_t\theta+(\partial_x\theta)^2=\frac{1-2\ka\rho}{2\rho}\partial_{xx}\rho-\frac{(\partial_x\rho)^2}{4\rho^2}+1-\rho.
    \end{aligned}\right.
\end{equation}
Therefore, setting 
\begin{equation}
\label{def:change:rho}
\Tilde{\rho}(x,t)=\rho(x,-t/2), \ \ \Tilde{v}(x,t)=\partial_x(\theta(x,-t/2)),\     \
K(y)=(1-2\ka y)/4y,\ \ g_0(y)=(y-1)/2,
\end{equation}
for all $y>0$, we deduce that 
$(\tilde{\rho},\tilde{v})$ satisfies the  Euler--Korteweg system in $[0,T]$:

\begin{equation}\label{eq:EulK0}
\left\{
    \begin{aligned}
        &\partial_t\Tilde{\rho}+\partial_x(\Tilde{v}\Tilde{\rho})=0,\\
&\partial_t\Tilde{v}+\Tilde{v}\partial_x\Tilde{v}=\partial_x\Big( K(\Tilde{\rho})\partial_{xx}\Tilde{\rho}+\frac{K'(\Tilde{\rho})}{2}(\partial_x\tilde{\rho})^2-g_0(\Tilde{\rho})\Big).
        \end{aligned}
\right.
\end{equation}


The result established in \cite{BenzoniLWP} concerning the general  Euler--Korteweg system \eqref{eq:EulK0} is as follows.
\begin{theorem}[Theorem~5.1 in \cite{BenzoniLWP}]\label{thm:lwpEK}
    Take $s>2+1/2$. Let $\rho_0\in L^\infty(\R)\cap \dot H^s(\R)$ be a function taking values in a compact subset of $(J_-,J_+)\subset (0,\infty)$. 
    Assume that $K,g_0\in \boC^\infty(\R)$, with $K>0$ in $(J_-,J_+).$ 
If $v_0\in H^{s-1}(\R)$, then there exists $T>0$ such that \eqref{eq:EulK0} has a unique solution $(\rho,v)$ on $\R\times[0,T]$, satisfying $\rho(\cdot,0)=\rho_0$, $v(\cdot,0)=v_0$, and
    \begin{equation}\label{lwpEK}
        \begin{cases}
            (\rho-\rho_0)\in\boC([0,T];H^s(\R)),\\
        (\partial_x\rho,v)\in\boC([0,T];H^{s-1}(\R)),\\
        \rho(\R\times[0,T])\subset\subset(J_-,J_+).
        \end{cases}
    \end{equation}
Also, the flow map is continuous in a neighborhood of $(\rho_0,u_0)$ in $(\rho_0+H^s(\R))\times H^{s-1}(\R)$.
\end{theorem}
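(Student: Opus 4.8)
The statement is quoted as Theorem~5.1 of \cite{BenzoniLWP}, so its proof properly belongs to that reference; here I outline the strategy one would follow to establish it. The plan is to recast the quasilinear system \eqref{eq:EulK0}, whose principal part is third order and of Korteweg (dispersive) type, as a \emph{degenerate quasilinear Schr\"odinger equation} for a single complex unknown, and then to run a quasilinear energy method. Introduce the coefficient $c(y)=\sqrt{K(y)/y}>0$ on $(J_-,J_+)$ and set $w:=\Tilde{v}+i\,c(\Tilde{\rho})\partial_x\Tilde{\rho}$, so that the velocity is recovered as $\Tilde{v}=\Re(w)$. A direct computation using both equations of \eqref{eq:EulK0} shows that $(\Tilde{\rho},w)$ solves a system consisting of the transport-type mass equation $\partial_t\Tilde{\rho}+\partial_x(\Tilde{\rho}\,\Re w)=0$ coupled with an equation of the form
\begin{equation*}
\partial_t w+\Re(w)\,\partial_x w+i\sqrt{K(\Tilde{\rho})\Tilde{\rho}}\,\partial_{xx}w=\mathcal{N}(\Tilde{\rho},\partial_x\Tilde{\rho},w),
\end{equation*}
where $\mathcal{N}$ is semilinear (carrying at most one derivative on $w$ and one on $\Tilde{\rho}$). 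The hypotheses $K>0$ and $\Tilde{\rho}$ valued in a compact subset of $(J_-,J_+)$ guarantee that the real principal coefficient $\sqrt{K(\Tilde{\rho})\Tilde{\rho}}$ is bounded away from $0$, so the equation is genuinely (non-degenerately) dispersive. Note that the formulation is set up for $\Tilde{\rho}-\rho_0$ and $(\partial_x\Tilde{\rho},w)$, which accommodates a non-decaying background density.

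First I would establish a priori estimates for $\Tilde{\rho}-\rho_0\in H^s$ and $w\in H^{s-1}$, with $s>2+1/2$. Differentiating the $w$-equation, the only term that does not close by Sobolev product and commutator estimates is the first-order transport term $\Re(w)\,\partial_x w$, which interacts with the Schr\"odinger dispersion and produces a genuine loss of one derivative. This is handled by a gauge transform \`a la Doi: one conjugates $\partial_x^{s-1} w$ by a suitable order-zero pseudodifferential (or exponential) operator, chosen so that the skew-symmetric part of the first-order operator is cancelled modulo lower order. The resulting differential inequality $\tfrac{d}{dt}\big(\|\Tilde{\rho}-\rho_0\|_{H^s}^2+\|w\|_{H^{s-1}}^2\big)\lesssim \Lambda\big(\|\Tilde{\rho}-\rho_0\|_{H^s}+\|w\|_{H^{s-1}}\big)$, with $\Lambda$ a continuous increasing function, closes by a Gronwall argument on an interval $[0,T]$ depending only on the initial norms and on the distance of $\rho_0(\R)$ to $\{J_-,J_+\}$; the confinement $\Tilde{\rho}(\R\times[0,T])\subset\subset(J_-,J_+)$ then follows from continuity in time after shrinking $T$.

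Existence is obtained by a parabolic regularization (adding $\varepsilon\,\partial_{xx}$ to the Schr\"odinger equation, solving the resulting semilinear parabolic problem, and passing to the limit $\varepsilon\to0$ using the uniform bounds), or equivalently by a Friedrichs mollifier iteration. Uniqueness and continuity of the flow map follow from the same gauged energy estimate applied to the difference of two solutions, carried out at one order of regularity below, which is the standard loss accompanying quasilinear well-posedness. The main obstacle is precisely this quasilinear dispersive loss of derivatives: unlike for a semilinear Schr\"odinger equation, the coefficient $\sqrt{K(\Tilde{\rho})\Tilde{\rho}}$ of $\partial_{xx}w$ depends on the solution, and the transport term $\Re(w)\,\partial_x w$ cannot be absorbed by plain integration by parts against $\partial_{xx}$. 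The gauge/symmetrizer construction, which exploits the Hamiltonian structure of the Euler--Korteweg system (ensuring the correct sign and the absence of an unrecoverable first-order term), is what makes the estimates close, and is the technical heart of \cite{BenzoniLWP}.
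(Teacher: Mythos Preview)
The paper does not prove this statement; it is quoted verbatim as Theorem~5.1 of \cite{BenzoniLWP} and used as a black box to derive Corollary~\ref{coro:lwp}. You correctly recognize this and give an outline of the strategy from \cite{BenzoniLWP} (the change of unknown $w=\Tilde v+i\,c(\Tilde\rho)\partial_x\Tilde\rho$ turning the system into a quasilinear Schr\"odinger equation, gauged energy estimates, parabolic regularization), which is faithful to that reference. There is nothing to compare against in the present paper.
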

For all $\ka\leq0$, it is clear that $K$ given in \eqref{def:change:rho} satisfies 
$K(y)>0,$ for all $y>0$, so that we can apply Theorem~\ref{thm:lwpEK} with $(J_-,J_+)=(0,\infty).$ 
In the next result, we obtain the local well-posedness of \eqref{QGP} for small $H^s(\R)$-perturbations of $u_{c,\ka}$ using Theorem~\ref{thm:lwpEK} and defining $\Psi=\rho e^{i\theta}\in u_{c,\ka}+H^s(\R)$ such that \eqref{eq:madelungeq}--\eqref{def:change:rho} hold. Also, we deduce the continuity of the flow map and the conservation of energy and momentum.
\begin{corollary}[Well-posedness of \eqref{QGP}]\label{coro:lwp}
    Let $s>1/2+2$, $\ka<0$ and $c>0$. If $\Psi_0\in u_{c,\ka}+H^s(\R)$ belongs to
    $\boN\boX(\R)$,
    then there exists $T_{\Psi_0}>0$, the maximal time of existence such that for every $T\in (0,T_{\Psi_0})$, \eqref{QGP} has a unique solution $\Psi$ on  $\R\times[0,T]$, satisfying $\Psi(\cdot,0)=\Psi_0$ and 
    \begin{equation}\label{lwp}
        \begin{cases}
            \Psi\in\boC([0,T];u_{c,\ka}+H^s(\R)),\\
            \inf_{\R\times[0,T]}|\Psi|>0.
        \end{cases}
    \end{equation}
    Moreover, the flow map is continuous in a neighborhood of $\Psi_0$ in $u_{c,\ka}+H^s(\R)$, and the energy and momentum are conserved, i.e.\ $E_\ka(\Psi(\cdot, t))=E_\ka(\Psi_0)$ and $p(\Psi(\cdot, t))=p(\Psi_0)$,
    for all $t\in[0,T].$
    \end{corollary}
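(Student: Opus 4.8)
The plan is to transfer \eqref{QGP} to the Euler--Korteweg system \eqref{eq:EulK0} via the Madelung transform, solve the latter with Theorem~\ref{thm:lwpEK}, and then transport the solution, the continuity of the flow, and the conservation laws back to \eqref{QGP}. First I would set up the hydrodynamic initial data: since $\Psi_0\in\boN\boX(\R)$ with $\inf_\R|\Psi_0|>0$, Lemma~\ref{lem:phaseregu} provides a lifting $\Psi_0=\sqrt{\rho_0}\,e^{i\theta_0}$, and I set $v_0=\partial_x\theta_0$. I would then check the hypotheses of Theorem~\ref{thm:lwpEK} with $(J_-,J_+)=(0,\infty)$: the map $K(y)=(1-2\ka y)/(4y)$ satisfies $K>0$ on $(0,\infty)$ because $\ka<0$, and $K,g_0\in\boC^\infty((0,\infty))$. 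For the data, writing $\Psi_0=u_{c,\ka}+\phi_0$ with $\phi_0\in H^s(\R)$ and using the exponential decay of $u_{c,\ka}$, $u_{c,\ka}'$ and $\eta_{c,\ka}$ from Theorem~\ref{thm:classiftwregu} together with the fact that $H^s(\R)$ is an algebra for $s>1/2$, I would show that $\rho_0-1=|\Psi_0|^2-1\in H^s(\R)$ takes values in a compact subset of $(0,\infty)$, and that $v_0-\theta_{c,\ka}'\in H^{s-1}(\R)$, where $\theta_{c,\ka}'=c\eta_{c,\ka}/(2(1-\eta_{c,\ka}))$; this is exactly the affine regularity required by \eqref{lwpEK} relative to the soliton profile.

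Next I would apply Theorem~\ref{thm:lwpEK} to obtain a unique solution $(\tilde\rho,\tilde v)$ of \eqref{eq:EulK0} on $\R\times[0,T]$ with the regularity \eqref{lwpEK} and $\tilde\rho$ valued in a compact subset of $(0,\infty)$. Undoing the substitution \eqref{def:change:rho} gives $(\rho,\theta)$ solving the Madelung system \eqref{eq:madelungeq} on $[0,T]$, where $\theta$ is recovered by integrating $\tilde v=\partial_x\theta$ in $x$ and fixing the additive constant in $t$ so that $\theta(\cdot,0)=\theta_0$; the second equation in \eqref{eq:madelungeq} then determines $\partial_t\theta$ consistently. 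Reconstructing $\Psi=\sqrt{\rho}\,e^{i\theta}$ and using that $y\mapsto\sqrt{y}$ is smooth away from $0$ while $\rho$ stays in a compact subset of $(0,\infty)$, I would verify that $\Psi$ is a weak solution to \eqref{QGP} in the sense of \eqref{eq:weakqgp}, that $\inf_{\R\times[0,T]}|\Psi|>0$, and that $\Psi-u_{c,\ka}\in\boC([0,T];H^s(\R))$, which gives \eqref{lwp}. Uniqueness for \eqref{QGP} follows because, as long as $\inf|\Psi|>0$, the Madelung transform is a bijection onto solutions of \eqref{eq:EulK0} of the stated regularity, so uniqueness transfers from Theorem~\ref{thm:lwpEK}; the continuity of the flow map is obtained likewise, since both the transform $\Psi_0\mapsto(\rho_0,v_0)$ and its inverse are continuous between $u_{c,\ka}+H^s(\R)$ and $(\rho_0+H^s(\R))\times H^{s-1}(\R)$ for $s>1/2+2$, and one composes with the continuity statement in Theorem~\ref{thm:lwpEK}. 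The maximal time $T_{\Psi_0}$ is defined in the usual way as the supremum of existence times.

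Finally, for the conservation of $E_\ka$ and $p$, I would first establish it for smooth data, say $\phi_0\in H^\sigma(\R)$ with $\sigma$ large, by differentiating $t\mapsto E_\ka(\Psi(\cdot,t))$ and $t\mapsto p(\Psi(\cdot,t))$ and using the equation together with integration by parts, equivalently, using that $E_\ka$ and $p$ are, up to the change \eqref{def:change:rho}, the Hamiltonian and momentum of the Euler--Korteweg system \eqref{eq:EulK0}. Then I would pass to general $s>1/2+2$ by approximating $\Psi_0$ in $u_{c,\ka}+H^s(\R)$ by smoother data, invoking the continuity of the flow map just established and the continuity of $E_\ka$ and $p$ with respect to the relevant topology. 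The main obstacle is the rigorous justification of the Madelung transform at this regularity: one must control the nonlinear operations (square root, reciprocal, division by $\rho$) on the affine space $u_{c,\ka}+H^s(\R)$, show that the perturbation genuinely stays in $H^s(\R)$ uniformly on $[0,T]$, and handle the soliton-dependent terms through the exponential decay in Theorem~\ref{thm:classiftwregu}; once this equivalence is set up cleanly, the remaining steps are transfers of the corresponding statements in Theorem~\ref{thm:lwpEK}.
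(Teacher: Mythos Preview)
Your proposal is correct and follows the same strategy as the paper: Madelung transform to the Euler--Korteweg system, application of Theorem~\ref{thm:lwpEK}, reconstruction of $\Psi$, and transfer of uniqueness, continuous dependence, and conservation laws via a density argument from smoother data. The only minor difference is that the paper reconstructs the phase by integrating the second Madelung equation in \emph{time}, setting $\theta(\cdot,t)=\theta_0+\int_0^t\big(v^2-\tfrac{1-2\ka\rho}{2\rho}\partial_{xx}\rho-\tfrac{(\partial_x\rho)^2}{4\rho^2}+\rho-1\big)$, which directly yields $\theta-\theta_0\in\boC^1([0,T];L^2(\R))$ and hence $\Psi-\Psi_0\in\boC([0,T];L^2(\R))$, whereas your spatial integration of $v=\partial_x\theta$ requires an extra step to pin down the time-dependent constant and to recover this $L^2$ control.
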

\begin{proof}
\emph{Existence.}
   We start by performing the Madelung transform to the initial condition: $\Psi_0=\sqrt{\rho_0}e^{i\theta_0}$. Then, setting $v_0=\partial_x\theta_0$,
   we can apply Theorem~\ref{thm:lwpEK} to obtain $(\Tilde{\rho},\Tilde{v})$ a solution to \eqref{eq:EulK0} satisfying \eqref{lwpEK}, with the functions 
   $K$ and $g_0$ defined in \eqref{def:change:rho}.
      Setting $\rho(x,t)=\Tilde{\rho}(x,-2t)$ and $v=v(x,-2t)$, it remains to define $\theta\in\boC(\R\times[0,T])$ so that $\Psi=\sqrt{\rho}e^{i\theta}$ satisfies \eqref{QGP} and \eqref{lwp}. 
   Indeed, taking $$\theta(\cdot,t)=\theta_0+\int_0^tv^2- \frac{1-2\ka\rho}{2\rho}\partial_{xx}\rho-\frac{(\partial_x\rho)^2}{4\rho^2}+\rho-1,$$
yields $\theta\in\boC^1([0,T];L^2(\R))$, thus $\theta$ satisfies the second equation in \eqref{eq:EulK0}.    
  Since $ \inf_\R\rho>0$, we deduce that $\sqrt{\rho} e^{i\theta}$ is a nonvanishing local solution to \eqref{QGP}. We verify that $\Psi$ is still a $H^s(\R)-$perturbation of $u_{c,\ka}$ for $t>0$ to ensure \eqref{lwp}.  Since $\Psi_0-u_{c,\ka}\in H^s(\R)$, we can conclude if we prove that $\Psi-\Psi_0\in \boC([0,T];H^s(\R))$. It is easy to check that $\partial_x(\Psi-\Psi_0)$ belongs in $\boC([0,T];H^{s-1}(\R))$ using \eqref{lwpEK}, thus, we just need to verify that $\Psi-\Psi_0\in\boC([0,T]; L^2(\R)).$
    Since $(\rho,\theta)$ satisfies \eqref{eq:madelungeq}, we deduce using \eqref{lwpEK} that $\partial_t\theta\in\boC([0,T];H^{s-2}(\R))$, in particular $\theta-\theta_0\in \boC^1([0,T];L^2(\R))$ and we see that $\Psi-\Psi_0\in\boC([0,T]; L^2(\R))$ follows from this observation and \eqref{lwpEK}.

    \emph{Uniqueness.} Assume that $\check{\rho}e^{i\check{\theta}}$ is another solution to \eqref{QGP} defined on $[0,\check{T}]$ with the same initial condition, then $\Tilde{T}=\min\{\check{T},T\}$ and relabeling $\Tilde{T}$ as $T$, we have $\check{\rho}=\rho$ and $\partial_x\theta=\partial_x\check{\theta}$ in $[0,T]$ by Theorem~\ref{thm:lwpEK}. Since $\check{\rho}e^{i\check{\theta}}$ satisfies \eqref{QGP}, we infer that $\check{\theta}$ satisfies the second line in \eqref{eq:EulK0}. Therefore, we deduce that $\partial_t\check{\theta}=\partial_t\theta$ in $\boC([0,T];L^2(\R))$, so that, integrating the latter identity, we obtain $\check{\theta}=\theta$. We conclude using Theorem~\ref{thm:lwpEK} that there exists $T$ such that the flow map is well-defined in the vicinity of any initial condition $\Psi_0\in u_{c,\ka}+H^s(\R)$ satisfying $\inf_\R|\Psi_0|>0$. Moreover, this map takes its value in $\boC([0,T];u_{c,\ka}+H^s(\R))$.
    
    \emph{Continuity with respect to the initial data.} Let  $\Psi_0\in u_{c,\ka}+H^s(\R)$ such that $\inf_\R|\Psi_0|>0$, then the flow map is well-defined in a neighborhood $\boV$ of $\Psi_0$ and valued in  $\boC([0,T],u_{c,\ka}+H^s(\R))$, for some $T>0$. By simplicity, we only show that this map is continuous at $\Psi_0$, the continuity in $\boV$  is similar. Let $(\Psi_0^{(n)})\subset\boV$ satisfying $||\Psi_0^{(n)}-\Psi_0||_{H^s(\R)}\to0$ as $n\to\infty$. Let $\Psi^{(n)}=\rho^{(n)}e^{i\theta^{(n)}}$ and $\Psi=\rho e^{i\theta}$ be the unique solution of \eqref{QGP} in $\R\times[0,T]$ satisfying \eqref{lwp} and $\Psi^{(n)}(\cdot,0)=\Psi_0^{(n)}$ and $\Psi(\cdot,0)=\Psi_0$. By the continuity of the flow map in Theorem~\ref{thm:lwpEK} we get 
    \begin{equation}\label{eq:contdatum}
        \lim\limits_{n\to\infty}||(\rho^{(n)},\partial_x\theta^{(n)})-(\rho,\partial_x\theta)||_{\boC([0,T];H^s(\R)\times H^{s-1}(\R))}=0.
    \end{equation}
    Since $\theta^{(n)}$ and $\theta$ satisfies the second equation in \eqref{eq:madelungeq}, we deduce using \eqref{eq:contdatum} that $||\partial_t\theta^{(n)}-\partial_t\theta||_{\boC([0,T];L^2(\R))}\to0$, as $n\to\infty$, thus $||\theta^{(n)}-\theta||_{\boC([0,T];L^2(\R))}\to0$, as $n\to\infty$. We see that this relation and \eqref{eq:contdatum} are enough to ensure that $||\rho^{(n)} e^{i\theta^{(n)}}-\rho e^{i\theta}||_{\boC([0,T];H^s(\R))}\to0$ as $n\to\infty$, hence the flow map is continuous in $\Psi_0$.

    \emph{Conservation laws.} Assume that $\Psi_0$ satisfies $\inf_\R|\Psi_0|>0$ and is regular enough so that  the solution $\Psi$ of \eqref{QGP} belongs in $\boC([0,T];u_{c,\ka}+H^s(\R))$, for some $s>4+1/2$. Then the energy and momentum of $\Psi$ are conserved in time (one just needs to derive with respect to time in the integrals).
    The conclusion follows for $s>2+1/2$, by 
    using a density argument and the continuity of the flow map.

    As a conclusion, the following the maximal time of existence is well-defined:
    \begin{equation}\label{def:Tpsi}
        T_{\Psi_0}=\sup\left\{T>0:\quad \begin{aligned}
          &\text{There exists a unique  solution to \eqref{QGP} $\Psi$ defined on $[0,T]$}\\ &\text{satisfying \eqref{lwp} and $\Psi(\cdot,0)=\Psi_0$.}  
        \end{aligned}\right\},
    \end{equation}
     associated with  an initial condition $\Psi_0\in\boN\boX(\R)\cap u_{c,\ka}+H^s(\R)$. 
    \end{proof}
Finally, we can prove the stability of dark solitons, as stated in  
Theorem~\ref{thm:stability}, by invoking Theorem~\ref{thm:compacite}, and the Cazenave--Lions argument \cite{cazlions}.
\begin{proof}[Proof of Theorem~\ref{thm:stability}] Using that $\gc$ is a bijection between $(0,\gq_\ka^*)$ and $(c_\ka^*,\sqrt{2 })$, we just need to show the result for $u_{\gc(\gq),\ka}$, parametrized by $\gq\in(0,\gq_\ka^*)$.
 By contradiction,  we 
 suppose that for some $\gq\in(0,\gq_\ka^*),$ the dark soliton $u_{\gc(\gq),\ka}$ is not orbitally stable. Then there exist $\ve_0>0$, and a sequence $(v^{(n)}_0)\subset H^s(\R)$,  $s>5/2$, 
 such that 
the solution $\Psi^{(n)}(x,t)=u_{\gc(\gq),\ka}(x)+v^{(n)}(x,t)$ to \eqref{QGP}, with initial data  $\Psi^{(n)}_0=u_{\gc(\gq),\ka}+v^{(n)}_0$,  defined for $t\in[0, T_{\Psi_0^{(n)}} )$,  satisfies
\begin{align}\label{eq:nonstab1}
    &d(\Psi^{(n)}_0,u_{\gc(\gq),\ka})<1/n,\\
\label{eq:nonstab2}
    \text{and }\quad\inf_{(y,\theta)\in\R^2}&d(\Psi^{(n)}(t_n),u_{\gc(\gq),\ka}(\cdot-y)e^{i\theta})>\ve_0,
\end{align}
 for some $t_n\in (0,T_{\Psi_0^{(n)}}).$
Let us recall that Lemma~5.5 in \cite{delaire-mennuni} establishes that 
	if $v_{n}, v\in \mathcal{X}(\mathbb{R})$  satisfy
	$	d(v_n,v)\to 0$, 
	then,
	\begin{equation}
		\label{convstablemmeenergie2}
		\Vert |v_n|-|v| \Vert_{L^{\infty}(\R)} \to 0
		\quad \text{and}	\quad \Vert |v_n|^2-|v|^2 \Vert_{L^{2}(\R)} \to 0.
	\end{equation}
	In particular, this implies the continuity of the energy $E_\kappa(v_n)\to E_\kappa(v)$ (with respect to $d$). In addition, if $v_n,v\in \boN\boX(\R)$, then we also have the continuity of the momentum, i.e.\ that $p(v_n)\to p(v)$.	
 By conservation of energy, we have \begin{align}\label{eq:consE}
     E_\ka(\Psi^{(n)}(t))=E_\ka(\Psi_0^{(n)}),\quad \text{ for all }0\leq t\leq t_n.
 \end{align}
From \eqref{eq:consE}, we infer that, up to a subsequence that we do not relabel, we have $\Psi^{(n)}(t)\in\boN\boX(\R)$ for all $0\leq t \leq t_n$. Indeed, in view of \eqref{eq:nonstab1} and the continuity of the energy with respect to $d$, we obtain,
\begin{align}\label{eq:limemin}
    E_\ka(\Psi^{(n)}_0)<E_\ka(u_{\gc(\gq),\kappa})+\delta_n=\boE_\ka(\gq)+\delta_n,
\end{align}
for some positive sequence $\delta_n\to0$. Thus, for $n$ large enough, using \eqref{eq:consE}--\eqref{eq:limemin} and that $\boE_\ka$ is strictly increasing in $(0,\gq_\ka^*)$ (see Theorem~\ref{thm:compacite}), we deduce that there is $\tilde \gq \in 
(\gq,\gq_\ka^*)$ such that 
\begin{equation}
  E_\ka(\Psi^{(n)}(t))<\boE(\tilde \gq),\quad\text{ for all } 0\leq t\leq t_n.
\end{equation} By the definition of $\gq_\ka^*$ in \eqref{q_*}, we deduce that $\Psi^{(n)}(t)\in\boN\boX(\R)$, for all $0\leq t\leq t_n$. We can now invoke the conservation of momentum and the continuity of the momentum (with respect to $d$), to conclude that $p(\Psi^{(n)}(t_n))\to\gq$. Again, using \eqref{eq:consE} and the continuity of the energy, we  obtain $E_\ka(\Psi^{(n)}(t_n))\to\boE_\ka(\gq)$.
From Theorem~\ref{thm:compacite}, we deduce that there exist $\theta\in\R$ and a sequence of points $(x_n)$ such that 
 \begin{align}\label{eq:convstab}
     &1-|\Psi^{(n)}(\cdot+x_n,t_n)|^2\to 1-|u_{\gc(\gq),\ka}|^2,\quad\text{ in }L^2(\R),\\
     \label{eq:convstab2}
     &(\Psi^{(n)})'(\cdot+x_n,t_n)\to e^{i\theta}u_{\gc(\gq),\ka}',\quad\text{ in } L^2(\R).
 \end{align}
Let us show that  \eqref{eq:convstab}--\eqref{eq:convstab2} imply  that 
\begin{equation}
\label{last:conv}
d(\Psi^{(n)}(t_n),e^{i\theta}u_{\gc(\gq),\ka}(\cdot-x_n))\to0.
\end{equation}
Indeed, it is immediate that  \eqref{eq:convstab2} leads to  $||(\Psi^{(n)})'(t_n)- e^{i\theta}u_{\gc(\gq),\ka}'(\cdot-x_n)||_{L^2(\R)}\to 0$. In addition, we have the estimate: 
\begin{align}\label{eq:strongernorm}
     \norm{|\Psi^{(n)}|-|u_{\gc(\gq),\ka}(\cdot-x_n)|}_{L^2(\R)}^2\leq\frac{\norm{|\Psi^{(n)}|^2-|u_{\gc(\gq),\ka}(\cdot-x_n)|^2}_{L^2(\R)}^2}{\inf_{\R}((|\Psi^{(n)}|+|u_{\gc(\gq),\ka}(\cdot-x_n)|)^2)}.
 \end{align}
 Since $0\leq1-\abs{u_{\gc(\gq),\ka}}^2\leq1-\gc(\gq)^2/2$ (see Proposition~\ref{prop:globaleta}), we deduce that $|u_{\gc(\gq),\ka}|\geq \gc(\gq)/\sqrt{2}$. Therefore, using \eqref{eq:strongernorm} and \eqref{eq:convstab}, we conclude that $\norm{|\Psi^{(n)}|-|u_{\gc(\gq),\ka}(\cdot-x_n)|}_{L^2(\R)}\to0$, which establishes \eqref{last:conv}. This contradicts \eqref{eq:nonstab2}. 
\end{proof}


\begin{merci}
	The authors acknowledge support from the Labex CEMPI (ANR-11-LABX-0007-01).
	E.~Le Quiniou was also supported by the R\'egion Hauts-de-France.
\end{merci}
\bibliographystyle{abbrv}

\end{document}